\documentclass[11pt]{amsart}
\usepackage{amsxtra, amsfonts, amsmath, amsthm, amstext, amssymb, amscd, mathrsfs, verbatim, color}
\usepackage{threeparttable}  
\usepackage{mathtools} 
\usepackage[ansinew]{inputenc}\usepackage[T1]{fontenc}
\usepackage[all,cmtip]{xy}
\usepackage[ngerman,french,english]{babel}

\theoremstyle{plain}

\newtheorem{theorem}{Theorem}[section]
\newtheorem{lemma}[theorem]{Lemma}
\newtheorem{definition-theorem}[theorem]{Definition-Theorem}
\newtheorem{proposition}[theorem]{Proposition}
\newtheorem{corollary}[theorem]{Corollary}

\newtheorem{conjecture}[theorem]{Conjecture}

\theoremstyle{definition}
\newtheorem{definition}[theorem]{Definition}
\newtheorem{example}[theorem]{Example}
\newtheorem{remark}[theorem]{Remark}
\newtheorem{notation}[theorem]{Notation}
\newcommand \bth[1] { \begin{theorem}\label{t#1} }
\newcommand \ble[1] { \begin{lemma}\label{l#1} }

\newcommand \bpr[1] { \begin{proposition}\label{p#1} }
\newcommand \bco[1] { \begin{corollary}\label{c#1} }
\newcommand \bde[1] { \begin{definition}\label{d#1}\rm }
\newcommand \bex[1] { \begin{example}\label{e#1}\rm }
\newcommand \bre[1] { \begin{remark}\label{r#1}\rm }

\newcommand \bnota[1] {\begin{notation}\label{n#1}\rm }
\newcommand {\ele} { \end{lemma} }

\newcommand {\epr} { \end{proposition} }
\newcommand {\eco} { \end{corollary} }
\newcommand {\ede} { \end{definition} }
\newcommand {\eex} { \end{example} }
\newcommand {\ere} { \end{remark} }
\newcommand {\enota} { \end{notation} }

\begin{document}
\setlength{\baselineskip}{1.2\baselineskip}
\title[Casselman-Jacquet modules]
{Stability of the smooth Casselman-Jacquet functor}
\author[Kei Yuen Chan]{Kei Yuen Chan}
\author[Kaidi Wu]{Kaidi Wu}
\author[Jun Yu]{Jun Yu}
\author[Hongfeng Zhang]{Hongfeng Zhang}

\address{(Chan) Department of Mathematics, The University of Hong Kong, HK.}\email{kychan1@hku.hk}

\address{(Wu) Department of Mathematics and New Cornerstone Science Laboratory, The University of Hong Kong, HK.}
\email{kaidiwu24@connect.hku.hk}

\address{(Yu) School of Mathematical Sciences and Beijing International Center for Mathematical Research, Peking, 100871, China}
\email{junyu@bicmr.pku.edu.cn}

\address{(Zhang) School of Mathematics and Statistics, Huazhong University of Science and Technology, Wuhan, 430074, China}
\email{zhanghongf@pku.edu.cn}

\maketitle

\begin{abstract}
We establish and prove several results for the smooth Casselman-Jacquet submodule and quotient functors for real reductive groups. In addition to exactness, surjectivity, transitivity and globalization results, we establish a stability property for the intersection of Jacquet subspaces. Our approach is based on a family of seminorms on the Casselman-Jacquet quotient module. As an application, we establish a full version of the real Bernstein-Zelevinsky filtrations for smooth Fr\'echet representations of moderate growth.
\end{abstract}

\selectlanguage{english} 


\section{Introduction}

\subsection{Motivations}
\noindent
The Casselman--Jacquet functor is a systematic tool for studying the restriction of representations of a reductive group to its parabolic subgroups. It was developed in the foundational work of Casselman, building on Harish-Chandra's suggestive parallels between the real and $p$-adic worlds---an idea often referred to as the Lefschetz principle \cite{Ca78}. However, the significant topological differences between real and $p$-adic groups introduce extra difficulties when studying the Casselman--Jacquet functor in the real setting.

A main goal of this article is to establish several fundamental properties of the smooth Casselman--Jacquet functor. Classical treatments of this functor have focused predominantly on the algebraic side, namely Harish-Chandra modules. For instance, an application appears in the Casselman embedding theorem \cite{CM82, SW82}; computations on principal series representations can be found in \cite{Ab08};  and various functorial aspects are studied in \cite{CGYD19}. Nevertheless, Casselman-Jacquet modules have also been investigated on the analytic side---i.e., for smooth representations---in earlier work \cite{AGS15a, AGS15b}.

The smooth Casselman--Jacquet functor is particularly important in problems such as distinction, where non-admissible representations naturally arise. In such settings, the algebraic and analytic perspectives do not always perfectly coincide. A central contribution of this article is to overcome several topological obstacles in establishing these desired properties.

Our methodology on the study of Casselman-Jacquet functors makes extensive use of seminorms, which are fundamental to the theory of representations of moderate growth (see \cite{Ca89, Wa92, BK14}). Indeed, the articles \cite{BK14, BGKKS25+} have extensively studied norms on Harish-Chandra modules, for example by proving the Sobolev equivalence of continuous norms on these modules.

This work is part of a broader project \cite{Ch23, CW26} to establish a comprehensive mathematical proof of branching laws for real general linear groups, following the blueprint previously laid out for the $p$-adic case \cite{Ch21, CS21, Ch23+, CP25+}. Since the Bernstein-Zelevinsky theory is essential to the entire $p$-adic framework, it is therefore desirable to develop a parallel theory for the real case. The development of the smooth Casselman-Jacquet functor is designed for this.

A main result of this article provides a resolution to the problem of establishing the real Bernstein--Zelevinsky filtration. Our results on this filtration complement those in \cite{AGS15a, AGS15b}, with a particular focus on representations of moderate growth. Beyond the Casselman--Jacquet functor, our approach relies on the analogy between Hecke algebras on the $p$-adic side (see \cite{BZ76}) and Schwartz algebras on the real side (see \cite{dCl91}). The primary insight of our study builds on the basic cases treated in \cite{Ch23}, with further modifications discussed in \cite{WZ25+}. Related problems, such as Duflo's conjecture, are studied in \cite{LOY23}.


\subsection{Main results}

Let $G$ be a reductive Lie group and let $P$ be a parabolic subgroup of $G$ with its unipotent radical $N_P$. Let $\mathfrak n=\mathrm{Lie}(N_P)$. Let $\mathrm{Rep}^{\infty, F}(G)$ (resp. $\mathrm{Rep}^{\infty,F}(P)$) be the category of smooth Fr\'echet representations of $G$ (resp. $P$) with moderate growth. A primary source of such representations of $G$ or $P$ comes from Casselman-Wallach representations \cite{Ca89, Wa92}, possibly from a larger reductive group that contains $G$. For other examples, see e.g. \cite{dCl91, BK14, CSu21}. Main examples in this article are those restricted from general linear groups to their mirabolic subgroups.

 Define two functors:
\[\mathbf{CJ}^{\infty}_P: \mathrm{Rep}^{\infty, F}(G) \rightarrow \mathrm{Rep}^{\infty, F}(P) , \quad   \mathbf{CJ}^{\infty}_P(\pi) := \varprojlim_k \pi/\overline{\mathfrak n^k.\pi} ,
\]
and 
\[  \mathbf{CJ}^{\infty}_{s, P}: \mathrm{Rep}^{\infty, F}(G) \rightarrow \mathrm{Rep}^{\infty, F}(P), \quad    \mathbf{CJ}^{\infty}_{s,P}(\pi) := \bigcap_{k =0}^{\infty} \overline{\mathfrak n^k.\pi}  .
\]
We may sometimes use $\mathbf{CJ}^{\infty}_{N_P}$ for $\mathbf{CJ}^{\infty}_P$, and use $\mathbf{CJ}^{\infty}_{s, N_P}$ for $\mathbf{CJ}^{\infty}_{s,P}$.

We emphasize that our Casselman-Jacquet functors are defined with closure. In other words, the quotients are automatically Hausdorff. The definition suffices for representation-theoretic applications as long as they are computable; and the obvious merit is that our results are unconditional.

A starting observation of the entire article is the following:

\begin{proposition} (=Corollary \ref{cor converging seq 0})
Let $\pi$ be in $\mathrm{Rep}^{\infty, F}(G)$. Let $P$ be a parabolic subgroup of $G$ and let $\mathfrak n=\mathrm{Lie}(N_P)$.  For each $k \in \mathbb Z_{\geq 1}$, let $x_k$ be in $\overline{\mathfrak n^k.\pi}/ \mathbf{CJ}_{s}^{\infty}(\pi)$. Then $(x_k)$ is a sequence converging to $0$ in 
$\pi/\mathbf{CJ}_{s}^{\infty}(\pi)$. 
\end{proposition}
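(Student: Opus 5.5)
The plan is to reduce the statement to a topological comparison between $\pi/\mathbf{CJ}^{\infty}_{s}(\pi)$ and the inverse limit $\mathbf{CJ}^{\infty}_P(\pi)=\varprojlim_k \pi/\overline{\mathfrak n^k.\pi}$. The reduction goes as follows. Write $W=\mathbf{CJ}^{\infty}_{s}(\pi)$ and $V_k=\overline{\mathfrak n^k.\pi}/W$, a decreasing sequence of closed subspaces of the Fr\'echet space $\pi/W$ with $\bigcap_k V_k=0$. Since the $x_k\in V_k$ are arbitrary, we may replace each $x_k$ by any scalar multiple of it. Hence the statement is equivalent to the following: for every continuous seminorm $q$ on $\pi/W$ there is a $k$ with $q|_{V_k}=0$.

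This in turn says exactly that the natural continuous injection
\[\iota:\pi/W\longrightarrow \varprojlim_k \pi/\overline{\mathfrak n^k.\pi}\]
is a topological embedding. Here the target carries the inverse-limit topology, in which the basic neighbourhoods of $0$ contain the kernels $V_k$ of the projections. We will prove the stronger fact that $\iota$ is a homeomorphism onto $\mathbf{CJ}^{\infty}_P(\pi)$. Both sides are Fr\'echet, so by the open mapping theorem it suffices to show that $\iota$ is surjective, i.e.\ that $\pi\to\mathbf{CJ}^{\infty}_P(\pi)$ is onto.

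For surjectivity we use a Mittag-Leffler / Borel-lemma construction. Fix a compatible family $(y_k)$ with $y_k\in\pi/\overline{\mathfrak n^k.\pi}$ and an increasing sequence of seminorms $p_1\le p_2\le\cdots$ defining the topology of $\pi$. We choose lifts $v_k\in\pi$ of $y_k$ inductively. Given $v_k$, any lift $v_{k+1}'$ of $y_{k+1}$ satisfies $v'_{k+1}-v_k\in\overline{\mathfrak n^k.\pi}$. We then want to modify $v'_{k+1}$ by an element of $\overline{\mathfrak n^{k+1}.\pi}$ so that $p_k(v_{k+1}-v_k)<2^{-k}$; the limit of the $v_k$ then maps to $(y_k)$. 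The needed input is an approximation statement:

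\emph{For every continuous seminorm $p$ and every $\varepsilon>0$ there is $m$ such that every element of $\overline{\mathfrak n^{m}.\pi}$ lies within $p$-distance $\varepsilon$ of $\overline{\mathfrak n^{m+1}.\pi}$.}

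Equivalently, $p$ descends to a seminorm on $\overline{\mathfrak n^m.\pi}/\overline{\mathfrak n^{m+1}.\pi}$ which vanishes. We would like this with $m$ independent of the level, and we would pass to a subsequence of levels if necessary.

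This approximation step is the main obstacle, and it is where the structure of $G$ and moderate growth must enter; for abstract decreasing filtrations of Fr\'echet spaces it is false. The first thing I would try is the Casselman-type contraction argument. Choose $H$ in the split centre of the Levi Lie algebra such that $\operatorname{ad}H$ acts on $\mathfrak n$ with eigenvalues $\ge 1$, and set $a_t=\exp(tH)$. Then $a_t$ preserves each $\mathfrak n^k.\pi$. For $X_1\cdots X_k v$ with $X_i\in\mathfrak n$ we have the identity
\[a_{-t}X_1\cdots X_k v=e^{-t\lambda}X_1\cdots X_k\,a_{-t}v,\qquad \lambda\ge k.\]
Moderate growth gives $p(a_{-t}w)\le Ce^{N|t|}p'(w)$ for some $N$ depending only on $p$. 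Therefore, once $k>N$, conjugating by $a_{-t}$ shrinks $p$-sizes of elements of $\mathfrak n^k.\pi$ exponentially relative to $p'$. Combining this with the Sobolev-type seminorms on the quotients, i.e.\ the family of seminorms on the Casselman--Jacquet quotient promised in the introduction, should yield the approximation statement above with $m$ roughly $N(p)$. The corollary then follows from the open mapping theorem as explained.
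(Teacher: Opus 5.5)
Your reductions are correct. Rescaling the $x_k$ shows the statement is equivalent to every continuous seminorm on $\pi/\mathbf{CJ}^{\infty}_s(\pi)$ vanishing on some $\overline{\mathfrak n^k.\pi}/\mathbf{CJ}^{\infty}_s(\pi)$, i.e.\ to $\iota$ being a topological embedding. By the open mapping theorem, together with density of the image of $\pi$ in the inverse limit, this is equivalent to surjectivity of $\pi\to\mathbf{CJ}^{\infty}_P(\pi)$. The paper argues in the opposite direction: Proposition \ref{prop surj cj quotient} is deduced from the seminorm property, not the reverse.

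The difficulty is that your approximation statement, in the form your construction needs (for all deeper levels $r>m$), is itself equivalent to the corollary. If the quotient of $p$ vanishes on $\overline{\mathfrak n^m.\pi}/\mathbf{CJ}^{\infty}_s(\pi)$, then $\overline{\mathfrak n^m.\pi}$ lies in the $p$-closure of $\mathbf{CJ}^{\infty}_s(\pi)\subset\overline{\mathfrak n^r.\pi}$. Conversely, your Mittag-Leffler step plus open mapping returns the corollary. So all of the content sits in that statement, and what you offer for it is not a proof.

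The contraction step fails as stated. From $a_{-t}X_1\cdots X_kv=e^{-t\lambda}X_1\cdots X_k\,a_{-t}v$ you must bound $p(X_1\cdots X_k\,a_{-t}v)$. That needs a seminorm $p_k$ with $p(X_1\cdots X_kw)\le p_k(w)$, and the moderate-growth exponent of $p_k$ depends on $k$. So there is no fixed $N$ for $k$ to exceed.

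For instance, take the $M_2$-representation $\sigma=\mathcal S(\mathbb R^{\times})$ of Example \ref{ex schwartz function M2}, which Example \ref{ex gl2 ps} identifies with $\mathbf{CJ}^{\infty}_{s,B}$ of a $\mathrm{GL}_2(\mathbb R)$ principal series. Take $a_t=\mathrm{diag}(e^t,1)$, so $\mathrm{ad}\,H$ has eigenvalue $1$ on $\mathfrak n$. With $p$ the sup-norm, $p_k(w)=(2\pi)^k\sup|x^kw|$ has exponent exactly $k$, which cancels $e^{-t\lambda}$. Indeed $\mathfrak n^k.\sigma=x^k\mathcal S(\mathbb R^{\times})=\sigma$ for all $k$ while the sup-norm is dilation invariant. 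Hence $p(a_{-t}u)$ does not decay for any $u\in\mathfrak n^k.\sigma$, at any level.

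Even granting some contraction, smallness of $a_{-t}u$ does not produce approximants of $u$ in the deeper closed subspace $\overline{\mathfrak n^{m+1}.\pi}$. Appealing to ``the family of seminorms on the Casselman--Jacquet quotient'' is circular: that family is Proposition \ref{prop convergence of nilpotent action}, from which the corollary is immediate.

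The missing idea is the Schwartz-algebra structure, which needs no torus at all; the paper works in $\mathrm{Rep}^{\infty,F}(N)$. The argument runs as follows, with $\lambda=\pi/\mathbf{CJ}^{\infty}_s(\pi)$:
\begin{enumerate}
\item By moderate growth and Dixmier--Malliavin (Proposition \ref{prop nondeg schwartz}), the action $\mathcal S(\mathfrak n)\widehat{\otimes}\lambda\to\lambda$ is surjective.
\item After Fourier transform, $\mathcal S(\mathfrak n^*\setminus\{0\})$ acts by zero on $\lambda$ (Lemma \ref{lem inclusion}). Borel's lemma then yields a surjection $\mathbb C[[\mathfrak n^*]]\widehat{\otimes}\lambda\to\lambda$ (Lemma \ref{lem non-dgenerate quotient}).
\item By the open mapping theorem, the quotient seminorms of $p_i\otimes q_j$ define the topology of $\lambda$.
\item Since $\mathfrak n^k$ sends $\mathbb C[[\mathfrak n^*]]$ into power series of high order (Lemma \ref{lem intersect jacquet}), each of these seminorms vanishes on $\overline{\mathfrak n^k.\pi}/\mathbf{CJ}^{\infty}_s(\pi)$ for $k$ large.
\end{enumerate}
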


This leads to the following properties:

\begin{theorem} \label{thm stable property}
Let $\pi$ be in $\mathrm{Rep}^{\infty, F}(G)$. Let $P$ be a parabolic subgroup of $G$ and let $\mathfrak n=\mathrm{Lie}(N_P)$. Then the following statements hold:
\begin{enumerate}
\item (=Corollary \ref{cor maximal stable quotient}) $\overline{\mathfrak n.\mathbf{CJ}_{s, P}^{\infty}(\pi)}=\mathbf{CJ}_{s, P}^{\infty}(\pi)$.
\item (=Proposition \ref{prop surj cj quotient}) The map from $\pi$ to $\mathbf{CJ}_P^{\infty}(\pi)$ is surjective, and hence we have the following strict short exact sequence:
\[   0 \rightarrow \mathbf{CJ}^{\infty}_{s, P}(\pi)\rightarrow \pi \rightarrow \mathbf{CJ}_P^{\infty}(\pi) \rightarrow  0 .
\]
\item (=Propositions \ref{prop monic epi property} and \ref{prop surj cj quotient functor}) For any epi morphism $f: \pi_1\rightarrow \pi_2$, the maps $\mathbf{CJ}_P^{\infty}(f)$ and $\mathbf{CJ}_{s, P}^{\infty}(f)$ are also epi. For any surjective morphism from $\pi_1$ to $\pi_2$, $\mathbf{CJ}^{\infty}_P(f)$ is also surjective.
\item (=Proposition \ref{prop trans CJ}) Let $Q$ be a parabolic subgroup of $G$ with $Q\subset P$. Then $\mathbf{CJ}^{\infty}_Q\circ \mathbf{CJ}^{\infty}_P(\pi) \cong \mathbf{CJ}^{\infty}_Q(\pi)$.
\end{enumerate}
\end{theorem}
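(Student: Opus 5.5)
The plan is to derive all four parts from the preceding Proposition: for any choice $x_k\in\overline{\mathfrak n^k.\pi}/\mathbf{CJ}^{\infty}_{s}(\pi)$, the sequence $x_k$ tends to $0$ in $\bar\pi:=\pi/\mathbf{CJ}^{\infty}_{s}(\pi)$. In words, the filtration $\overline{\mathfrak n^k.\pi}$ is uniformly small in $\bar\pi$. First I will reformulate this as a topological statement: for every continuous seminorm $q$ on $\bar\pi$ there is a $k$ such that $q$ vanishes on the image of $\overline{\mathfrak n^k.\pi}$. If not, there would be some $q$ and, for each $k$, an element $y_k$ of the image of $\overline{\mathfrak n^k.\pi}$ with $q(y_k)\neq0$. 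Rescaling to $x_k=y_k/q(y_k)$ gives $q(x_k)=1$, contradicting $x_k\to0$. Consequently the images $\overline{\mathfrak n^k.\pi}/\mathbf{CJ}^{\infty}_s(\pi)$ form a fundamental system of neighbourhoods of $0$. More precisely, the topology of $\bar\pi$ is the one induced from $\prod_k \pi/\overline{\mathfrak n^k.\pi}$, and each seminorm factors through some finite stage.

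For (2), I will show that the natural injection $\bar\pi\to\varprojlim_k\pi/\overline{\mathfrak n^k.\pi}$ is surjective and a topological isomorphism. Take a compatible family $(v_k+\overline{\mathfrak n^k.\pi})_k$. The differences $v_{k+1}-v_k$ lie in $\overline{\mathfrak n^k.\pi}$, so their images tend to $0$ by the Proposition. I expect they in fact tend to $0$ fast enough for the series $v_1+\sum(v_{k+1}-v_k)$ to converge in $\bar\pi$. The reason is that, for each seminorm $q$, only finitely many terms have nonzero $q$-value, so the partial sums are eventually constant modulo $\ker q$. Hence the partial sums form a Cauchy sequence in the Fréchet space $\bar\pi$, and the limit maps to the given family. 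Strictness then follows from the open mapping theorem. For (1), let $W=\mathbf{CJ}^{\infty}_s(\pi)$. By the seminorm description, $\overline{\mathfrak n^k.\pi}$ is the preimage of a neighbourhood system that is finitely determined, and $\overline{\mathfrak n^{k+1}.\pi}\supseteq\overline{\mathfrak n.\overline{\mathfrak n^{k}.\pi}}$. I will aim to show $\overline{\mathfrak n.W}\supseteq W$ by applying the Proposition to the representation $\pi/\overline{\mathfrak n.W}$. This is a Fréchet moderate-growth $P$-representation, but it need not be a $G$-representation, so I expect to have to repeat the argument of the Proposition for it. Then any $w\in W$ lies in $\overline{\mathfrak n^k.\pi}$ for all $k$. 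The goal is to write $w\equiv \sum_j X_j w_j$ modulo arbitrarily deep terms, with each $w_j\in W$, which is the Mittag-Leffler-type step.

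For (3), an epimorphism $f$ (dense image) maps $\mathfrak n^k.\pi_1$ onto a dense subspace of $\mathfrak n^k.\pi_2$. Hence the induced map on each $\pi_1/\overline{\mathfrak n^k.\pi_1}\to\pi_2/\overline{\mathfrak n^k.\pi_2}$ has dense image. Via (2), $\mathbf{CJ}^\infty_P(f)$ is a quotient of $f$ and so has dense image. For $\mathbf{CJ}^\infty_s$ I will use (1) together with the exact sequence to approximate elements of $\mathbf{CJ}^\infty_s(\pi_2)$. When $f$ is surjective, surjectivity of $\mathbf{CJ}^\infty_P(f)$ is immediate from (2), since $\pi_1\to\pi_2\to\mathbf{CJ}^\infty_P(\pi_2)$ is then surjective. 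For (4), write $\mathfrak n_Q=\mathfrak n_P\oplus\mathfrak n_{Q\cap M_P}$. Using (2) for both functors, I need $\mathbf{CJ}^\infty_{s,Q}(\pi)\supseteq\mathbf{CJ}^\infty_{s,P}(\pi)$, which follows from $\mathfrak n_P\subset\mathfrak n_Q$. I also need the preimage of $\mathbf{CJ}_{s,Q}^\infty(\mathbf{CJ}^\infty_P\pi)$ to be $\mathbf{CJ}^\infty_{s,Q}(\pi)$. The latter requires comparing the filtrations $\mathfrak n_Q^k$ and $\mathfrak n_P^a\mathfrak n_{Q\cap M}^b$ via $\mathfrak n_Q^{2k}\subset\sum_{a+b\ge k}\ldots$, using the grading by a dominant element of $\mathfrak a_Q$. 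I will then invoke the neighbourhood-base description from the first paragraph to pass closures through the quotient.

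The main obstacle is (1). Extracting $\overline{\mathfrak n.W}=W$ needs the Proposition for the non-$G$-representation $\pi/\overline{\mathfrak n.W}$, or an extra argument that $W/\overline{\mathfrak n.W}$ embeds continuously into a Hausdorff quotient where the finite-determinacy forces it to vanish.
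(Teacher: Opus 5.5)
Your arguments for (2) and for the $\mathbf{CJ}^{\infty}_P$ half of (3) are sound and essentially match the paper's. For (2) you run the Cauchy argument in the complete quotient $\pi/\mathbf{CJ}^{\infty}_s(\pi)$ instead of choosing controlled lifts in $\pi$, which is if anything cleaner. One small correction: the subspaces $\overline{\mathfrak n^k.\pi}/\mathbf{CJ}^{\infty}_s(\pi)$ are not neighbourhoods of $0$, since an open subspace is the whole space. Your corrected formulation, that every continuous seminorm vanishes on one of them, is the right one.

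The genuine gap is (1). You leave it open, and the route you propose would not close it. Applying the Proposition to $\pi'=\pi/\overline{\mathfrak n.W}$ yields nothing new. Since $\overline{\mathfrak n.W}\subset\overline{\mathfrak n^k.\pi}$ for every $k$, one gets $\overline{\mathfrak n^k.\pi'}=\mathrm{pr}(\overline{\mathfrak n^k.\pi})$. Hence $\mathbf{CJ}^{\infty}_s(\pi')=\mathrm{pr}(W)$ and $\pi'/\mathbf{CJ}^{\infty}_s(\pi')=\pi/W$. So the Proposition for $\pi'$ is verbatim the Proposition for $\pi$, and it is blind to $\mathrm{pr}(W)$, which is exactly the space you need to kill. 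The $G$-versus-$P$ issue is a red herring: the paper proves the Proposition for all of $\mathrm{Rep}^{\infty,F}(N)$. Nor is it necessary, or clear how, to produce $w_j\in W$ with $w\equiv\sum_j X_jw_j$.

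The missing idea is that the $\mathfrak n$-action on $\pi'$ factors through $\pi/W$. Fix a basis $X_1,\ldots,X_n$ of $\mathfrak n$. The continuous map $\pi^n\to\pi'$, $(y_i)\mapsto\mathrm{pr}(\sum_i X_iy_i)$, vanishes on $W^n$ because $\mathfrak n.W\subset\overline{\mathfrak n.W}$. So it descends to a continuous linear map $T:(\pi/W)^n\to\pi'$.

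Every element of $\mathfrak n^{k+1}.\pi$ has the form $\sum_iX_iy_i$ with $y_i\in\mathfrak n^k.\pi$. Therefore $\mathrm{pr}(W)\subset\mathrm{pr}(\overline{\mathfrak n^{k+1}.\pi})\subset\overline{T((\overline{\mathfrak n^k.\pi}/W)^n)}$ for all $k$.

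Now take a continuous seminorm $q$ on $\pi'$, and choose a continuous seminorm $p$ on $\pi/W$ with $q(T(\bar y))\le C\sum_ip(\bar y_i)$. By the Proposition for $\pi$ itself (in your seminorm form), $p$ vanishes on $\overline{\mathfrak n^k.\pi}/W$ for $k\gg0$. Hence $q$ vanishes on $\mathrm{pr}(W)$. As $\pi'$ is Hausdorff, $\mathrm{pr}(W)=0$, i.e. $W\subset\overline{\mathfrak n.W}$. This is the paper's proof; its Claims 1 and 2 are the sequential form of this factorization.

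Two smaller points. First, the $\mathbf{CJ}^{\infty}_s$ half of (3), which you also leave vague, does not actually need (1). Write $\lambda_i=\mathbf{CJ}^{\infty}_s(\pi_i)$. Then $\lambda_2\subset\overline{\mathfrak n^k.\pi_2}\subset\overline{f(\mathfrak n^k.\pi_1)}$ for every $k$, by density of $f(\pi_1)$ and continuity of the $\mathfrak n$-action. The same seminorm argument, applied to the continuous map $\pi_1/\lambda_1\to\pi_2/\overline{f(\lambda_1)}$ together with the Proposition for $\pi_1$, gives $\lambda_2=\overline{f(\lambda_1)}$.

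Second, in (4) no grading comparison of $\mathfrak n_Q^k$ with $\mathfrak n_P^a\mathfrak n_{Q\cap M}^b$ is needed. The inclusion $\mathfrak n_P\subset\mathfrak n_Q$ gives $\mathbf{CJ}^{\infty}_{s,P}(\pi)\subset\overline{\mathfrak n_Q^k.\pi}$. So, by (2), the quotient map $\pi\to\sigma=\mathbf{CJ}^{\infty}_P(\pi)$ carries $\overline{\mathfrak n_Q^k.\pi}$ onto the closed subspace $\overline{\mathfrak n_Q^k.\sigma}$. Hence $\pi/\overline{\mathfrak n_Q^k.\pi}\cong\sigma/\overline{\mathfrak n_Q^k.\sigma}$ stage by stage, and you never need (2) for the $P$-representation $\sigma$.
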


\begin{remark}
The map from $\pi$ to $\mathbf{CJ}^{\infty}_P(\pi)$ behaves quite different between the $(\mathfrak g, K)$-modules and representations in $\mathrm{Rep}^{\infty,F}(G)$. In the $(\mathfrak g, K)$-module case, the corresponding map is injective \cite[Corollary 8.19]{CM82}, rather than surjective. This difference arises from some analytic behaviors due to completeness. 
\end{remark}

\begin{remark}
We shall refer to Theorem \ref{thm stable property}(1) for the stability of the Casselman-Jacquet submodule. This property is useful for describing the Casselman-Jacquet submodule functor in terms of Schwartz algebra actions (see Corollary \ref{cor closedness and stable prop}).
\end{remark}

\begin{remark}
In the theory of finitely generated modules over a Noetherian commutative ring, a proof of the stabilization analogue of Theorem \ref{thm stable property}(1) relies on the Artin-Rees lemma (see, e.g., the proof of \cite[Corollary 5.4]{Ei95}). However, we currently lack a version of the Artin-Rees lemma for smooth  Fr\'echet representations of moderate growth (cf. Lemma \ref{lem AR lemma}). 

It is also known that stabilization fails for non-finitely generated modules (without a topology) over a Noetherian commutative ring.
\end{remark}

Our next result is the exactness of the Casselman-Jacquet functors for the category of Casselman-Wallach representations: 

\begin{theorem} (= Theorem \ref{thm exact cj functors cw} + Corollary \ref{cor exact cw cj sub}) \label{thm intro exact}
Let $\mathcal{CW}(G)$ be the category of Casselman-Wallach representations of $G$. Let $\iota: \mathcal{CW}(G)\rightarrow \mathrm{Rep}^{\infty,F}(G)$ be the embedding of $\mathcal{CW}(G)$ into $\mathrm{Rep}^{\infty, F}(G)$. Then $\mathbf{CJ}_{s,P}^{\infty} \circ \iota$ and $\mathbf{CJ}_P^{\infty} \circ \iota$ send a short exact sequence to a strict short exact sequence.
\end{theorem}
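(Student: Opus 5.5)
Our approach is to reduce everything to the Harish-Chandra module level, where exactness is classical, and then transport the result back through the Casselman--Wallach globalization. Fix a short exact sequence $0\to\pi_1\to\pi_2\to\pi_3\to 0$ in $\mathcal{CW}(G)$. Such sequences are automatically strict, with closed image. Let $V_i$ denote the $K$-finite vectors, so that $0\to V_1\to V_2\to V_3\to 0$ is exact as $(\mathfrak g,K)$-modules.

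The algebraic input is the following. Since $V_2$ is finitely generated over $U(\mathfrak n)$ (Casselman--Osborne), the Artin--Rees lemma applies. It gives $\mathfrak n^k V_2\cap V_1\supseteq \mathfrak n^{k-c}V_1$ for some constant $c$. Consequently the $\mathfrak n$-adic completions $\widehat V=\varprojlim V/\mathfrak n^kV$ form an exact sequence $0\to\widehat V_1\to \widehat V_2\to\widehat V_3\to 0$.

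\textbf{Step 1: identify the smooth quotients.} We would identify $\pi/\overline{\mathfrak n^k\pi}$ with $V/\mathfrak n^kV$. This is a finite-dimensional object in each generalized $\mathfrak a$-weight, and is in fact finite-dimensional for fixed $k$ once $V$ is finitely generated over $U(\mathfrak n)$. To prove it, first note that $\mathfrak n^k\pi$ is the image of $\mathfrak n^k\otimes\pi$. By automatic continuity for Casselman--Wallach representations, $\overline{\mathfrak n^k\pi}\cap V=\mathfrak n^kV$, using that $V/\mathfrak n^kV$ is finite-dimensional and Hausdorff. Next, $V$ is dense in $\pi$. Together these give $\pi/\overline{\mathfrak n^k\pi}\cong V/\mathfrak n^kV$. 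Hence $\mathbf{CJ}^\infty_P(\pi)\cong\widehat V$ as a Fréchet space, namely a countable inverse limit of finite-dimensional spaces. Exactness of the $\mathbf{CJ}^\infty_P\circ\iota$ sequence then follows from the algebraic exactness. Strictness is automatic by the open mapping theorem, since all terms are Fréchet and the maps are continuous. Surjectivity onto $\mathbf{CJ}_P^\infty(\pi_3)$ is also available directly from Theorem~\ref{thm stable property}(3).

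\textbf{Step 2: deduce the submodule statement.} Apply Theorem~\ref{thm stable property}(2) to each $\pi_i$, giving the strict short exact sequences $0\to\mathbf{CJ}_s\to\pi\to\mathbf{CJ}\to0$. Arrange these into a $3\times3$ diagram whose middle row is the given sequence and whose bottom row is exact by Step~1. The snake lemma, applied in vector spaces, then shows that the top row $0\to\mathbf{CJ}^\infty_{s,P}(\pi_1)\to\mathbf{CJ}^\infty_{s,P}(\pi_2)\to\mathbf{CJ}^\infty_{s,P}(\pi_3)\to0$ is exact. Strictness again follows from the open mapping theorem, because each $\mathbf{CJ}_s^\infty$ is a closed subspace of a Fréchet space.

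\textbf{Main obstacle.} The key step is the identification $\overline{\mathfrak n^k\pi}\cap V=\mathfrak n^kV$, equivalently the Hausdorffness and finite-dimensionality of $\pi/\mathfrak n^k\pi$ without closure. Our first attempt would be to use that $\pi\cong \mathcal S(G)\otimes_{U(\mathfrak g)}$-type descriptions, i.e.\ Casselman--Wallach globalization commutes with quotients. Concretely, $\pi/\mathfrak n^k\pi$ is the globalization-type quotient of $V/\mathfrak n^kV$, and a finitely generated $U(\mathfrak g)$-submodule, namely $\mathfrak n^k\pi$ realized as the image of $\mathfrak n^k\otimes\pi\to\pi$, has closed range by Casselman's comparison theorem applied to $\mathfrak n$-homology. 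If that route fails, we would instead use Casselman's theorem that $H_0(\mathfrak n^k\text{-filtration})$ of $\pi$ agrees with that of $V$; this gives finite-dimensionality and hence closedness of $\mathfrak n^k\pi$ directly.
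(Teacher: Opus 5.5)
\textbf{There is a genuine gap in Step 1: it only works when $P$ is a minimal parabolic.} Casselman--Osborne gives finite generation of $V$ over $U(\mathfrak n_B)$ for a minimal parabolic $B$. It does not give finite generation over $U(\mathfrak n)$ with $\mathfrak n=\mathrm{Lie}(N_P)$ for an arbitrary $P$.

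For general $P$, $V/\mathfrak n^kV$ is an admissible finite-length $(\mathfrak m,K_P)$-module, and it is usually infinite-dimensional. For example, if $\pi=\mathrm{Ind}_P^{G,\infty}\sigma$ with $\sigma$ infinite-dimensional, evaluation at $e$ gives an infinite-dimensional quotient of $\pi/\overline{\mathfrak n\pi}$. Consequently your deduction fails. Density of $V$ together with $\overline{\mathfrak n^k\pi}\cap V=\mathfrak n^kV$ only shows that $V/\mathfrak n^kV$ is a dense subspace of $\pi/\overline{\mathfrak n^k\pi}$, not that the two are isomorphic. In fact $\pi/\overline{\mathfrak n^k\pi}$ is the Casselman--Wallach globalization $(V/\mathfrak n^kV)^\infty$ as an $M_P$-representation. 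This is Theorem~\ref{prop cw jacquet functor}, which the paper proves by extending the Casselman--Wallach equivalence to $\mathcal{CW}_k(P)$ and comparing the two Frobenius reciprocities via uniqueness of adjoints.

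So $\mathbf{CJ}^\infty_P(\pi)$ is neither $\widehat V$ nor an inverse limit of finite-dimensional spaces, and exactness cannot be read off from the exactness of the algebraic completions. Two smaller errors in the algebraic input:
\begin{itemize}
\item Your Artin--Rees inclusion is reversed. What holds, and what you need, is $\mathfrak n^kV_2\cap V_1\subseteq\mathfrak n^{k-c}V_1$.
\item For general $P$ this inclusion comes from McConnell's Artin--Rees property for the ideal $\mathfrak n\,U(\mathfrak n_B)$ of $U(\mathfrak n_B)$, not from finite generation over $U(\mathfrak n)$.
\end{itemize}

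\textbf{The missing idea is a topological Artin--Rees lemma for the globalized quotients,} namely $\overline{\mathfrak n^n\pi_2}\cap\pi_1\subseteq\overline{\mathfrak n^{n-c}\pi_1}$ (Lemma~\ref{lem AR lemma}). The paper obtains it as follows.
\begin{itemize}
\item The kernel $(\pi_1\cap\overline{\mathfrak n^n\pi_2})/\overline{\mathfrak n^n\pi_1}$ of $\pi_1/\overline{\mathfrak n^n\pi_1}\to\pi_2/\overline{\mathfrak n^n\pi_2}$ is, by the globalization theorem and exactness of globalization, the globalization of the algebraic kernel $(V_1\cap\mathfrak n^nV_2)/\mathfrak n^nV_1$. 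Hence the algebraic kernel is dense in it.
\item The algebraic Artin--Rees lemma sends this dense subspace to zero in $\pi_1/\overline{\mathfrak n^{n-c}\pi_1}$, and continuity finishes the argument.
\end{itemize}
Cofinality then identifies $\varprojlim\pi_1/\overline{\mathfrak n^k\pi_1}$ with $\varprojlim\pi_1/(\pi_1\cap\overline{\mathfrak n^k\pi_2})$. The latter is the kernel of $\mathbf{CJ}^\infty_P(\pi_2)\to\mathbf{CJ}^\infty_P(\pi_3)$; levelwise this uses $f(\overline{\mathfrak n^k\pi_2})=\overline{\mathfrak n^k\pi_3}$ (Corollary~\ref{cor surjective closure}). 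Surjectivity comes from Proposition~\ref{prop surj cj quotient functor}.

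\textbf{What survives.} The ``main obstacle'' you isolate, closedness of $\mathfrak n^k\pi$ itself, is never needed, since the functors are defined with closures; the paper notes this closedness is not known in general. Your Step~2 (nine lemma plus open mapping) is correct and is what the paper does. Your Step~1 does go through when $P$ is minimal, where $\pi/\overline{\mathfrak n^k\pi}=V/\mathfrak n^kV$ is finite-dimensional, provided the continuity of $V\to V/\mathfrak n^kV$ is justified, for instance via Frobenius reciprocity into $\mathrm{Ind}_P^{G,\infty}$.
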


\begin{remark}
For $p$-adic groups, the Jacquet functor can be realized as a Jacquet integral (see \cite[Lemma 2.33]{BZ76}), and this is important for establishing its exactness. It is well-known that the Jacquet functor for real groups, in the sense of Definition \ref{def jacquet functor}, is not exact. 
\end{remark}

The exactness of the Casselman-Jacquet functor $\mathbf{CJ}^{\infty}$ cannot be deduced immediately from the Harish-Chandra setting of \cite{Ca78}, since the relevant functors may map representations outside the class of admissible ones. To circumvent this difficulty and prove exactness, we rely on a suitable variant of the Artin-Rees lemma (see Lemma \ref{lem AR lemma}).

In contrast, in terms of computational aspects, it is more possible to transport from some known results of Casselman--Jacquet quotients of Harish-Chandra modules, and so this adds value to the study of those smooth functors. A transfer result in this direction is the following, and we direct the reader to the body of the article for unexplained terminologies: 

\begin{theorem} (=Theorem \ref{prop cw jacquet functor}) \label{thm transfer thm}
Let $\pi$ be a Casselman--Wallach representation of $G$, and let $\pi_K$ be the associated Harish-Chandra module of $\pi$ i.e. the subspace of $K$-finite vectors in $\pi$, where $K$ is a maximal compact subgroup of $G$. Then the partial Casselman--Jacquet functor (in the sense of Definition \ref{def partial cj cw}) of $\pi$ is the Casselman--Wallach globalization of the corresponding partial Casselman--Jacquet functor  (in the sense of Definition \ref{def partial cj hc}) of $\pi_K$. 
\end{theorem}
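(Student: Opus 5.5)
The strategy is to compare the two candidate objects through the natural map
\[
\iota_k:\ \pi_K/\mathfrak n^k.\pi_K \longrightarrow \pi/\overline{\mathfrak n^k.\pi},
\]
which is induced by the inclusion $\pi_K\subset \pi$. Here I read the partial Casselman--Jacquet functor (Definitions \ref{def partial cj cw} and \ref{def partial cj hc}) as the level-$k$ quotient, viewed as a representation of the Levi factor (or of $P$ with $N_P$ acting unipotently). The proof would then follow the standard characterization of the Casselman--Wallach globalization. One needs three facts.
\begin{enumerate}
\item The source of $\iota_k$ is a Harish-Chandra module.
\item $\iota_k$ is injective with dense image consisting of $K_M$-finite vectors, where $K_M$ is a maximal compact subgroup of the Levi.
\item The target is a Casselman--Wallach representation.
\end{enumerate}
By uniqueness of the Casselman--Wallach globalization, these three facts identify the target with the globalization of the source.

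Step 1 is purely algebraic. Casselman's theorem (finiteness of $\mathfrak n$-homology) gives that $H_0(\mathfrak n,\pi_K)$ is an admissible, finitely generated module for the Levi. I then argue by induction on $k$ using the exact sequences
\[
0\to \mathfrak n^k.\pi_K/\mathfrak n^{k+1}.\pi_K\to \pi_K/\mathfrak n^{k+1}.\pi_K\to \pi_K/\mathfrak n^k.\pi_K\to 0 .
\]
The first term is a quotient of $\mathfrak n^{\otimes k}\otimes H_0(\mathfrak n,\pi_K)$, where $\mathfrak n^{\otimes k}$ is a finite-dimensional representation of the Levi. Hence every term is a Harish-Chandra module for the Levi. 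Density in Step 2 is immediate, since $\pi_K$ is dense in $\pi$.

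The main obstacle is the topological input behind injectivity and Step 3. Injectivity means $\pi_K\cap\overline{\mathfrak n^k.\pi}=\mathfrak n^k.\pi_K$. For $k=1$ this is exactly the Casselman comparison theorem: $\mathfrak n.\pi$ is closed, and
\[
H_0(\mathfrak n,\pi)=\pi/\mathfrak n.\pi
\]
is the Casselman--Wallach globalization of $H_0(\mathfrak n,\pi_K)$. This is due to Hecht--Taylor and Li--Sun, and in the form of \cite{AGS15a,AGS15b}.

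For general $k$ I would try to reduce to the case $k=1$ through the same filtration, now on the smooth side. The map $\mathfrak n^{\otimes k}\otimes\pi\to\mathfrak n^k.\pi$ descends to a surjection
\[
\mathfrak n^{\otimes k}\otimes H_0(\mathfrak n,\pi)\twoheadrightarrow \overline{\mathfrak n^k.\pi}/\overline{\mathfrak n^{k+1}.\pi}.
\]
Its source is Casselman--Wallach, so the target, being a Hausdorff quotient, is again Casselman--Wallach, and its Harish-Chandra module is a quotient of that of the source. Extensions of Casselman--Wallach representations of the Levi are Casselman--Wallach, so Step 3 follows by induction on $k$.

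To pin down injectivity I would then compare the two filtrations level by level via the five lemma on $K_M$-finite parts. This uses exactness of the Casselman--Wallach globalization functor, which follows from the Casselman--Wallach theorem. If the level-by-level identification of the graded pieces resists, the fallback is to realize $\pi/\mathfrak n^k.\pi$ directly as $\mathfrak n$-coinvariants of $\pi\otimes F_k$ for a suitable finite-dimensional $P$-module $F_k$ built from $U(\mathfrak n)/\mathfrak n^kU(\mathfrak n)$. The comparison theorem for $\mathfrak n$-homology with finite-dimensional coefficients would then give closedness of $\mathfrak n^k.\pi$ and the identification in one stroke. Alternatively, one can invoke the Artin--Rees type Lemma \ref{lem AR lemma} to control $\pi_K\cap\overline{\mathfrak n^k.\pi}$.
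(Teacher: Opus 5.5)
\textbf{Verdict: genuine gap.} Your reduction to the three facts is sound, and Steps 1 and 3 are essentially fine. In Step 3, the map $\mathfrak n^{\otimes k}\otimes H_0(\mathfrak n,\pi)\to\overline{\mathfrak n^k.\pi}/\overline{\mathfrak n^{k+1}.\pi}$ only obviously has dense image. Surjectivity needs a $K_M$-isotypic argument plus strictness of morphisms between Casselman--Wallach representations; alternatively, quote Lemma \ref{lem jacquet casselman wallach rep}.

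The gap is the injectivity of $\iota_k$ for $k\geq 2$, i.e. $\pi_K\cap\overline{\mathfrak n^k.\pi}=\mathfrak n^k.\pi_K$. This is the real content of the theorem, and none of your three routes delivers it.
\begin{itemize}
\item \emph{The level-by-level five-lemma argument is circular.} In the map of short exact sequences from your filtration, the right vertical map is an isomorphism by induction. The left map $\mathfrak n^k.\pi_K/\mathfrak n^{k+1}.\pi_K\to\bigl(\overline{\mathfrak n^k.\pi}/\overline{\mathfrak n^{k+1}.\pi}\bigr)_{K_M}$ is surjective by density and admissibility. The snake lemma then says the middle map is injective exactly when $\mathfrak n^k.\pi_K\cap\overline{\mathfrak n^{k+1}.\pi}=\mathfrak n^{k+1}.\pi_K$. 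Given the induction hypothesis, this is literally the level-$(k+1)$ statement you are trying to prove. Knowing that the graded pieces are quotients of $\mathfrak n^{\otimes k}\otimes H_0(\mathfrak n,\pi_K)$ and of its globalization does not tell you that the two kernels correspond.
\item \emph{The Artin--Rees alternative is circular within this paper.} The proof of Lemma \ref{lem AR lemma} uses this very theorem (via Remark \ref{rmk explicit globalization}) to know that $\sigma_K/\mathfrak n^n.\sigma_K\to\sigma/\overline{\mathfrak n^n.\sigma}$ is injective.
\item \emph{The fallback via $H_0(\mathfrak n,F_k\otimes\pi)$, with $F_k=U(\mathfrak n)/I^k$, is not covered by the comparison theorems you cite.} $F_k$ is only a $P$-module, not a $G$-module. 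One could try filtering $F_k$ by $I^jF_k$ and chasing the long exact sequence on $K_M$-isotypic components. That needs the degree-one comparison theorem and an argument you have not supplied.
\end{itemize}

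The missing idea, which is what the paper uses, is a continuous $P$-equivariant map $\Psi:\pi\to\sigma^{\infty}$, with $\sigma:=\pi_K/\mathfrak n^k.\pi_K$, extending the algebraic projection.
\begin{enumerate}
\item Frobenius reciprocity (Proposition \ref{prop frobenius reciprocity hc}) turns $\mathrm{Id}_\sigma$ into a $(\mathfrak g,K)$-map $\pi_K\to\mathrm{Ind}_{\mathfrak p,K_P}^{\mathfrak g,K}\sigma$.
\item By Proposition \ref{prop gK induction}, the target is $(\mathrm{Ind}_P^{G,\infty}\sigma^{\infty})_K$.
\item By the Casselman--Wallach equivalence, this map extends to a continuous $G$-map $\pi\to\mathrm{Ind}_P^{G,\infty}\sigma^{\infty}$.
\item Evaluation at the identity gives $\Psi$.
\end{enumerate}
Since each $X\in\mathfrak n$ acts continuously on $\sigma^{\infty}$ and $\mathfrak n^k$ acts by zero there (Lemma \ref{lem converge nilpotent action}), $\Psi$ kills $\overline{\mathfrak n^k.\pi}$. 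Meanwhile $\Psi|_{\pi_K}$ is the projection onto $\sigma$. Hence $\pi_K\cap\overline{\mathfrak n^k.\pi}\subset\mathfrak n^k.\pi_K$ for every $k$ at once, with no $\mathfrak n$-homology comparison theorem needed.

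The paper packages this as a Yoneda argument. Both $\mathbf{CJ}^{\infty}_{P,k}(\pi)$ and $(\mathbf{CJ}^{HC}_{P,k}(\pi_K))^{\infty}$ represent $\sigma\mapsto\mathrm{Hom}_G(\pi,\mathrm{Ind}_P^{G,\infty}\sigma)$ on the abelian category $\mathcal{CW}_k(P)$, by Propositions \ref{prop frobenius smooth}, \ref{prop gK induction}, \ref{prop frobenius reciprocity hc} and Theorem \ref{thm equivalence of categories}. So they are naturally isomorphic, and the isomorphism is induced by your $\iota_k$ (Remark \ref{rmk explicit globalization}). With this input, your Steps 1--3 close up.
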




\subsection{Application to real BZ filtrations}
The $p$-adic Bernstein-Zelevinsky (BZ) theory is developed in \cite{BZ76, BZ77} with application to the classification of irreducible smooth representations. One sees e.g. \cite{Ch23+} for some recent developments on the subject. Roughly speaking, the BZ theory is to study a smooth $\mathrm{GL}_n$-representation restricted to its mirabolic subgroup. 

To incorporate the smooth Casselman-Jacquet theory in our study of a BZ filtration, we need more notation. Let $\mathbb K\in \{\mathbb R, \mathbb C\}$ and let:
\[  G_n=\mathrm{GL}_n(\mathbb K), \quad V=V_{n-1}=\left\{ \begin{pmatrix} I_{n-1} & v \\ & 1 \end{pmatrix} : v \in \mathbb K^{n-1} \right\}, \quad \mathfrak{v}_{n-1}:=\mathrm{Lie}(V_{n-1}).
\]
Let $M_n$ be the mirabolic subgroup of $G_n$. Let $\mathfrak v_{n-1}^*$ be the linear dual of $\mathfrak v_{n-1}$. According to Theorem \ref{thm stable property}(2), 
one has a short exact sequence: for $\pi$ in $\mathrm{Rep}^{\infty, F}(M_n)$,
\[   0 \rightarrow \mathbf{CJ}^{\infty}_{s,V}(\pi) \rightarrow \pi \rightarrow \mathbf{CJ}_V^{\infty}(\pi) \rightarrow 0.
\]

A key issue in the BZ filtration is to provide an alternate description of $\mathbf{CJ}_{s,V}^{\infty}(\pi)$, which is Proposition \ref{prop bz description} below. The geometric origin of the description is based on the two $M_n$-orbits in $\mathfrak v_{n-1}^*$:
\[   \mathfrak v_{n-1}^*\setminus \left\{ 0 \right\} \hookrightarrow \mathfrak v_{n-1}^* \hookleftarrow \left\{ 0 \right\} .
\]
One may view that $\mathbf{CJ}^{\infty}_{s,V}(\pi)$ comes from the open orbit  $\mathfrak v_{n-1}^*\setminus \left\{ 0 \right\}$ while $\mathbf{CJ}_V^{\infty}(\pi)$ comes from the closed orbit $\left\{ 0 \right\}$. Proposition \ref{prop bz description}(1) below shall make the meaning of the sentence clearer.

We now need more ingredients to complete our story. Let $\phi$ be a twisted character on $V_{n-1}$ defined as in Section \ref{label character twist}. Define $\Phi^-:\mathrm{Rep}^{\infty, F}(M_n)\rightarrow \mathrm{Rep}^{\infty, F}(M_{n-1})$ given by
\[   \Phi^-(\pi) :=\delta^{-1/2}\cdot \pi/\mathfrak{v}_{n-1}.(\pi \otimes \phi^{-1}) ,
\]
where $\delta$ is a normalized character of $M_{n-1}$, and $\phi$ is a non-degenerate unitary character of $V_{n-1}$. The well-definedness of $\Phi^-$ is Proposition \ref{prop bz description}(2). It may be remarkable to point out that $\Phi^-$ admits a natural right adjoint functor via the Frobenius reciprocity, while admitting a left adjoint functor is an interesting property in BZ theory. We shall denote the left adjoint functor of $\Phi^-$ by $\Phi^+$, which is in the form of a Schwartz induction (see Section \ref{def_phi+} for details). 

The moderate growth condition allows one to define an action of the Schwartz algebra $\mathcal S(\mathfrak{v}_{n-1})$ on $\pi$ i.e. there exists a continuous map:
\[   a: \mathcal S(\mathfrak{v}_{n-1})\widehat{\otimes} \pi \rightarrow \pi .
\]

    Let $\mathcal S(\mathfrak{v}_{n-1}^*\setminus \left\{ 0 \right\})$ be the subspace of all functions in $\mathcal S(\mathfrak{v}_{n-1}^*)$ with all derivatives vanishing at $0$. Let $\mathcal C_k(\mathfrak v_{n-1})$ be the space spanned by the $k$-th derivatives of functions in $\mathcal S(\mathfrak v_{n-1})$ (see (\ref{eqn k derivative space})). Under the Fourier transform in Section \ref{ss fourier transform}, it takes $\mathcal S(\mathfrak{v}_{n-1}^*\setminus \left\{ 0 \right\})$ to $\bigcap_{k=1}^{\infty}\mathcal C_k(\mathfrak v_{n-1})$. The space $\mathcal S(\mathfrak{v}_{n-1}^*\setminus \left\{0\right\}).\pi$ is defined to be the image of $(\bigcap_{k=1}^{\infty}\mathcal C_k(\mathfrak v_{n-1}))\widehat{\otimes} \pi$ under the map $a$ above. 
    
    The role of the Fourier transform is to help transform the space $\mathcal S(\mathfrak{v}_{n-1}^*\setminus \left\{ 0 \right\}).\pi$ into the mirabolic induction in Section \ref{s descrip bz layer}. This will then allow one to apply the imprimitivity theorem in Section \ref{s imprimitivity thm} to obtain several desired properties, summarized as follows:

\begin{proposition} \label{prop bz description}
Let $\pi$ be in $\mathrm{Rep}^{\infty, F}(M_n)$. Then the following statements hold:
\begin{enumerate}
\item (cf. Corollary \ref{cor closedness and stable prop}) $\mathbf{CJ}_{s,V}^{\infty}(\pi)=\overline{\mathcal S(\mathfrak{v}_{n-1}^*\setminus \left\{ 0 \right\}).\pi}$.
\item (=Lemma \ref{lem kernel schwartz}(2)) $\Phi^-(\pi)$ is Hausdorff.
\item (=Corollary \ref{cor hausdorff space}) There exists a continuous bijective $M_n$-equivariant map  from $\Phi^+\circ \Phi^-(\pi)$ to $\mathcal S(\mathfrak{v}_{n-1}^*\setminus \left\{ 0 \right\}).\pi$.
\end{enumerate}
\end{proposition}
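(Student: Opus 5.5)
We prove the three parts separately. The common tool is the Schwartz action $a:\mathcal S(\mathfrak v_{n-1})\widehat{\otimes}\pi\to\pi$, transported to the dual side by the Fourier transform. Here $\mathfrak v_{n-1}$ is abelian, so $\mathfrak v_{n-1}^k$ acts through the symmetric powers. Moreover $X.(f.v)=(\partial_X f).v$ up to sign, so $a(\mathcal C_k(\mathfrak v_{n-1})\widehat{\otimes}\pi)\subset \overline{\mathfrak v^k.\pi}$. On the Fourier side, $\mathcal C_k$ corresponds to the functions vanishing to order $k$ at $0$, i.e. the ideal $\mathfrak m_0^k\mathcal S(\mathfrak v^*)$.

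\textbf{Part (1).} For $\subseteq$ in the other direction we argue as follows. The inclusion $\overline{\mathcal S(\mathfrak v^*\setminus\{0\}).\pi}\subset \bigcap_k\overline{\mathfrak v^k.\pi}$ is immediate from the remark above, since the space in question lies in every $\mathcal C_k$. For the reverse inclusion, set $W=\overline{\mathcal S(\mathfrak v^*\setminus\{0\}).\pi}$. This is an $M_n$-stable closed subspace, because $\mathcal S(\mathfrak v^*\setminus\{0\})$ is stable under the coadjoint action. Consider $\pi/W$, on which $\mathcal S(\mathfrak v^*)$ acts through the quotient $\mathcal S(\mathfrak v^*)/\mathcal S(\mathfrak v^*\setminus\{0\})\cong\mathbb C[[\xi]]$ (Borel's lemma).

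Now take $x\in\mathbf{CJ}^\infty_{s,V}(\pi)$. By Theorem \ref{thm stable property}(1), $x\in\overline{\mathfrak v.\mathbf{CJ}_s}$, and iterating gives $x\in\overline{\mathfrak v^k.\mathbf{CJ}_s}$ for every $k$. Pick an approximate identity $\varphi_j\in\mathcal S(\mathfrak v)$ with $\varphi_j.x\to x$. Write $\widehat{\varphi_j}=T_{k}(\widehat{\varphi_j})+r_{j,k}$, where $T_k$ is the Taylor polynomial at $0$ cut off by a bump $\chi$, and $r_{j,k}$ vanishes to order $k$. The Taylor part acts on $x$ through differential operators in $\mathfrak v$ of order $<k$, applied after the Schwartz function $\check\chi$. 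I then want to show that the Taylor part can be absorbed modulo $W$ using $x\in\overline{\mathfrak v^k.\pi}$. The cleaner route is to show $\mathbf{CJ}_s(\pi/W)=\mathbf{CJ}_s(\pi)/W$-image and that $\mathbf{CJ}_s(\pi/W)=0$. On $\pi/W$, the Schwartz action factors through formal power series, and the seminorm picture (Corollary \ref{cor converging seq 0}) should show that an element of $\bigcap_k\overline{\mathfrak v^k(\pi/W)}$ is killed by every element of $\mathcal S(\mathfrak v)$ with $\widehat f(0)=0$, and hence is fixed by the approximate identity up to $\widehat f(0)$. Combining this with $x\in \overline{\mathfrak v.x'}$-type stability forces the image to be $0$. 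This reverse inclusion is the main obstacle. I expect to need the stability statement Theorem \ref{thm stable property}(1) together with the Dixmier--Malliavin decomposition $x=\sum f_i.y_i$ to write elements as Schwartz-action images before splitting the Fourier transforms.

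\textbf{Part (2).} This amounts to showing that $\mathfrak v.(\pi\otimes\phi^{-1})$ is closed. Twisting by $\phi^{-1}$ shifts the Fourier picture by the point $\psi\in\mathfrak v^*\setminus\{0\}$. Choose $f\in\mathcal S(\mathfrak v)$ with $\widehat f$ equal to $1$ near $\psi$. Then $1-\widehat f$ lies in the closed ideal generated by the coordinate functions $\xi_i-\psi_i$, and a Hadamard-lemma decomposition with Schwartz (smooth) coefficients shows that $\mathfrak v.\pi$ equals the image of functions vanishing at $\psi$. By the $\mathcal S$-version of the decomposition this is the kernel of the continuous functional $v\mapsto$ "evaluation at $\psi$", which is defined via $f.v$ with $f$ as above, and a kernel is closed.

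\textbf{Part (3).} Identify $\mathcal S(\mathfrak v^*\setminus\{0\}).\pi$ with a Schwartz induction over the open orbit $\mathfrak v^*\setminus\{0\}\cong M_{n-1}V\backslash M_n$. Localizing at the point $\psi$ gives the fibre $\Phi^-(\pi)$, using part (2). The map is $\Phi^+\Phi^-(\pi)\to\pi$, $F\mapsto\int F$, which is continuous and equivariant by Frobenius/adjunction. Surjectivity onto $\mathcal S(\mathfrak v^*\setminus\{0\}).\pi$ comes from partitions of unity on the orbit. Injectivity is proved by the imprimitivity theorem of Section \ref{s imprimitivity thm}: a kernel element is a $\mathcal S(\mathfrak v^*\setminus\{0\})$-submodule whose fibres at every point vanish, and hence it is zero.
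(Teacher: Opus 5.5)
Your treatment of (2) has a genuine gap, and (3) inherits it. You correctly reduce (2) to the closedness of $\mathfrak v.(\pi\otimes\phi^{-1})$. You may also identify that space with the image of Schwartz functions whose Fourier transform vanishes at $\psi$. But on a general $\pi$ there is no continuous ``evaluation at $\psi$'' having that space as its kernel. The map $v\mapsto f.v$ is continuous, but its kernel is unrelated: already for $\pi=\mathcal S(\mathfrak v^*)$ with the multiplication action, it consists of the functions vanishing on $\mathrm{supp}\,\widehat f$, not of those vanishing at $\psi$.

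In fact no argument using only the $\mathcal S(\mathfrak v)$-module structure can work. Let $V\cong\mathbb R$ act on $\mathcal S(\mathbb R)$ by $(t.u)(x)=e^{2\pi\sqrt{-1}t\theta(x)}u(x)$ with $\theta(x)=\psi+e^{-1/x^2}$; this is smooth and of moderate growth. Choose $\phi$ so that $\mathfrak v$ acts on $\pi\otimes\phi^{-1}$ by multiplication by $2\pi\sqrt{-1}(\theta-\psi)$. Then $\mathfrak v.(\pi\otimes\phi^{-1})=e^{-1/x^2}\mathcal S(\mathbb R)$, which is dense in, but strictly smaller than, the closed space of Schwartz functions flat at $0$; for example it misses $e^{-1/(2x^2)-x^2}$.

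What you are missing is the $\mathrm{GL}_{n-1}(\mathbb K)$-action, which is transitive on $\mathfrak v_{n-1}^*\setminus\{0\}$. It is what lets one realize $\mathcal S(\mathfrak v^*\setminus\{0\})\widehat{\otimes}\pi$ as the Schwartz induction $\Phi^+(\pi|_{M_{n-1}})$ (Section \ref{s descrip bz layer}), i.e.\ as genuine functions on the orbit, where evaluation at the base point makes sense. The paper then proceeds as follows:
\begin{itemize}
\item It takes $\kappa'$ to be the largest closed subspace with $\Phi^+(\kappa')\subset\ker(a\circ\Theta^{-1})$, so $\kappa'$ is closed by construction.
\item It proves $\kappa'=\mathfrak v.(\pi\otimes\phi^{-1})$ by computing $a\circ\Theta^{-1}$ on $m\mapsto\widetilde f(m)\,m.x$ modulo $\mathfrak v.(\pi\otimes\phi^{-1})$ (via Lemma \ref{lem phi minus map}), together with Lemma \ref{lem action map kernel}. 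This is Lemma \ref{lem kernel schwartz}.
\item Du Cloux's imprimitivity (Corollary \ref{cor alt form imprimitivity}) then identifies the whole kernel with $\Phi^+(\kappa')$, which gives (3).
\end{itemize}
In your (3), the claim that all fibres of the kernel vanish is exactly this computation, so it cannot be taken for granted. Nor can continuity come from the adjunction, since Proposition \ref{prop adjoint bz functor} is itself derived from the canonical embedding.

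In (1) the easy inclusion is fine, but the reverse inclusion $\mathbf{CJ}^{\infty}_{s,V}(\pi)\subset W$ is not proved. You flag it yourself as the main obstacle. Your intermediate claim, that elements of $\bigcap_k\overline{\mathfrak v^k.(\pi/W)}$ are killed by every $f$ with $\widehat f(0)=0$, is unjustified; proving it is no easier than proving those elements are zero. Also, Corollary \ref{cor converging seq 0} concerns $\pi/\mathbf{CJ}^{\infty}_s(\pi)$, not $\pi/W$.

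Your ``cleaner route'' does close, and without using stability. Since $\mathcal S(\mathfrak v^*\setminus\{0\})$ kills $\pi/W$, the argument of Lemma \ref{lem non-dgenerate quotient} gives a continuous surjection $\mathbb C[[\mathfrak v^*]]\widehat{\otimes}(\pi/W)\to\pi/W$. Running the proof of Proposition \ref{prop convergence of nilpotent action} with $\lambda=\pi/W$ shows that the quotient seminorms $\widetilde{p_i\otimes q_j}$, which define the Hausdorff topology of $\pi/W$, vanish on $\overline{\mathfrak v^k.(\pi/W)}$ for $k>i$. Hence $\bigcap_k\overline{\mathfrak v^k.(\pi/W)}=0$, and this space contains the image of $\mathbf{CJ}^{\infty}_{s,V}(\pi)$.

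The paper instead applies its Nakayama-type Proposition \ref{prop zero from stable} to $\mathbf{CJ}^{\infty}_{s,V}(\pi)/W$, which is $\mathfrak v$-stable by Corollary \ref{cor maximal stable quotient}. Either way, your approximate-identity and Taylor-splitting detour is unnecessary.
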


The upshot of Proposition \ref{prop bz description} is that it enables one to obtain the real BZ filtration in Theorem \ref{thm bz filtration full} through repeated applications; see Definition \ref{def bz filtration} for further details.

To understand the successive subquotients of the Bernstein-Zelevinsky filtrations, the problem naturally reduces to two tasks: (i) studying the module $\mathbf{CJ}^{\infty}_{V_{n-k}}\circ (\Phi^-)^{k-1}(\pi)$ , and (ii) describing the closure of these modules in the sense of Proposition \ref{prop bz description}. With regard to these, we further offer two additional remarks.

\begin{remark}
For (i) above, there are much progress for $p$-adic group case (see \cite{Ch23+} and references therein). For real group case, we expect that certain structural features (e.g. simple quotients) of the associated Casselman-Jacquet functors can be computed via the tools developed in \cite{CW26, CP25+}, in addition to Theorem \ref{thm transfer thm}. Moreover, a strengthened result foreshadowed in \cite[Section 12.2]{CW26} is anticipated in the forthcoming work \cite{CW26+}; this development should further facilitate the computation of higher structures of Casselman-Jacquet functors using tools from the theory of $p$-adic groups.
\end{remark}

\begin{remark}
For (ii) , the space $\mathcal S(\mathfrak v_{n-1}^*\setminus \left\{ 0 \right\}).\pi$ is not necessarily complete and so the bijection in Proposition \ref{prop bz description}(3) is not necessarily a homeomorphism. One sees a nuclearity conjecture and discussions in Section \ref{ss continuous map homeomorphism}.
\end{remark}

\subsection{Guiding examples} \label{ss   guiding example}

We finally provide examples to illustrate the above results and guide towards proofs of the results. 

\begin{example} (Finite-dimensional representations) \label{ex finite dim}
Let $G$ be a reductive Lie group, let $P$ be any parabolic subgroup of $G$ and let $\mathfrak n=\mathrm{Lie}(N_P)$. Let $\pi$ be a finite-dimensional representation of $G$. As $\mathfrak n^k$ acts trivially on $\pi$ for sufficiently large $k$,
\[  \mathbf{CJ}^{\infty}_P(\pi)=\pi, \quad \mathbf{CJ}^{\infty}_{s,P}(\pi)=0 .
\]
\end{example}

\begin{example} (Schwartz functions on $\mathbb R^{\times}$) \label{ex schwartz function M2}
Let $M_2$ be the mirabolic subgroup of $\mathrm{GL}_2(\mathbb R)$ i.e. containing all matrices of the form $\begin{pmatrix}  t& a \\ 0 &1 \end{pmatrix}$ with $t \in \mathbb R^{\times}, a\in \mathbb R$. Let $\pi=\mathcal S(\mathbb R^{\times})$ be the space of Schwartz functions from $\mathbb R^{\times}$ to $\mathbb C$, equipped with the standard topology. See the beginning of Section \ref{ss topology on power series} for details.

 We equip $\pi$ with the $M_2$-action given by:
\[  \left( \begin{pmatrix} 1 & a \\ 0 & 1 \end{pmatrix}.f \right)(x)=e^{-2\pi\sqrt{-1}xa} f(x), \quad  \left( \begin{pmatrix} t & \\ & 1     \end{pmatrix}.f\right)(x)=f(tx) .
\]
For $N=\left\{ \begin{pmatrix} 1 & a \\ & 1 \end{pmatrix}: a \in \mathbb R\right\}$ and $\mathfrak n=\mathrm{Lie}(N)$, $\pi$ satisfies the stability property in Theorem \ref{thm stable property}(1). In other words, $\mathbf{CJ}^{\infty}_N(\pi)=0$ and $\mathbf{CJ}^{\infty}_{s,N}(\pi)=\pi$.
\end{example}

\begin{example} (Principal series for $\mathrm{GL}_2$) \label{ex gl2 ps}
Let $G=\mathrm{GL}_2(\mathbb R)$ and let $B$ be the subgroup of upper triangular matrices in $G$. For $s \in \mathbb C$, let $\chi_s: \mathbb R^{\times}\rightarrow \mathbb C^{\times}$ given by $\chi(t)=|t|^s$. We consider the following normalized parabolically induced module: for $s_1, s_2 \in \mathbb C$
\[   \mathrm{Ps}(s_1, s_2) := \mathrm{Ind}_B^{\mathrm{GL}_2(\mathbb R)}(\chi_{s_1} \boxtimes \chi_{s_2}) .
\]
Then $\mathbf{CJ}_B^{\infty}(\mathrm{Ps}(s_1, s_2))$ admits a decreasing Hausdorff filtration 
\[   \ldots \subset  F_2 \subset F_1 \subset F_0 =\mathbf{CJ}_B^{\infty}(\mathrm{Ps}(s_1, s_2)) 
\]
such that the successive subquotients $F_{\ell}/F_{\ell+1}$ are $2$-dimensional $M_2$-representation such that 
\begin{enumerate}
\item the unipotent subgroup of $B$ acts trivially; and
\item the subgroup $\left\{ \begin{pmatrix} t & \\ & 1 \end{pmatrix}: t \in \mathbb R^{\times}\right\}$ acts by the eigenvalues $|t|^{s_1+\frac{1}{2}}t^{\ell}$ and $|t|^{s_2+\frac{1}{2}}t^{\ell}$; and
\item $\mathbf{CJ}_B^{\infty}(\mathrm{Ps}(s_1,s_2)) \cong \varprojlim_k F_0/F_k$.
\end{enumerate}
Moreover, $\mathbf{CJ}_{s, B}^{\infty}(\pi) \cong \mathcal S(\mathbb R^{\times})$, where $\mathcal S(\mathbb R^{\times})$ is described in Example \ref{ex schwartz function M2}. Detailed computations can be found in \cite[Section 5]{Ch23}.  
\end{example}

\begin{example} (Discrete series for $\mathrm{GL}_2$)
We use notations in Example \ref{ex schwartz function M2}. Assume $s_1=\frac{2m-1}{2}$ and $s_2=\frac{-2m+1}{2}$ for some $m \in \mathbb Z_{\geq 1}$. Then $\mathrm{Ps}(s_1, s_2)$ is reducible with two irreducible composition factors. The unique irreducible submodule of $\mathrm{Ps}(s_1, s_2)$ is an essentially discrete series, denoted by $DS_{2m}$. By Theorem \ref{thm intro exact} and Examples \ref{ex finite dim} and \ref{ex gl2 ps}, one may deduce that $\mathbf{CJ}^{\infty}_{s, B}(DS_{2m}) \cong \mathcal{S}(\mathbb R^{\times})$, and $\mathbf{CJ}^{\infty}_B(DS_{2m})$ admits a decreasing Hausdorff filtration $\ldots \subset F_2\subset F_1\subset F_0$ whose successive subquotients $F_{\ell}/F_{\ell+1}$ satisfy that the subgroup $\left\{ \begin{pmatrix} t & \\  & 1 \end{pmatrix}: t \in \mathbb R^{\times}\right\}$ acts by the eigenvalues $|t|^{m+\ell}$ and $\mathrm{sgn}(t)|t|^{m+\ell}$. See \cite[Section 5]{Ch23} for more details.
\end{example}

\subsection{Relations of key results}

\begin{itemize}
\item SmnCJ: Property of seminorms on Casselman--Jacquet quotients (Proposition \ref{prop convergence of nilpotent action})
\item StabCJs: Stability of Casselman--Jacquet submodules (Corollary \ref{cor maximal stable quotient})
\item Surj: Surjectivity from $\pi$ to $\mathbf{CJ}^{\infty}(\pi)$ (Proposition \ref{prop surj cj quotient})
\item EpCJ: Preserving epiness/surjectivity for Casselman-Jacquet submodule and quotient functors (Propositions \ref{prop monic epi property} and \ref{prop surj cj quotient functor})
\item TrnCJ: Transitivity of Casselman-Jacquet quotient functors (Proposition \ref{prop trans CJ})
\item  CJSw : Description of Casselman-Jacquet submodules in terms of Schwartz algebra actions (Corollary \ref{cor closedness and stable prop})
\item AbCatP: The abelian categorical property for the Casselman-Wallach category of a parabolic subgroup (Corollary \ref{cor cw abelian category})
\item GbCJ: Globalization of partial Casselman-Jacquet quotient functors (Theorem \ref{prop cw jacquet functor})
\item ARL: Artin-Rees lemma for the Casselman-Wallach category (Lemma \ref{lem AR lemma})
\item ExtCJ: Exactness of Casselman-Jacquet functors in the Casselman-Wallach category (Theorem \ref{thm exact cj functors cw})
\item HsfDer: Hausdorffness of the functor $\Phi^-$ (Lemma \ref{lem kernel schwartz}(2))
\item RlBZ: Real Bernstein-Zelevinsky filtrations (Theorem \ref{thm bz filtration full})
\end{itemize}

\[ \xymatrix{    & \mbox{SmnCJ} \ar[rd] \ar[ld] &      & &   \mbox{AbCatP} \ar[d] \\
 \mbox{StabCJs} \ar[rd] \ar[drr] &   &  \mbox{Surj}  \ar[d] \ar[rd] \ar[ldd] & & \mbox{GbCJ} \ar[d] \ar@{..>}[ddlll] \\
   & \mbox{CJSw} \ar[d] & \mbox{EpCJ} \ar[dr] &  \mbox{TranCJ}  &  \mbox{ARL} \ar[ld] \\
  \mbox{HsfDer} \ar[r]      &       \mbox{RlBZ}  &  &   \ar@{..>}[ll]   \mbox{ExtCJ}     &                \\
}
\]

The solid arrows above mean logical implications in our development, and the two dotted arrows mean tools for computational aspects of BZ filtrations. More discussion of the proof strategy or ideas for the proof of each individual result can be found in the corresponding sections of the results.

\subsection{Organization of the article}
Part \ref{p cj moderate growth} is devoted to the properties of Casselman-Jacquet functors. Section \ref{s prelim} sets up basic notations, and particularly reviews the quotient topology and Fourier transform. Section \ref{s schwartz alg actions} studies the Schwartz algebra action from a unipotent subgroup. Section \ref{s seminorms cj modules} discusses a seminorm on Casselman-Jacquet modules, which is used to show a stability result in Section \ref{s n stable}, and a surjectivity result in Section \ref{s exact of cj}. Then we deduce some functorial properties in Section \ref{s epi cj sub functor}, and a transitivity result in Section \ref{s transitive cj}, and a Schwartz algebra description in Section \ref{s description intersect cj}.  

Part \ref{part cj casselman wallach rep} specializes the functors to the category of Casselman-Wallach representations. In Sections \ref{s prelim gK mod} and \ref{s rep para}, we define some categories for representations of parabolic subgroups and study their globalization.  Section \ref{adj para and jacq} studies some adjointness property of Jacquet functors, and then to establish a globalization result in Section \ref{s globalization result}. Section \ref{s artin rees lemma}, we prove an Artin-Rees lemma and an exactness result. 

Part \ref{part bz filtration} studies a full version of real Bernstein-Zelevinsky filtrations. Section \ref{s mirabolic subgp} defines the mirabolic induction and Section \ref{s imprimitivity thm} shows the imprimitivity theorem. Sections \ref{s descrip bz layer}, \ref{s closed bz layer} and \ref{s real bz} establish real Bernstein-Zelevinsky filtrations and related properties. 

\subsection{Acknowledgements}
The first-named author would like to thank Dragan Mili\v{c}i\'c and Gordan Savin for fruitful discussions during his visit at the University of Utah in March 2026. He would also like to thank Dmitry Gourevitch, Henryk Hecht, Peter Trapa, Daniel Wong and Chengbo Zhu for helpful conversations or discussions during different stages of this project. The first-named author and the second-named author are grateful to Binyong Sun for inspiring discussions. The second-named author thanks Xuhua He for his continuous support and encouragement.

 This project is supported in part by the Research Grants Council of the Hong Kong Special Administrative Region, China (Project No: 17305223, 17308324) and the National Natural Science Foundation of China (Project No. 12322120). The second-named author is partially supported by the New Cornerstone Science Foundation through the New Cornerstone Investigator Program awarded to Professor Xuhua He. He is also supported by the National Natural Science Foundation of China (Grant No. 123B1004).

\part{Basic properties of smooth Casselman-Jacquet functors} \label{p cj moderate growth}

\section{Preliminaries} \label{s prelim}

\subsection{General notions}

Let $G$ be a Lie group. A representation $\pi$ of $G$ is said to be {\it continuous} if $\pi$ is a topological vector space over $\mathbb C$ such that the map
\[   G \times \pi \rightarrow \pi
\]
is continuous.

Let $\pi$ be a continuous representation of $G$. A vector $v \in \pi$ is said to be smooth if the map $G \to \pi$ given by $g \mapsto \pi(g)v=g.v$ is a smooth map. We say that $\pi$ is {\it smooth} if each vector in $\pi$ is smooth. We say that $\pi$ is {\it Fr\'echet} if the underlying topology is Fr\'echet, i.e. the topology is defined by a metric and the space is Hausdorff, locally convex and complete under the topology.

For a reductive Lie group $G$ (see \cite[Chapter 0, Sec. 3.1]{BW00}), we fix an algebraic scale $||\cdot ||_G$ on $G$ as in \cite[2.1.2]{BK14} (also see \cite[Section 2.A.2]{Wa88}). A smooth representation $\pi$ of $G$ is of {\it moderate growth} if for every continuous seminorm $p_1$ of $\pi$, there exists an integer $r \in \mathbb Z_{\geq 0}$ and a continuous seminorm $p_2$ of $\pi$ such that, for all $v \in \pi$ and all $g \in G$,
\[   p_1(\pi(g) v) \leq  ||g||^r_G \cdot p_2(v) .
\]

Let $\mathrm{Rep}^{\infty, F}(G)$ be the category of smooth Fr\'echet representations of $G$ of moderate growth,  whose morphisms are continuous $G$-equivariant linear maps. A representation $\pi$ in $\mathrm{Rep}^{\infty, F}(G)$ is said to be (topologically) irreducible if there is no proper non-zero closed $G$-invariant subspace of $\pi$. We remark that the category $\mathrm{Rep}^{\infty, F}(G)$ is not abelian, but it is quasi-abelian, see, for example, \cite[Section 2]{Ka93} for more discussions. Nevertheless, for $\pi$ in $\mathrm{Rep}^{\infty, F}(G)$, any closed $G$-invariant subspace of $\pi$ is still in $\mathrm{Rep}^{\infty, F}(G)$ and any Hausdorff $G$-invariant quotient of $\pi$ is also in $\mathrm{Rep}^{\infty, F}(G)$, see \cite[Lemma 11.5.2]{Wa92} or \cite[Lemma 2.9]{BK14}.

For a parabolic subgroup $P$ of $G$, write $N_P$ to be the unipotent radical of $P$ and $M_P$ to be the Levi subgroup of $P$.

\subsection{Quotient topology} \label{ss quotient topology}

Let $\pi$ be a Fr\'echet space. Let $\left\{ p_i \right\}_{i \in \mathbb Z_{\geq 1}}$ be a countable family of seminorms defining the topology on a vector space $\pi$. One has a metric on $\pi$ defining the topology of $\pi$ given by:
\[ d(v,v') = \sum_{i=1}^{\infty}\frac{1}{2^i}\frac{p_i(v-v')}{1+p_i(v-v')}.
\]
The Hausdorffness for $\pi$ is equivalent to:
\[   p_i(v)= 0 \mbox{ for all $i \in \mathbb Z_{\geq 1}$} \quad \Longleftrightarrow \quad v=0 .
\]
By summing or maximizing the seminorms if necessary, we may always assume the seminorms are increasing, i.e. $p_1 \leq p_2 \leq p_3 \leq \ldots$.

Now let $\kappa$ be a closed subspace of $\pi$, equipped with the subspace topology. Define a metric on $\pi/\kappa$ by 
\[ \widetilde{d}([v],[v']):=\mathrm{inf}\left\{ d(\widetilde{v}, \widetilde{v}'): \widetilde{v}\in [v], \widetilde{v}' \in [v'] \right\}.\] 
It is a classical result that the topology determined by the metric $\widetilde{d}$ agrees with the quotient topology.

Alternatively, one can define a family of seminorms as: 
\[\widetilde{p}_n([v]) :=\mathrm{inf}\left\{ p_n(\widetilde{v}) : \widetilde{v} \in [v]\right\}. \]
This family $\left\{ \widetilde{p}_n \right\}$ of seminorms defines the topology on $\pi/\kappa$. This defines another metric
\[  \overline{d}([v],[v']): = \sum_{i=1}^{\infty} \frac{1}{2^i} \frac{\widetilde{p}_i([v]-[v'])}{1+\widetilde{p}_i([v]-[v'])} .
\]
One observes that, for any $[v], [v']$ in $\pi/\kappa$, $\overline{d}([v], [v'])\leq \widetilde{d}([v],[v'])$. Hence, the topology defined by $\overline{d}$ is a priori weaker than the topology defined by $\widetilde{d}$. By the open mapping theorem, the two topologies indeed agree.

\subsection{Projective tensor products} \label{ss projective norm}

Let $\pi_1$ and $\pi_2$ be two Fr\'echet spaces. Let $\left\{ p_i\right\}_{i\in \mathbb Z_{\geq 1}}$ and $\left\{ q_i \right\}_{i \in \mathbb Z_{\geq 1}}$ be families of seminorms defining the topology of $\pi_1$ and $\pi_2$ respectively. Define the projective seminorms $\left\{ p_i\otimes q_j\right\}_{i,j\in \mathbb Z_{\geq 1}}$ as: for any $v \in \pi_1\otimes \pi_2$ as (\cite[Chapter 43]{Tr06}):
\[  (p_i\otimes q_j)(v)= \mathrm{inf} \left\{ r>0: v \in r\omega \right\}  ,
\]
where $\omega$ is the balanced convex hull of the set $\left\{ v_1 \otimes v_2 \in \pi_1\otimes \pi_2 : p_i(v_1)\leq 1, q_j(v_2)\leq 1 \right\}$. According to \cite[Proposition 43.1]{Tr06}, for any $v \in \pi_1\otimes \pi_2$,
\[   (p_i \otimes q_j)(v) =\inf \left\{ \sum_k p_i(v_1^k)\cdot q_j(v_2^k)  \right\}  ,
\]
where $v_1^k$ and $v_2^k$ run for all the finite sums such that $v=\sum_k v_1^k \otimes v_2^k$.

The family $\left\{ p_i \otimes q_j \right\}_{i,j}$ of seminorms defines a metrizable locally convex topology on the (algebraic) tensor product $\pi_1\otimes \pi_2$. The projective tensor product $\pi_1\widehat{\otimes} \pi_2$ is the completion of $\pi_1\otimes \pi_2$ under the metric.

\subsection{Fourier transform on $\mathcal S(\mathbb R^n, \pi)$} \label{ss fourier transform}

Let $\pi$ be a Fr\'echet space (over $\mathbb C$) with the associated seminorms $q_j$, $j\in \mathbb{Z}_{\geq 1}$. We shall write the variable vector $x$ in $\mathbb R^n$ as $(x_1, \ldots, x_n)$ and the corresponding partial derivatives by $\partial_1, \ldots, \partial_n$. Let $\mathcal S(\mathbb R^n, \pi)$ be the space of $\pi$-valued Schwartz functions on $\mathbb R^n$, that is, the space of smooth function $f: \mathbb R^n\rightarrow \pi$ such that for any $k, k_1,\ldots, k_n \in \mathbb Z_{\geq 0}$, and any $j \in \mathbb Z_{\geq 1}$,
\[ \mathrm{sup}_{(x_1,\ldots, x_n)\in \mathbb R^n} \left\{ (1+|x_1|^2+\ldots+|x_n|^2)^k \cdot q_j(\partial_1^{k_1}\ldots \partial_n^{k_n}f(x_1,\ldots, x_n)) \right\} <\infty .
\]
Define a family of seminorms on $\mathcal S(\mathbb R^n, \pi)$: for $k, k_1, \ldots, k_n\in \mathbb Z_{\geq 0}$, and $j \in \mathbb Z_{\geq 1}$
\[  p_{k,k_1,\ldots, k_n}^{j}(f)=\mathrm{sup}_{(x_1,\ldots, x_n)\in \mathbb R^n} \left\{(1+|x_1|^2+\ldots+|x_n|^2)^k \cdot q_j(\partial_1^{k_1}\ldots \partial_n^{k_n}f(x_1,\ldots, x_n)) \right\}.
\]
We shall equip $\mathcal S(\mathbb R^n, \pi)$ with the topology induced by the family of seminorms $ \left\{ p_{k,k_1,\ldots, k_n}^j \right\}$.

Let $(\mathbb R^n)^*$ be the linear dual of $\mathbb R^n$. Let $\langle \cdot,\cdot \rangle$ be the non-degenerate pairing of $ (\mathbb R^n)^* \times \mathbb R^n  \rightarrow \mathbb R$ given by $\langle y,x \rangle:= y(x)$. The pairing $\langle \cdot , \cdot \rangle$ identifies $(\mathbb R^n)^*$ with $\mathbb R^n$, and let $(y_1,\ldots, y_n)$ be the dual vector of $(\mathbb R^n)^*$  i.e. $\langle (y_1,\ldots, y_n), (x_1,\ldots, x_n)\rangle=x_1y_1+\ldots +x_ny_n$. 

The Fourier transform defines a homeomorphism from $\mathcal S(\mathbb R^n, \pi)$ onto $\mathcal S((\mathbb R^n)^*, \pi)$: for $f \in \mathcal S(\mathbb R^n, \pi)$ and for $y \in (\mathbb R^n)^*$,
\[   \widehat{f}(y) = \int_{\mathbb R^n}   e^{2\sqrt{-1}\pi\langle y, x\rangle} f(x)~ dx ,
\]
with the standard Euclidean measure $dx$ and so 
\[  \widehat{\partial_{i} f}(y)=-2\sqrt{-1}\pi y_i \widehat{f}(y) , \mbox{ and }  \quad   \widehat{x_i\cdot f}(y)=  \frac{1}{2\sqrt{-1}\pi}\partial_{y_i}(\widehat{f})(y) .
\]
 Suppose further that there exists a linear group action of a Lie group $G$ on $\mathbb R^n$. This induces a continuous group action $G$ on $\mathcal S(\mathbb R^n, \pi)$ given by $(g.f)(x)=f(g^{-1}.x)$. Under the Fourier transform, for $f \in \mathcal S((\mathbb R^n)^*, \pi)$ and $g \in G$,
\[   \widehat{g.f}(y) = |\mathrm{det}(g)|\cdot (\widehat{f}( g^{-1}.y)) ,
\]
where $\det(g)$ is the determinant of the action of $g$ on $\mathbb R^n$. 


Let $\mathcal S((\mathbb R^n)^*\setminus \left\{ 0 \right\}, \pi)$ be the space of smooth functions in $\mathcal S((\mathbb R^n)^*, \pi)$, all of whose derivatives vanish at $0$, equipped with the subspace topology from $\mathcal S((\mathbb R^n)^*. \pi)$. Then the Fourier transform  restricts to a topological isomorphism between the two spaces $\mathcal S((\mathbb R^n)^*\setminus \left\{ 0 \right\}, \pi)$ and  
\[\bigcap_{i=0}^{\infty} \mathrm{Span}_{\mathbb{C}}\left\{ \partial_n^{m_n}\ldots\partial_1^{m_1}f   : f\in  \mathcal S(\mathbb R^n, \pi),  m_1, \ldots, m_n \in \mathbb Z_{\geq 0}\ \text{s.t.}\ \sum_{j=1}^{n}m_j=i \right\}. \]




\subsection{Power series $\mathbb C[[x_1, \ldots, x_n]]$} \label{ss topology on power series}


Let $\mathbb C[[x_1, \ldots, x_n]]$ be the space of formal power series of $n$-variables at $0$. Define a countable family $\left\{ \widetilde{p}_{k_1, \ldots, k_n} \right\}_{k_1,\ldots, k_n \in \mathbb Z_{\geq 0}}$ of seminorms on $\mathbb C[[x_1, \ldots, x_n]]$ given by:
\[   \widetilde{p}_{k_1, \ldots, k_n}(Q) := | (\partial_1^{k_1}\ldots \partial_n^{k_n}Q)(0) | .
\]
This family determines the topology on $\mathbb{C}[[x_1,\ldots, x_n]]$ and turns it into a Fr\'echet space. 

As topological vector spaces, the classical Borel's lemma asserts that:
\begin{align} \label{eqn borel lemma}
0 \rightarrow \mathcal S(\mathbb R^n\setminus \left\{ 0 \right\}) \stackrel{\iota}{\rightarrow} \mathcal S(\mathbb R^n) \stackrel{\mathrm{pr}}{\rightarrow} \mathbb C[[x_1, \ldots, x_n]] \rightarrow 0 ,
\end{align}
where $\iota$ is the inclusion map, and $\mathrm{pr}(f)$ is given by the Taylor series expansion of $f$ at $0$. One may see \cite[Section 2]{CHM00} for more related discussions.

Note that 
\begin{align} \label{eqn schwartz ideal 0}
x_1\mathcal S(\mathbb R^{n}\setminus \left\{ 0 \right\}) +\ldots +x_n\mathcal S(\mathbb R^{n}\setminus \left\{0 \right\}) =\mathcal S(\mathbb R^n \setminus \left\{ 0 \right\}) .
\end{align}
Indeed, for any $f \in \mathcal S(\mathbb R^n \setminus \left\{ 0 \right\})$, 
\[  f= x_1\left(\frac{x_1f}{x_1^2+\ldots +x_n^2}\right)+\ldots +x_n\left(\frac{x_nf}{x_1^2+\ldots +x_n^2}\right) \in x_1\mathcal S(\mathbb R^n \setminus \left\{ 0 \right\})+\ldots +x_n\mathcal S(\mathbb R^n\setminus \left\{ 0 \right\})  .
\]

Combining (\ref{eqn borel lemma}) and (\ref{eqn schwartz ideal 0}), one also sees that $x_1\mathcal S(\mathbb R^n)+\ldots +x_n\mathcal S(\mathbb R^n)$ is the subspace of functions in $\mathcal S(\mathbb R^n)$ vanishing at $0$.

\section{Schwartz algebra actions} \label{s schwartz alg actions}

In this section, we discuss actions from Schwartz algebras, see e.g. \cite{dCl91} for more discussions. We shall consider $G$ to be a reductive Lie group in the remaining of Part \ref{p cj moderate growth}.

\subsection{Schwartz algebra modules}

Let $N$ be the unipotent radical of a parabolic subgroup $P$ of $G$. Let $\mathfrak n=\mathrm{Lie}(N)$. As $N$ is simply-connected, the exponential map $\mathrm{exp}$ defines a diffeomorphism from $\mathfrak n$ to $N$. We shall usually identify $\mathfrak n \cong \mathbb R^n$ for some $n$, and so we have the space $\mathcal S(\mathfrak n)$ of Schwartz functions from $\mathfrak n$ to $\mathbb C$, discussed in Sections \ref{ss fourier transform} and \ref{ss topology on power series}. 

 With a fixed scale $||\cdot||_G$ on $G$ as in \cite[2.1.2]{BK14}, one can deduce that $||\mathrm{exp}(x)||_G$ ($x\in \mathfrak n$) is bounded by a polynomial function on $\mathfrak n$. Let $\pi$ be in $\mathrm{Rep}^{\infty,F}(N)$. With the moderate growth condition, we obtain a well-defined continuous map \cite[Section 11.8]{Wa92}:
\begin{align} \label{eqn n action}
a: \mathcal S(\mathfrak n) \widehat{\otimes} \pi \rightarrow \pi 
\end{align}
defined by
\begin{align} \label{eqn algebra action}
f\otimes v \mapsto \pi(f)v :=\int_{\mathfrak n} f(x) \cdot  \mathrm{exp}(x).v~ dx ,
\end{align}
where $dx$ is the Euclidean measure on $\mathfrak n$. Indeed $dx$ is the Haar measure of $N$. We shall say that the above map is the $\mathcal S(\mathfrak n)$-action on $\pi$.

It follows from the Dixmier-Malliavin Theorem that:

\begin{proposition} (see \cite[Proposition 2.20]{BK14})  \label{prop nondeg schwartz}
The $\mathcal S(\mathfrak n)$-action on $\pi$ is non-degenerate, i.e. the map in (\ref{eqn n action}) is surjective.
\end{proposition}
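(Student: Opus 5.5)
The plan is to use the Dixmier--Malliavin theorem: every smooth vector of a Fr\'echet representation of a Lie group is a finite sum of vectors of the form $\pi(f)w$, with $f$ a compactly supported smooth function on the group. Everything then reduces to showing that the action of $C_c^\infty(N)$, transported to $\mathfrak n$ via $\exp$, is the restriction of the map $a$ in (\ref{eqn n action}).

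First step: record that $\pi$, viewed as a representation of the simply connected nilpotent Lie group $N$, is a smooth Fr\'echet representation. This holds because $\pi\in\mathrm{Rep}^{\infty,F}(N)$. Dixmier--Malliavin then gives, for each $v\in\pi$, finitely many $\phi_i\in C_c^\infty(N)$ and $w_i\in\pi$ with
\[
v=\sum_i \int_N \phi_i(n)\, n.w_i\, dn .
\]

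Second step: identify these integrals with the map $a$. Since $\exp:\mathfrak n\to N$ is a diffeomorphism carrying the Euclidean measure $dx$ to a Haar measure on $N$ (as noted in the text), the function $f_i:=\phi_i\circ\exp$ lies in $C_c^\infty(\mathfrak n)\subset\mathcal S(\mathfrak n)$. After rescaling the $\phi_i$ to match the normalization of Haar measure, (\ref{eqn algebra action}) gives $\int_N\phi_i(n)\,n.w_i\,dn=\pi(f_i)w_i=a(f_i\otimes w_i)$. Hence
\[
v=a\Bigl(\sum_i f_i\otimes w_i\Bigr),
\]
so $v$ lies in the image of $a$, already on the algebraic tensor product $\mathcal S(\mathfrak n)\otimes\pi$.

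Main obstacle: the only nontrivial input is the Dixmier--Malliavin theorem itself, which I would quote rather than reprove. The remaining care is bookkeeping:
\begin{itemize}
\item the integral defining $a$ converges, and $a$ is continuous on the completed tensor product; this follows from the moderate growth bound $\|\exp(x)\|_G\le$ polynomial, as cited from \cite[Section 11.8]{Wa92};
\item on compactly supported $f$ this integral agrees with the group-algebra action.
\end{itemize}
Moderate growth is not needed for surjectivity itself, only for $a$ to be well defined on all of $\mathcal S(\mathfrak n)\widehat{\otimes}\pi$.
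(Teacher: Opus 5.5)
Your proposal is correct and is the same approach as the paper. The paper proves the statement only by invoking the Dixmier--Malliavin theorem (through Bernstein--Kr\"otz, Proposition 2.20). Your step of pulling $C_c^\infty(N)$ back along $\exp$ into $C_c^\infty(\mathfrak n)\subset\mathcal S(\mathfrak n)$, using that $dx$ corresponds to a Haar measure on $N$, is exactly the bookkeeping that citation leaves implicit. Your remark that moderate growth matters only for $a$ to be defined on all of $\mathcal S(\mathfrak n)\widehat{\otimes}\pi$, and not for surjectivity, is also right.
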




\subsection{Some module structure of $\mathcal S(\mathfrak n)$} \label{ss first intersect prop}

One considers $\mathcal S(\mathfrak n)$ as an $N$-representation, via the following action: for $f \in \mathcal S(\mathfrak n)$, and for $g \in N$ and $X \in \mathfrak n$,
\[  (g.f)(X) =f( \mathrm{log}(g^{-1}\mathrm{exp}(X))) .
\]
This induces an $N$-action on $\mathcal S(\mathfrak n)$, and so induces an $N$-action on $\mathcal S(\mathfrak n)\widehat{\otimes}\pi$ determined by: for $f \in \mathcal S(\mathfrak n)$ and $v \in \pi$, and for $g \in N$,
\[ g.(f\otimes v) = (g.f)\otimes v .\]
Then, with $dx$ to be a Haar measure, the map in (\ref{eqn algebra action}) is also $N$-equivariant. We can extend $\mathcal S(\mathfrak n)\widehat{\otimes} \pi$ to a $P$-representation determined by:  for $f \in \mathcal S(\mathfrak n)$ and $v \in \pi$, and for $g \in M_P$,
\[ g.(f\otimes v)=(g.f)\otimes (g.v) .
\]
The map in (\ref{eqn algebra action}) is also $P$-equivariant. The space $\mathcal S(\mathfrak n)$ is in $\mathrm{Rep}^{\infty, F}(P)$ (see \cite[Remark 2.19 and Proposition 2.20]{BK14}). 

Let $\mathrm{Diff}_k(\mathfrak n)$ be the space of homogeneous differential operators on $\mathfrak n$ of degree $k$. For a fixed $k \in \mathbb Z_{\geq 0}$, let
\begin{align} \label{eqn k derivative space}
\mathcal{C}_k(\mathfrak n)= \mathcal{C}_k:= \left\{ \sum_{j=1}^rD_j f_j :  f_j\in \mathcal S(\mathfrak n), D_j\in \mathrm{Diff}_k(\mathfrak n), r\in \mathbb{Z}_{\geq 1} \right\}  .
\end{align}

The proof of Lemma \ref{lem nk Schwartz desc} in the following is based on an inductive analysis of the multiplication of the corresponding unipotent radical. In the abelian case, the lemma could be simpler by simply applying the Fourier transform.

\begin{lemma}  \label{lem nk Schwartz desc}
We use the above notation. Then the space $\bigcap_{k =0}^{\infty} \mathcal C_k$ is invariant under the $\mathfrak n$-action.
\end{lemma}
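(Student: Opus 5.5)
Our approach is to make the $\mathfrak n$-action on $\mathcal S(\mathfrak n)$ explicit in exponential coordinates and show that it shifts the filtration $\mathcal C_k$ by at most a bounded amount. For $Y\in\mathfrak n$, the action $(g.f)(X)=f(\log(g^{-1}\exp X))$ differentiates to $(Y.f)(X)=-\frac{d}{dt}\big|_{t=0} f(\log(\exp(-tY)\exp X))$. This is a first-order differential operator
\[ Y.f=\sum_i a_i^Y(X)\,\partial_i f .\]
By the Baker--Campbell--Hausdorff formula, which terminates because $\mathfrak n$ is nilpotent, the coefficients $a_i^Y$ are polynomials on $\mathfrak n$.

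To control these coefficients, we fix a basis of $\mathfrak n$ adapted to the grading $\mathfrak n=\bigoplus_{j\ge 1}\mathfrak n_j$ coming from a dominant element of $\mathfrak a$ defining $P$, so that $[\mathfrak n_i,\mathfrak n_j]\subset\mathfrak n_{i+j}$. Assign to each coordinate $x_i$ of $\mathfrak n_j$ the weight $j$. Each BCH term $[X,[X,\dots,Y]]$ is homogeneous for the grading. Hence, if $Y\in\mathfrak n_j$, the coefficient $a_i^Y$ in front of $\partial_i$ (with $x_i$ of weight $w_i$) is a weighted-homogeneous polynomial of weight $w_i-j$, and it involves only variables of weight strictly less than $w_i$. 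In the abelian case this just says $a_i^Y$ is constant, which matches the remark before the lemma.

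The main task is the commutation step. We must show that if $D\in\mathrm{Diff}_k$ and $h\in\mathcal S(\mathfrak n)$, then $a\,\partial_i D h$ lies in $\mathcal C_{k'}$ with $k'\to\infty$ as $k\to\infty$; since the intersection is over all $k$, this suffices. The key identity is $x_l\,\partial^\alpha h=\partial^\alpha(x_l h)-\alpha_l\,\partial^{\alpha-e_l}h$. Thus multiplication by a monomial of degree $d$ sends $\mathcal C_m$ into $\mathcal C_{m-d}$. Indeed, $x^\beta\partial^\alpha h$ is a combination of terms $\partial^{\alpha-\gamma}(x^{\beta-\gamma}h)$, each of order at least $|\alpha|-|\beta|$, and each such term lies in $\mathcal C_{|\alpha|-|\beta|}$ because $\mathcal C_{m}\subset\mathcal C_{m-1}$ via $\partial_l(\cdot)$ absorbing one derivative.

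The coefficients $a_i^Y$ have degree bounded by a constant $d_0$ depending only on $\mathfrak n$, namely its nilpotency length. So for $f\in\bigcap_k\mathcal C_k$, given $m$ we write $f\in\mathcal C_{m+1+d_0}$. Then $\partial_i f\in\mathcal C_{m+2+d_0}$, and multiplication by $a_i^Y$ lands in $\mathcal C_{m+2}\subset\mathcal C_m$. Summing over $i$ gives $Y.f\in\mathcal C_m$ for every $m$, so $Y.f\in\bigcap_k\mathcal C_k$.

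The step we expect to need the most care is checking that the inclusion $\mathcal C_{m}\subset \mathcal C_{m-1}$ holds cleanly, i.e. that every $\partial^\alpha h$ with $|\alpha|=m$ is a degree-$(m-1)$ derivative of a Schwartz function. This is clear: take $\partial^{\alpha-e_l}(\partial_l h)$. It also means the finer weighted analysis above is not really needed beyond the polynomiality and degree bound of the $a_i^Y$.
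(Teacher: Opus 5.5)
Your proof is correct. Its skeleton matches the paper's. By the Baker--Campbell--Hausdorff formula each $Y\in\mathfrak n$ acts on $\mathcal S(\mathfrak n)$ by a first-order operator $\sum_i a_i^Y\partial_i$ with polynomial coefficients. The claim then reduces to $\bigcap_k\mathcal C_k$ being stable under each $\partial_i$ and under multiplication by polynomials.

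You differ from the paper in how you obtain stability under multiplication by polynomials, and in how much structure you use. The paper picks a basis adapted to the lower central series and works layer by layer through (\ref{eqn derivative formula}) and (\ref{eqn general formula}). It handles the terms $\ell_j\,\partial_{X_j}f$ on the Fourier side: multiplication by $\ell_j$ becomes differentiation, which preserves infinite-order vanishing at $0$. It then pulls back through the identification, stated in Section \ref{ss fourier transform}, of $\bigcap_k\mathcal C_k$ with the inverse Fourier image of $\mathcal S(\mathfrak n^*\setminus\{0\})$. That pull-back uses the less obvious half of the identification: a function whose Fourier transform vanishes to infinite order at $0$ lies in every $\mathcal C_k$. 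That half amounts to a division argument like (\ref{eqn schwartz ideal 0}).

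Your identity $x_l\partial^\alpha h=\partial^\alpha(x_lh)-\alpha_l\partial^{\alpha-e_l}h$ gives $x^\beta\mathcal C_m\subset\mathcal C_{m-|\beta|}$ directly. So you need neither the Fourier transform nor the adapted basis and induction. You also get the quantitative inclusion $Y.\mathcal C_m\subset\mathcal C_{m+1-d_0}$, valid for any first-order operator with polynomial coefficients of degree at most $d_0$. The paper's route, in exchange, stays inside the Fourier picture it uses right afterwards in Lemma \ref{lem n-invariant under schwartz}.

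Two small remarks. First, with $(g.f)(X)=f(\log(g^{-1}\exp X))$ the derived action is $\frac{d}{dt}\big|_{t=0}f(\log(\exp(-tY)\exp X))$, without your extra minus sign; this is harmless. Second, the weighted grading you set up is unnecessary here. It is, however, exactly what gives a clean proof of Lemma \ref{lem intersect jacquet}, via a weighted version of your Leibniz estimate.
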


\begin{proof}
Let $n=\mathrm{dim}~\mathfrak n$. Let $\mathfrak{n}_0=\mathfrak{n}$. For $i \geq 1$, inductively define $\mathfrak n_i=[\mathfrak n_{i-1},\mathfrak n]$. The nilpotency of $\mathfrak n$ implies that $\mathfrak n_i=0$ for $i \geq n$. Let $r^*$ be the largest non-negative integer such that $\mathfrak n_{i^*}\neq 0$. Now, we fix a basis $X_1, \ldots, X_{i_1}$ for $\mathfrak n_{r^*}$, and then extend to a basis $X_1, \ldots, X_{i_2}$ ($i_2>i_1$) for $\mathfrak n_{r^*-1}$. Repeat the same process and then obtain a basis $X_1, \ldots, X_n$ for $\mathfrak n$. We write $\partial_{X_i}$ for the corresponding partial differentiation with respect to $X_i$ on functions in $\mathcal S(\mathfrak n)$. For $f \in \mathcal S(\mathfrak n)$, we shall write $f(a_1,\ldots, a_n)$ to represent $f(a_1X_1+\ldots+a_nX_n)$.


Let $f \in \bigcap_{k=0}^{\infty} \mathcal C_k$. It follows from the Baker-Campbell-Hausdorff formula that: for $Y \in \mathfrak n$ and $f \in \mathcal S(\mathfrak n)$, and a variable $t$,
\[ (\mathrm{exp}(tY).f)(X) = f(X- tY-\frac{t}{2}[Y,X]+\frac{t^2}{12}[Y,[Y,X]]+\ldots ) .
\]
For $1\leq i \leq i_1$, taking $Y=X_i$ and taking differentiation, we have:
\begin{align*}
(X_if)(a_1, \ldots, a_n) &= -(\partial_{X_i}f)(a_1,\ldots, a_n)  .
\end{align*}
Hence, $X_if \in \bigcap_{k=0}^{\infty} \mathcal C_k$.

Note that, by the nilpotency on $\mathfrak n$ and our choice of a basis, for $i_1+1 \leq i \leq i_2$,
\[  -\frac{1}{2}[X_i, \sum_{j=1}^na_jX_j] =  \sum_{j=1}^{i_1} \ell_j(a_{i_1+1}, \ldots, a_n) X_j
\]
for a linear function $\ell_j$ on $a_{n_1+1},\ldots, a_n$. Note that other terms in the Baker-Campell-Hausdorff formula are zero. Consequently,
\begin{align} \label{eqn derivative formula}
(X_if)(a_1,\ldots, a_n) &=- (\partial_{X_i}f)(a_1,\ldots, a_n)  + \sum_{j=1}^{i_1} \ell_j(a_{i_1+1},\ldots, a_n) \cdot (\partial_{X_j}f)(a_1,\ldots, a_n) .
\end{align}
 Inductively, we see that the terms, for $j=1, \ldots, i_1$, $\partial_{X_j}f$
is in $\bigcap_{k=0}^{\infty} \mathcal C_k$. Then we also have $\ell_j\cdot \partial_{X_j}f$ is also in $\bigcap_{k=0}^{\infty}\mathcal C_k$. (To see this, one may apply the Fourier transform  on $\ell_j\cdot \partial_{X_j}f$, see Section \ref{ss fourier transform}. Since $\widehat{\ell_j \cdot \partial_{X_j}f}$ is equal to a derivative on $\widehat{\partial_{X_j}f}$, $\widehat{\ell_j\cdot\partial_{X_j}f}$ is in $\mathcal S(\mathfrak n^*\setminus \left\{ 0\right\})$, where $\mathfrak n^*$ is the linear dual of $\mathfrak n$. Then, by taking the inverse Fourier transform, $\ell_j\cdot \partial_{X_j}f$ is also in $\bigcap_{k=0}^{\infty}\mathcal C_k$.) 

Thus, with \eqref{eqn derivative formula},
\begin{align} \label{eqn derivative form double}
\partial_{X_i}f \in \bigcap_{k=0}^{\infty} \mathcal C_k. 
\end{align}

For the general case, one has to replace (\ref{eqn derivative formula}) with a more general formula:  for $i_s+1\leq i \leq i_{s+1}$,
\begin{align} \label{eqn general formula}
(X_if)(a_1, \ldots, a_n) = -(\partial_{X_i}f)(a_1,\ldots, a_n) + \sum_{t=0}^{s-1} \sum_{j=i_t+1}^{i_{t+1}} \ell_j(a_{i_{t+1}+1},\ldots, a_n) \cdot (\partial_{X_j}f)(a_1,\ldots, a_n) ,
\end{align}
where $\ell_j$ (for $i_{t}+1\leq j \leq i_{t+1}$) is a polynomial in variables $a_{i_{t+1}+1},\ldots, a_n$, and is of degree at most $r^*-1$. Thus one now proceeds inductively with similar argument above to show that other derivatives of the form (\ref{eqn derivative form double}) (but now $i$ to be arbitrary in $\left\{ 1, \ldots, n \right\}$) are in $\bigcap_{k=0}^{\infty}\mathcal C_k$.
\end{proof}

\begin{lemma} \label{lem intersect jacquet}
For each $k \in \mathbb Z_{\geq 1}$, there exists a sufficiently large positive integer $p$ such that 
\[  \mathfrak n^p.\mathcal S(\mathfrak n) \subset \mathcal C_k .
\]
In particular, $\bigcap_{k=0}^{\infty} \mathfrak n^k.\mathcal S(\mathfrak n) \subset \bigcap_{k=0}^{\infty} \mathcal C_k$.
\end{lemma}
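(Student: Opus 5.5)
The plan is to make the action of $\mathfrak n$ on $\mathcal S(\mathfrak n)$ explicit in the graded basis $X_1,\ldots,X_n$ adapted to the lower central series $\mathfrak n_{r^*}\subset \mathfrak n_{r^*-1}\subset\cdots\subset\mathfrak n_0=\mathfrak n$, as in the proof of Lemma \ref{lem nk Schwartz desc}. Formula (\ref{eqn general formula}) shows that each $X_i$ acts on $\mathcal S(\mathfrak n)$ as a first-order operator
\[ X_i = -\partial_{X_i} + \sum_{j} \ell_{ij}\,\partial_{X_j}, \]
where $\ell_{ij}$ are polynomials, nonzero only when $X_j$ lies strictly deeper in the filtration than $X_i$. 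Each $\ell_{ij}$ depends only on coordinates $a_m$ with $X_m$ outside the span of the deeper basis vectors, so $\ell_{ij}$ does not involve the variable $a_j$. The whole argument takes place inside the algebra of polynomial-coefficient differential operators acting on $\mathcal S(\mathfrak n)$.

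The key device is a weighted degree. Assign to $\partial_{X_j}$ the weight $w_j$ equal to one plus the depth of $X_j$ in the filtration, so that deeper directions (central ones) get larger weight. Assign to each coordinate $a_m$ the weight $-w_m$. Then every term $\ell_{ij}\partial_{X_j}$ in $X_i$ is homogeneous of the same weight $w_i$ as $\partial_{X_i}$; this is the graded structure of the BCH formula. Hence a monomial $X_{i_1}\cdots X_{i_p}$ in $U(\mathfrak n)$ expands, after moving all polynomial coefficients to the left, into a sum of terms $q\,\partial^\alpha$. Here $q$ is a polynomial, and the differential order $|\alpha|$ is at least something growing with $p$. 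More precisely, moving polynomials left only produces derivatives of the coefficients, which lowers the order of $\partial$ but also removes coordinate factors. Weight bookkeeping shows the total weight equals $\sum_t w_{i_t}\ge p$, the order $|\alpha|$ satisfies $|\alpha|\,w_{\max}\ge \text{weight}(\partial^\alpha)$, and the polynomial weight is $\le 0$. So $|\alpha|\ge p/w_{\max}$, with $w_{\max}\le r^*+1$. Taking $p\ge k(r^*+1)$ gives every term of the form $q\,\partial^\alpha$ with $|\alpha|\ge k$.

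It then remains to check that $q\,\partial^\alpha f\in\mathcal C_k$ for $f\in\mathcal S(\mathfrak n)$ and $|\alpha|\ge k$. For this, commute the polynomial back to the right, using $x_m\partial_m g=\partial_m(x_m g)-g$ and iterating. Doing this naively lowers the order, so instead use the Fourier transform of Section \ref{ss fourier transform}. There $\mathcal C_k$ corresponds to the span of $y^\beta\widehat{g}$ with $|\beta|=k$, which is exactly the space of Schwartz functions vanishing to order $k$ at $0$ (by the Taylor/Borel argument and the division trick of (\ref{eqn schwartz ideal 0})). Multiplication by $q$ becomes a constant-coefficient differential operator, and differentiation lowers the vanishing order at $0$. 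So it is cleaner to avoid commuting altogether: arrange the expansion with polynomial coefficients to the right, i.e. in the form $\partial^\alpha\circ q$. Such a term visibly lies in $\mathcal C_{|\alpha|}$. The weight argument is symmetric under this reordering, since $[\partial,q]$ preserves total weight.

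The main obstacle is the weight bookkeeping: ensuring that reordering never decreases the $\partial$-order below $p/w_{\max}$. This holds because, when writing $X_{i}=-\partial_{X_i}+\sum_j \partial_{X_j}\circ\ell_{ij}$, the coefficient $\ell_{ij}$ does not depend on $a_j$, so $\partial_{X_j}$ and $\ell_{ij}$ commute and the right-ordered form is available. Composition then stays in the span of $\partial^\alpha\circ q$ with weight$(\partial^\alpha)\ge$ total weight $\ge p$. The final claim follows by intersecting over $k$.
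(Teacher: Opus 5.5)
Your proof is correct, and it follows a genuinely different route from the paper's. Both arguments start from the explicit formula (\ref{eqn general formula}), which writes $X_i$ as a first-order operator with polynomial coefficients.

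The paper then fixes $p=k(r^*)^{r^*}$ and counts how many of the $p$ factors fall into each layer of the lower central series. In each case it pushes the polynomial coefficients inside the derivatives, tracking degrees one layer of variables at a time.

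You replace this case analysis with a single invariant: the $\mathbb Z$-grading of the Weyl algebra in which $\partial_{X_j}$ has weight $w_j$ and $a_j$ has weight $-w_j$. Reordering preserves this grading, because $[\partial_{X_j},a_m]=\delta_{jm}$ has weight $0$. Write an element of the image of $\mathfrak n^p$ in right normal form $\sum\partial^\alpha\circ a^\beta$. Every term then has $|\alpha|\ge p/(r^*+1)$, and $\partial^\alpha(a^\beta f)\in\mathcal C_{|\alpha|}\subset\mathcal C_k$.

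This gives a rigorous argument and the explicit bound $p=k(r^*+1)$, which is essentially sharp. A nonzero $Z\in\mathfrak n_{r^*}$ is central and lies in $\mathfrak n^{r^*+1}$ inside $U(\mathfrak n)$. So $Z^m$ lies in $\mathfrak n^{m(r^*+1)}$ and acts by $\pm\partial_Z^m$, which sends a Gaussian into $\mathcal C_m\setminus\mathcal C_{m+1}$. Hence any valid $p$ must exceed $(k-1)(r^*+1)$.

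In particular, the paper's specific choice $p=k(r^*)^{r^*}$ is too small when $r^*=1$. Take the Heisenberg algebra with $[X,Y]=Z$ and $k=2$. Then $XY-YX$ acts by $-\partial_Z$, so $\partial_Z f\in\mathfrak n^2.\mathcal S(\mathfrak n)$. For a Gaussian $f$, however, $\widehat{\partial_Z f}$ does not vanish to second order at $0$, so $\partial_Z f\notin\mathcal C_2$. The existence statement of the lemma is unaffected.

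One correction to your write-up. The terms $\ell_{ij}\partial_{X_j}$ are homogeneous of weight exactly $w_i$ only for a basis compatible with a grading of $\mathfrak n$. For a basis merely adapted to the lower central series, as in the paper, you only have $\mathrm{ad}_{X_{m_1}}\cdots\mathrm{ad}_{X_{m_r}}X_i\in\mathfrak n_{w_i-1+\sum_t w_{m_t}}$, using $[\mathfrak n_s,\mathfrak n_t]\subset\mathfrak n_{s+t+1}$. This gives $w_j\ge w_i+\sum_t w_{m_t}$, so each term has weight $\ge w_i$ rather than $=w_i$.

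Your bookkeeping only uses the lower bound ``total weight $\ge p$'', so the argument goes through verbatim with inequalities. You can also drop the detour through the left normal form $q\,\partial^\alpha$, which, as you noticed, need not land in $\mathcal C_{|\alpha|}$.
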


\begin{proof}
We shall use the notations in the proof of Lemma \ref{lem nk Schwartz desc}. We pick $p=k(r^*)^{r^*}$. Let $j_1, \ldots, j_p \in \left\{ 1, \ldots, n \right\}$ and let $f \in \mathcal S(\mathfrak n)$. Our goal is to show that 
\[     X_{j_p} \ldots X_{j_1}f  \in \mathcal C_k . 
\]

Using the expression (\ref{eqn general formula}), it suffices to show that 
\[                 D_{j'_p}  \ldots  D_{j'_1}f \in \mathcal C_k ,
\]
where each $D_{j_a'}$ ($j_a'\in \left\{1, \ldots, n\right\}$) takes the form $\ell_a \cdot \partial_{X_{j'_a}}$ with certain polynomial function $\ell_a$ of degree at most $r^*-1$.

For $s=0,1, \ldots r^*-1$, let
\[  x_s = \mathrm{card} \left\{ j_c' \in  \left\{ i_s+1,\ldots, i_{s+1} \right\}: c=1,\ldots p   \right\}
\]
i.e. the number of times of $j_c$'s appearing in $\left\{ i_s+1, \ldots, i_{s+1} \right\}$.

Suppose $x_0 \geq k$. Note that from (\ref{eqn general formula}), $\ell_a$ is independent of $X_1,\ldots X_{i_1}$. Thus, we can rearrange the term $D_{j_p'}\ldots D_{j_1'}f$ into the form:
\[   \partial_{X_{i_1}}^{m_{i_1}} \ldots \partial_{X_1}^{m_1}\widetilde{f}
\]
for some $\widetilde{f}\in \mathcal S(\mathfrak n)$ and some $m_{1}, \ldots, m_{i_1}$ with $m_1+\ldots +m_{i_1} \geq k$. This implies the expression is in $\mathcal C_k$ and so we are done.

We consider the product of the polynomial functions
\[  \ell_p\cdot \ldots \cdot \ell_1 
\]
and let the sum of the degrees of $a_{i_1+1}, \ldots a_{i_2}$ of the polynomial function is $S$. Note that
\[   S <  k(r^*-1) ,
\]
because the contributions from $a_{i_1+1},\ldots, a_{i_2}$ have to come from $D_{j_s'}$ with $j_s'\in \left\{ 1,\ldots, i_1\right\}$, but the condition $x_0$ imposes a constraint on $S$.

Suppose $x_0 < k$ and $x_1 \geq kr^*$. By applying the product formula, $D_{j_p'}\ldots D_{j_1'}f$ can be expressed into the form:
\begin{align} \label{eqn linear expressions}
\sum_{m_1'+\ldots+m_n' \leq p}  \ell'_{(m_1',\ldots, m_n')} \cdot (\partial_{X_n}^{m_n'}\ldots \partial_{X_1}^{m_1'}f)
\end{align}
for some polynomial function $\ell'_{m_1',\ldots, m_n'}$ (possibly zero).

Note from the product formula that, if $\ell'_{m_1',\ldots, m_p'}$ is non-zero, then we must have that $m_{i_1+1}'+\ldots +m_{i_2}'$ is greater than or equal to $k$ plus the sum of degrees of $a_{i_1+1},\ldots,  a_{i_2}$ of $\ell'_{(m_1',\ldots, m_p')}$.

To produce the terms in (\ref{eqn linear expressions}),  one considers the expression
\begin{align} \label{eqn rearrange the term}       \sum_{m_1'+\ldots +m_n' \leq p} \partial_{X_n}^{m_n'}\ldots  \partial_{X_1}^{m_1'} (\ell'_{(m_1',\ldots, m_n')}\cdot f)  .
\end{align}
Note that the above discussion guarantees that $m_{i_1+1}'+\ldots +m_{i_2}'\geq k$, and so is in $\mathcal C_k$.

The expression (\ref{eqn rearrange the term}) after the product formula produces more terms than (\ref{eqn linear expressions}), but those terms are of smaller degree and still satisfies above discussions. Now one repeats the same consideration as above, and then eventually one can show that $D_{j_p'}\cdots D_{j_1'}f$ is in $\mathcal C_k$.

To consider more general case, we suppose 
\begin{align} \label{eqn inequalities in approx}
x_0< k, x_1< kr^*, \ldots, x_s < k(r^*)^{s},\mbox{ and } x_{s+1} \geq k(r^*)^{s+1} \end{align}
for some $s$. A similar argument could show that $D_{j_p'}\ldots D_{j_1'}f \in \mathcal C_k$. 

With our choice on $p$, the inequalities (\ref{eqn inequalities in approx}) must be satisfied for some $s$. Thus we are done.
\end{proof}

We shall use other method (to avoid technical and not so insightful computations) to discuss the reverse inclusion in Lemma \ref{lem intersect jacquet}, and one sees Corollary \ref{cor jacquet intersect for schwartz}.

\subsection{Some module structure of $\mathcal S(\mathfrak n^*)$}

Let $\mathfrak n^*$ be the linear dual of $\mathfrak n$. 

\begin{lemma}
The space $\mathcal S(\mathfrak n^*\setminus \left\{ 0 \right\})$ is invariant under the action of $\mathfrak n$.
\end{lemma}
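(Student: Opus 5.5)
The plan is to make precise what the $\mathfrak n$-action on $\mathcal S(\mathfrak n^*)$ is, and then check invariance directly on the Fourier side. Identify $\mathfrak n\cong\mathbb R^n$ via the basis $X_1,\dots,X_n$ from the proof of Lemma \ref{lem nk Schwartz desc}, and $\mathfrak n^*\cong(\mathbb R^n)^*$ via the dual coordinates $y_1,\dots,y_n$. The $\mathfrak n$-action on $\mathcal S(\mathfrak n^*)$ is the one transported from the $\mathfrak n$-action on $\mathcal S(\mathfrak n)$ by the Fourier transform of Section \ref{ss fourier transform}. That is, for $X\in\mathfrak n$ and $h=\widehat f$ we set $X.h:=\widehat{X.f}$.

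There are two routes. The quickest is purely formal. Section \ref{ss fourier transform} shows that the Fourier transform restricts to a topological isomorphism from $\bigcap_k\mathcal C_k(\mathfrak n)$ onto $\mathcal S(\mathfrak n^*\setminus\{0\})$. Lemma \ref{lem nk Schwartz desc} says $\bigcap_k\mathcal C_k$ is $\mathfrak n$-stable. So the transported action preserves $\mathcal S(\mathfrak n^*\setminus\{0\})$. The only point to check is that the two descriptions of the spaces match:
\begin{itemize}
\item the space in Section \ref{ss fourier transform} is spanned by order-$i$ derivatives for every $i$;
\item $\mathcal C_k$ in \eqref{eqn k derivative space} is spanned by homogeneous degree-$k$ constant-coefficient operators applied to Schwartz functions.
\end{itemize}
These coincide, since $\mathrm{Diff}_k(\mathfrak n)$ consists of homogeneous order-$k$ constant-coefficient operators.

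Since that argument leans on the characterization in Section \ref{ss fourier transform}, I would also give a self-contained second route. By the Baker--Campbell--Hausdorff computation leading to \eqref{eqn general formula}, each $X_i$ acts on $\mathcal S(\mathfrak n)$ as a first-order differential operator with polynomial coefficients:
\[
D_i=-\partial_{X_i}+\sum_j \ell_j\,\partial_{X_j}.
\]
Under the Fourier transform, $\partial_{X_j}$ becomes multiplication by $-2\sqrt{-1}\pi y_j$, and multiplication by $a_m$ becomes $\frac{1}{2\sqrt{-1}\pi}\partial_{y_m}$. Hence $\widehat{D_i}$ is a differential operator on $\mathfrak n^*$ with polynomial coefficients, a sum of terms of the form $\ell_j(\partial_y)\circ y_j$.

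Such an operator maps $\mathcal S(\mathfrak n^*)$ to itself. It also preserves the subspace of functions all of whose derivatives vanish at $0$: multiplying by a polynomial, or differentiating, a function with vanishing Taylor series at $0$ gives again a function with vanishing Taylor series at $0$. This is immediate from the Leibniz rule, or equivalently from the Borel sequence \eqref{eqn borel lemma}, since $\mathcal S(\mathbb R^n\setminus\{0\})$ is the kernel of the Taylor map $\mathrm{pr}$ and $\mathrm{pr}$ intertwines these operators with the corresponding operators on $\mathbb C[[y]]$.

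The main obstacle is bookkeeping rather than analysis. One must verify that the action formula \eqref{eqn general formula}, derived via BCH, really gives a polynomial-coefficient operator for every basis element, so that the transported operators are well defined. One must also fix the sign and normalization conventions of the Fourier transform so that the dictionary $\partial\leftrightarrow y$, $a\leftrightarrow\partial_y$ is applied consistently. Once this is done, invariance under all of $\mathfrak n$ follows by linearity from invariance under each $X_i$.
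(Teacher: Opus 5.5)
Your first route is exactly the paper's proof. The paper's entire argument is the single sentence that the lemma follows from the Fourier-transform identification of $\bigcap_k\mathcal C_k(\mathfrak n)$ with $\mathcal S(\mathfrak n^*\setminus\{0\})$ in Section~\ref{ss fourier transform}, together with Lemma~\ref{lem nk Schwartz desc}. Your check that $\mathcal C_k$ is the span of order-$k$ pure derivatives is the only matching this requires.

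Your second route is also correct and genuinely different. It uses neither Lemma~\ref{lem nk Schwartz desc} nor that identification, whose proof rests on a division argument for flat functions as in \eqref{eqn schwartz ideal 0}. It needs only two facts: each $Y\in\mathfrak n$ acts on $\mathcal S(\mathfrak n)$ by a vector field with polynomial coefficients, and the Fourier transform turns polynomial-coefficient operators into polynomial-coefficient operators, which visibly preserve flatness at $0$. This is precisely the observation the paper invokes inside the proof of Lemma~\ref{lem nk Schwartz desc} for the terms $\ell_j\,\partial_{X_j}f$. Applying it directly makes the induction along the lower central series unnecessary. Combined with the identification of Section~\ref{ss fourier transform}, it even reproves Lemma~\ref{lem nk Schwartz desc}.

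The bookkeeping you flag is lighter than you fear. You do not need the precise shape of \eqref{eqn general formula}. You only need that the $t$-derivative at $t=0$ of $\log(\exp(-tY)\exp(X))$ is, by nilpotency, a finite sum of iterated brackets of $X$ with $Y$, and hence polynomial in $X$. The sign and normalization conventions of the Fourier transform are also irrelevant, since every convention sends polynomial-coefficient operators to polynomial-coefficient operators.
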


\begin{proof}
This follows from taking the Fourier transform in Section \ref{ss fourier transform} and Lemma \ref{lem nk Schwartz desc}.
\end{proof}

Indeed, we have the following stronger invariance result (cf. Corollary \ref{cor maximal stable quotient} below):

\begin{lemma} \label{lem n-invariant under schwartz}
We have $\mathfrak n.\bigcap_{k=0}^{\infty}\mathcal C_k(\mathfrak n)=\bigcap_{k=0}^{\infty}\mathcal C_k(\mathfrak n)$, and $\mathfrak n.\mathcal S(\mathfrak n^*\setminus \left\{ 0 \right\})=\mathcal S(\mathfrak n^*\setminus \left\{ 0 \right\})$. 
\end{lemma}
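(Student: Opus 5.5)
The plan is to prove the first equality; the second then follows by applying the Fourier transform of Section \ref{ss fourier transform}. That transform identifies $\bigcap_k \mathcal C_k(\mathfrak n)$ with $\mathcal S(\mathfrak n^*\setminus\{0\})$, and it intertwines the $\mathfrak n$-action on $\mathcal S(\mathfrak n)$ with the transported action on $\mathcal S(\mathfrak n^*)$, which is exactly how the $\mathfrak n$-action on $\mathcal S(\mathfrak n^*)$ is defined in the preceding lemma. Here $\mathfrak n.W$ means the span of $X.f$ with $X\in\mathfrak n$ and $f\in W$. The inclusion $\subseteq$ is Lemma \ref{lem nk Schwartz desc}. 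So the whole task is the reverse inclusion: every $f\in\bigcap_k\mathcal C_k$ is a finite sum $\sum_i X_i.g_i$ with $g_i\in\bigcap_k\mathcal C_k$.

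I will prove this by induction along the filtration $\mathfrak n_{r^*}\subset\cdots\subset\mathfrak n_0=\mathfrak n$, using the adapted basis $X_1,\ldots,X_n$ from the proof of Lemma \ref{lem nk Schwartz desc}. Formula (\ref{eqn general formula}) gives $X_i.g=-\partial_{X_i}g+\sum_{j\le i_s}\ell_j\,\partial_{X_j}g$, where the coefficients $\ell_j$ depend only on coordinates of strictly lower filtration level than $j$.

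First I decompose $f$ into partial derivatives. On the Fourier side, $\widehat f$ vanishes to infinite order at $0$. By (\ref{eqn schwartz ideal 0}) I can write $\widehat f=\sum_j y_j h_j$ with $h_j=y_j\widehat f/|y|^2\in\mathcal S(\mathfrak n^*\setminus\{0\})$. Transforming back gives $f=\sum_j \partial_{X_j} g_j$ with each $g_j\in\bigcap_k\mathcal C_k$, up to constants.

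Next I convert each derivative into a Lie algebra action. Write $\partial_{X_j}g_j=-X_j.g_j+\sum_{j'\le i_s}\ell_{j'}\partial_{X_{j'}}g_j$. The error term $\ell_{j'}\partial_{X_{j'}}g_j$ again lies in $\bigcap_k\mathcal C_k$, by the Fourier argument used in the proof of Lemma \ref{lem nk Schwartz desc}. It is a derivative in a direction $X_{j'}$ lying strictly deeper in the filtration, with coefficient independent of the $X_{j'}$-coordinate. Because of that independence, I can write $\ell_{j'}\partial_{X_{j'}}g_j=\partial_{X_{j'}}(\ell_{j'}g_j)$, where $\ell_{j'}g_j\in\bigcap_k\mathcal C_k$ since multiplication by a polynomial corresponds to differentiation on the Fourier side.

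The step I expect to be the main obstacle is making this recursion terminate. I will proceed by descending induction on the filtration level of the derivative direction. For $j\le i_1$, which is the centre-most level, we have $\partial_{X_j}=-X_j$ exactly, so there is no error term. At higher levels the errors only involve directions of strictly smaller index in the central part. After at most $r^*+1$ rounds every remaining term has the form $\partial_{X_j}h$ with $j\le i_1$, hence equals $-X_j.h$. The technical care needed is to check the claimed independence of $\ell_{j'}$ from the $X_{j'}$-coordinate, which comes from the ordering of the basis. If this fails, the fallback is to express instead $\ell\,\partial h=\partial(\ell h)-(\partial\ell)h$ and induct additionally on the polynomial degree of $\ell$. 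That induction also terminates, and $(\partial\ell)h$ lies in $\bigcap_k\mathcal C_k$, so it can be decomposed again via the first step.
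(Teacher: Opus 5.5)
Your proposal is correct and takes the same route as the paper. Write an element of $\bigcap_k\mathcal C_k$ as $\sum_j \partial_{X_j} g_j$ via the Fourier transform and (\ref{eqn schwartz ideal 0}), then show by descending induction from (\ref{eqn general formula}) that each such derivative lies in $\mathfrak n.\bigcap_k\mathcal C_k$. Your check that $\ell_{j'}$ involves only coordinates of strictly lower level does hold, so $\ell_{j'}\partial_{X_{j'}}h=\partial_{X_{j'}}(\ell_{j'}h)$, and your fallback is unnecessary; as stated it would re-enter the full statement and is not clearly terminating.
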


\begin{proof}
The two statements are equivalent by taking the Fourier transform and we shall show the second one. 

 Indeed, from the equation (\ref{eqn general formula}), one inductively shows that for $f \in \bigcap_{k= 0}^{\infty}\mathcal C_k$, and any $j=1,\ldots n$, the derivatives of $f$ are in $\mathfrak n.\bigcap_{k=0}^{\infty}\mathcal C_k$. Fix a basis $\left\{ X_1, \ldots, X_n\right\}$ for $\mathfrak n^*$. For $f \in \mathcal S(\mathfrak n^*)$, we write $f(b_1,\ldots, b_n)$ for $f(b_1X_1+\ldots+b_nX_n)$. By taking the Fourier transform, we have: for $f \in \mathcal S(\mathfrak n^*\setminus \left\{ 0 \right\})$, and any $j=1,\ldots, n$, $b_jf \in \mathfrak n.\mathcal S(\mathfrak n^*\setminus \left\{ 0 \right\})$. Then, following from (\ref{eqn schwartz ideal 0}), $\mathfrak n.\mathcal S(\mathfrak n^*\setminus \left\{0 \right\})=\mathcal S(\mathfrak n^*\setminus \left\{ 0\right\})$.
\end{proof}

\section{seminorms on Casselman-Jacquet modules} \label{s seminorms cj modules}

 Let $P$ be a parabolic subgroup of $G$. Let $\mathfrak n=\mathrm{Lie}(N_P)$ and simply write $N$ for $N_P$. In this section, we show the topology on the Casselman-Jacquet module has some nice properties from the seminorms, that is Proposition \ref{prop convergence of nilpotent action}. We outline the main idea of the proof. Firstly, the condition of moderate growth allows one to define a continuous action of the Schwartz algebra $\mathcal S(\mathfrak n)$ on $\pi$, and so defines a continuous map $\mathcal S(\mathfrak n) \widehat{\otimes} \pi \rightarrow \pi$. With the help from the classical Dixmier-Malliavin theorem forces, we have that the action is surjective (see Lemma \ref{lem non-dgenerate quotient}). Now, by the open mapping theorem, one  deduces the topology on $\pi$ from the quotient topology and the tensor product topology of $\mathcal S(\mathfrak n)\widehat{\otimes}\pi$.

\subsection{Two functors} \label{ss two functors}
 Recall that $\mathrm{Rep}^{\infty, F}(P)$ is not abelian  because for a given morphism $f: \pi_1\rightarrow \pi_2$, the space $\pi_1/f^{-1}(0)$ is not necessarily homeomorphic to $f(\pi_1)$. Note that any monic in $\mathrm{Rep}^{\infty,F}(P)$ is injective while any epi $f:\pi_1\rightarrow \pi_2$ in $\mathrm{Rep}^{\infty,F}(P)$ has dense set-theoretic image in the codomain, i.e. $\overline{f(\pi_1)}=\pi_2$.

Define the {\it Casselman-Jacquet submodule functor} from $\mathrm{Rep}^{\infty, F}(G)$ to $\mathrm{Rep}^{\infty, F}(P)$:
\[ \mathbf{CJ}_{s,P}^{\infty}(\pi)=  \mathbf{CJ}_{s,N}^{\infty}(\pi) := \bigcap_{k=0}^{\infty} \overline{\mathfrak n^k.\pi} ,
\]
equipped with the subspace topology from $\pi$. Note that the space is invariant under the action of $P$ and so is well-defined. 

Define the {\it Casselman-Jacquet (quotient) functor} from  $\mathrm{Rep}^{\infty, F}(G)$ to $\mathrm{Rep}^{\infty, F}(P)$:
\[ \mathbf{CJ}_P^{\infty}(\pi) = \mathbf{CJ}_N^{\infty}(\pi) := \varprojlim_{k } \pi/\overline{\mathfrak n^k.\pi} . \]
Recall that the inverse limit of Fr\'echet spaces is still Fr\'echet, and so this gives the topology on $\mathbf{CJ}_P^{\infty}(\pi)$. We shall regard $\mathbf{CJ}_P^{\infty}(\pi)$ as a $P$-representation.

We occasionally omit the subscript $N$ or $P$ from the preceding notation. More generally, one may replace $G$ with other Lie groups that contain $N$ as a subgroup - for instance, $N$ itself. The functor remains well-defined in these contexts except that the modules $\mathbf{CJ}^{\infty}_{s,P}(\pi)$ and $\mathbf{CJ}^{\infty}_P(\pi)$ are equipped with different group actions, and we shall occasionally make use of such variants.

\subsection{Seminorms on the Casselman-Jacquet functor} \label{ss seminorms cj functor}






In the action map (\ref{eqn n action}), we shall express 
\[ \mathcal S(\mathfrak n^*\setminus \left\{ 0 \right\}).\pi :=a( \bigcap_{k=0}^{\infty} \mathcal C_k(\mathfrak n) \widehat{\otimes} \pi).\] This notion is more convenient if one later compares with the $\mathbb C[[\mathfrak n^*]]$-action on the Casselman-Jacquet quotients.

\begin{lemma} \label{lem inclusion}
Let $\pi$ be in $\mathrm{Rep}^{\infty, F}(N)$. Then $\mathcal S(\mathfrak n^*\setminus \left\{ 0 \right\}).\pi  \subset \mathbf{CJ}^{\infty}_s(\pi)$.
\end{lemma}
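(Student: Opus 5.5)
We need to show that $a\bigl((\bigcap_k \mathcal C_k)\widehat{\otimes}\pi\bigr)\subset \overline{\mathfrak n^k.\pi}$ for every $k$. Since $\bigcap_j\mathcal C_j\subset \mathcal C_k$, it suffices to prove, for each fixed $k$, that $a(\mathcal C_k\widehat{\otimes}\pi)\subset\overline{\mathfrak n^k.\pi}$, or at least that this holds on the closed subspace $\bigl(\bigcap_j \mathcal C_j\bigr)\widehat{\otimes}\pi$. By continuity of $a$ and closedness of $\overline{\mathfrak n^k.\pi}$, the argument reduces to showing that elements of the form $(Df)\otimes v$ are sent into $\mathfrak n^k.\pi$, where $D\in\mathrm{Diff}_k(\mathfrak n)$ and $f\in\mathcal S(\mathfrak n)$. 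One must also check that the algebraic span of such elementary tensors (with $f$ ranging so that the first factor lies in $\bigcap_j\mathcal C_j$) is dense in $\bigl(\bigcap_j\mathcal C_j\bigr)\widehat{\otimes}\pi$. Since $\bigcap_j\mathcal C_j$ is a closed subspace of the nuclear Fréchet space $\mathcal S(\mathfrak n)$, via the Fourier transform it is $\mathcal S(\mathfrak n^*\setminus\{0\})$. The completed tensor product is then the closure of the algebraic tensor product, so it suffices to treat $g\otimes v$ with $g\in\bigcap_j\mathcal C_j\subset\mathcal C_k$.

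The main computation treats a single term $g=Df$ with $D$ a constant-coefficient homogeneous operator of order $k$, i.e.\ a product $\partial_{X_{j_1}}\cdots\partial_{X_{j_k}}$ in the basis from the proof of Lemma \ref{lem nk Schwartz desc}. First use integration by parts in $a$: $\pi(X.f)v=\int (X.f)(x)\exp(x).v\,dx$. By $N$-equivariance of $a$, this gives $\pi(X.f)v = X.(\pi(f)v)\in\mathfrak n.\pi$, where $X.f$ denotes the derivative of the left translation action. Iterating, $\pi(X_{i_1}\cdots X_{i_k}.f)v\in\mathfrak n^k.\pi$. Hence it suffices to show that $\mathcal C_k\subset \mathfrak n^k.\mathcal S(\mathfrak n)$ (finite sums), i.e.\ that each $\partial_{X_{j_1}}\cdots\partial_{X_{j_k}}f$ is a finite sum of $Y_1\cdots Y_k.h$ with $Y_i\in\mathfrak n$ and $h\in\mathcal S(\mathfrak n)$.

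The key step, and the expected obstacle, is this reverse inclusion. I would try to realize partial derivatives via the action using formula (\ref{eqn general formula}): $-X_i.f=\partial_{X_i}f-\sum \ell_j\,\partial_{X_j}f$, where the $\ell_j$ are polynomials in the "lower" coordinates and the $\partial_{X_j}$ are in the "higher" commutator directions. Alternatively, and more cleanly, use the right-translation action. Right-invariant and left-invariant vector fields together generate the constant fields in a triangular way, and right translation also commutes with $a$ up to replacing $v$ by something in $\pi$ (indeed $\int (f*\,\cdot)$ with right derivative $\tilde X f$ gives $\pi(\tilde X f)v=\pi(f)(X.v)$ up to sign). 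Thus $\pi(\tilde X_{i_1}\cdots\tilde X_{i_k}f)v=\pi(f)(X_{i_k}\cdots X_{i_1}.v)$, and mixing left and right versions is harmful only if the polynomial coefficients are handled.

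To handle the polynomial coefficients, I would induct along the filtration $\mathfrak n_{r^*}\subset\cdots\subset\mathfrak n$ as in Lemma \ref{lem nk Schwartz desc}. Write $\partial_{X_i}=-X_i+\sum\ell_j\partial_{X_j}$, and then note that $\ell_j\partial_{X_j}f=\partial_{X_j}(\ell_jf)-(\partial_{X_j}\ell_j)f$. Here $\partial_{X_j}\ell_j=0$ because $\ell_j$ depends only on coordinates of index beyond the block of $j$. So $\ell_j\partial_{X_j}f=\partial_{X_j}(\ell_j f)$ with $\ell_jf$ still Schwartz. By downward induction on the block index (the top block $\mathfrak n_{r^*}$ being central, where $\partial_{X_i}=-X_i$ exactly), every first-order $\partial_{X_i}$ applied to a Schwartz function lies in $\mathfrak n.\mathcal S(\mathfrak n)$. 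Iterating $k$ times, with the innermost function absorbing the polynomial factors, shows $\mathcal C_k\subset\mathfrak n^k.\mathcal S(\mathfrak n)$, finishing the proof.
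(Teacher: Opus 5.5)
Your reductions are sound. By density of the algebraic tensor product, continuity of $a$, and the $\mathfrak n$-equivariance $a((X.f)\otimes v)=X.a(f\otimes v)$, it suffices to place the relevant first tensor factors in $\mathfrak n^k.\mathcal S(\mathfrak n)$. Your first-order step is also correct: since $\ell_j$ in (\ref{eqn general formula}) does not involve the coordinate $a_j$, one has $\partial_{X_i}h=-X_i.h+\sum_j\partial_{X_j}(\ell_jh)$, and downward induction on the block gives $\partial_{X_i}\mathcal S(\mathfrak n)\subset\mathfrak n.\mathcal S(\mathfrak n)$.

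The gap is the sentence ``Iterating $k$ times, with the innermost function absorbing the polynomial factors.'' Take $g=\partial_{X_{j_2}}\cdots\partial_{X_{j_k}}f$. The first-order identity applied to $\partial_{X_{j_1}}g$ produces terms $\partial_{X_j}(\ell_jg)$, and to continue you would need $\ell_jg\in\mathfrak n^{k-1}.\mathcal S(\mathfrak n)$. But $\ell_j$ depends on shallower coordinates, which the remaining $\partial_{X_{j_m}}$ may differentiate, so it cannot be pushed inside without creating terms with fewer derivatives. Equivalently, multiplication by a polynomial lowers the vanishing order of the Fourier transform at $0$, so $\ell_j\mathcal C_{k-1}\not\subset\mathcal C_{k-1}$ and your induction hypothesis does not apply.

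This already happens for the Heisenberg algebra, with $[X,Y]=Z$ and coordinates $cZ+aX+bY$. There $X.f=-\partial_af-\tfrac b2\partial_cf$, $Y.f=-\partial_bf+\tfrac a2\partial_cf$ and $Z.f=-\partial_cf$, and your scheme gives
\[ \partial_b\partial_af=-Y.(\partial_af)-Z.\bigl(\tfrac a2\partial_af\bigr)=-Y.(\partial_af)-Z.\partial_a\bigl(\tfrac a2f\bigr)+\tfrac12Z.f . \]
Here $\tfrac a2\partial_af\notin\mathcal C_1=\mathfrak n.\mathcal S(\mathfrak n)$, since its integral is $-\tfrac12\int f$. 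The leftover term $\tfrac12Z.f$ lies in $\mathfrak n^2.\mathcal S(\mathfrak n)$ only because $Z.f=X.Y.f-Y.X.f$, a fact your iteration never uses.

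So a further idea is needed. The finite-level claim $\mathcal C_k\subset\mathfrak n^k.\mathcal S(\mathfrak n)$ is in fact true, but proving it requires weight bookkeeping:
\begin{enumerate}
\item Give the coordinate $a_j$ the weight $d_j$, the largest $d$ with $X_j\in\mathfrak n_{d-1}$ (basis adapted to the lower central series).
\item The coefficient of $\partial_{X_j}$ in $X_i.$ then has weighted degree at most $d_j-d_i$, and $X_i.$ lowers the weighted degree of polynomials by at least $d_i$.
\item Moving polynomials to the right via $q\,(X.h)=X.(qh)-(X.q)h$ never decreases $\sum_m d_{\gamma_m}-\deg_w q$.
\item Finally, $X_j\in\mathfrak n^{d_j}$ inside $U(\mathfrak n)$.
\end{enumerate}
This yields $\partial^\beta=\sum_\gamma X_{\gamma_1}\cdots X_{\gamma_r}\circ q_\gamma$ with $\sum_m d_{\gamma_m}\ge|\beta|$. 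It is exactly the kind of computation the paper says it avoids.

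The cheap repair, which is the paper's route, uses the fact that you only need $g\in\bigcap_j\mathcal C_j$. This space, the Fourier transform of $\mathcal S(\mathfrak n^*\setminus\{0\})$, is stable under multiplication by polynomials and under every $\partial_{X_j}$. So your first-order argument, run inside it, gives $\partial_{X_i}(\bigcap_j\mathcal C_j)\subset\mathfrak n.\bigcap_j\mathcal C_j$. Combined with $\bigcap_j\mathcal C_j=\sum_i\partial_{X_i}(\bigcap_j\mathcal C_j)$ (the Fourier transform of (\ref{eqn schwartz ideal 0})), this is Lemma \ref{lem n-invariant under schwartz}. It yields $\bigcap_j\mathcal C_j\subset\mathfrak n^k.\mathcal S(\mathfrak n)$ for every $k$, after which the rest of your argument goes through unchanged.
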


\begin{proof}
Let $a$ be the action map given in (\ref{eqn n action}). The lemma then follows from:
\[  \mathcal S(\mathfrak n^*\setminus \left\{ 0 \right\}).\pi \subset a(\left(\bigcap_{k=0}^{\infty} \mathfrak n^k.\mathcal S(\mathfrak n)\right) \widehat{\otimes}\pi) \subset \bigcap_{k=0}^{\infty} (\mathfrak n^k.\pi) \subset \mathbf{CJ}^{\infty}_s(\pi) ,\]
where the first equality follows from $\mathcal S(\mathfrak n^*\setminus \left\{0 \right\})\subset \mathfrak n^k.\mathcal S(\mathfrak n^*)$ for all $k \in \mathbb Z_{\geq 0}$ by Lemma \ref{lem n-invariant under schwartz}, and the middle inclusion follows from
\[  a\left(\bigcap_{k=0}^{\infty} \mathfrak n^k.\mathcal S(\mathfrak n) \widehat{\otimes} \pi\right) \subset a\left(\mathfrak n^k.\mathcal S(\mathfrak n) \widehat{\otimes} \pi\right)=\mathfrak n^k.a\left(\mathcal S(\mathfrak n)\widehat{\otimes} \pi \right) \quad \mbox{ for all $k \in \mathbb Z_{\geq 0}$}
\]
and $a(\mathcal S(\mathfrak n) \widehat{\otimes}\pi)=\pi$ (see Proposition \ref{prop nondeg schwartz}).
\end{proof}

\begin{lemma} \label{lem non-dgenerate quotient}
Let $\pi$ be in $\mathrm{Rep}^{\infty, F}(N)$. Then there exists a non-degenerate continuous action of $\mathbb C[[\mathfrak n^*]]$ on $\pi/ \mathbf{CJ}_s^{\infty}(\pi)$ i.e. a surjective continuous morphism from $\mathbb{C}[[\mathfrak n^*]]\widehat{\otimes}(\pi/\mathbf{CJ}_s^{\infty}(\pi))$ to $\pi/\mathbf{CJ}_s^{\infty}(\pi)$.
\end{lemma}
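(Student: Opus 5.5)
We plan to obtain the $\mathbb C[[\mathfrak n^*]]$-action by descending the $\mathcal S(\mathfrak n)$-action of (\ref{eqn n action}) through the Fourier transform and Borel's lemma (\ref{eqn borel lemma}). Write $\pi' := \pi/\mathbf{CJ}_s^{\infty}(\pi)$; it is Hausdorff and Fr\'echet because $\mathbf{CJ}_s^{\infty}(\pi)$ is closed. Via the Fourier transform of Section \ref{ss fourier transform}, we identify $\mathcal S(\mathfrak n)$ with $\mathcal S(\mathfrak n^*)$. Borel's lemma then identifies $\mathbb C[[\mathfrak n^*]]$ with the Fr\'echet quotient $\mathcal S(\mathfrak n^*)/\mathcal S(\mathfrak n^*\setminus\{0\})$, and under the Fourier transform the latter quotient is $\mathcal S(\mathfrak n)/\bigcap_k \mathcal C_k(\mathfrak n)$.

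We then take the composite
\[ \mathcal S(\mathfrak n)\widehat{\otimes}\pi \xrightarrow{a} \pi \to \pi' . \]
It is continuous, and it is surjective by Proposition \ref{prop nondeg schwartz}. By Lemma \ref{lem inclusion}, it kills $(\bigcap_k \mathcal C_k)\widehat{\otimes}\pi$. We also need it to kill $\mathcal S(\mathfrak n)\widehat{\otimes}\mathbf{CJ}_s^{\infty}(\pi)$. Since $\mathbf{CJ}_s^{\infty}(\pi)$ is a closed $N$-stable subspace, the integral $\int f(x)\exp(x).v\,dx$ stays in it for $v\in \mathbf{CJ}_s^{\infty}(\pi)$. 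Because the map is continuous and the image is closed, the closure of the span of these two subspaces is killed as well. Hence the map factors through
\[ \big(\mathcal S(\mathfrak n)\widehat{\otimes}\pi\big)\big/\overline{\big(\textstyle\bigcap_k\mathcal C_k\big)\widehat{\otimes}\pi+\mathcal S(\mathfrak n)\widehat{\otimes}\mathbf{CJ}_s^{\infty}(\pi)} . \]

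The next step identifies this quotient with $\mathbb C[[\mathfrak n^*]]\widehat{\otimes}\pi'$. For Fr\'echet spaces the projective tensor product preserves strict surjections: given quotient maps $E\to E/E_0$ and $F\to F/F_0$, the map $E\widehat\otimes F\to (E/E_0)\widehat\otimes(F/F_0)$ is surjective, with kernel the closure of $E_0\widehat\otimes F+E\widehat\otimes F_0$ (Tr\`eves, Ch.~43). We apply this with $E=\mathcal S(\mathfrak n)$, $E_0=\bigcap_k\mathcal C_k$ (closed, being Fourier dual to the closed subspace $\mathcal S(\mathfrak n^*\setminus\{0\})$), $F=\pi$ and $F_0=\mathbf{CJ}_s^{\infty}(\pi)$. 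This yields a continuous map $\mathbb C[[\mathfrak n^*]]\widehat{\otimes}\pi'\to\pi'$, and it is surjective because the composite from $\mathcal S(\mathfrak n)\widehat\otimes\pi$ was already surjective. Continuity is inherited through the open mapping theorem, since the source quotient topology coincides with the projective topology on the tensor product of the quotients.

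The main obstacle we expect is the tensor-quotient step: justifying the description of the kernel of $E\widehat\otimes F\to(E/E_0)\widehat\otimes(F/F_0)$, or at least that the kernel lies in the closure of $E_0\widehat\otimes F+E\widehat\otimes F_0$. For Fr\'echet spaces this reduces to surjectivity of completed quotient maps plus density of algebraic tensors. Only the containment of the kernel is needed for well-definedness, so we would first prove that directly using the quotient seminorms $\widetilde p_n$ of Section \ref{ss quotient topology} and the infimum formula for $p_i\otimes q_j$. Algebraically, the action should be checked to be the expected one: a monomial in the formal variables corresponds, up to constants, to the action of the matching element of $U(\mathfrak n)$, by the formula $\widehat{\partial_i f}=-2\sqrt{-1}\pi y_i\widehat f$.
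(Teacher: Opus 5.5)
Your proposal is correct and follows essentially the paper's route. Both arguments use surjectivity of the Schwartz action (Proposition \ref{prop nondeg schwartz}), Lemma \ref{lem inclusion} to kill the flat part $\bigcap_k\mathcal C_k(\mathfrak n)$, and Borel's lemma together with Tr\`eves' results on projective tensor products of quotient maps to descend to $\mathbb C[[\mathfrak n^*]]\widehat{\otimes}(\pi/\mathbf{CJ}_s^{\infty}(\pi))$. The one difference is that the paper applies Proposition \ref{prop nondeg schwartz} directly to $\lambda=\pi/\mathbf{CJ}_s^{\infty}(\pi)$, which is itself in $\mathrm{Rep}^{\infty,F}(N)$. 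It therefore needs only the one-sided quotient $\mathcal S(\mathfrak n^*)\widehat{\otimes}\lambda\to\mathbb C[[\mathfrak n^*]]\widehat{\otimes}\lambda$ and never has to kill $\mathcal S(\mathfrak n)\widehat{\otimes}\mathbf{CJ}_s^{\infty}(\pi)$. On the other hand, your description of the kernel as the closure of algebraic tensors is exactly what justifies the paper's brief ``define $s_0$ via a lift'' step on the completed tensor product.

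Your closing check, matching monomials with elements of $U(\mathfrak n)$, is not needed for the lemma, and it is not literally true when $\mathfrak n$ is non-abelian. The Euclidean Fourier variables correspond to the coordinate derivatives $\partial_{X_i}$ in exponential coordinates, and these differ from the $\mathfrak n$-action by the correction terms in (\ref{eqn general formula}).
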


\begin{proof}
Let $\lambda=\pi/\mathbf{CJ}^{\infty}_s(\pi)$. It follows from Proposition \ref{prop nondeg schwartz} that the action map:
\[   \mathcal S(\mathfrak n) \widehat{\otimes} \lambda \rightarrow \lambda 
\]
is continuous and non-degenerate. We shall consider the action after applying the Fourier transform discussed in Section \ref{ss fourier transform}, and so we have a continuous map $s$ from $\mathcal S(\mathfrak n^*)\widehat{\otimes}\lambda$ to $\lambda$.

Now $\mathcal S(\mathfrak n^*\setminus \left\{ 0 \right\}).\lambda$ is zero by Lemma \ref{lem inclusion}. Then one has a well-defined map $s_0$ from $\mathbb C[[\mathfrak n^*]]\widehat{\otimes} \lambda$ to $\lambda$ given by:
\[   f \otimes x \stackrel{s_0}{\mapsto}  \pi(\widetilde{f})x  ,
\]
where $\widetilde{f}$ is a lift of $f$ in $\mathcal S(\mathfrak n^*)$. It again follows from Lemma \ref{lem inclusion} that the action is well-defined, i.e. independent of a choice of $\widetilde{f}$.

Now, by \cite[Proposition 43.9]{Tr06}, the natural map $\mathrm{pr}$ from $\mathcal S(\mathfrak n^*) \widehat{\otimes} \pi$ to $\mathbb C[[\mathfrak n^*]] \widehat{\otimes} \pi$ is surjective. Since $s_0 \circ \mathrm{pr}=s$ and $s$ is also surjective, we must have $s_0: \mathbb C[[\mathfrak n^*]]\widehat{\otimes}\lambda \rightarrow \lambda$ is again also surjective, as desired. 
\end{proof}

\begin{proposition} \label{prop convergence of nilpotent action}
Let $\pi$ be in $\mathrm{Rep}^{\infty, F}(N)$. There exists a countable family $\left\{ q_j \right\}$ of seminorms defining the topology on $\pi/\mathbf{CJ}_s^{\infty}(\pi)$ such that for each fixed $q_j$, there exists a sufficiently large integer $k_j>0$ such that the seminorm $q_j$ vanishes on $\overline{\mathfrak n^{k}. \pi}/\mathbf{CJ}_s^{\infty}(\pi)$ for all $k >k_j$. In particular, for any $r>k>k_j$, the induced seminorm $\widetilde{q}_k$ on $\overline{\mathfrak n^k.\pi}/\overline{\mathfrak n^r.\pi}$ vanishes. 
\end{proposition}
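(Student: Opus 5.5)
Set $\lambda=\pi/\mathbf{CJ}_s^{\infty}(\pi)$. The plan is to transport the seminorms of $\mathbb C[[\mathfrak n^*]]\widehat{\otimes}\lambda$ to $\lambda$ via the surjection of Lemma \ref{lem non-dgenerate quotient}, and to exploit the fact that each seminorm of $\mathbb C[[\mathfrak n^*]]$ only sees finitely many Taylor coefficients at $0$.

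\emph{Step 1: quotient seminorms.} Let $s_0:\mathbb C[[\mathfrak n^*]]\widehat{\otimes}\lambda\to\lambda$ be the surjection of Lemma \ref{lem non-dgenerate quotient}.
\begin{itemize}
\item Both spaces are Fr\'echet, so by the open mapping theorem $\lambda$ carries the quotient topology from $s_0$.
\item On $\mathbb C[[\mathfrak n^*]]$ choose the increasing seminorms $P_m(Q)=\max_{|\alpha|\le m}|\partial^\alpha Q(0)|$, built from the $\widetilde p_{k_1,\dots,k_n}$ of Section \ref{ss topology on power series}.
\item Fix increasing seminorms $r_m$ on $\lambda$, and set $\rho_m=P_m\otimes r_m$, the projective seminorms of Section \ref{ss projective norm}.
\item As in Section \ref{ss quotient topology}, define $q_m(x)=\inf\{\rho_m(y): s_0(y)=x\}$.
\end{itemize}
The family $\{q_m\}$ defines the topology of $\lambda$.

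\emph{Step 2: the key claim.} For each $m$ there is $k_m$ such that $q_m$ vanishes on $\mathfrak n^k.\lambda$ for $k>k_m$; continuity of $q_m$ then gives vanishing on the closure. Write $v\in\pi$ as $v=a(\sum_i f_i\otimes v_i)$ (limit of finite sums), using Proposition \ref{prop nondeg schwartz}. The equivariance $X.a(f\otimes v)=a((X.f)\otimes v)$ gives
\[
X_{j_k}\cdots X_{j_1}.v=a\Big(\sum_i (X_{j_k}\cdots X_{j_1}f_i)\otimes v_i\Big).
\]
By Lemma \ref{lem intersect jacquet}, for $k\ge p(m+1)$ we have $X_{j_k}\cdots X_{j_1}f_i\in\mathcal C_{m+1}$. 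Under the Fourier transform of Section \ref{ss fourier transform}, $\mathcal C_{m+1}$ corresponds to functions on $\mathfrak n^*$ of the form $\sum_{|\beta|=m+1} y^\beta g_\beta$. Their Taylor series lie in the ideal $\mathfrak m^{m+1}$, on which $P_m$ vanishes. So the element $y=\mathrm{pr}\big(\sum_i \widehat{X\cdots X f_i}\otimes v_i\big)$ is a lift of $X_{j_k}\cdots X_{j_1}.v$ lying in the closure of $\mathfrak m^{m+1}\otimes\lambda$, and it satisfies $\rho_m(y)=0$. Hence $q_m(\mathfrak n^k.v)=0$. The "in particular" clause follows because the induced seminorm on $\overline{\mathfrak n^k.\pi}/\overline{\mathfrak n^r.\pi}$ is an infimum of $q_m$ over cosets, which is $0$.

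\emph{Main obstacle.} The delicate point is passing from finite sums to the completed tensor product. The step requires that $\mathfrak m^{m+1}\otimes\lambda$ is $\rho_m$-null and that $\rho_m$ is continuous, so that null-ness passes to closures. One must also check that the $\mathcal C_{m+1}$-membership of the components gives membership of the whole element in the closed subspace $\overline{\mathfrak m^{m+1}}\widehat\otimes\lambda$. This should hold because $\mathfrak m^{m+1}$ is closed of finite codimension in $\mathbb C[[\mathfrak n^*]]$: the completed tensor product splits compatibly, and $\mathfrak n^k$ acts continuously on $\mathcal S(\mathfrak n)\widehat\otimes\pi$ landing in $\mathcal C_{m+1}\widehat\otimes\pi$. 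If this becomes awkward, I would instead use the finite-dimensional quotient $\mathbb C[[\mathfrak n^*]]/\mathfrak m^{m+1}$ directly and define $q_m$ through $(\mathbb C[[\mathfrak n^*]]/\mathfrak m^{m+1})\otimes\lambda$, which avoids completion issues entirely.
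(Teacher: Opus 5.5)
Your proposal is correct and is essentially the paper's proof. It realizes $\lambda=\pi/\mathbf{CJ}_s^{\infty}(\pi)$ as a Hausdorff quotient of $\mathbb C[[\mathfrak n^*]]\widehat{\otimes}\lambda$ via Lemma~\ref{lem non-dgenerate quotient} and the open mapping theorem, uses Lemma~\ref{lem intersect jacquet} (through the Fourier transform) to lift elements of $\mathfrak n^k.\lambda$ to elements annihilated by the projective seminorm, and passes to the closure by continuity. The differences are only cosmetic (the diagonal family $P_m\otimes r_m$) or expository: you spell out the completed-tensor-product step that the paper leaves implicit, and it works exactly as you say because $\rho_m$ is continuous and vanishes on $\mathfrak m^{m+1}\otimes\lambda$.
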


\begin{proof}
 It follows from discussions in Section \ref{ss topology on power series} that the seminorms $\left\{ p_i \right\}_{i \in \mathbb Z_{\geq 0}}$ defining the Fr\'echet topology of $\mathbb C[[\mathfrak n^*]]$ can be chosen as
\[  p_i(\sum_{i_1, \ldots, i_n \geq 0} c_{i_1,\ldots, i_r}X_1^{i_1}\ldots X_n^{i_n}) := \mathrm{max}_{i_1+\ldots+i_n\leq i}\left\{ |c_{i_1, \ldots, i_n}| \right\} ,
\]
where $X_1, \ldots, X_r$ is a fixed basis for $\mathfrak n^*$. Thus the projective seminorm $\left\{ p_i \otimes q_j \right\}_{i,j \in \mathbb Z_{\geq 1}}$ defines the topology on $\mathbb C[[\mathfrak n^*]]\widehat{\otimes} \lambda$. 

Let $\lambda=\pi/\mathbf{CJ}_s^{\infty}(\pi)$. By  Lemma \ref{lem non-dgenerate quotient}, we have a continuous surjection:
\[   \mathbb C[[\mathfrak n^*]] \widehat{\otimes} \lambda \rightarrow \lambda .
\]
Then, by the open mapping theorem, $\lambda$ can be realized as a Hausdorff quotient of $\mathbb C[[\mathfrak n^*]]\widehat{\otimes} \lambda$. Recall that the quotient seminorms $\widetilde{p_i\otimes q_j}$ define the topology of $\lambda$ in Section \ref{ss quotient topology}. For fixed $i$ (and $j$), by Lemma \ref{lem intersect jacquet}, we can choose a sufficiently large integer $k$ such that $\mathfrak n^k.\mathbb C[[\mathfrak n^*]]$ vanishes on $p_i$. Note that $\mathfrak n^k.(\mathbb C[[\mathfrak n^*]]\widehat{\otimes}\lambda)$ surjects onto $\mathfrak n^k.\lambda/\mathbf{CJ}_{s}^{\infty}(\pi)$. Then, for $x \in \mathfrak n^k.\lambda$, we can choose a representative $\widetilde{x}$ such that $(p_i\otimes q_j)(\widetilde{x})=0$ and so $\widetilde{p_i\otimes q_j}(\widetilde{x})=0$. Now, taking the continuity on the closure, we have that $\widetilde{p_i\otimes q_j}$ vanishes on $\overline{\mathfrak n^k.\lambda}=\overline{\mathfrak n^k.\pi}/\mathbf{CJ}_s^{\infty}(\pi)$, as desired.
\end{proof}

\begin{corollary} \label{cor converging seq 0}
Let $\pi$ be in $\mathrm{Rep}^{\infty,F}(N)$. Let $(x_k)_{k=1}^{\infty}$ be a sequence in $\pi/\mathbf{CJ}_{s}^{\infty}(\pi)$ such that $x_k \in \overline{\mathfrak n^k.\pi}/\mathbf{CJ}^{\infty}_s(\pi)$ for each positive integer $k$. Then the sequence $(x_k)$ converges to $0$.
\end{corollary}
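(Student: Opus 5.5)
The plan is to deduce the corollary directly from Proposition \ref{prop convergence of nilpotent action}, using the fact that convergence in a Fr\'echet space is detected seminorm by seminorm. Write $\lambda=\pi/\mathbf{CJ}_s^{\infty}(\pi)$ and let $\{q_j\}$ be the countable family of seminorms given by Proposition \ref{prop convergence of nilpotent action}. This family defines the topology of $\lambda$. Therefore a sequence $(x_k)$ in $\lambda$ converges to $0$ if and only if $q_j(x_k)\to 0$ as $k\to\infty$ for every fixed $j$. This is the standard characterization via the metric built from the seminorms in Section \ref{ss quotient topology}, which is valid for any defining family, increasing or not.

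Fix $j$ and let $k_j$ be the integer provided by Proposition \ref{prop convergence of nilpotent action}. Then $q_j$ vanishes identically on $\overline{\mathfrak n^{k}.\pi}/\mathbf{CJ}_s^{\infty}(\pi)$ for all $k>k_j$. By hypothesis $x_k$ lies in that subspace, so $q_j(x_k)=0$ for every $k>k_j$. In particular $q_j(x_k)\to 0$. Since $j$ was arbitrary, we get $d(x_k,0)=\sum_j 2^{-j}q_j(x_k)/(1+q_j(x_k))\to 0$. The tail of this sum is bounded by $2^{-J}$, and each of the finitely many remaining terms is eventually zero, so $x_k\to 0$ in $\lambda$.

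There is essentially no obstacle once Proposition \ref{prop convergence of nilpotent action} is available. The one point to keep straight is that the seminorms $q_j$ there are precisely a family defining the quotient topology of $\lambda$; that is, the quotient seminorms $\widetilde{p_i\otimes q_j}$ coming from the surjection $\mathbb C[[\mathfrak n^*]]\widehat{\otimes}\lambda\to\lambda$ together with the open mapping theorem. We must not confuse them with the original seminorms on $\pi$. With that identification the eventual vanishing gives convergence immediately.
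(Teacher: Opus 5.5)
Your argument is correct and is essentially the paper's own proof. You take the defining seminorms of $\pi/\mathbf{CJ}_s^{\infty}(\pi)$ supplied by Proposition \ref{prop convergence of nilpotent action}, note that each one vanishes on $x_k$ for all large $k$, and conclude $d(x_k,0)\to 0$ for the metric of Section \ref{ss quotient topology}; your version is slightly cleaner in working directly in $\pi/\mathbf{CJ}_s^{\infty}(\pi)$, whereas the paper phrases it via $\mathbf{CJ}^{\infty}(\pi)$ before that identification (Proposition \ref{prop surj cj quotient}) has been established.
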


\begin{proof}
Let $\left\{ q_i \right\}_{i \in \mathbb Z_{\geq 1}}$ be a family of seminorms defining the topology of $\mathbf{CJ}^{\infty}(\pi)$ in Proposition \ref{prop convergence of nilpotent action}. It follows from Section \ref{ss quotient topology} that the topology of $\mathbf{CJ}^{\infty}(\pi)$ is also defined by the metric: for $v, v' \in \pi/\mathbf{CJ}_s^{\infty}(\pi)$,
\[    d(v,v')= \sum_{j=1}^{\infty}\frac{1}{2^j} \frac{q_j(v-v')}{1+q_j(v-v')} .
\]
Now, from the previous proposition, we have that $d(x_k,0) \rightarrow 0$ and so $x_k\rightarrow 0$ as desired. 
\end{proof}

\section{Stability Theorem} \label{s n stable}
Let $N$ be the unipotent radical of a parabolic subgroup of $G$, and let $\mathfrak n=\mathrm{Lie}(N)$.

\begin{definition}
 For $\pi$ in $\mathrm{Rep}^{\infty, F}(N)$, $\pi$ is said to be {\it $\mathfrak n$-stable} if $\overline{\mathfrak n.\pi}=\pi$. 
\end{definition}

We sketch the intuitive idea on the stability result of Corollary \ref{cor maximal stable quotient} below. A main idea is to ``glue'' non-trivially $\mathbf{CJ}^{\infty}(\pi)$ with a module $\omega$ with the trivial $\mathfrak n$-action. The way to glue modules to form a module $\pi'$ is in the sense that $\mathbf{CJ}_s^{\infty}(\pi')$ is $\omega$. The module $\omega$ also has to be the submodule of $\overline{\mathfrak n^k.\pi'}$. Now one picks an element $v$ in $\omega$ and constructs a sequence converging to $v$, and finally shows from Proposition \ref{prop convergence of nilpotent action} that this sequence also has to converge to $0$.
 

\begin{corollary} \label{cor maximal stable quotient}
Let $\pi$ be in $\mathrm{Rep}^{\infty, F}(N)$. Then $\mathbf{CJ}_s^{\infty}(\pi)$ is $\mathfrak n$-stable.
\end{corollary}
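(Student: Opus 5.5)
The plan is to run the seminorm argument of Proposition \ref{prop convergence of nilpotent action} on a larger quotient than $\pi/\mathbf{CJ}_s^{\infty}(\pi)$. Write $\omega=\mathbf{CJ}^{\infty}_s(\pi)$ and $\omega'=\overline{\mathfrak n.\omega}$. The inclusion $\omega'\subset\omega$ is clear, since $\omega$ is closed and $\mathfrak n$-invariant. Also $\omega'$ is a closed $N$-invariant subspace. Set $\pi''=\pi/\omega'$, which is Hausdorff and lies in $\mathrm{Rep}^{\infty,F}(N)$. The image $\omega/\omega'$ of $\omega$ in $\pi''$ is annihilated by $\mathfrak n$, and it is contained in $\overline{\mathfrak n^k.\pi''}$ for every $k$, because $\omega\subset\overline{\mathfrak n^k.\pi}$ and the projection is continuous. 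We must show $\omega/\omega'=0$.

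\emph{Step 1: $\mathcal S(\mathfrak n^*\setminus\{0\})$ already acts by zero on $\pi''$.} By Lemma \ref{lem n-invariant under schwartz}, every $f\in\bigcap_k\mathcal C_k(\mathfrak n)$ can be written as a finite sum $\sum_j X_j.g_j$ with $X_j\in\mathfrak n$ and $g_j\in\bigcap_k\mathcal C_k(\mathfrak n)$. Since the action map $a$ is $N$-equivariant, $a(f\otimes w)=\sum_j X_j.a(g_j\otimes w)$. Lemma \ref{lem inclusion} gives $a(g_j\otimes w)\in\omega$, so $a(f\otimes w)\in\mathfrak n.\omega\subset\omega'$. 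By continuity of $a$, this passes to the completed tensor product $\bigl(\bigcap_k\mathcal C_k\bigr)\widehat\otimes\pi$. Hence $\mathcal S(\mathfrak n^*\setminus\{0\}).\pi\subset\omega'$, and since $\pi\to\pi''$ is surjective, $\mathcal S(\mathfrak n^*\setminus\{0\}).\pi''=0$.

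\emph{Step 2: a $\mathbb C[[\mathfrak n^*]]$-action on $\pi''$.} We repeat the proof of Lemma \ref{lem non-dgenerate quotient} verbatim, with $\lambda$ replaced by $\pi''$; the only input it used was the vanishing established in Step 1. Using the Borel sequence (\ref{eqn borel lemma}) and the surjectivity $\mathcal S(\mathfrak n^*)\widehat\otimes\pi''\to\mathbb C[[\mathfrak n^*]]\widehat\otimes\pi''$, we obtain a continuous surjection $\mathbb C[[\mathfrak n^*]]\widehat\otimes\pi''\to\pi''$.

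\emph{Step 3: seminorms and conclusion.} We rerun the proof of Proposition \ref{prop convergence of nilpotent action} for $\pi''$. By the open mapping theorem, $\pi''$ carries the quotient seminorms $\widetilde{p_i\otimes q_j}$. By Lemma \ref{lem intersect jacquet}, each such seminorm vanishes on $\overline{\mathfrak n^k.\pi''}$ for $k$ large. A vector $v\in\omega/\omega'$ lies in every $\overline{\mathfrak n^k.\pi''}$, so all these seminorms vanish at $v$. Since $\pi''$ is Hausdorff, $v=0$. Therefore $\omega=\omega'$, which is the claimed $\mathfrak n$-stability.

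The main obstacle is Step 1, i.e.\ justifying that the quotient by $\omega'$ rather than by $\omega$ still kills the Schwartz functions vanishing to infinite order. This rests on the equality $\mathfrak n.\bigcap_k\mathcal C_k=\bigcap_k\mathcal C_k$ from Lemma \ref{lem n-invariant under schwartz}, together with care about passing from the algebraic tensor product to its completion. Everything after that step is a direct transcription of the earlier proofs.
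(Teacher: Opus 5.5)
Your proof is correct, but it takes a different route from the paper.

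\textbf{The paper's route.} The paper also passes to $\pi'=\pi/\overline{\mathfrak n.\lambda}$ with $\lambda=\mathbf{CJ}^{\infty}_s(\pi)$. It never checks that $\mathcal S(\mathfrak n^*\setminus\{0\})$ acts by zero on $\pi'$. Instead, it uses the seminorm machinery only on the canonical quotient $\pi/\lambda$, packaged as Corollary \ref{cor converging seq 0}, and transports the information to $\pi'$ by hand. It approximates $v\in\mathrm{pr}(\lambda)$ by sums $X_1x_1^k+\cdots+X_nx_n^k$ with $x_i^k\in\mathrm{pr}(\overline{\mathfrak n^k.\pi})$. It notes that such a sum depends only on the classes of the $x_i^k$ in $\pi/\lambda$, since $\mathfrak n.\lambda$ dies in $\pi'$. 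It then uses openness of $\pi'\to\pi/\lambda$ to choose lifts tending to $0$, which forces $v=0$.

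\textbf{Your route.} Your Step 1 replaces this lifting argument with the inclusion $\mathcal S(\mathfrak n^*\setminus\{0\}).\pi\subset\overline{\mathfrak n.\mathbf{CJ}^{\infty}_s(\pi)}$. You get it from three ingredients: the equality (not just invariance) in Lemma \ref{lem n-invariant under schwartz}, the $\mathfrak n$-equivariance of $a$, and Lemma \ref{lem inclusion}. After that, the proofs of Lemma \ref{lem non-dgenerate quotient} and Proposition \ref{prop convergence of nilpotent action} use nothing specific to the quotient. They therefore apply to $\pi''$ and give $\bigcap_k\overline{\mathfrak n^k.\pi''}=0$, as you claim.

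\textbf{What each route buys.} Yours isolates a cleaner general principle: for any $\pi$ in $\mathrm{Rep}^{\infty,F}(N)$ on which $\mathcal S(\mathfrak n^*\setminus\{0\})$ acts by zero, $\mathbf{CJ}^{\infty}_s(\pi)=0$. Applied to $\pi/\overline{\mathcal S(\mathfrak n^*\setminus\{0\}).\pi}$, this gives the first equality of Corollary \ref{cor closedness and stable prop} directly. You would not need to go through stability and the Nakayama-type Proposition \ref{prop zero from stable}. The paper's route invokes the Schwartz-module description only for $\pi/\mathbf{CJ}^{\infty}_s(\pi)$ and otherwise uses only elementary facts about quotient topologies. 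Both routes ultimately rest on Lemma \ref{lem n-invariant under schwartz}; the paper uses it through Lemma \ref{lem inclusion}. So neither is more economical in its inputs; yours is shorter and more conceptual.
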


\begin{proof}
Let $\lambda=  \bigcap_{k=0}^{\infty} \overline{\mathfrak n^k. \pi}=\mathbf{CJ}_s^{\infty}(\pi)$. Let
\[   \pi'= \pi/\overline{\mathfrak n.\lambda} .
\]
Let $\mathrm{pr}: \pi \rightarrow \pi'$ be the projection map. Let $v$ be an arbitrary element in $\mathrm{pr}(\lambda)$. We proceed to show that $v=0$. \\

\noindent
{\it Claim 1:} We fix a basis $\left\{ X_1,\ldots, X_n\right\}$ for $\mathfrak n$. Then, for each $k \in \mathbb Z_{\geq 1}$, there exists a sequence of sets $\left\{ x^k_1, \ldots, x^k_n\right\}$ of vectors in $\mathrm{pr}(\overline{\mathfrak n^k.\pi})$ such that 
\[   X_1x^k_1+\ldots + X_nx^k_n  \longrightarrow v .
\]

\noindent
{\it Proof of Claim 1:} 
We fix a non-negative integer $k$. By the definition and closedness of $\lambda$, we have:
\[   \overline{\mathfrak n.\lambda} \subset \lambda \subset \overline{\mathfrak n^{k+1}.\pi} .
\]
It is clear that 
\[   \mathfrak n. \mathrm{pr}(\overline{\mathfrak n^k.\pi}) \subset \overline{\mathrm{pr}(\mathfrak n^{k+1}.\pi)} \quad \Longrightarrow \quad  \overline{ \mathfrak n. \mathrm{pr}(\overline{\mathfrak n^k.\pi})} \subset \overline{\mathrm{pr}(\mathfrak n^{k+1}.\pi)}  .
\]
We also have
\[   \mathrm{pr}(\mathfrak n^{k+1}.\pi) \subset \mathfrak n.\mathrm{pr}(\overline{\mathfrak n^{k}.\pi}) \quad \Longrightarrow \quad \overline{\mathrm{pr}(\mathfrak n^{k+1}.\pi)} \subset \overline{\mathfrak n.\mathrm{pr}(\overline{\mathfrak n^k.\pi})} .
\]
Combining with $\overline{\mathrm{pr}(\overline{\mathfrak n^{k+1}.\pi})}=\overline{\mathrm{pr}(\mathfrak n^{k+1}.\pi)}$, this implies that $\overline{\mathrm{pr}(\overline{\mathfrak n^{k+1}.\pi})} =\overline{\mathfrak n.\mathrm{pr}(\overline{\mathfrak n^k.\pi})}$ and so
\[  \mathrm{pr}(\lambda) \subset \overline{\mathfrak n.\mathrm{pr}(\overline{\mathfrak n^k.\pi})} .
\]

This implies that, for each $k$, there exists a sequence of sets $\left\{ x_1^k,\ldots, x_n^k \right\}$ in $\mathrm{pr}(\overline{\mathfrak n^k.\pi})$ such that $d(X_1x^k_1+\ldots+X_nx^k_n, v)<\frac{1}{2^k}$. This provides the desired sequence.
\\

Claim 1 above provides a sequence of vectors in $\mathrm{pr}(\overline{\mathfrak n^k.\pi})$ converging to $v$. However, we need to make some good choices, for which we shall need Claim 2 below. \\

\noindent
{\it Claim 2:} Let $(x_1^k, \ldots, x_n^k)_{k \in \mathbb Z_{\geq 1}}$ be a sequence of the vectors in the previous claim. Suppose $(y^k_1, \ldots, y^k_n)_{k \in \mathbb Z_{\geq 1}}$ form another sequence of vectors such that, for each $k$ and $i$, $x_i^k$ and $y_i^k$ have the same image in the quotient $\pi/\lambda$. Then 
\[  X_1y_1^k+\ldots +X_ny_n^k \longrightarrow v  .
\]
\noindent
{\it Proof of Claim 2:} Note that 
\[ (X_1x_1^k+\ldots+X_nx_n^k) -(X_1y_1^k+\ldots+X_ny_n^k)=X_1(x_1^k-y_1^k)+\ldots +X_n(x_n^k-y_n^k) \] 
is zero because $x_1^k-y_1^k, \ldots, x_n^k-y_n^k$ are in $\lambda$ and so the above expression is in $\mathfrak n.\lambda \subset \overline{\mathfrak n.\lambda}$.  In other words, $(X_1x_1^k+\ldots +X_nx_n^k)-(X_1y_1^k+\ldots +X_ny_n^k)=0$ and so this gives the claim. \\

Now back to the proof, from Claim 1, we can find a sequence of $x_1^k, \ldots, x_n^k \in \mathrm{pr}(\overline{\mathfrak n^k.\pi})$ such that $X_1x_1^k+\ldots+X_nx_n^k$ converges to $v$. 

On the other hand, let $\mathrm{pr}':\pi' \rightarrow  \pi/\lambda$ be the projection. We now consider the projections of $x_1^k, \ldots, x_n^k$ 
\[  \mathrm{pr}'(x_1^k), \ldots, \mathrm{pr}'(x_n^k) 
\]
to $\pi/\mathbf{CJ}_s^{\infty}(\pi)$. By Corollary \ref{cor converging seq 0}, 
\[  \mathrm{pr}'(x_1^k), \ldots, \mathrm{pr}'(x_n^k) 
\]
converge to $0$. Hence, from the quotient topology, we can find representatives $y_1^k,\ldots ,y_n^k$ in $\pi'$ of respective  $\mathrm{pr}'(x_1^k),\ldots, \mathrm{pr}'(x_n^k)$ so that each sequence $(y_i^k)_k$ converges to $0$. In particular, we also have:
\[   X_1y_1^k+\ldots +X_ny_n^k \rightarrow 0
\]
Now by Claim 2, such the sequence also converges to $v$. Hence, $v=0$.
\end{proof}

As a consequence  of the $\mathfrak n$-stability, we also have the stability of the Casselman-Jacquet submodule functor.

\begin{corollary} \label{cor cj functor stablize}
Let $\pi$ be in $\mathrm{Rep}^{\infty, F}(N)$. Then $\mathbf{CJ}^{\infty}_{s,P}\circ \mathbf{CJ}^{\infty}_{s,P}(\pi)=\mathbf{CJ}^{\infty}_{s,P}(\pi)$.
\end{corollary}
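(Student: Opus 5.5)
Set $\lambda=\mathbf{CJ}^{\infty}_{s,P}(\pi)$, a closed $N$-stable subspace of $\pi$, hence itself in $\mathrm{Rep}^{\infty,F}(N)$ with the subspace topology, so $\mathbf{CJ}^{\infty}_{s,P}(\lambda)$ makes sense. The plan is to prove by induction on $k$ that $\overline{\mathfrak n^k.\lambda}=\lambda$ for every $k\geq 0$. Intersecting over $k$ then gives $\mathbf{CJ}^{\infty}_{s,P}(\lambda)=\bigcap_k\overline{\mathfrak n^k.\lambda}=\lambda$.

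The case $k=0$ is trivial, and $k=1$ is exactly Corollary \ref{cor maximal stable quotient}: $\overline{\mathfrak n.\lambda}=\lambda$. For the inductive step, assume $\overline{\mathfrak n^k.\lambda}=\lambda$. Each $X\in\mathfrak n$ acts continuously on $\lambda$, since $\lambda$ is a smooth Fr\'echet representation of $N$. Continuity gives $X.\overline{\mathfrak n^k.\lambda}\subset\overline{X.\mathfrak n^k.\lambda}\subset\overline{\mathfrak n^{k+1}.\lambda}$. Hence
\[ \mathfrak n.\lambda=\mathfrak n.\overline{\mathfrak n^k.\lambda}\subset\overline{\mathfrak n^{k+1}.\lambda}, \]
and taking closures, $\lambda=\overline{\mathfrak n.\lambda}\subset\overline{\mathfrak n^{k+1}.\lambda}\subset\lambda$. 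This closes the induction.

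Here closures may be taken either in $\lambda$ or in $\pi$; they agree because $\lambda$ is closed in $\pi$. There is no serious obstacle: all the analytic difficulty sits in Corollary \ref{cor maximal stable quotient}, and the only point needing care is this continuity and closure bookkeeping. The same argument also shows that any $\mathfrak n$-stable $\pi$ satisfies $\mathbf{CJ}^{\infty}_{s,P}(\pi)=\pi$.
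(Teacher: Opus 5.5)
Your proposal is correct and follows the paper's proof: the paper also deduces $\overline{\mathfrak n^k.\mathbf{CJ}^{\infty}_{s}(\pi)}=\mathbf{CJ}^{\infty}_{s}(\pi)$ for every $k$ from Corollary \ref{cor maximal stable quotient} and then intersects over $k$. Your induction, which uses the continuity of the $\mathfrak n$-action, simply spells out the step from $k=1$ to general $k$ that the paper leaves implicit.
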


\begin{proof}
Corollary \ref{cor maximal stable quotient} implies that $\overline{\mathfrak n^k.\mathbf{CJ}_s^{\infty}(\pi)}=\mathbf{CJ}^{\infty}_s(\pi)$ for any $k \in \mathbb Z_{\geq 0}$, and this implies the corollary.
\end{proof}

\section{ Surjection onto the Casselman-Jacquet quotient} \label{s exact of cj}

We now consider the natural map from $\pi$ to $\mathbf{CJ}^{\infty}(\pi)$. We remark that such map is continuous by the universal property of the projective tensor product and the continuity of projections $\pi\rightarrow \pi/\overline{\mathfrak n^k.\pi}$ ($k \in \mathbb Z_{\geq 1}$). The key idea of proving such the natural map is surjective is that for the given element $(x_1,x_2, \ldots)$ in $\mathbf{CJ}^{\infty}_s(\pi)$, we choose suitable representatives $\widetilde{x}_i$ in $\pi$ with the property that the projection of $\widetilde{x}_i$ onto $\pi/\overline{\mathfrak n^i.\pi}$ is $x_i$. The good choices are determined by the seminorms in Proposition \ref{prop convergence of nilpotent action} and allow one to show that the sequence converges.

\begin{proposition} \label{prop surj cj quotient}
Let $P$ be a parabolic subgroup of $G$ and let $\pi$ be in $\mathrm{Rep}^{\infty, F}(P)$. 
\begin{enumerate}
\item The functor $\mathbf{CJ}^{\infty}$ is well-defined i.e. $\mathbf{CJ}^{\infty}(\pi)$ is of moderate growth. 
\item The natural map from $\pi$ to $\mathbf{CJ}^{\infty}(\pi)$ is surjective.
\item As representations in $\mathrm{Rep}^{\infty,F}(P)$, $ \pi/\mathbf{CJ}_s^{\infty}(\pi) \cong \mathbf{CJ}^{\infty}(\pi)$.
\end{enumerate}
\end{proposition}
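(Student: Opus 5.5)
The plan is to prove (2) first, deduce (3) from it with the open mapping theorem, and then get (1) from (3). Throughout, write $\lambda=\pi/\mathbf{CJ}_s^{\infty}(\pi)$. Since $\mathbf{CJ}_s^{\infty}(\pi)$ is closed and $P$-stable, $\lambda$ is a Hausdorff Fr\'echet quotient of $\pi$ in $\mathrm{Rep}^{\infty,F}(P)$. The natural map $\pi\to\mathbf{CJ}^{\infty}(\pi)$ is continuous and $P$-equivariant. Its kernel is exactly $\bigcap_k \overline{\mathfrak n^k.\pi}=\mathbf{CJ}_s^{\infty}(\pi)$, so it factors through a continuous injective $P$-map $\iota:\lambda\to\mathbf{CJ}^{\infty}(\pi)$.

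For (2), take $(x_1,x_2,\ldots)\in\varprojlim_k\pi/\overline{\mathfrak n^k.\pi}$ with $x_{k+1}\mapsto x_k$.
\begin{enumerate}
\item Choose any lift $\tilde x_1\in\pi$ of $x_1$, and build lifts $\tilde x_k\in\pi$ of $x_k$ inductively.
\item Given $\tilde x_k$, pick an arbitrary lift $y$ of $x_{k+1}$. Then $y-\tilde x_k\in\overline{\mathfrak n^k.\pi}$, and we will set $\tilde x_{k+1}=\tilde x_k+z_k$ for some $z_k\in\overline{\mathfrak n^k.\pi}$ with $z_k\equiv y-\tilde x_k \pmod{\overline{\mathfrak n^{k+1}.\pi}}$. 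The freedom is only in adding elements of $\overline{\mathfrak n^{k+1}.\pi}$.
\item It is cleaner to work in $\lambda$. By Corollary \ref{cor converging seq 0}, any sequence $w_k\in\overline{\mathfrak n^k.\pi}/\mathbf{CJ}_s^{\infty}(\pi)$ tends to $0$. Better, Proposition \ref{prop convergence of nilpotent action} gives increasing seminorms $q_j$ on $\lambda$ with $q_j$ vanishing on $\overline{\mathfrak n^k.\pi}/\mathbf{CJ}_s^\infty(\pi)$ for $k>k_j$.
\item So the images $[z_k]\in\lambda$ satisfy $q_j([z_k])=0$ once $k>k_j$. Hence the partial sums $[\tilde x_k]=[\tilde x_1]+\sum_{m<k}[z_m]$ are eventually constant for each $q_j$, and in particular Cauchy in $\lambda$.
\item By completeness of $\lambda$ they converge to some $u\in\lambda$. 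Lift $u$ to $\tilde u\in\pi$.
\item For fixed $k$, all $[\tilde x_m]$ with $m\ge k$ differ from $[\tilde x_k]$ by elements of the closed subspace $\overline{\mathfrak n^k.\pi}/\mathbf{CJ}_s^{\infty}(\pi)$ of $\lambda$. Thus $u-[\tilde x_k]$ lies there, so $\tilde u\equiv\tilde x_k$ modulo $\overline{\mathfrak n^k.\pi}$, and $\tilde u$ maps to $(x_k)$.
\end{enumerate}
This is where I expect the main subtlety. The limit has to be taken in $\lambda$, where the vanishing seminorms live, rather than in $\pi$, and one must check that closedness of $\overline{\mathfrak n^k.\pi}$ makes it pass to the limit. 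Proposition \ref{prop convergence of nilpotent action} is exactly what makes the sequence converge without any careful choice of the $z_k$.

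For (3), the map $\iota$ is now a continuous bijection between Fr\'echet spaces, hence a homeomorphism by the open mapping theorem, and it is $P$-equivariant. For (1), moderate growth of $\mathbf{CJ}^{\infty}(\pi)$ then follows from (3), since $\lambda$ is a Hausdorff quotient of a moderate growth representation (\cite[Lemma 2.9]{BK14}). Smoothness transfers along the topological isomorphism in the same way.
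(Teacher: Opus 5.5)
Your proof is correct and follows the paper's strategy. You prove (2) by using Proposition \ref{prop convergence of nilpotent action} to build a convergent sequence of lifts of $(x_k)$, and you identify the limit using the closedness of $\overline{\mathfrak n^k.\pi}$; then (1) and (3) follow from the open mapping theorem and the fact that Hausdorff quotients stay in $\mathrm{Rep}^{\infty,F}(P)$. The only difference is that you take the limit in $\lambda=\pi/\mathbf{CJ}_s^{\infty}(\pi)$, where the $q_j$ vanish exactly so arbitrary lifts already give a Cauchy sequence. The paper instead takes the limit in $\pi$ itself, choosing representatives with $p_s(y_{s+1})\leq 2^{-s}$, which implicitly requires transferring the vanishing from the $q_j$ on $\lambda$ to the quotient seminorms of the $p_i$; your version sidesteps that step.
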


\begin{proof}
Note that (1) and (3) follow from (2) and the open mapping theorem, and the fact that $\mathbf{CJ}^{\infty}_s(\pi)$ is the kernel of the natural map from $\pi$ to $\mathbf{CJ}^{\infty}(\pi)$. 

We now prove (2). Let $(x_1, x_2, \ldots)$ be in $\varprojlim_k \pi/\overline{\mathfrak n^k.\pi}$. We shall construct a convergent sequence $(y_n)$ in $\pi$ such that the limit of $y_n$ is projected to $(x_1, x_2, \ldots)$. 

Let $\left\{ p_i \right\}_{i \in \mathbb Z_{\geq 0}}$ be a countable family of seminorms defining the topology of $\pi$ satisfying:
\[ p_1 \leq p_2 \leq \ldots \]
By Proposition \ref{prop convergence of nilpotent action}, for each $i \in \mathbb Z_{\geq 0}$, there exists an integer $k_i$ such that the seminorm induced from $p_i$ vanishes on $\overline{\mathfrak n^{k_i}.\pi}/\overline{\mathfrak n^r.\pi}$ for any $r >k_i$. We may and shall further assume that $k_1<k_2<k_3<\ldots$. 

Let $\iota_{k_2}:\pi\rightarrow \pi/\overline{\mathfrak n^{k_2}.\pi}$. Let $y_1$ be an element in $\pi$ such that the projection of $y_1$ from $\pi$ to $\pi/\overline{\mathfrak n^{k_1}.\pi}$ is $x_{k_1}$. Then the element $x_{k_2}-\iota_{k_2}(y_1)$ is in $\overline{\mathfrak n^{k_1}.\pi}/\overline{\mathfrak n^{k_2}.\pi}$. Since the induced seminorm of $p_1$ vanishes on $\overline{\mathfrak n^{k_1}.\pi}/\overline{\mathfrak n^{k_2}.\pi}$, we can find an element $y_2$ in $\overline{\mathfrak n^{k_2}.\pi}$ such that the following hold: $p_1(y_2) \leq \frac{1}{2}$ and; the projection of $y_2$ from $\pi$ to $\pi/\overline{\mathfrak n^{k_2}.\pi}$ is $x_{k_2}-\iota_{k_2}(y_1)$.

We similarly and inductively construct elements $y_s$ such that the following conditions hold:
\begin{enumerate}
\item $p_s(y_{s+1}) \leq \frac{1}{2^s}$
\item the projection of $y_{s+1}$ from $\pi$ to $\pi/\overline{\mathfrak n^{k_s+1}.\pi}$ is $x_{k_{s+1}}-\iota_{k_{s+1}}(y_1-\ldots-y_s)$. 
\end{enumerate}

In
\[  d(y_{s+1}) =\sum_{i=1}^s\frac{1}{2^i}\cdot \frac{p_i(y_{s+1})}{1+p_i(y_{s+1})} + \sum_{i=s+1}^{\infty}\frac{1}{2^i}\cdot \frac{p_i(y_{s+1})}{1+p_i(y_{s+1})} , \]
the first summation is bounded by $\frac{1}{2^{s-1}}$ and the second summation is also bounded by $\frac{1}{2^{s}}$. Thus, $\{\sum_{i=1}^sy_i\}$ forms a Cauchy sequence and so converges in $\pi$. It remains to observe that the element $\sum_{i=1}^{\infty}y_i$ under the map from $\pi$ to $\mathbf{CJ}_P^{\infty}(\pi)$ is $(x_1,x_2, \ldots)$. 
\end{proof}

\section{Epiness or surjectivity under Casselman-Jacquet functors} \label{s epi cj sub functor}

\subsection{Epiness for Casselman-Jacquet submodules} \label{ss epiness of cj}

     Let $N$ be a unipotent radical of a parabolic subgroup of $G$. Note that, in $\mathrm{Rep}^{\infty, F}(N)$, a morphism $f: \pi_1 \rightarrow \pi_2$ is epi if and only if the image of $f$ is dense in $\pi_2$; and $f$ is monic if and only if $f$ is injective.

\begin{proposition} \label{prop monic epi property}
Let $\pi_1, \pi_2$ be in $\mathrm{Rep}^{\infty, F}(N)$.
\begin{enumerate}
\item If $f: \pi_1 \rightarrow \pi_2$ is monic (and so injective), then $\mathbf{CJ}^{\infty}_s(f)$ is also monic (and so injective).
\item If $f: \pi_1 \rightarrow \pi_2$ is epi, then  $\mathbf{CJ}_s ^{\infty}(f)$ is also epi.
\end{enumerate}
\end{proposition}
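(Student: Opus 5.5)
Part (1) is formal. Since $f$ is continuous and $N$-equivariant, $f(\mathfrak n^k.\pi_1)\subset \mathfrak n^k.\pi_2$, and continuity gives $f(\overline{\mathfrak n^k.\pi_1})\subset \overline{\mathfrak n^k.\pi_2}$. Hence $f$ maps $\mathbf{CJ}^{\infty}_s(\pi_1)$ into $\mathbf{CJ}^{\infty}_s(\pi_2)$, and $\mathbf{CJ}^{\infty}_s(f)$ is simply the restriction of $f$. A restriction of an injective map is injective, so it is monic.

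For (2), we must show that $f(\mathbf{CJ}_s^{\infty}(\pi_1))$ is dense in $\mathbf{CJ}_s^{\infty}(\pi_2)$. Let $\omega:=\overline{f(\mathbf{CJ}_s^{\infty}(\pi_1))}$, a closed $N$-stable subspace of $\lambda_2:=\mathbf{CJ}_s^{\infty}(\pi_2)$. First we push epiness down the filtration. Since $f(\pi_1)$ is dense in $\pi_2$, continuity gives $\overline{f(\mathfrak n^k.\pi_1)}=\overline{\mathfrak n^k.f(\pi_1)}\supset \mathfrak n^k.\pi_2$ (approximate $v$ by $f(u_j)$ and apply $X_{i_1}\cdots X_{i_k}$, which is continuous on $\pi_2$). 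Hence $f(\overline{\mathfrak n^k.\pi_1})$ is dense in $\overline{\mathfrak n^k.\pi_2}$ for every $k$. Next we use the surjectivity result, Proposition \ref{prop surj cj quotient}: it identifies $\pi_i/\mathbf{CJ}_s^{\infty}(\pi_i)$ with $\mathbf{CJ}^{\infty}(\pi_i)$. The map $\bar f:\pi_1/\lambda_1\to \pi_2/\omega$ is well defined and has dense image.

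The key step is to show $\lambda_2\subset\omega$. I would imitate the proof of Corollary \ref{cor maximal stable quotient}. Pass to $\pi':=\pi_2/\omega$ with projection $\mathrm{pr}$ and take $v\in \mathrm{pr}(\lambda_2)$; we want $v=0$. By Corollary \ref{cor maximal stable quotient}, $\lambda_2=\overline{\mathfrak n.\lambda_2}$, and for each $k$ we have $\lambda_2\subset\overline{\mathfrak n^k.\pi_2}=\overline{f(\overline{\mathfrak n^k.\pi_1})}$. So $v$ is a limit of elements $\mathrm{pr}f(u_k)$ with $u_k\in \overline{\mathfrak n^k.\pi_1}$.

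By Corollary \ref{cor converging seq 0} applied to $\pi_1$, the images of $u_k$ in $\pi_1/\lambda_1$ tend to $0$. We therefore choose representatives $u_k'=u_k-\ell_k$ with $\ell_k\in\lambda_1$ and $u_k'\to 0$ in $\pi_1$. Then $\mathrm{pr}f(u_k')=\mathrm{pr}f(u_k)$, since $f(\lambda_1)\subset\omega$. Continuity gives $\mathrm{pr}f(u_k')\to 0$, while the same sequence converges to $v$, so $v=0$.

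This argument is actually simpler than the stability proof: the role of Claim 2 is played by $f(\lambda_1)\subset\omega$, so no $\mathfrak n$-action bookkeeping is needed. The main obstacle is the approximation step: for fixed $k$ we must find $u_k$ with $\mathrm{pr}f(u_k)$ within $2^{-k}$ of $v$. This needs density of $f(\overline{\mathfrak n^k.\pi_1})$ in a set containing $\lambda_2$, which the first paragraph of (2) supplies. A diagonal choice over $k$, as in Claim 1, then finishes the argument.
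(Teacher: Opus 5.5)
Your proof is correct and follows essentially the same route as the paper: approximate an element of $\lambda_2$ modulo $\overline{f(\lambda_1)}$ by images of elements of $\overline{\mathfrak n^k.\pi_1}$, then use Corollary \ref{cor converging seq 0} on $\pi_1/\lambda_1$ together with continuity of the induced map to force that element to vanish. Your explicit density step $\overline{f(\overline{\mathfrak n^k.\pi_1})}=\overline{\mathfrak n^k.\pi_2}$ is a welcome precision, since the paper's ``take any lift'' tacitly treats $f$ as surjective when it is only epi; and, as you observe, the appeal to $\mathfrak n$-stability of $\lambda_2$ is not actually needed.
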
  

\begin{proof}
(1) is clear as the induced map is simply restricted to $\mathbf{CJ}_s^{\infty}(\pi_1)$ and so is still injective.

We now consider (2). Let $f: \pi_1\rightarrow \pi_2$ be an epimorphism in $\mathrm{Rep}^{\infty, F}(N)$. Let 
\[   \lambda_1=\bigcap_{k=0}^{\infty} \overline{\mathfrak n^k.\pi_1}, \quad \lambda_2=\bigcap_{k=0}^{\infty} \overline{\mathfrak n^k.\pi_2} .
\]
Since $f(\overline{\mathfrak n^k.\pi_1}) \subset \overline{\mathfrak n^k.\pi_2}$ for all $k$, we have:
\[ f(\lambda_1)\subset \lambda_2 .
\]
In order to show that $\mathbf{CJ}_s ^{\infty}(f)$ is epi, it suffices to show that $\overline{f(\lambda_1)}=\lambda_2$ or equivalently $\lambda_2/\overline{f(\lambda_1)}=0$. 

We consider an induced map $\widetilde{f}$ from $\pi_1/\lambda_1$ to $\pi_2/\overline{f(\lambda_1)}$. Let $x \in \lambda_2/\overline{f(\lambda_1)} \subset \pi_2/\overline{f(\lambda_1)}$. We want to show that $x=0$. \\

\ \\

\noindent
{\it Claim:} For any $k \in \mathbb Z_{\geq 0}$, there exists $\widetilde{x}_k \in \overline{\mathfrak n^k.\pi_1}/\lambda_1$ such that $d(f(\widetilde{x}_k),x)<\frac{1}{2^k}$, where $d$ is the function defining the metric on $\pi_2/\overline{f(\lambda_1)}$. 

\noindent
{\it Proof of Claim:} By the $\mathfrak n$-stable property on $\lambda_2$ (Corollary \ref{cor maximal stable quotient}), for any $k \geq 0$,
\[   x \in \overline{\mathfrak n^k.\lambda_2}/\overline{f(\lambda_1)} .
\]

Thus, there exists $x_k$ in $(\mathfrak n^k.\lambda_2)/\overline{f(\lambda_1)}$ such that 
\[    d(x_k,x)<\frac{1}{2^k} .
\]
Then, one takes $\widetilde{x}_k$ to be any lift of $x_k$ in $\overline{\mathfrak n^k.\pi_1}/\lambda_1$ and so satisfies the required property of the claim. \\

We go back to the proof of the proposition. It follows from Proposition \ref{prop convergence of nilpotent action} that the sequence $(\widetilde{x}_k)$ in the Claim converges to $0$. Hence, $x=\lim_k f(\widetilde{x}_k)=0$, as desired.
\end{proof}

\subsection{Surjectivity and epiness under Casselman-Jacquet quotient functor}

We now show a preservation of surjectivity under the Casselman-Jacquet quotient functor. The key is to deduce from the surjectivity result in Proposition \ref{prop surj cj quotient}.

\begin{proposition} \label{prop surj cj quotient functor}
Let $f: \pi_1 \rightarrow \pi_2$ be a surjective (resp. epi) morphism in $\mathrm{Rep}^{\infty,F}(G)$. Let $P$ be a parabolic subgroup of $G$. Then $\mathbf{CJ}_P^{\infty}(f)$ is also surjective (resp. epi).
\end{proposition}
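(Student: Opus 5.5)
The plan is to use the commutative square formed by $f$, the two natural projections $q_i:\pi_i\rightarrow \mathbf{CJ}_P^{\infty}(\pi_i)$, and the induced map $\mathbf{CJ}_P^{\infty}(f)$. First we check that $\mathbf{CJ}_P^{\infty}(f)$ is well-defined and that the square commutes. Since $f$ is continuous and $\mathfrak n$-equivariant, we have $f(\mathfrak n^k.\pi_1)\subset \mathfrak n^k.\pi_2$. By continuity this gives $f(\overline{\mathfrak n^k.\pi_1})\subset \overline{\mathfrak n^k.\pi_2}$. Hence $f$ induces continuous maps $\pi_1/\overline{\mathfrak n^k.\pi_1}\rightarrow \pi_2/\overline{\mathfrak n^k.\pi_2}$ that are compatible in $k$, and therefore a continuous $P$-equivariant map on the inverse limits satisfying
\[ \mathbf{CJ}_P^{\infty}(f)\circ q_1 = q_2\circ f . \]

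Surjective case. By Proposition \ref{prop surj cj quotient}(2), the map $q_2$ is surjective. If $f$ is surjective, then $q_2\circ f$ is surjective, so $\mathbf{CJ}_P^{\infty}(f)\circ q_1$ is surjective. This forces $\mathbf{CJ}_P^{\infty}(f)$ to be surjective.

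Epi case. Here $f(\pi_1)$ is dense in $\pi_2$. The map $q_2$ is continuous and surjective, and a continuous map sends dense subsets onto dense subsets of its image. So $q_2(f(\pi_1))$ is dense in $q_2(\pi_2)=\mathbf{CJ}_P^{\infty}(\pi_2)$. But $q_2(f(\pi_1))=\mathbf{CJ}_P^{\infty}(f)(q_1(\pi_1))$, which lies inside the image of $\mathbf{CJ}_P^{\infty}(f)$. Hence that image is dense, i.e.\ $\mathbf{CJ}_P^{\infty}(f)$ is epi.

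The only nontrivial input is the surjectivity of $q_2$ from Proposition \ref{prop surj cj quotient}. Without it, $\mathbf{CJ}_P^{\infty}(\pi_2)$ could be strictly larger than the image of $\pi_2$, and neither argument would go through. The point to double-check is that Proposition \ref{prop surj cj quotient} applies to $\pi_2$ regarded in $\mathrm{Rep}^{\infty,F}(P)$ by restriction. This holds because restriction from $G$ to $P$ preserves smoothness, the Fr\'echet property and moderate growth, the scale on $P$ being bounded by the restriction of the scale on $G$.
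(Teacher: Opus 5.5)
Your proof is correct and is essentially the paper's argument. The paper also lifts $y\in\mathbf{CJ}_P^{\infty}(\pi_2)$ to some $x\in\pi_2$ using Proposition \ref{prop surj cj quotient}, then to $\widetilde{x}\in\pi_1$ using surjectivity of $f$, which is your commutative-square argument written elementwise; your density argument for the epi case spells out what the paper only indicates by ``taking the closure.''
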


\begin{proof}
We only prove the statement for the surjectivity part, and the epiness part is quite similar by taking the closure. Let $f: \pi_1\rightarrow \pi_2$ be surjective. Let $y \in \mathbf{CJ}^{\infty}(\pi_2)$. Proposition \ref{prop surj cj quotient} implies that there exists $x \in \pi_2$ such that 
\[ y=(x, x, \ldots ) \in \varprojlim_k \pi_2/\overline{\mathfrak n^k.\pi_2} .
\]
Here each $x$ is considered as the projection of $x$ from $\pi_2$ to the corresponding $\pi_2/\overline{\mathfrak n^k.\pi_2}$. 

The surjectivity of $f$ implies that there exists $\widetilde{x} \in \pi_1$ such that $f(\widetilde{x})=x$. The element $(\widetilde{x}, \widetilde{x}, \ldots)$ in $\mathbf{CJ}_P^{\infty}(\pi_1)$ is then mapped to $y$ under the map $\mathbf{CJ}_P^{\infty}(f)$, as desired.
 \end{proof}

\subsection{Non-injectivity of Casselman-Jacquet quotient functor}

\begin{example} \label{ex small seminorm family} (Failure of preserving injectivity of $\mathbf{CJ}^{\infty}$)
Let $C^{\infty}(\mathbb R^{\times})$ be the space of smooth functions from $\mathbb R^{\times}$ to $\mathbb C$. We equip the space with the topology induced from the family of seminorms given by: for $r, s \in \mathbb Z_{\geq 0}$, and for $f \in C^{\infty}(\mathbb R^{\times})$,
\[  p_{r,s}(f) := \mathrm{sup}_{x \in \mathbb R^{\times}}\left\{ x^{r+s}\frac{d^sf}{dx^s}(x) \right\} .
\]
Let $\pi_1$ be the subspace of $C^{\infty}(\mathbb R^{\times})$ such that for any $r,s \in \mathbb Z_{\geq 0}$, $p_{r,s}(f)<\infty$ and 
\[  \lim_{x\rightarrow 0^+} x^{r+s}\frac{d^sf}{dx^s} =\lim_{x\rightarrow 0^-}x^{r+s}\frac{d^sf}{dx^s} \quad \mbox{exists} . \]
The conditions on the limit at $0$ imply that for $f \in \pi_1$, $x^{r+s}\frac{d^sf}{dx^s}$ has to be uniformly continuous in a neighborhood of $0$. Using the standard fact that the limit of any uniform convergent sequence of uniformly continuous functions on an open interval in $\mathbb R^{\times}$ is still uniformly continuous, we have that the space $\pi_1$ is complete under the seminorms $p_{r,s}$. 

Let $M_2$ be the mirabolic subgroup of $\mathrm{GL}_2(\mathbb R)$. We equip $\pi_1$ with the $M_2$-action given by:
\[  \left( \begin{pmatrix} 1 & a \\ 0 & 1 \end{pmatrix}.f \right)(x)=e^{-2\pi\sqrt{-1}xa} f(x), \quad  \left( \begin{pmatrix} t & \\ & 1     \end{pmatrix}.f\right)(x)=f(tx) .
\]
It follows from definitions that the $M_2$-representation $\pi_1$ is smooth and of moderate growth.

Let $\pi_2=\mathcal S(\mathbb R)$ be as in Example \ref{ex schwartz function M2}. Note that $\pi_2$ naturally sits inside $\pi_1$, and the inclusion is continuous.

Let $N=\left\{ \begin{pmatrix} 1 & x \\ 0 & 1 \end{pmatrix} : x \in \mathbb R\right\}$ and let $\mathfrak n=\mathrm{Lie}(N)$. Then $\bigcap_{k=1}^{\infty}\mathfrak n^k.\pi_1$ is the space $\mathcal S(\mathbb R^{\times})$, as vector spaces. However, $\mathcal S(\mathbb R^{\times})$ does not form a complete subspace in $\pi_1$ under the topology defined by seminorms $p_{r,s}$. Indeed, we can consider the following example: 
\[ f_n(x)=\left\{ \begin{array}{cc} xe^{-(x^{\frac{-1}{n}}+(1-x)^{-1})} & \mbox{ if $0<x<1$} \\
                                      0   &  \mbox{if $x \geq 1$ or $x < 0$ }
                                      \end{array} \right.
\]
Note that the sequence $(f_n)$ converges to 
\[  f(x)= \left\{ \begin{array}{cc} xe^{-(1+(1-x)^{-1})} & \mbox{ if $0<x<1$} \\
                                         0 & \mbox{ if $x\geq 1$ or $x< 0$ } \end{array} \right.
\]
under the seminorm $\left\{ p_{r,s}\right\}$. Each $f_n$ is in $\mathcal S(\mathbb R^{\times})$, but their limit $f$ is not in $\mathcal S(\mathbb R^{\times})$. This explains why the space $\mathcal S(\mathbb R^{\times})$ is not complete in $\pi_1$ under the topology from $\left\{ p_{r,s}\right\}$. 

Now the restriction map to $\mathbb R^{\times}$, $\iota:\pi_2 \rightarrow \pi_1$, is well-defined and continuous. The image of $f\in \mathcal{S}(\mathbb{R^{\times}})$ in $\mathbf{CJ}_{N}^{\infty}(\pi_1)$ is zero, but the image of $f$ (extended to $0$) in $\mathbf{CJ}_{N}^{\infty}(\pi_2)$ is non-zero. Hence, $\mathbf{CJ}_{N}^{\infty}(\iota)$ is not injective.
\end{example}

\section{Transitivity of Casselman-Jacquet functors} \label{s transitive cj}

\begin{lemma} \label{lem two isomorphisms}
Let $\pi$ be in $\mathrm{Rep}^{\infty, F}(G)$. Let $Q\subset P\subset G$ be parabolic subgroups of $G$. Let $\mathfrak n_P=\mathrm{Lie}(N_P)$ and let $\mathfrak n_Q=\mathrm{Lie}(N_Q)$. Let $\sigma=\mathbf{CJ}^{\infty}_P(\pi)$. Let $\mathrm{pr}: \pi \rightarrow \sigma$ be the projection map. Then, for any $k \in \mathbb Z_{\geq 0}$,  
\[ \mathrm{pr}(\overline{\mathfrak n_Q^k.\pi}) =\overline{\mathfrak n_Q^k.\sigma}, \quad  \mbox{ and } \quad  \pi/(\overline{\mathfrak n_Q^k.\pi}) \cong \sigma/(\overline{\mathfrak n_Q^k.\sigma}) .
\]

\end{lemma}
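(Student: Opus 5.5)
By Proposition \ref{prop surj cj quotient}, $\mathrm{pr}:\pi\to\sigma$ is a continuous surjection with kernel $\lambda:=\mathbf{CJ}^{\infty}_{s,P}(\pi)=\bigcap_k\overline{\mathfrak n_P^k.\pi}$. By the open mapping theorem it is a topological quotient map, i.e.\ $\sigma\cong\pi/\lambda$. The whole lemma then reduces to a single containment,
\[ \lambda\subset \overline{\mathfrak n_Q^k.\pi}\quad\text{for every }k. \]
Since $Q\subset P$, we have $N_P\subset N_Q$, so $\mathfrak n_P\subset\mathfrak n_Q$. Hence $\mathfrak n_P^k.\pi\subset\mathfrak n_Q^k.\pi$, and therefore $\lambda\subset\overline{\mathfrak n_P^k.\pi}\subset\overline{\mathfrak n_Q^k.\pi}$. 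This step is formal.

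For the first equality, surjectivity and equivariance of $\mathrm{pr}$ give $\mathrm{pr}(\mathfrak n_Q^k.\pi)=\mathfrak n_Q^k.\sigma$. Continuity then gives $\mathrm{pr}(\overline{\mathfrak n_Q^k.\pi})\subset\overline{\mathfrak n_Q^k.\sigma}$. For the reverse inclusion, let $W=\overline{\mathfrak n_Q^k.\pi}$. This is a closed subspace containing $\lambda=\ker\mathrm{pr}$, so $W=\mathrm{pr}^{-1}(\mathrm{pr}(W))$. Because $\mathrm{pr}$ is a quotient map, $\mathrm{pr}(W)$ is closed in $\sigma$. It contains $\mathfrak n_Q^k.\sigma$, hence also its closure. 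This gives $\mathrm{pr}(\overline{\mathfrak n_Q^k.\pi})=\overline{\mathfrak n_Q^k.\sigma}$.

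For the isomorphism, compose $\pi\to\sigma\to\sigma/\overline{\mathfrak n_Q^k.\sigma}$. This map is continuous and surjective, and by the equality above its kernel is $\mathrm{pr}^{-1}(\overline{\mathfrak n_Q^k.\sigma})=W$. So it induces a continuous bijection $\pi/W\to\sigma/\overline{\mathfrak n_Q^k.\sigma}$. Both sides are Fréchet, so the open mapping theorem makes this bijection a topological isomorphism. It is equivariant for the relevant group, namely the one acting on both sides, which contains $N_Q$ and the appropriate Levi part.

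The only non-formal input is the surjectivity of $\pi\to\mathbf{CJ}^\infty_P(\pi)$ and the identification $\sigma\cong\pi/\mathbf{CJ}^\infty_{s,P}(\pi)$, both from Proposition \ref{prop surj cj quotient}. Without these, $\mathrm{pr}(W)$ need not be closed and the argument would fail. Given that proposition, I expect no step to be a real obstacle; the point needing the most care is the closedness of the image of a saturated closed set under a quotient map.
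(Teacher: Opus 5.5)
Your proposal is correct and follows essentially the same route as the paper. Both arguments use the surjectivity $\pi\to\mathbf{CJ}^{\infty}_P(\pi)$ from Proposition \ref{prop surj cj quotient} and the containment $\mathbf{CJ}^{\infty}_{s,P}(\pi)\subset\overline{\mathfrak n_P^k.\pi}\subset\overline{\mathfrak n_Q^k.\pi}$ to get closedness of $\mathrm{pr}(\overline{\mathfrak n_Q^k.\pi})$, and then identify $\pi/\overline{\mathfrak n_Q^k.\pi}$ with $\sigma/\overline{\mathfrak n_Q^k.\sigma}$ through the quotient by $\mathbf{CJ}^{\infty}_{s,P}(\pi)$; you additionally spell out the saturated-closed-set argument and the open mapping step, which the paper leaves implicit.
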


\begin{proof}
We first show the former equality. Since $\mathrm{pr}$ is surjective (Proposition \ref{prop surj cj quotient}), we also have $ \mathrm{pr}(\mathfrak n_Q^k.\pi) =\mathfrak n_Q^k.\sigma$ for any $k \in \mathbb Z_{\geq 0}$. Since $\mathbf{CJ}_{s,P}^{\infty}(\pi) \subset \overline{\mathfrak n_Q^k.\pi}$, we have that $\mathrm{pr}(\overline{\mathfrak n_Q^k.\pi})$ is also closed. Thus, we have the former equality.


We now show the latter isomorphism. Since $\mathbf{CJ}^{\infty}_{s,P}(\pi)\subset \overline{\mathfrak n_P^k.\pi}\subset \overline{\mathfrak n_Q^k.\pi}$, 
\[  \frac{\pi}{\overline{\mathfrak n_Q^k.\pi}} \cong \frac{\pi/\mathbf{CJ}^{\infty}_{s,P}(\pi)}{\overline{\mathfrak n_Q^k.\pi}/\mathbf{CJ}^{\infty}_{s,P}(\pi)} .
\]
The latter isomorphism follows from $\sigma =\pi/\mathbf{CJ}^{\infty}_{s,P}(\pi)$ and the first equality.
\end{proof}

\begin{proposition} (Transitivity of CJ functor) \label{prop trans CJ}
Let $\pi$ be in $\mathrm{Rep}^{\infty, F}(G)$. Let $Q\subset P\subset G$ be parabolic subgroups of $G$. Then
\[   \mathbf{CJ}_Q^{\infty}\circ \mathbf{CJ}_P^{\infty}(\pi) \cong \mathbf{CJ}_Q^{\infty}(\pi) .
\]
\end{proposition}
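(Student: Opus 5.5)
The plan is to deduce the statement directly from Lemma \ref{lem two isomorphisms} by passing to inverse limits. Put $\sigma=\mathbf{CJ}^{\infty}_P(\pi)$ and let $\mathrm{pr}:\pi\to\sigma$ be the natural map. By Proposition \ref{prop surj cj quotient}, $\mathrm{pr}$ is a surjective continuous $P$-equivariant map. By the open mapping theorem it identifies $\sigma$ with $\pi/\mathbf{CJ}^{\infty}_{s,P}(\pi)$. In particular $\sigma$ lies in $\mathrm{Rep}^{\infty,F}(P)$, hence in $\mathrm{Rep}^{\infty,F}(Q)$, so $\mathbf{CJ}^{\infty}_Q(\sigma)$ makes sense as a variant with $N_Q\subset P$.

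First, for each $k$ the map $\mathrm{pr}$ induces a continuous $Q$-equivariant map
\[ \phi_k:\pi/\overline{\mathfrak n_Q^k.\pi}\to \sigma/\overline{\mathfrak n_Q^k.\sigma}. \]
This map is well defined because $\mathrm{pr}(\overline{\mathfrak n_Q^k.\pi})\subset\overline{\mathfrak n_Q^k.\sigma}$. Lemma \ref{lem two isomorphisms} shows that $\phi_k$ is a bijection. The reason is that $\mathbf{CJ}^{\infty}_{s,P}(\pi)\subset\overline{\mathfrak n_P^k.\pi}\subset\overline{\mathfrak n_Q^k.\pi}$, using $\mathfrak n_P\subset\mathfrak n_Q$. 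Hence the preimage of $\overline{\mathfrak n_Q^k.\sigma}=\mathrm{pr}(\overline{\mathfrak n_Q^k.\pi})$ under $\mathrm{pr}$ is exactly $\overline{\mathfrak n_Q^k.\pi}$. Both sides are Fréchet, so $\phi_k$ is a topological isomorphism by the open mapping theorem.

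Second, the $\phi_k$ are compatible with the transition maps of the two inverse systems. Both transition maps are the natural projections $\pi/\overline{\mathfrak n_Q^{k+1}.\pi}\to\pi/\overline{\mathfrak n_Q^{k}.\pi}$ and similarly for $\sigma$, and $\phi_k$ is induced by the single map $\mathrm{pr}$. Hence the $\phi_k$ assemble into a $Q$-equivariant topological isomorphism of the inverse limits:
\[ \mathbf{CJ}^{\infty}_Q(\pi)=\varprojlim_k \pi/\overline{\mathfrak n_Q^k.\pi}\;\cong\;\varprojlim_k\sigma/\overline{\mathfrak n_Q^k.\sigma}=\mathbf{CJ}^{\infty}_Q(\sigma). \]
Here the topology on each inverse limit is the inverse limit topology, and a compatible system of topological isomorphisms induces a topological isomorphism on the limit.

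The step requiring the most care is the closedness claim inside Lemma \ref{lem two isomorphisms}. This is the statement that $\mathrm{pr}(\overline{\mathfrak n_Q^k.\pi})$ is closed and equals $\overline{\mathfrak n_Q^k.\sigma}$. It relies on the surjectivity of Proposition \ref{prop surj cj quotient} and on the containment of the kernel $\mathbf{CJ}^{\infty}_{s,P}(\pi)$ in $\overline{\mathfrak n_Q^k.\pi}$. Since that lemma is already available, the rest is formal. The only remaining check is that the $Q$-actions match, which holds because $\mathrm{pr}$ is $P$-equivariant and $Q\subset P$.
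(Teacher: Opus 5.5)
Your proposal is correct and is the same argument as the paper's, which simply cites the levelwise isomorphisms $\pi/\overline{\mathfrak n_Q^k.\pi}\cong\sigma/\overline{\mathfrak n_Q^k.\sigma}$ from Lemma \ref{lem two isomorphisms}. You additionally spell out the step the paper leaves implicit: these isomorphisms are all induced by the single map $\mathrm{pr}$, so they are compatible with the transition maps and pass to a topological isomorphism of the inverse limits.
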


\begin{proof}
This follows from the second assertion in Lemma \ref{lem two isomorphisms}.
\end{proof}

\section{Alternate descriptions of Casselman-Jacquet submodule functor} \label{s description intersect cj}

\subsection{Nakayama lemma for $\mathrm{Rep}^{\infty, F}(N)$}

One may regard the following proposition as an analogue of the Nakayama lemma for smooth representations over the local ring $\mathbb C[[\mathfrak n^*]]$.

\begin{proposition} \label{prop zero from stable}
Let $N$ be the unipotent radical of a parabolic subgroup of $G$ and let $\mathfrak n=\mathrm{Lie}(N)$. Let $\pi$ be in $\mathrm{Rep}^{\infty,F}(N)$. Suppose the following two conditions hold:
\begin{enumerate}
\item $\pi$ is $\mathfrak n$-stable; and
\item $\mathcal S(\mathfrak n^* \setminus \left\{ 0 \right\}).\pi=0$.
\end{enumerate}
Then $\pi=0$.
\end{proposition}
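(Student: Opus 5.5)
The plan is to show that condition (2) forces $\pi$ to coincide with its own Casselman--Jacquet quotient, on which $\mathfrak n$ acts ``topologically nilpotently''. Condition (1) then leaves no room for anything nonzero.

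First, I would show that $\mathbf{CJ}^{\infty}_s(\pi)=0$. By (1), $\overline{\mathfrak n.\pi}=\pi$. Induction on $k$ then gives $\overline{\mathfrak n^k.\pi}=\pi$ for all $k$, since
\[
\overline{\mathfrak n^{k+1}.\pi}\supset \overline{\mathfrak n.\overline{\mathfrak n^{k}.\pi}}=\overline{\mathfrak n.\pi}=\pi
\]
by continuity of the $\mathfrak n$-action. Hence $\mathbf{CJ}^{\infty}_s(\pi)=\pi$, and this alone does not force $\pi=0$. So the strategy is to rerun the proof of Lemma \ref{lem non-dgenerate quotient} directly on $\pi$ rather than on $\pi/\mathbf{CJ}^\infty_s(\pi)$. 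The only input that lemma used about its target $\lambda$ was that $\mathcal S(\mathfrak n^*\setminus\{0\}).\lambda=0$. Here this is exactly hypothesis (2).

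Concretely, Fourier transforming the non-degenerate action $\mathcal S(\mathfrak n)\widehat\otimes\pi\to\pi$ (Proposition \ref{prop nondeg schwartz}) gives a surjection $s:\mathcal S(\mathfrak n^*)\widehat\otimes\pi\to\pi$. By (2), $s$ vanishes on $\mathcal S(\mathfrak n^*\setminus\{0\})\otimes\pi$, hence by continuity on its closure in $\mathcal S(\mathfrak n^*)\widehat\otimes\pi$. Using the Borel sequence (\ref{eqn borel lemma}) and \cite[Proposition 43.9]{Tr06}, the closure of this subspace is the kernel of $\mathcal S(\mathfrak n^*)\widehat\otimes\pi\to\mathbb C[[\mathfrak n^*]]\widehat\otimes\pi$. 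Therefore $s$ factors through a continuous surjection $s_0:\mathbb C[[\mathfrak n^*]]\widehat\otimes\pi\to\pi$.

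Next, I would repeat the argument of Proposition \ref{prop convergence of nilpotent action} verbatim. By the open mapping theorem, $\pi$ carries the quotient seminorms $\widetilde{p_i\otimes q_j}$. For fixed $i$, Lemma \ref{lem intersect jacquet} gives $k$ such that $p_i$ vanishes on $\mathfrak n^k.\mathbb C[[\mathfrak n^*]]$. The map $s_0$ is $\mathfrak n$-equivariant, so $\mathfrak n^k.\pi$ is the image of $\mathfrak n^k.(\mathbb C[[\mathfrak n^*]]\widehat\otimes\pi)$. Hence $\widetilde{p_i\otimes q_j}$ vanishes on $\mathfrak n^k.\pi$, and by continuity on $\overline{\mathfrak n^k.\pi}=\pi$. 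Every seminorm in a defining family of the Hausdorff space $\pi$ is therefore zero, so $\pi=0$.

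The main obstacle is the factorization step. One must check that the projective tensor product sequence
\[
0\to\mathcal S(\mathfrak n^*\setminus\{0\})\widehat\otimes\pi\to\mathcal S(\mathfrak n^*)\widehat\otimes\pi\to\mathbb C[[\mathfrak n^*]]\widehat\otimes\pi\to 0
\]
is exact in the middle, so that $s_0$ is well defined. This is where nuclearity of the Schwartz spaces (or, as in the paper, the Tr\`eves result cited in Lemma \ref{lem non-dgenerate quotient}) enters. A second point is the equivariance under $\mathfrak n$ of the passage to $\mathbb C[[\mathfrak n^*]]$, where the $\mathfrak n$-action on $\mathcal S(\mathfrak n^*)$ is transported from the action on $\mathcal S(\mathfrak n)$ by the Fourier transform. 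It preserves $\mathcal S(\mathfrak n^*\setminus\{0\})$ by Lemma \ref{lem n-invariant under schwartz}, hence descends to $\mathbb C[[\mathfrak n^*]]$.
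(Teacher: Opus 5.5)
Your proposal is correct and follows essentially the same route as the paper. Both arguments produce the continuous surjection $\mathbb C[[\mathfrak n^*]]\widehat\otimes\pi\to\pi$ as in Lemma \ref{lem non-dgenerate quotient}, realize the topology of $\pi$ by the quotient seminorms $\widetilde{p_i\otimes q_j}$, use Lemma \ref{lem intersect jacquet} to see that these vanish on $\mathfrak n^k.\pi$ for large $k$, and conclude from $\overline{\mathfrak n^k.\pi}=\pi$. Your last step, via $\mathfrak n$-equivariance of $s_0$ and continuity of the seminorm, is a tidier version of the paper's. The only blemish is the opening slip announcing $\mathbf{CJ}^{\infty}_s(\pi)=0$, when what you actually prove and use, correctly, is $\mathbf{CJ}^{\infty}_s(\pi)=\pi$.
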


\begin{proof}


By the same argument in the proof of Lemma \ref{lem non-dgenerate quotient}, the condition (2) implies that there exists a continuous surjective map:
\[  \mathbb C[[\mathfrak n^*]] \widehat{\otimes} \pi \rightarrow \pi . \]
Let $\left\{ q_j\right\}$ be a countable family of seminorms defining the topology of $\pi$. Also, the topology on $\mathbb C[[\mathfrak n^*]]$ is determined by a family of the seminorms $\left\{ p_i \right\}$ as discussed in the proof of Proposition \ref{prop convergence of nilpotent action}.

Recall that the topology on $\mathbb C[[\mathfrak n^*]] \widehat{\otimes} \pi$ is defined in Section \ref{ss projective norm}. Let $x \in \pi$. It suffices to show that $p_i\otimes q_j(x)=0$ for all $i$ and $j$. By Lemma \ref{lem intersect jacquet}, we can now choose sufficiently large $k$ such that $p_i$ vanishes on $\mathfrak n^k.\mathbb C[[\mathfrak n^*]]$. By the given assumption, there exists a sequence $( x_n )$ in $\mathfrak n^k.\pi$ converging to $x$. Now, it suffices to show that $p_i\otimes q_j(x_n)=0$ for each $n$. By the $\mathfrak n$-stable condition that $\mathbf{CJ}^{\infty}_s(\pi)=\pi$, there exist monomials $P_1,\ldots, P_r$ in $\mathbb C[\mathfrak n^*]$ with degree $k$ and elements $y_1,\ldots, y_r$ in $\pi$ such that
\[  x_n=P_1.y_1+\ldots +P_r.y_r  .\]
 
 Note that 
\[   (p_i\otimes q_j)(\sum_{t=1}^r P_t\otimes y_t) =0 . \]
Recall that, by the open mapping theorem, we can identify $\pi$ with the quotient 
\[(\mathbb C[[\mathfrak n^*]] \widehat{\otimes}\pi)/\kappa , \] 
where $\kappa$ is the kernel of the quotient map from $\mathbb C[[\mathfrak n^*]]\widehat{\otimes}\pi$ to $\pi$. Since $P_1\otimes y_1+\ldots +P_r\otimes y_r$ projects to $x_n$ by the above action map, the quotient seminorm $\widetilde{p_i \otimes q_j}$ on $x_n$ is zero. Since our choices of $i$ and $j$ are arbitrary, this implies that $x_n$ vanishes for all the seminorms $\widetilde{p_i\otimes q_j}$ defining the topology on $\pi$. Thus, $x_n=0$ and so $x=0$.
\end{proof}

\subsection{Casselman-Jacquet submodules and Schwartz algebra actions}

\begin{corollary} \label{cor closedness and stable prop}
Let $\pi$ be in $\mathrm{Rep}^{\infty, F}(N)$.  Then  
\[ \mathbf{CJ}_{s, N}^{\infty}(\pi) =  \overline{\mathcal S(\mathfrak n^* \setminus \left\{ 0 \right\}).\pi} = \overline{\bigcap_{k=0}^{\infty} \mathfrak n^k.\pi} .  \]
\end{corollary}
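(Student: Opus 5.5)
We prove the chain of inclusions
\[ \overline{\mathcal S(\mathfrak n^*\setminus\{0\}).\pi}\subset \overline{\textstyle\bigcap_{k}\mathfrak n^k.\pi}\subset \mathbf{CJ}^{\infty}_{s,N}(\pi) \]
and then close the loop with the Nakayama-type Proposition \ref{prop zero from stable}. The first two inclusions are essentially contained in the proof of Lemma \ref{lem inclusion}. There it is shown that $\mathcal S(\mathfrak n^*\setminus\{0\}).\pi\subset \bigcap_k \mathfrak n^k.\pi$, via $\bigcap_k\mathcal C_k\subset \mathfrak n^k.\bigcap_k\mathcal C_k$ (Lemma \ref{lem n-invariant under schwartz}) and the $\mathfrak n$-equivariance and surjectivity of $a$. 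Also $\bigcap_k\mathfrak n^k.\pi\subset\bigcap_k\overline{\mathfrak n^k.\pi}$ trivially. Taking closures, and using that $\mathbf{CJ}^\infty_s(\pi)$ is closed, gives both inclusions.

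For the reverse inclusion, set $\lambda=\mathbf{CJ}^{\infty}_s(\pi)$ and $\mu=\overline{\mathcal S(\mathfrak n^*\setminus\{0\}).\pi}\subset\lambda$. Note $\mu$ is a closed $N$-stable subspace, since $\mathcal S(\mathfrak n^*\setminus\{0\})$ (equivalently $\bigcap_k\mathcal C_k$) is $N$-invariant and $a$ is $N$-equivariant. Hence $\omega:=\lambda/\mu$ lies in $\mathrm{Rep}^{\infty,F}(N)$, and we apply Proposition \ref{prop zero from stable} to $\omega$. Condition (1): by Corollary \ref{cor maximal stable quotient}, $\overline{\mathfrak n.\lambda}=\lambda$, and the image of a dense subspace under the continuous surjection $\lambda\to\omega$ is dense, so $\overline{\mathfrak n.\omega}=\omega$. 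Condition (2): the $\mathcal S(\mathfrak n)$-action commutes with the quotient map, so for $f\in\bigcap_k\mathcal C_k$ and $w\in\lambda$ we get $a(f\otimes w)\in\mathcal S(\mathfrak n^*\setminus\{0\}).\pi\subset\mu$. By continuity of $a$ on the completed tensor product, $a\big((\bigcap_k\mathcal C_k)\widehat\otimes\lambda\big)\subset\mu$. Here one should check that $(\bigcap_k\mathcal C_k)\widehat{\otimes}\lambda$ maps into $(\bigcap_k\mathcal C_k)\widehat\otimes\pi$, or argue directly by density of the algebraic tensor product. Moreover $(\bigcap_k\mathcal C_k)\widehat\otimes\omega$ is a quotient of $(\bigcap_k\mathcal C_k)\widehat\otimes\lambda$ (surjectivity of completed tensor products of Fréchet quotients, \cite[Proposition 43.9]{Tr06}). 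Hence $\mathcal S(\mathfrak n^*\setminus\{0\}).\omega=0$. Proposition \ref{prop zero from stable} then gives $\omega=0$, i.e.\ $\lambda=\mu$, and all three spaces coincide.

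The main obstacle is the bookkeeping in condition (2). We must verify that $\bigcap_k\mathcal C_k$, with the subspace topology, behaves well under completed tensor products, and that the action on the subquotient $\omega$ is genuinely induced from the one on $\pi$. I would handle this by working with the Fourier-side description $\mathcal S(\mathfrak n^*\setminus\{0\})$, which is a closed subspace of $\mathcal S(\mathfrak n^*)$ and hence Fréchet. Everything is then reduced to elementary tensors $f\otimes w$ together with continuity. A second, milder point is ensuring $\mu$ is $N$-invariant so that $\omega$ is an honest object of $\mathrm{Rep}^{\infty,F}(N)$. This follows from the invariance of $\bigcap_k\mathcal C_k$ under translation, which is proven analogously to Lemma \ref{lem nk Schwartz desc}.
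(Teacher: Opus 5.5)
Your proof is correct and follows the paper's argument. The paper also gets $\overline{\mathcal S(\mathfrak n^*\setminus\{0\}).\pi}\subset\mathbf{CJ}^{\infty}_s(\pi)$ from Lemma \ref{lem inclusion}, applies Proposition \ref{prop zero from stable} to $\mathbf{CJ}^{\infty}_s(\pi)/\overline{\mathcal S(\mathfrak n^*\setminus\{0\}).\pi}$ with condition (1) supplied by Corollary \ref{cor maximal stable quotient}, and obtains the middle term by sandwiching via $\mathfrak n^k.\omega=\omega$ from Lemma \ref{lem n-invariant under schwartz}. Your extra bookkeeping fills in details the paper leaves implicit: the $N$-invariance of $\mu$, and condition (2) via elementary tensors plus density of the algebraic tensor product, which already suffices without the surjectivity result for completed tensor products.
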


\begin{proof}
The inclusion $\overline{\mathcal S(\mathfrak n^*\setminus \left\{ 0 \right\}). \pi} \subset \mathbf{CJ}^{\infty}_s(\pi)$ follows from Lemma \ref{lem inclusion}. For another inclusion, one first considers the quotient $\lambda:=\mathbf{CJ}^{\infty}_s(\pi)/\overline{\mathcal S(\mathfrak n^*\setminus \left\{ 0 \right\}). \pi}$. Note that $\lambda$ satisfies the assumption (1) in Proposition \ref{prop zero from stable} by Corollary \ref{cor maximal stable quotient}, and $\lambda$ satisfies the assumption (2)  in Proposition \ref{prop zero from stable} from the definition of $\lambda$. Thus Proposition \ref{prop zero from stable} now implies the first equality in the corollary.

We now prove the second equality. Let $\omega=\mathcal S(\mathfrak n^*\setminus \left\{ 0 \right\}).\pi$. Note that, by Lemma \ref{lem n-invariant under schwartz}, $\mathfrak n^k.\mathcal S(\mathfrak n^* \setminus \left\{ 0\right\})=\mathcal S(\mathfrak n^*\setminus \left\{ 0 \right\})$ and so $\mathfrak n^k.\omega=\omega$. Thus 
\[  \omega =\mathfrak n^k.\omega \subset \bigcap_{k=1}^{\infty} \mathfrak n^k. \pi \subset \bigcap_{k=1}^{\infty} \overline{\mathfrak n^k.\pi} .
\]
Now one takes the closure, and the first equality forces all inclusions after taking closures to be equal. This shows the second equality of the corollary.
\end{proof}

\begin{corollary} \label{cor cj sub max stable}
Let $\pi$ be in $\mathrm{Rep}^{\infty,F}(N)$.   Then $\mathbf{CJ}^{\infty}_{s,P}(\pi)$ is the maximal $\mathfrak n$-stable closed subpsace of $\pi$.
\end{corollary}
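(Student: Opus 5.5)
We need to prove two things: that $\mathbf{CJ}^{\infty}_{s,P}(\pi)$ is itself a closed $\mathfrak n$-stable subspace of $\pi$, and that every closed $\mathfrak n$-stable subspace $\omega\subset\pi$ is contained in it.

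The first point is already available. $\mathbf{CJ}^{\infty}_{s,P}(\pi)=\bigcap_k\overline{\mathfrak n^k.\pi}$ is an intersection of closed subspaces, hence closed. It is $\mathfrak n$-stable by Corollary \ref{cor maximal stable quotient}, i.e. $\overline{\mathfrak n.\mathbf{CJ}^{\infty}_{s}(\pi)}=\mathbf{CJ}^{\infty}_{s}(\pi)$.

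For maximality, let $\omega$ be a closed $N$-invariant subspace of $\pi$ with $\overline{\mathfrak n.\omega}=\omega$. (Closed subrepresentations stay in $\mathrm{Rep}^{\infty,F}(N)$, so the notion of $\mathfrak n$-stability applies to $\omega$.) We show by induction on $k$ that $\omega\subset\overline{\mathfrak n^k.\pi}$.
\begin{itemize}
\item The case $k=0$ is trivial.
\item Suppose $\omega\subset\overline{\mathfrak n^{k}.\pi}$. Each $X\in\mathfrak n$ acts continuously on the smooth Fr\'echet representation $\pi$, so $X.\overline{\mathfrak n^k.\pi}\subset\overline{X.\mathfrak n^k.\pi}\subset\overline{\mathfrak n^{k+1}.\pi}$.
\item Hence $\mathfrak n.\omega\subset\overline{\mathfrak n^{k+1}.\pi}$. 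Taking closures and using stability gives $\omega=\overline{\mathfrak n.\omega}\subset\overline{\mathfrak n^{k+1}.\pi}$.
\end{itemize}
Intersecting over all $k$ yields $\omega\subset\mathbf{CJ}^{\infty}_s(\pi)$.

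An alternative route is to note that $\mathfrak n$-stability gives $\overline{\mathfrak n^k.\omega}=\omega$ for all $k$, so $\mathbf{CJ}^{\infty}_s(\omega)=\omega$. Monotonicity, $\overline{\mathfrak n^k.\omega}\subset\overline{\mathfrak n^k.\pi}$, then gives the same inclusion.

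The only point needing care is the continuity step $X.\overline{A}\subset\overline{X.A}$. This is where the Fr\'echet smoothness is used: the Lie algebra action on a smooth Fr\'echet representation is by continuous operators. All the substantive work is already contained in Corollary \ref{cor maximal stable quotient}, so no real obstacle is expected.
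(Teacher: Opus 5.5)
Your proof is correct, but it takes a different route from the paper's. The paper lets $\pi'$ be the maximal $\mathfrak n$-stable closed subspace and reruns the proof of Corollary \ref{cor closedness and stable prop}. Via the Nakayama-type Proposition \ref{prop zero from stable}, it shows $\pi'=\overline{\mathcal S(\mathfrak n^*\setminus\{0\}).\pi}$. Then it concludes with the first equality of Corollary \ref{cor closedness and stable prop}, namely $\overline{\mathcal S(\mathfrak n^*\setminus\{0\}).\pi}=\mathbf{CJ}^{\infty}_s(\pi)$. That route rests on the Schwartz-algebra and power-series machinery: Lemma \ref{lem n-invariant under schwartz}, the $\mathbb C[[\mathfrak n^*]]$-action, and the Nakayama analogue.

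You instead combine Corollary \ref{cor maximal stable quotient}, which both routes need anyway, with an elementary induction. It uses only the continuity of each $X\in\mathfrak n$ on $\pi$. That continuity is a standard closed-graph fact, and the paper also uses it implicitly, e.g. in Claim 1 of the proof of Corollary \ref{cor maximal stable quotient}.

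Your argument buys several things:
\begin{itemize}
\item It avoids the Nakayama lemma and Fourier-analytic input entirely.
\item It does not presuppose that a maximal $\mathfrak n$-stable closed subspace exists. It produces one, whereas the paper takes its existence for granted.
\item It proves slightly more: your induction never uses that $\omega$ is closed or $N$-invariant, only that $\omega\subset\overline{\mathfrak n.\omega}$.
\end{itemize}
The paper's route buys the simultaneous identification of the maximal stable subspace with $\overline{\mathcal S(\mathfrak n^*\setminus\{0\}).\pi}$, which is the form used in Part \ref{part bz filtration}. That identification is already the content of Corollary \ref{cor closedness and stable prop}, however, so for the statement at hand your argument is the more economical one.
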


\begin{proof}
Let $\pi'$ be the maximal $\mathfrak n$-stable closed subspace in $\pi$. Now one proceeds with the same proof as in Corollary \ref{cor closedness and stable prop} to obtain that $\pi'=\overline{\mathcal S(\mathfrak n^*\setminus \left\{0 \right\}).\pi}$. Now the corollary follows from the first equation in Corollary \ref{cor closedness and stable prop}.
\end{proof}

\begin{corollary} \label{cor jacquet intersect for schwartz}
We have
\[  \bigcap_{k =1}^{\infty} \mathfrak n^k.\mathcal S(\mathfrak n) =\bigcap_{k=1}^{\infty} \mathcal C_k(\mathfrak n) .
\]
\end{corollary}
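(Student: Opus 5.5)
One inclusion is already available: Lemma \ref{lem intersect jacquet} gives $\bigcap_{k}\mathfrak n^k.\mathcal S(\mathfrak n)\subset\bigcap_k\mathcal C_k(\mathfrak n)$. So the work is the reverse inclusion, and I would get it without any further Baker--Campbell--Hausdorff bookkeeping. The plan is to view $\mathcal S(\mathfrak n)$ as an object of $\mathrm{Rep}^{\infty,F}(N)$ (recall it even lies in $\mathrm{Rep}^{\infty,F}(P)$) and apply Lemma \ref{lem n-invariant under schwartz}.

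Set $\omega:=\bigcap_{k=0}^\infty\mathcal C_k(\mathfrak n)$, which is the Fourier transform of $\mathcal S(\mathfrak n^*\setminus\{0\})$. By Lemma \ref{lem n-invariant under schwartz} we have $\mathfrak n.\omega=\omega$, the equality taken as sets of finite sums $\sum_j X_j f_j$ with $X_j\in\mathfrak n$ and $f_j\in\omega$. Iterating gives $\mathfrak n^k.\omega=\omega$ for every $k\ge 1$. Indeed, if $\mathfrak n^{k-1}.\omega=\omega$, then
\[
\mathfrak n^k.\omega=\mathfrak n.(\mathfrak n^{k-1}.\omega)=\mathfrak n.\omega=\omega .
\]
Since $\omega\subset\mathcal S(\mathfrak n)$, it follows that $\omega=\mathfrak n^k.\omega\subset\mathfrak n^k.\mathcal S(\mathfrak n)$ for all $k$, and hence $\omega\subset\bigcap_k\mathfrak n^k.\mathcal S(\mathfrak n)$. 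This is exactly the argument of the second equality in Corollary \ref{cor closedness and stable prop}, applied with $\pi=\mathcal S(\mathfrak n)$ but without taking closures. Combined with Lemma \ref{lem intersect jacquet}, this yields the claimed equality.

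The one point to check is that the $\mathfrak n$-action used in Lemma \ref{lem n-invariant under schwartz} is the same as the action defining $\mathfrak n^k.\mathcal S(\mathfrak n)$. Both are the derivative of the left translation $(g.f)(X)=f(\log(g^{-1}\exp X))$, so they agree. I also need the equality in Lemma \ref{lem n-invariant under schwartz} to hold algebraically, with finite sums and no closure. Its proof gives this: each coordinate function times $f$ is written as a finite expression using (\ref{eqn general formula}), and then (\ref{eqn schwartz ideal 0}) writes every element as a finite sum $\sum_j b_j g_j$.

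The main obstacle is making that inductive step of Lemma \ref{lem n-invariant under schwartz} precise, namely that each $\partial_{X_i}f$ with $f\in\omega$ lies in $\mathfrak n.\omega$. I would induct downward along the filtration $\mathfrak n_{r^*}\subset\cdots\subset\mathfrak n_0$. By (\ref{eqn general formula}), $\partial_{X_i}f=-X_if+\sum\ell_j\partial_{X_j}f$, where the correction terms involve only indices $j$ from deeper layers. The polynomial multipliers $\ell_j$ preserve $\omega$, since on the Fourier side they act as derivatives, which preserve $\mathcal S(\mathfrak n^*\setminus\{0\})$. However, $\ell_j\partial_{X_j}f$ must be rewritten as an element of $\mathfrak n.\omega$, not merely of $\omega$. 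For $j$ in the centre layer this is direct: $\partial_{X_j}$ is, up to sign, the action of the central element $X_j$, and it commutes with multiplication by $\ell_j$, which does not depend on the central coordinates, so $\ell_j\partial_{X_j}f=-X_j(\ell_jf)$. For the deeper layers I would apply the inductive hypothesis to $\ell_jf\in\omega$ and then absorb the commutator $[\partial_{X_j},\ell_j]$, which involves lower-order terms in the deeper variables. This gives an induction on (layer, degree of $\ell$) with finitely many steps.
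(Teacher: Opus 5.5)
Your argument is correct, and it is more direct than the paper's. The paper first uses the stability machinery to get $\bigcap_k\mathcal C_k(\mathfrak n)\subset\overline{\bigcap_k\mathfrak n^k.\mathcal S(\mathfrak n)}$. This is Corollaries \ref{cor closedness and stable prop} and \ref{cor cj sub max stable}, and hence ultimately the Nakayama-type Proposition \ref{prop zero from stable} and the seminorm analysis behind Corollary \ref{cor maximal stable quotient}. It then removes the closure by contradiction: if $\bigcap_k\mathfrak n^k.\mathcal S(\mathfrak n)$ were not closed, its closure $\lambda=\bigcap_k\mathcal C_k(\mathfrak n)$ would satisfy $\mathfrak n^k.\lambda\subsetneq\lambda$ for some $k$, contradicting Lemma \ref{lem n-invariant under schwartz}. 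That final step is your observation $\omega=\mathfrak n^k.\omega\subset\mathfrak n^k.\mathcal S(\mathfrak n)$ in contrapositive form. Once one notices that Lemma \ref{lem n-invariant under schwartz} is a closure-free equality, it gives the reverse inclusion on its own, so the detour through the closure and the stability results is superfluous. Your route depends only on Lemmas \ref{lem intersect jacquet} and \ref{lem n-invariant under schwartz}. It also gives the closedness of $\bigcap_k\mathfrak n^k.\mathcal S(\mathfrak n)$ for free, since $\omega$ is closed. The paper's presentation instead stresses that the corollary is the case $\pi=\mathcal S(\mathfrak n)$ of the general description of $\mathbf{CJ}^{\infty}_{s}$.

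One correction to your last paragraph: there is no commutator to absorb. In \eqref{eqn general formula}, for $j$ in the layer $\{i_t+1,\dots,i_{t+1}\}$ the coefficient $\ell_j$ depends only on $a_{i_{t+1}+1},\dots,a_n$; this follows from $[\mathfrak n_a,\mathfrak n_b]\subset\mathfrak n_{a+b+1}$. Hence $\ell_j\partial_{X_j}f=\partial_{X_j}(\ell_jf)$ exactly, and the induction on layers closes at once by applying the hypothesis to $\ell_jf\in\omega$. This matters. A nonzero term $(\partial_{X_j}\ell_j)f$ would only be known to lie in $\omega$, and showing it lies in $\mathfrak n.\omega$ is exactly the inclusion $\omega\subset\mathfrak n.\omega$ you are trying to prove. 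So the proposed induction on degree would be circular if that term were actually present.
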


\begin{proof}
The inclusion $\subset$ follows from Lemma \ref{lem intersect jacquet}. It follows from Lemma \ref{lem n-invariant under schwartz}  that $\bigcap_{k=1}^{\infty}\mathcal C_k(\mathfrak n)$ is $\mathfrak n$-stable. Thus, by Corollaries \ref{cor closedness and stable prop} and \ref{cor cj sub max stable}, $\bigcap_{k=1}^{\infty}\mathcal C_k(\mathfrak n) \subset \overline{\bigcap_{k=1}^{\infty}\mathfrak n^k.\mathcal S(\mathfrak n) }$. The inclusion $\overline{\bigcap_{k=1}^{\infty}\mathfrak n^k.\mathcal S(\mathfrak n)}\subset\bigcap_{k=1}^{\infty}\mathcal C_k(\mathfrak n)$ follows from Lemma \ref{lem intersect jacquet} and the space $\bigcap_{k=1}^{\infty}\mathcal C_k(\mathfrak n)$ is closed. It remains to show that $\bigcap_{k=1}^{\infty} \mathfrak n^k.\mathcal S(\mathfrak n)$ is closed. Suppose not. Let $\lambda:= \overline{\bigcap_{k=1}^{\infty} \mathfrak n^k.\mathcal S(\mathfrak n)}$. Then we must have $\bigcap_{k=1}^{\infty}\mathfrak n^k.\lambda \subsetneq \lambda$, and so $\mathfrak n^k.\lambda \subsetneq \lambda$ for some $k$. However, this contradicts  Lemma \ref{lem n-invariant under schwartz}. Thus $\lambda=\bigcap_{k=1}^{\infty} \mathfrak n^k.\mathcal S(\mathfrak n)$.
\end{proof}




\part{Globalization and comparison with Harish-Chandra modules} \label{part cj casselman wallach rep}

Let $G$ be a reductive Lie group and let $P$ be a parabolic subgroup of $G$. Let $K$ be a maximal compact subgroup of $G$. By conjugating $K$ with an element in $G$ if necessary, we assume that all parabolic subgroups we consider below are chosen such that $K \cap P$ is still a maximal compact subgroup of $P$. Let $\mathfrak g=\mathrm{Lie}(G)$, $\mathfrak n=\mathrm{Lie}(N_P)$, $\mathfrak m=\mathrm{Lie}(M_P)$, $\mathfrak p=\mathrm{Lie}(P)$, and let $K_P=K\cap P$. We shall keep using all these notation for the entire Part \ref{part cj casselman wallach rep}.

\section{Representations of parabolic subgroups} \label{s prelim gK mod}



\subsection{Admissibility}

 A $(\mathfrak m, K_P)$-module $\pi$ is said to be {\it admissible} if for any irreducible finite-dimensional representation $\tau$ of $K_P$, $\mathrm{dim}~\mathrm{Hom}_{K_P}(\tau, \pi)<\infty$. A representation $\pi$ in $\mathrm{Rep}^{\infty,F}(M_P)$ is said to be {\it admissible} if the space of $K_P$-finite vectors of $\pi$ forms an admissible $(\mathfrak m, K_P)$-module.

\subsection{Categories for parabolic subgroups}

\begin{definition} \label{def P category}
For $k \in \mathbb Z_{\geq 1}$, let $\mathcal{HC}_{k}(P)$ be the category of $\mathfrak p$-modules $\pi$ satisfying the following properties:
\begin{enumerate}
\item $\pi$ is an admissible $(\mathfrak m, K_P)$-module of finite length; and
\item $\mathfrak n^k$ acts trivially on $\pi$.
\end{enumerate}
The morphisms are $\mathfrak p$-equivariant and $K_P$-equivariant maps.
\end{definition}

We remark that some classical contents use the condition of "finitely-generated" over the universal enveloping algebra $U(\mathfrak m)$ of $\mathfrak m$, and it is indeed equivalent to the condition of "finite-length" (see e.g. \cite[Page 394]{Ca89}, \cite[Chapter 4]{Wa88}, \cite[Theorem 2.2.2]{AGS15a}). We shall write $\mathcal{HC}(G)$ for $\mathcal{HC}_1(G)$, and the modules in $\mathcal{HC}(G)$ are sometimes called {\it Harish-Chandra modules}.

\begin{definition} \label{def P CW category}
For $k \in \mathbb Z_{\geq 1}$, let $\mathcal{CW}_{ k}(P)$ be the category of smooth Fr\'echet representations $\pi$ of $P$ such that the space of $K_P$-finite vectors of $\pi$ forms a $\mathfrak p$-module in $\mathcal{HC}_k(P)$.  
The morphisms are $P$-equivariant continuous maps.
\end{definition}

We shall also write $\mathcal{CW}(G)$ for $\mathcal{CW}_{1}(G)$. This category $\mathcal{CW}(G)$ is sometimes referred to as the Casselman-Wallach category, and representations in $\mathcal{CW}(G)$ are sometimes called {\it Casselman-Wallach representations}.

For a Casselman-Wallach representation $\pi$ of $G$, denote by $\pi_K$ the space of $K$-finite vectors in $\pi$, regarded as a $(\mathfrak g, K)$-module. For a Harish-Chandra $(\mathfrak g, K)$ module, denote by $\pi^{\infty}$ to be the Casselman-Wallach globalization of $\pi$. The functors determine the Casselman-Wallach equivalence of the two categories:

\begin{theorem} \cite{Ca89, Wa92, BK14} \label{thm CW globalization}
There is a categorical equivalence between $\mathcal{CW}(G)$ and $\mathcal{HC}(G)$.
\end{theorem}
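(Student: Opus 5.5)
The statement is the Casselman--Wallach globalization theorem, which the paper cites from \cite{Ca89, Wa92, BK14}. I would therefore not reprove it from scratch. Instead I would assemble the equivalence from the two functors already introduced: $\pi \mapsto \pi_K$ from $\mathcal{CW}(G)$ to $\mathcal{HC}(G)$, and $V \mapsto V^{\infty}$ from $\mathcal{HC}(G)$ to $\mathcal{CW}(G)$. The aim is to show that both composites are naturally isomorphic to the identities. The order of steps is as follows.

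\textbf{Step 1.} Check that $\pi \mapsto \pi_K$ is well defined and faithful.
\begin{itemize}
\item For $\pi \in \mathcal{CW}(G)$, the $K$-finite vectors are smooth, hence stable under $\mathfrak g$. So $\pi_K$ is a $(\mathfrak g,K)$-module, and it lies in $\mathcal{HC}(G)$ by definition of $\mathcal{CW}(G)$.
\item $\pi_K$ is dense in $\pi$ by the Peter--Weyl decomposition along $K$.
\item A continuous $G$-map is therefore determined by its restriction to $\pi_K$. This gives faithfulness.
\end{itemize}

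\textbf{Step 2.} Construct $V^{\infty}$ and show it is well defined.
\begin{itemize}
\item By Casselman's subrepresentation theorem \cite{CM82}, $V$ embeds into a principal series $I_K$ for a minimal parabolic.
\item Define $V^{\infty}$ as the closure of $V$ in the smooth principal series $I^{\infty}$. This is a Fr\'echet representation of moderate growth whose $K$-finite part is $V$, by admissibility.
\end{itemize}

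\textbf{Step 3.} Prove that the globalization is unique. This is the heart of the theorem and the step I expect to be the main obstacle: any two smooth Fr\'echet moderate-growth globalizations of $V$ must coincide. I would follow \cite{BK14}.
\begin{itemize}
\item Show that every such globalization $\pi$ satisfies the Sobolev-norm equivalence. That is, its topology on $V$ is given by the $G$-continuous norms coming from the minimal one (the $\mathcal S(G)$-module $\mathcal S(G)\otimes_{U(\mathfrak g)} V$ realisation) and from the maximal one (the dual of the minimal globalization of the contragredient).
\item Then show that minimal and maximal coincide. This rests on the comparison of norms via matrix-coefficient asymptotics, namely Casselman's theorem on leading exponents, or alternatively the Bernstein--Kr\"otzl argument using Dixmier--Malliavin and the $\mathcal S(G)$-action.
\end{itemize}
Uniqueness immediately gives the natural isomorphism $(\pi_K)^{\infty} \cong \pi$.

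\textbf{Step 4.} Prove fullness. A $(\mathfrak g,K)$-map $T\colon \pi_{1,K} \to \pi_{2,K}$ must extend continuously. Apply uniqueness to the closure of the graph of $T$ in $\pi_1 \times \pi_2$. This closure is a globalization of the graph module, so projection to $\pi_1$ is a topological isomorphism, and $T$ extends.

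\textbf{Step 5.} Essential surjectivity of $\pi \mapsto \pi_K$ is exactly Step 2. Combining Steps 1--5 gives the claimed equivalence of categories.
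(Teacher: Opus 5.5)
Your proposal is correct and takes essentially the same route as the paper, which simply cites the Casselman--Wallach/Bernstein--Kr\"otz existence and uniqueness theorem; you additionally spell out the formal reduction: faithfulness from density of $K$-finite vectors, fullness from the graph-closure argument together with the strong form of uniqueness, and essential surjectivity from taking a closure inside an induced representation. One small precision for Step 2: for a $V$ of finite length but reducible, the embedding should be into $\mathrm{Ind}_{P_0}^{G}\sigma$ with $\sigma$ a finite-dimensional, possibly non-semisimple, representation of the minimal parabolic (e.g.\ $\sigma=V/\mathfrak n_0^kV$), because modules on which $Z(\mathfrak g)$ acts non-semisimply do not embed in sums of ordinary principal series.
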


\section{Extended Casselman-Wallach globalization} \label{s rep para}

In this section, we generalize Theorem \ref{thm CW globalization} to the categories of parabolic subgroups in Section \ref{s prelim gK mod}. A somewhat subtle point in the generalization is the extension of the continuous action of the nilpotent algebras, which also relies on the functoriality of the Casselman-Wallach globalization. A key result in this section is the abelian structure in Corollary \ref{cor cw abelian category}.

\subsection{Continuity under nilpotent actions}

\begin{lemma} \label{lem converge nilpotent action}
 Let $\pi$ be in $\mathcal{HC}_k(P)$.  Regarding $\pi$ as a $(\mathfrak m, K_P)$-representation, let $\pi^{\infty}$ be its Casselman-Wallach globalization of $\pi$ as a $M_P$-representation. 
\begin{enumerate}
\item[(1)] Let $X \in \mathfrak n$. Then the map from $\pi^{\infty}$ to $\pi^{\infty}$ given by $v \mapsto X.v$ is continuous.
\item[(2)] For any $m \in M_P$ and for any $X \in \mathfrak n$, and for all $v \in \pi^{\infty}$, 
\[   m.(X.v) =\mathrm{Ad}(m)(X).(m.v) .
\]

\end{enumerate}
\end{lemma}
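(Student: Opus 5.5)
The plan is to realize the action of $X\in\mathfrak n$ on $\pi^{\infty}$ as the Casselman--Wallach globalization of a morphism of Harish-Chandra modules for $M_P$, and then invoke Theorem \ref{thm CW globalization}. First, note that $\pi$, viewed as an $(\mathfrak m,K_P)$-module, is an admissible module of finite length; so $\pi^{\infty}$ is defined and $\pi$ is dense in it. Consider the finite-dimensional $M_P$-representation $\mathfrak n$ (complexified), on which $M_P$ acts by $\mathrm{Ad}$. The $\mathfrak n$-action on $\pi$ gives a linear map
\[ \mu: \mathfrak n_{\mathbb C}\otimes \pi \rightarrow \pi, \quad X\otimes v\mapsto X.v . \]
Since $\pi$ is a $\mathfrak p$-module with compatible $K_P$-action, $[Y,X]=\mathrm{ad}(Y)X$ for $Y\in\mathfrak m$ and $k.(X.v)=\mathrm{Ad}(k)X.(k.v)$ for $k\in K_P$. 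Hence $\mu$ is a morphism of $(\mathfrak m,K_P)$-modules.

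Next, $\mathfrak n_{\mathbb C}\otimes\pi$ is again an admissible finite-length $(\mathfrak m,K_P)$-module: tensoring with a finite-dimensional module preserves the Harish-Chandra category. Its globalization is $\mathfrak n_{\mathbb C}\otimes\pi^{\infty}$ with the diagonal $M_P$-action. This tensor product carries the product Fréchet topology, is smooth of moderate growth, and its $K_P$-finite vectors are exactly $\mathfrak n_{\mathbb C}\otimes \pi$. By the uniqueness part of Theorem \ref{thm CW globalization} (applied to $M_P$), it is the Casselman--Wallach globalization. By the full faithfulness in that theorem, $\mu$ extends uniquely to a continuous $M_P$-equivariant map
\[ \mu^{\infty}: \mathfrak n_{\mathbb C}\otimes \pi^{\infty}\rightarrow \pi^{\infty}. \]
We then define $X.v:=\mu^{\infty}(X\otimes v)$. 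For fixed $X$, the map $v\mapsto X\otimes v$ is continuous, so the composite $v\mapsto X.v$ is continuous; this proves (1). Since $\mu^\infty$ extends $\mu$, this is the unique continuous extension of the action on the dense subspace $\pi$.

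For (2), $M_P$-equivariance of $\mu^{\infty}$ gives
\[ m.\mu^{\infty}(X\otimes v)=\mu^{\infty}(m.(X\otimes v))=\mu^{\infty}(\mathrm{Ad}(m)X\otimes m.v), \]
which is exactly the required identity. Alternatively, one can check the identity on the dense subspace $\pi$ when $m$ lies in the identity component times $K_P$, and then pass to the closure using continuity from (1).

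The main obstacle is identifying $\mathfrak n_{\mathbb C}\otimes\pi^\infty$ as the Casselman--Wallach globalization of $\mathfrak n_{\mathbb C}\otimes\pi$. In particular, one must confirm that $M_P$, which may be disconnected and only reductive in the Harish-Chandra sense, falls within the scope of Theorem \ref{thm CW globalization}. The fallback is the characterization of the globalization as the unique moderate-growth smooth Fréchet completion, as in \cite{BK14}. That characterization is preserved under tensoring with a finite-dimensional representation, because moderate growth is preserved: $\|\mathrm{Ad}(m)\|$ is polynomially bounded by the scale.
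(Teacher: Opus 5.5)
Your proposal is correct and follows essentially the same route as the paper. The paper globalizes the action map $T\colon \pi\otimes\mathfrak n\to\pi$ to a continuous $M_P$-equivariant map $\widetilde{T}\colon(\pi\otimes\mathfrak n)^{\infty}\to\pi^{\infty}$, identifies $(\pi\otimes\mathfrak n)^{\infty}\cong\pi^{\infty}\otimes\mathfrak n$ by uniqueness of the Casselman--Wallach globalization, and then reads off (1) from the continuity of $\widetilde{T}$ and (2) from its equivariance; your remark that this defines the $\mathfrak n$-action on $\pi^{\infty}$ as the unique continuous extension from the dense subspace $\pi$ makes explicit a point the paper leaves implicit.
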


\begin{proof}
We shall also regard $\mathfrak n$ as a $M_P$-representation via the adjoint action. Then $\pi \otimes \mathfrak n$ is also a $(\mathfrak m, K_P)$-module as a tensor product of two $(\mathfrak m, K_P)$-modules. Let $T: \pi \otimes \mathfrak n\to \pi$ be the action $v\otimes X\mapsto Xv$. From the Casselman-Wallach globalization, we have the following commutative diagram:
\[  \xymatrix{   \pi \otimes \mathfrak n \ar[r]^T \ar[d] & \pi \ar[d] \\
                 (\pi \otimes \mathfrak n)^{\infty} \ar[r]^{\widetilde{T}} & \pi^{\infty}
} ,
\]
with the vertical maps to be the map from the Casselman-Wallach globalization, and $\widetilde{T}$ is the morphism from the Casselman-Wallach globalization of $T$ (see \cite[Theorem 11.6.7]{Wa92} and \cite[Proposition 11.2]{BK14}). 

We also naturally have $(\pi \otimes \mathfrak n)^{\infty} \cong \pi^{\infty}\otimes \mathfrak n$ by the uniqueness of the Casselman-Wallach globalization again, and the morphism agrees on $K$-finite vectors. Note that, for a convergent sequence $v_n$ on $\pi^{\infty}$, $ v_n \otimes X$ also forms a convergent sequence in $\pi^{\infty}\otimes \mathfrak n$.   Under the continuous morphism $\widetilde{T}$, the sequence $\widetilde{T}(X\otimes v_i)=X.v_i$ is also convergent. This proves (1).

We now prove (2). We again identify $\pi^{\infty}\otimes \mathfrak n$ with $(\pi\otimes \mathfrak n)^{\infty}$. Thus, $\widetilde{T}$ commutes with the $M_P$-action and so this gives (2).
\end{proof}

\subsection{Exponentiating the Lie algebra action of $\mathfrak n$}

Let $\pi$ be in $\mathcal{CW}_{k}(P)$. We now exponentiate the action of $\mathfrak n$ for $\pi$. For $v \in \pi$, define $\pi_v$ to be the space spanned by $\mathfrak n^r.v$ for $r \in \mathbb Z_{\geq 0}$. It follows from definitions that $\pi_v$ is finite-dimensional. Let $n_v=\mathrm{dim}_{\mathbb C}(\pi_v)$. Then we have a map:
\[   \psi_v: \mathfrak n \rightarrow \mathfrak{gl}(\pi_v) \cong \mathfrak{gl}(n_v, \mathbb C)  .
\]
Since $N_P$ is simply-connected, the classical Lie theorem provides a map:
\[   \Psi_v: N_P \rightarrow \mathrm{GL}(\pi_v) .
\]
The map is well-defined, i.e. for all $n \in N_P$, $\Psi_{v_1}(n)|_{\pi_{v_1}\cap \pi_{v_2}}=\Psi_{v_2}(n)|_{\pi_{v_1}\cap \pi_{v_2}}$ for any $v_1, v_2 \in \pi$, by the uniqueness of lifting.

\begin{lemma} \label{lem phi action}
Let $\pi$ be as above. The map $\Psi: N_P \times \pi \rightarrow \pi$ given by $(n, v) \mapsto n.v$ defines a continuous map.
\end{lemma}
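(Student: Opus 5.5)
The plan is to write the $N_P$-action through a finite exponential series and then read off continuity from Lemma \ref{lem converge nilpotent action}(1). Since $\mathfrak n^k$ acts trivially on $\pi$, the restriction $\psi_v(X)$ is nilpotent on $\pi_v$ for each $X\in\mathfrak n$. The corresponding element of $U(\mathfrak n)$ then acts through a nilpotent operator whose relevant powers vanish, so the lift $\Psi_v$ is given explicitly by
\[ \Psi_v(\exp X)w=\sum_{j=0}^{m}\frac{1}{j!}X^j.w ,\qquad w\in \pi_v,\]
where $m$ is a uniform bound independent of $v$. More precisely, $X^j$ lies in $U(\mathfrak n)$ filtered degree $j$, and $\mathfrak n^k$ kills everything. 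Hence $X^j$ acts by zero once $j\geq k$, so $m=k-1$ works. The justification is that $t\mapsto \sum_j \frac{t^j}{j!}\psi_v(X)^j$ is a one-parameter subgroup integrating $\psi_v$, so by uniqueness it agrees with $\Psi_v(\exp tX)$. Consequently $n.v$ is defined for every $v\in\pi$, with
\[ \Psi(\exp X, v)=\sum_{j=0}^{k-1}\frac{1}{j!}X^j.v . \]

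Continuity will be proved jointly, identifying $N_P$ with $\mathfrak n\cong\mathbb R^d$ via $\exp$, which is a diffeomorphism. Fix a basis $Y_1,\dots,Y_d$ of $\mathfrak n$ and write $X=\sum a_iY_i$. Expanding $X^j$ gives
\[ \Psi(\exp X,v)=\sum_{|\alpha|\le k-1} c_\alpha(a)\, Y_{\alpha}.v ,\]
where each $c_\alpha$ is a polynomial in $a=(a_1,\dots,a_d)$ and each $Y_\alpha$ is a noncommutative monomial in the $Y_i$. Since the sum has finitely many terms, it suffices to show that each map $(a,v)\mapsto c_\alpha(a)Y_\alpha.v$ is jointly continuous. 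The map $v\mapsto Y_\alpha.v$ is a composition of the continuous linear maps from Lemma \ref{lem converge nilpotent action}(1). Scalar multiplication $\mathbb C\times\pi\to\pi$ is jointly continuous in a topological vector space, and $a\mapsto c_\alpha(a)$ is continuous. Joint continuity follows.

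The main obstacle is justifying the explicit formula. Specifically, one must check that the abstract lift $\Psi_v$ from Lie's theorem, together with the consistency across the spaces $\pi_v$, agrees with the truncated exponential. It is also worth confirming that Lemma \ref{lem converge nilpotent action}(1) applies to $\pi\in\mathcal{CW}_k(P)$ on all of $\pi$ and not only on $K_P$-finite vectors. I would handle this by identifying $\pi$ with the Casselman--Wallach globalization of its $K_P$-finite part as an $M_P$-representation, and using that the $\mathfrak n$-action on $\pi$ is the extension by continuity. The formula itself is finite-dimensional linear algebra on each $\pi_v$, where both sides are the unique lift of $\psi_v$.
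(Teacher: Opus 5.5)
Your proof is correct, but it reaches joint continuity by a different and more direct route than the paper.

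The paper also starts from the truncated exponential $\exp(X).v_0=\sum_{j<k}\psi_{v_0}(X)^j v_0/j!$. However, it uses that formula only for separate continuity: the orbit map $n\mapsto n.v_0$ is continuous for each fixed $v_0$. It then applies the Banach--Steinhaus theorem on the Fr\'echet space $\pi$ to conclude that $\{\Psi(n): n\in\Lambda\}$ is equicontinuous for every compact $\Lambda\subset N_P$. Joint continuity then follows from the standard criterion that separate continuity plus local equicontinuity gives joint continuity.

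You instead expand $X=\sum a_iY_i$ and write $\Psi(\exp X,v)$ as a finite sum of terms $c_\alpha(a)\,Y_\alpha.v$. Here each $c_\alpha$ is a polynomial and each $Y_\alpha$ is a fixed continuous operator. Joint continuity then follows at once from continuity of scalar multiplication and addition in a topological vector space.

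Your route is more elementary: it needs no uniform boundedness principle, and it shows more, namely that the action is polynomial in the coordinates on $\mathfrak n$. The paper's route is a general principle that would still work without an explicit finite formula. Here that generality is not needed, since the paper itself uses the finite formula anyway.

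Both arguments rest on the two inputs you flagged:
\begin{itemize}
\item continuity of each $X\in\mathfrak n$ on all of $\pi$, which is Lemma \ref{lem converge nilpotent action}(1);
\item vanishing of $\mathfrak n^k$ on all of $\pi$, which follows from its vanishing on the dense subspace of $K_P$-finite vectors together with that continuity.
\end{itemize}

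One small wording fix: what kills $X^j$ for $j\ge k$ is that $X^j\in\mathfrak n^j$, a product of $j$ elements of $\mathfrak n$. Membership in a filtered piece of $U(\mathfrak n)$ is not the relevant fact.
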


\begin{proof}
Fix $v_0 \in \pi$. Then
\begin{align} \label{eqn define group action}
 X\stackrel{\Psi_{v_0}}{\mapsto} \mathrm{exp}(X).v_0=(\mathrm{Id}+\psi_{v_0}(X)+\frac{\psi_{v_0}(X)^2}{2}+\ldots +\frac{\psi_{v_0}(X)^{k-1}}{(k-1)!}).v_0 
\end{align}
is continuous since each $X^i$ is continuous by Lemma \ref{lem converge nilpotent action}. Since $N$ and $\mathfrak n$ are diffeomorphic (in particular homeomorphic), we then have the map $n \in N \mapsto n.v_0$ is continuous. Thus, by the Banach-Steinhaus Theorem, the set of operators 
\[  \left\{ \Psi_{v_0}(n) : n \in \Lambda \right\},
\]
for any compact subset $\Lambda$ in $N$, is equicontinuous, and so $\Psi$ is continuous by \cite[Page 219]{Wa72}.
\end{proof}

\begin{corollary} \label{cor lifting action to P}
We use the notation in Lemma \ref{lem phi action}. Define $\psi: P \times \pi^{\infty} \rightarrow \pi^{\infty}$ given by: for $p=mn$ with $m \in M_P$ and $n \in N_P$, $(p, v) \mapsto m.(\psi(n).v)$, where the action of $m$ is the action from $\pi^{\infty}$ and the action of $\psi(n)$ is defined in Lemma \ref{lem phi action}. Then $\psi$ is continuous. 
\end{corollary}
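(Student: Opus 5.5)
The plan is to factor $\psi$ as a composition of two continuous maps, using that multiplication gives a diffeomorphism $M_P\times N_P\to P$, $(m,n)\mapsto mn$ (Levi decomposition). Write $\psi(p,v)=\Psi_M(m,\Psi(n,v))$, where $\Psi_M: M_P\times\pi^{\infty}\to\pi^{\infty}$ is the Casselman--Wallach action of $M_P$ and $\Psi$ is the map of Lemma \ref{lem phi action}. The map $\Psi_M$ is continuous because $\pi^{\infty}$ is a continuous (indeed smooth Fr\'echet) representation of $M_P$. The map $\Psi$ is continuous by Lemma \ref{lem phi action}.

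Consequently the map
\[ P\times\pi^{\infty}\cong M_P\times N_P\times\pi^{\infty}\to M_P\times\pi^{\infty}\to\pi^{\infty}, \]
given by $(m,n,v)\mapsto(m,\Psi(n,v))\mapsto m.\Psi(n,v)$, is a composition of continuous maps. It is therefore continuous as a map on $P\times\pi^{\infty}$.

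Although the statement only asks for continuity, I would also check that $\psi$ is a genuine group action. Well-definedness is immediate, since the decomposition $p=mn$ is unique. The action law comes down to the relation $m\,\Psi(n)\,m^{-1}=\Psi(mnm^{-1})$. To prove it, first use Lemma \ref{lem converge nilpotent action}(2), $m.(X.v)=\mathrm{Ad}(m)(X).(m.v)$. Iterating this in the finite exponential series \eqref{eqn define group action} gives the relation for $n=\exp X$, since $m\exp(X)m^{-1}=\exp(\mathrm{Ad}(m)X)$. Here one uses that $\pi_{m.v}=m.\pi_v$, so that the finite-dimensional pieces match up. With this relation in hand, $(m_1n_1)(m_2n_2)=m_1m_2\,(m_2^{-1}n_1m_2)\,n_2$ gives $\psi(p_1p_2)=\psi(p_1)\psi(p_2)$.

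The only mildly delicate point is that the continuity in Lemma \ref{lem phi action} is joint continuity in $(n,v)$, not merely separate continuity. Lemma \ref{lem phi action} provides exactly this, via equicontinuity on compact subsets together with \cite[Page 219]{Wa72}. I expect no further obstacle.
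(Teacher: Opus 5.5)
Your proposal is correct and follows the paper's proof. The paper likewise writes $\psi$ as the composition $P\times\pi^{\infty}\to M_P\times N_P\times\pi^{\infty}\to\pi^{\infty}$ of the Levi-decomposition diffeomorphism with the $M_P$- and $N_P$-action maps, and it derives well-definedness of the $P$-action from \eqref{eqn define group action} together with Lemma \ref{lem converge nilpotent action}(2); your check of $m\,\Psi(n)\,m^{-1}=\Psi(mnm^{-1})$ simply spells that last step out in more detail.
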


\begin{proof}
The map $\psi$ coincides with the following natural composition:
\[   P \times \pi^{\infty} \rightarrow M_P \times N_P \times \pi^{\infty} \rightarrow \pi^{\infty} ,
\]
where the first map comes from the diffeomorphism of $P$ and $M_P \times N_P$, and the second map comes from the action maps for $M_P$ and $N_P$. Since both maps in the composition are continuous, $\psi$ is continuous. The well-definedness for the $P$-action follows from (\ref{eqn define group action}) and Lemma \ref{lem converge nilpotent action}(2).
\end{proof}

\subsection{Categorical equivalences}

 We now have the following enhancement of the Casselman-Wallach equivalence:

\begin{theorem} \label{thm equivalence of categories}  
 There is a categorical equivalence between $\mathcal{CW}_{k}(P)$ and $\mathcal{HC}_{k}(P)$.
\end{theorem}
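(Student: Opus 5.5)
We will prove the equivalence by exhibiting two functors and checking that they are mutually quasi-inverse, reducing everything to the Casselman--Wallach equivalence for $M_P$ (Theorem \ref{thm CW globalization} applied to the reductive group $M_P$ with maximal compact subgroup $K_P$).

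\textbf{The two functors.} In one direction, take $\pi\in\mathcal{CW}_k(P)$ to its space $\pi_{K_P}$ of $K_P$-finite vectors. By definition of $\mathcal{CW}_k(P)$ this is a $\mathfrak p$-module in $\mathcal{HC}_k(P)$. It is stable under $\mathfrak n$, since $K_P\subset M_P$ normalizes $\mathfrak n$ and the adjoint action of $K_P$ on $\mathfrak n$ is finite-dimensional. Morphisms restrict, so this is a functor. In the other direction, take $\pi\in\mathcal{HC}_k(P)$ to $\pi^\infty$, the Casselman--Wallach globalization of $\pi$ as an $(\mathfrak m,K_P)$-module. Lemma \ref{lem converge nilpotent action} extends the $\mathfrak n$-action continuously to $\pi^\infty$, compatibly with $M_P$. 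Two further facts are needed here:
\begin{itemize}
\item $\mathfrak n^k$ still acts by zero on $\pi^\infty$, by continuity and density of $\pi$ in $\pi^\infty$;
\item the span of the $\mathfrak n^r.v$ is finite-dimensional for every $v\in\pi^\infty$, because $\mathfrak n^r.v$ lies in the image of $\mathfrak n^{\otimes r}\otimes v$ and vanishes for $r\ge k$.
\end{itemize}
Lemma \ref{lem phi action} and Corollary \ref{cor lifting action to P} then produce a continuous $P$-action on $\pi^\infty$. This action is smooth, since smoothness along $M_P$ is known and the $N_P$-orbit maps are polynomial by \eqref{eqn define group action}. Its $K_P$-finite vectors are exactly $\pi$, with the same $\mathfrak p$-action. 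For a morphism $T$ in $\mathcal{HC}_k(P)$, the globalized map $T^\infty$ is $M_P$-equivariant and continuous, and it commutes with each $X\in\mathfrak n$ on the dense subspace $\pi$, hence everywhere by continuity. It therefore commutes with the exponentiated $N_P$-action, so it is $P$-equivariant.

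\textbf{Quasi-inverse.} The composite $(\pi^\infty)_{K_P}\cong\pi$ is immediate from the construction. For the other composite, take $\sigma\in\mathcal{CW}_k(P)$. Restricted to $M_P$, $\sigma$ is a Casselman--Wallach representation, so $\sigma\cong(\sigma_{K_P})^\infty$ as $M_P$-representations, via the identity on $K_P$-finite vectors. Both sides carry continuous $\mathfrak n$-actions: on $\sigma$ the derived action, which is continuous on a smooth Fréchet representation, and on the globalization the one from Lemma \ref{lem converge nilpotent action}. These agree on the dense subspace $\sigma_{K_P}$, hence everywhere. Since $N_P$ is connected and simply connected and $\mathfrak n^k$ acts by zero, the $N_P$-action on each side is determined by the $\mathfrak n$-action through the finite exponential sum, so the two $P$-actions coincide. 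Full faithfulness then follows: a morphism in $\mathcal{CW}_k(P)$ is determined by its restriction to $K_P$-finite vectors by density, and every $\mathfrak p$-morphism globalizes as above.

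\textbf{Main obstacle.} The delicate step is proving that $\mathfrak n$ acts by zero to order $k$ on all of $\sigma$, that is, $\mathfrak n^k.\sigma=0$ in the smooth category, and that the derived $\mathfrak n$-action of $\sigma$ agrees with the globalized one. The density argument needs each $X\in\mathfrak n$ to act continuously on $\sigma$. For $\sigma$ a smooth Fréchet $P$-representation this is standard: the derived action is continuous by the closed graph theorem, or via smooth vectors of a Fréchet representation. This is where I would check the details most carefully, together with the moderate growth needed for membership in the category. Moderate growth along $N_P$ comes from the polynomial form of \eqref{eqn define group action}, combined with the moderate growth along $M_P$ of $\pi^\infty\otimes\mathfrak n^{\otimes r}$.
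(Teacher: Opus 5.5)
Your proposal is correct and follows the paper's proof. You use the same two functors, $\pi\mapsto\pi_{K_P}$ and $\pi\mapsto\pi^{\infty}$ (the Casselman--Wallach globalization over $M_P$), and you obtain the $N_P$-action on $\pi^{\infty}$ from Lemma \ref{lem converge nilpotent action}, Lemma \ref{lem phi action} and Corollary \ref{cor lifting action to P}. Beyond that, you write out checks the paper dismisses as straightforward: that $\mathfrak n^k$ acts by zero on $\pi^{\infty}$, that the resulting $P$-action is smooth and of moderate growth, that globalized morphisms are $P$-equivariant, and that the two $P$-actions on an object of $\mathcal{CW}_k(P)$ agree, by density of the $K_P$-finite vectors.
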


\begin{proof}
Define $\Lambda: \mathcal{CW}_{k}(P) \rightarrow \mathcal{HC}_{k}(P)$ given by $\pi \mapsto \pi_{K_P}$. Define  $\Omega: \mathcal{HC}_{k}(P) \rightarrow \mathcal{CW}_{k}(P)$ given by $\pi \mapsto \pi^{\infty}$, where $\pi^{\infty}$ is the Casselman-Wallach globalization for $\pi$ as a $M_P$-representation. One has to check the functors are well-defined. The less trivial point is the continuity action of $N_P$ after globalization, which follows from Corollary \ref{cor lifting action to P}. Checking the two functors are inverse to each other is straightforward.
\end{proof}


A morphism $f: \pi_1 \rightarrow \pi_2$ in $\mathrm{Rep}^{\infty, F}(P)$ is said to be {\it strict} if $f(\pi_1)$ is closed in $\pi_2$ (cf. \cite[Definition 1.1.1]{Sc99}).

\begin{corollary}\label{cor cw abelian category}
\begin{enumerate}
\item[(1)] The categories $\mathcal{CW}_{k}(P)$ and $\mathcal{HC}_{k}(P)$ are abelian. 
\item[(2)] In $\mathcal{CW}_{k}(P)$, any monic morphism is injective and any epi morphism is surjective. 
\item[(3)] In $\mathcal{CW}_{k}(P)$, any morphism is strict.
\end{enumerate}
\end{corollary}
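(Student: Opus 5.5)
We first show that $\mathcal{HC}_{k}(P)$ is abelian, and then transport this structure to $\mathcal{CW}_{k}(P)$ through the equivalence of Theorem \ref{thm equivalence of categories}.

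For $\mathcal{HC}_k(P)$, let $f:\pi_1\to\pi_2$ be a morphism. The kernel and the image are $\mathfrak p$- and $K_P$-stable subspaces. They are admissible $(\mathfrak m,K_P)$-modules of finite length, because the category of admissible finite-length $(\mathfrak m,K_P)$-modules is abelian: it is closed under submodules and quotients, since finite generation over $U(\mathfrak m)$ is equivalent to finite length, as remarked after Definition \ref{def P category}. The condition that $\mathfrak n^k$ acts trivially passes to submodules and quotients. Hence kernels and cokernels exist in $\mathcal{HC}_k(P)$, coimage equals image, and the category is abelian. It is clearly additive, since it is closed under direct sums. Because $\mathcal{CW}_k(P)$ is equivalent to $\mathcal{HC}_k(P)$, it is abelian as well, which proves (1).

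For (3), we take a morphism $f:\pi_1\to\pi_2$ in $\mathcal{CW}_k(P)$ and restrict it to $M_P$. The spaces $\pi_1,\pi_2$ are Casselman--Wallach representations of the reductive group $M_P$. The map $f$ is $M_P$-equivariant, and $(\pi_i)_{K_P}$ is admissible of finite length. By the Casselman--Wallach theorem for $M_P$ (Theorem \ref{thm CW globalization}, together with the standard fact that morphisms in $\mathcal{CW}(M_P)$ have closed range, \cite{Wa92, BK14}), $f(\pi_1)$ is closed in $\pi_2$, so $f$ is strict. Here the $P$-structure plays no role, since closedness is a property of the underlying topological map.

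For (2), we compare each notion with the corresponding notion in $\mathcal{HC}_k(P)$. Suppose $f$ is monic in $\mathcal{CW}_k(P)$. The kernel $\ker f$ is a closed $P$-stable subspace. Its $K_P$-finite vectors form a submodule of $(\pi_1)_{K_P}$, which lies in $\mathcal{HC}_k(P)$, so by the equivalence $\ker f$ lies in $\mathcal{CW}_k(P)$; concretely, it is the closure of its $K_P$-finite part, by the Casselman--Wallach theory for $M_P$. The inclusion $\ker f\to\pi_1$ composed with $f$ is zero, so monicity forces $\ker f=0$. Now suppose $f$ is epi. The image $f(\pi_1)$ is closed by (3), so $\pi_2/f(\pi_1)$ is a Hausdorff quotient. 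Its $K_P$-finite vectors are the cokernel of $f_{K_P}$, so the quotient lies in $\mathcal{CW}_k(P)$. The projection $\pi_2\to\pi_2/f(\pi_1)$ composed with $f$ is zero, so the epi property forces $\pi_2/f(\pi_1)=0$, and $f$ is surjective.

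The main obstacle is justifying that closed $P$-subrepresentations and Hausdorff quotients of objects of $\mathcal{CW}_k(P)$ are again in $\mathcal{CW}_k(P)$, and are the globalizations of the corresponding $\mathcal{HC}_k(P)$ objects. We handle this by reducing to $M_P$. The Casselman--Wallach theory for $M_P$ identifies the closed $M_P$-subrepresentations with the closures of the $(\mathfrak m,K_P)$-submodules, and it gives exactness of globalization. The $\mathfrak n$-action and the nilpotency condition then pass to these subspaces and quotients by continuity, using Lemma \ref{lem converge nilpotent action}.
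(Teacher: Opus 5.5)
Your proposal is correct and matches the paper's argument. For (1) you transport abelianness through Theorem \ref{thm equivalence of categories}, and for (3) you restrict to $M_P$ and use the closed-range property of Casselman--Wallach morphisms, which the paper cites as \cite[Lemma 7.15]{Ca89} together with uniqueness of the globalization. For (2) the paper cites \cite[Proposition 7.12]{Ca89}, whereas you argue directly: $\ker f$ and the Hausdorff quotient $\pi_2/f(\pi_1)$, which is Hausdorff by (3), are themselves objects of $\mathcal{CW}_k(P)$, so monic and epi force injectivity and surjectivity. This is a harmless and somewhat more self-contained variant.
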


\begin{proof}
It is clear that $\mathcal{HC}_{k}(P)$ is abelian and so $\mathcal{CW}_{k}(P)$ is also abelian by Theorem \ref{thm equivalence of categories}. The assertion (2) follows from \cite[Proposition 7.12]{Ca89}, and the uniqueness of the Casselman-Wallach globalization. The assertion (3) follows from \cite[Lemma 7.15]{Ca89} and the uniqueness of the Casselman-Wallach globalization.
\end{proof}

\section{Adjointness of (generalized) parabolic induction and Jacquet functor} \label{adj para and jacq}

The abelian  categorical structure provides a nice control on the topology of Casselman-Wallach representations. In this section, we explicitly extract information for Casselman-Jacquet modules. Along the way, we show some results needed to prove the globalization in Section \ref{s globalization result}. 

\subsection{Parabolic inductions for Casselman-Wallach representations} \label{ss parabolic ind cw}

Let $\sigma$ be in $\mathcal{CW}_k(P)$. Define the unnormalized induction $\mathrm{Ind}_P^{G, \infty}\sigma$ to be the space of smooth functions $f: G \rightarrow \sigma$ satisfying
\begin{align} \label{def induced functions}
   f(pg)= p.(f(g)) \quad \mbox{for $p \in P$ and $g \in G$ } .
\end{align}
We shall equip $\mathrm{Ind}_P^{G, \infty}\sigma$ with the $C^{\infty}$-topology. The induced module $\mathrm{Ind}_P^{G, \infty}\sigma$ is a smooth Fr\'echet representation of $G$ (see  \cite[Lemma 10.1.1]{Wa92} \footnote{The primary concern in \cite[Section 10.1.1]{Wa92} is on $\mathcal{CW}_1(P)$, but the induced space is constructed from a compact model and so \cite[Lemma 10.1.1]{Wa92} is still valid in our setting.}). It follows from \cite[Proposition 4.1]{Ca89} that $\mathrm{Ind}_P^{G, \infty}: \mathcal{CW}_k(P)\rightarrow \mathcal{CW}(G)$ is a well-defined functor. Moreover, the functor is exact (see \cite[Proposition 2.2.7]{dCl91}). The way of the functor defined depends on $k$, but we shall suppress the index and it should be clear from the context.







\subsection{Parabolic inductions for Harish-Chandra modules}

We now also discuss the parabolic induction for Harish-Chandra modules. 

It is defined as follows. Let $\sigma$ be in $\mathcal{HC}_k(P)$ and let $\sigma^{\infty}$ be the associated globalization in Theorem \ref{thm equivalence of categories}. Define, following \cite[Chapter III, Section 2.3]{BW00} (cf. \cite[Proposition 11.47]{KV95}),
\[   I_0^{HC}(\sigma) =\mathrm{Hom}_{U(\mathfrak p)}(U(\mathfrak g), \sigma) =\left\{ f: U(\mathfrak g)\rightarrow \sigma : f(pg)=p.f(g) \mbox{ for all $p\in U(\mathfrak p)$, $g \in U(\mathfrak g)$} \right\} ,
\] 
where $U(\mathfrak g)$ (resp. $U(\mathfrak p)$) is the universal enveloping algebra of $\mathfrak g$ (resp. $\mathfrak p$).

The action is determined by $(X.f)(Y)=f(YX)$ (for $X\in \mathfrak g$ and $Y\in U(\mathfrak g)$, $f \in I_0^{HC}(\sigma)$).  Let $I_1^{HC}(\sigma)$ be the space of $\mathfrak k$-finite vectors $f$ in $I_0^{HC}(\sigma)$ such that $U(\mathfrak k).f$ is semisimple as $\mathfrak k$-module. Let $\widetilde{K}_P$ be the universal covering group of $K_P$ with the covering map denoted by $\mathrm{pr}$ and let $Z_P$ be the kernel of the covering map $\mathrm{pr}$. Since $I_1^{HC}(\sigma)$ is a semisimple $\mathfrak k$-module and $\widetilde{K}_P$ is simply connected, one lifts the $\mathfrak k$-action to $\widetilde{K}_P$-action. To add the compatibility of the $K_P$-action, we define $\mathrm{Ind}_{\mathfrak p, K_P}^{\mathfrak g, K}(\sigma)$ to be precisely the subspace of $I_1^{HC}(\sigma)$ consisting of functions satisfying: for $k \in K_P$ and $X \in U(\mathfrak g)$,
\[   (k.f)(X)=k.f(\mathrm{Ad}(k)(X)) .
\]
One sees that $Z_P$ acts by an identity on the above space, and so the $\widetilde{K}_P$-action descends to a $K_P$-action on the space $\sigma$.

 \begin{proposition} \label{prop gK induction} \cite[Chapter III, Proposition 2.4]{BW00} (cf. \cite[Proposition 11.47]{KV95})
For $\sigma$ in $\mathcal{HC}_k(P)$, there is a natural isomorphism $\mathrm{Ind}_{\mathfrak p, K_P}^{\mathfrak g,K}(\sigma) \cong (\mathrm{Ind}_P^{G, \infty}\sigma^{\infty})_K$.
\end{proposition}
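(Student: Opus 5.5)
The natural route is to compare both sides through their common description as spaces of functions on $K$, the compact picture of the induced representation. By the Iwasawa decomposition $G=PK$, restriction to $K$ identifies $\mathrm{Ind}_P^{G,\infty}\sigma^{\infty}$ with the space of smooth functions $\phi:K\to\sigma^{\infty}$ satisfying $\phi(kx)=k.\phi(x)$ for $k\in K_P$. Its $K$-finite vectors are then exactly the $K$-finite such functions. Since $K$ is compact and each $K$-isotypic component is finite-dimensional, a $K$-finite $\phi$ takes values in a finite sum of $K_P$-isotypic components of $\sigma^\infty$. By admissibility (Definition \ref{def P category}) these components are finite-dimensional and lie in $\sigma=(\sigma^\infty)_{K_P}$.

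On this space of $K$-finite vectors, define $\Theta(f)(Y):=(Y.f)(e)$ for $Y\in U(\mathfrak g)$, using the derived $\mathfrak g$-action on the smooth representation. The value at $e$ lies in $\sigma$ by the previous paragraph. Differentiating $f(pg)=p.f(g)$ at $e$ along $\mathfrak p$ gives $\Theta(f)(XY)=X.\Theta(f)(Y)$ for $X\in\mathfrak p$. Here the $\mathfrak n$-action on $\sigma^\infty$ is the exponentiated one of Lemma \ref{lem phi action} and Corollary \ref{cor lifting action to P}, whose derivative recovers the original $\mathfrak n$-action. The map $\Theta$ is $\mathfrak g$-equivariant by construction. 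It is also $K$-compatible, giving $(k.\Theta f)(X)=k.\Theta f(\mathrm{Ad}(k)X)$ for $k\in K_P$. The $\mathfrak k$-finiteness and semisimplicity required to land in $I_1^{HC}$ follow from $K$-finiteness. Hence $\Theta$ lands in $\mathrm{Ind}_{\mathfrak p,K_P}^{\mathfrak g,K}(\sigma)$.

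For injectivity: if $\Theta(f)=0$, then all derivatives of $f$ at $e$ vanish. A $K$-finite vector in a smooth Fréchet representation is analytic along $G$, since it is $Z(\mathfrak g)$-finite up to the finite-length structure; more directly, along $K$ it is a matrix coefficient of a finite-dimensional representation. So $f$ vanishes near $e$ on $K$, hence on the identity component of $K$. Translating by the $K$-action handles the other components, and $G=PK$ then gives $f=0$.

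Surjectivity is the main obstacle. I would split it by $K$-types. Fix an irreducible $K$-type $\gamma$. Frobenius reciprocity for compact groups gives $\mathrm{Hom}_K(\gamma,(\mathrm{Ind}\,\sigma^\infty)_K)\cong\mathrm{Hom}_{K_P}(\gamma|_{K_P},\sigma)$. The algebraic side satisfies the same reciprocity: Zuckerman/Bernstein adjunction for $\mathrm{Hom}_{U(\mathfrak p)}(U(\mathfrak g),-)$ restricted to $\mathfrak k$-finite vectors, using $U(\mathfrak g)=U(\mathfrak p)\otimes U(\mathfrak k)$-type decompositions from $\mathfrak g=\mathfrak p+\mathfrak k$, gives $\mathrm{Hom}_{K}(\gamma,\mathrm{Ind}_{\mathfrak p,K_P}^{\mathfrak g,K}\sigma)\cong\mathrm{Hom}_{K_P}(\gamma,\sigma)$. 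These spaces are finite-dimensional by admissibility of $\sigma$. It then suffices to check that $\Theta$ intertwines the two reciprocity isomorphisms, both being evaluation at $e$ and $1$ respectively. Injectivity on each finite-dimensional isotypic piece, together with equal dimensions, yields bijectivity. The delicate points are the $K_P$ versus $\widetilde K_P$ compatibility, handled by the descent described above, and the fact that $\mathfrak g=\mathfrak p+\mathfrak k$ makes a $U(\mathfrak p)$-linear map determined by its restriction to $U(\mathfrak k)$. This is exactly the content of the cited \cite[Chapter III, Proposition 2.4]{BW00}, whose proof goes through verbatim with $\sigma\in\mathcal{HC}_k(P)$ in place of $k=1$, since only admissibility and the continuity of the $P$-action from Corollary \ref{cor lifting action to P} are used. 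Naturality in $\sigma$ is immediate from the formula for $\Theta$.
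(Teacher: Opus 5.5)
Your proposal is correct and takes essentially the paper's route: the paper just invokes the Borel--Wallach argument, which you have written out, namely the map $f\mapsto (Y\mapsto (Y.f)(e))$, compared $K$-type by $K$-type via $\mathfrak g=\mathfrak p+\mathfrak k$. You also correctly identify that the only new input for $\mathcal{HC}_k(P)$ is the continuous $P$-action of Corollary~\ref{cor lifting action to P}. The one compressed step, passing from vanishing on $K^0$ to vanishing on all of $K$, is cleanest via the left $K_P$-covariance of $f$ together with $K=K_PK^0$; this equality is also what makes the $K$-action on the algebraic induced module well defined.
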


\begin{proof}
This follows by the same argument in \cite[Chapter III, Proposition 2.4]{BW00}.
\end{proof}



\subsection{Jacquet functor for Casselman-Wallach representations}

\begin{definition} \label{def jacquet functor} \label{def partial cj cw}
For $k \in \mathbb Z_{\geq 1}$, define the {\it $k$-th Casselman-Jacquet functor} 
\[ \mathbf{CJ}^{\infty}_{P,k}:\mathcal{CW}(G)\rightarrow \mathcal{CW}_k(P), \quad  \mathbf{CJ}^{\infty}_{P,k}(\pi)=  \pi/\overline{\mathfrak n^k.\pi} .
\]
\end{definition}

The well-definedness of $\mathbf{CJ}^{\infty}_{P,k}$ follows from:

\begin{lemma} \label{lem jacquet casselman wallach rep} (see e.g. \cite[Page 86]{Wa92})
Let $\pi$ be in $\mathcal{CW}(G)$.  Then, for any $k \in \mathbb Z_{\geq 1}$, $\pi/\overline{\mathfrak n^k.\pi}$ is a Casselman-Wallach representation of $M_P$, and so is in $\mathcal{CW}_k(P)$.
\end{lemma}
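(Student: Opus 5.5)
The plan is to reduce everything to the algebraic Casselman theorem on Harish-Chandra modules and then transport the conclusion through the globalization equivalence of Theorem \ref{thm equivalence of categories}. Let $\pi$ be in $\mathcal{CW}(G)$, $\pi_K$ its Harish-Chandra module, and $\mathfrak n^k.\pi_K$ the algebraic image. By Casselman's result (cf. \cite{Ca78}, \cite[Chapter 4]{Wa88}), $\pi_K/\mathfrak n^k.\pi_K$ is finitely generated over $U(\mathfrak m)$ and admissible for $(\mathfrak m,K_P)$; the essential input is that $\pi_K$ is finitely generated over $U(\mathfrak n)$ up to $Z(\mathfrak g)$-finiteness (Osborne/Casselman), so the quotient is $Z(\mathfrak m)$-finite and finitely generated over $U(\mathfrak m)$. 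Hence $\pi_K/\mathfrak n^k.\pi_K$ lies in $\mathcal{HC}_k(P)$. Our aim is to show that the $K_P$-finite vectors of $\pi/\overline{\mathfrak n^k.\pi}$ coincide with this quotient, and that $\pi/\overline{\mathfrak n^k.\pi}$ is the Casselman--Wallach globalization as an $M_P$-representation.

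The first step is to show $\overline{\mathfrak n^k.\pi}$ is the closure of $\mathfrak n^k.\pi_K$. Since $\pi_K$ is dense in $\pi$ and each $X\in\mathfrak n$ acts continuously, $\mathfrak n^k.\pi\subset\overline{\mathfrak n^k.\pi_K}$, which gives the claim. Next I would identify the space: consider $\mathrm{Ind}$ and Frobenius reciprocity (Section \ref{adj para and jacq}). Put $\sigma=(\pi_K/\mathfrak n^k\pi_K)^\infty\in\mathcal{CW}_k(P)$. The quotient $\pi_K\to\sigma$ corresponds to a $(\mathfrak g,K)$-map $\pi_K\to \mathrm{Ind}_{\mathfrak p,K_P}^{\mathfrak g,K}(\sigma_{K_P})\cong(\mathrm{Ind}_P^{G,\infty}\sigma)_K$ by Proposition \ref{prop gK induction}. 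This extends, by Casselman--Wallach (Theorem \ref{thm CW globalization}), to a continuous $G$-map $\pi\to\mathrm{Ind}_P^{G,\infty}\sigma$; evaluation at $e$ then gives a continuous $P$-map $\pi\to\sigma$ extending $\pi_K\to\sigma_{K_P}$. This map is surjective because its image contains $\sigma_{K_P}$, and by automatic continuity/strictness in the Casselman--Wallach setting (e.g. \cite[Lemma 7.15]{Ca89}, or Corollary \ref{cor cw abelian category}(3) applied to the $M_P$-module image) it is closed with dense image. It vanishes on $\mathfrak n^k.\pi$, hence induces a continuous surjection $\pi/\overline{\mathfrak n^k.\pi}\to\sigma$.

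Conversely, I must show that the kernel is exactly $\overline{\mathfrak n^k.\pi}$. The image of $\pi_K$ in $\pi/\overline{\mathfrak n^k.\pi}$ is dense and is a quotient of $\pi_K/\mathfrak n^k\pi_K$. It therefore consists of $K_P$-finite vectors forming an admissible finitely generated $(\mathfrak m,K_P)$-module, so $\pi/\overline{\mathfrak n^k.\pi}$ is a Fréchet moderate-growth completion of an object of $\mathcal{HC}(M_P)$. The composite $\pi_K/\mathfrak n^k\pi_K\to(\pi/\overline{\mathfrak n^k.\pi})_{K_P}\to\sigma_{K_P}$ is the identity, so the first map is injective. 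Its image is dense, and by admissibility each $K_P$-isotypic component of the Fréchet quotient equals the closure of the corresponding finite-dimensional component, so the first map is an isomorphism onto the $K_P$-finite vectors.

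The main obstacle is then concluding that the completion $\pi/\overline{\mathfrak n^k.\pi}$ is the Casselman--Wallach globalization rather than some other globalization. I would invoke the uniqueness part of Theorem \ref{thm CW globalization} for $M_P$: a smooth Fréchet moderate-growth globalization that maps continuously onto the CW globalization $\sigma$ while being a quotient of a CW representation of $G$ must be CW. For this I would use the Bernstein--Krötz characterization \cite{BK14}, namely that the topology is given by a norm dominated by $G$-Sobolev norms restricted from $\pi$; with the map to $\sigma$, the topologies then coincide, and the map is an isomorphism by the open mapping theorem.
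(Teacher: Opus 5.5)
The paper gives no argument of its own for this lemma; it only cites Wallach \cite[Page 86]{Wa92}. Your proposal is a correct, self-contained proof, and it uses only results the paper states independently of the lemma: Propositions \ref{prop gK induction} and \ref{prop frobenius reciprocity hc}, the fact that $\mathrm{Ind}_P^{G,\infty}$ sends $\mathcal{CW}_k(P)$ to $\mathcal{CW}(G)$, and Theorems \ref{thm CW globalization} and \ref{thm equivalence of categories}.

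The real content is the continuous $P$-map $\Psi\colon\pi\to\sigma$. You get it by extending the Harish-Chandra Frobenius map to $\pi\to\mathrm{Ind}_P^{G,\infty}\sigma$ and evaluating at $e$, and it yields $\pi_K\cap\overline{\mathfrak n^k.\pi}=\mathfrak n^k.\pi_K$. Combined with your isotypic-projection argument, this shows that the $K_P$-finite vectors of $E=\pi/\overline{\mathfrak n^k.\pi}$ are exactly $\pi_K/\mathfrak n^k.\pi_K$.

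At that point you are finished. $E$ is a Hausdorff quotient of $\pi|_P$, hence smooth Fr\'echet of moderate growth, so $E\in\mathcal{CW}_k(P)$ by Definition \ref{def P CW category}. The uniqueness in Theorem \ref{thm CW globalization} for $M_P$ then identifies $E$ with $\sigma$. So the ``main obstacle'' in your last paragraph is not one, and the Bernstein--Kr\"otz Sobolev comparison can be dropped.

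For the same reason, do not claim surjectivity of $\Psi$ up front. Corollary \ref{cor cw abelian category}(3) and \cite[Lemma 7.15]{Ca89} concern morphisms between Casselman--Wallach representations, and $\pi|_{M_P}$ is in general not admissible. Surjectivity comes afterwards: $\bar\Psi\colon E\to\sigma$ is then a morphism in $\mathcal{CW}_k(P)$ inducing an isomorphism of Harish-Chandra modules, hence a topological isomorphism.

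Two small points. Your Step 1 is true but never used. And the Casselman--Osborne input is finite generation over $U(\mathfrak n_B)$ for a minimal parabolic $B\subset P$, not over $U(\mathfrak n)$.

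Compared with the citation, your route gives more. It shows directly that the natural map induces $(\pi_K/\mathfrak n^k.\pi_K)^{\infty}\cong\pi/\overline{\mathfrak n^k.\pi}$, which is the content of Theorem \ref{prop cw jacquet functor} and Remark \ref{rmk explicit globalization}. The paper obtains that later via uniqueness of adjoints, an argument that itself relies on this lemma.
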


We also have a quite standard Frobenius reciprocity (cf. \cite[Page 559]{Ca78}) and we omit the standard proof.

\begin{proposition} \label{prop frobenius smooth} 
Let $k \in \mathbb Z_{\geq 1}$. For any $\pi$ in $\mathcal{CW}(G)$ and any $\sigma$ in $\mathcal{CW}_{k}(P)$, there is a natural isomorphism:
\[  \mathrm{Hom}_G(\pi, \mathrm{Ind}^{G,\infty}_P\sigma) \cong \mathrm{Hom}_P(\mathbf{CJ}^{\infty}_{P,k}(\pi), \sigma) .
\]
\end{proposition}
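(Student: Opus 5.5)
The plan is to build the two maps in the bijection and check they are mutually inverse. Given a continuous $G$-map $T:\pi\to \mathrm{Ind}^{G,\infty}_P\sigma$, compose with the evaluation at the identity, $\mathrm{ev}_e: \mathrm{Ind}^{G,\infty}_P\sigma\to\sigma$, $f\mapsto f(e)$. This map is continuous for the $C^\infty$-topology. By the defining property (\ref{def induced functions}) it is $P$-equivariant, and differentiating gives $\mathrm{ev}_e(X.f)=X.f(e)$ for $X\in\mathfrak p$. Hence $\mathrm{ev}_e\circ T$ is a continuous $P$-map that intertwines the $\mathfrak n$-actions. Since $\mathfrak n^k$ acts trivially on $\sigma$, the map kills $\mathfrak n^k.\pi$, and by continuity it kills $\overline{\mathfrak n^k.\pi}$. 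It therefore factors through $\mathbf{CJ}^\infty_{P,k}(\pi)=\pi/\overline{\mathfrak n^k.\pi}$, and the factored map is continuous for the quotient topology. This gives $\Phi(T)\in \mathrm{Hom}_P(\mathbf{CJ}^{\infty}_{P,k}(\pi),\sigma)$.

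Conversely, given a continuous $P$-map $S:\pi/\overline{\mathfrak n^k.\pi}\to\sigma$, write $q$ for the quotient map and set $\Psi(S)(v)(g):=S(q(g.v))$. The identity $\Psi(S)(v)(pg)=S(q(pg.v))=p.S(q(g.v))$ follows from $P$-equivariance of $q$ and $S$. The function $g\mapsto S(q(g.v))$ is smooth because $v$ is a smooth vector and $S\circ q$ is continuous linear, so it commutes with derivatives of the orbit map. The assignment is $G$-equivariant for right translation.

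The main obstacle is continuity of $v\mapsto \Psi(S)(v)$ into the $C^\infty$-topology. I would treat it through the compact model. Restriction to $K$ identifies $\mathrm{Ind}^{G,\infty}_P\sigma$ with a closed subspace of $C^\infty(K,\sigma)$, as in the footnote citing \cite[Lemma 10.1.1]{Wa92}. The $C^\infty(K,\sigma)$ seminorms have the form $\sup_{k\in K} p_j\bigl(S q(k.D v)\bigr)$ with $D\in U(\mathfrak k)$. Continuity of $S\circ q$ bounds $p_j\circ S\circ q$ by a seminorm $r$ of $\pi$. Compactness of $K$ together with the equicontinuity of $\{\pi(k)\}_{k\in K}$ (Banach--Steinhaus) then bounds $\sup_k r(k.Dv)$ by a continuous seminorm of $v$. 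Note that no moderate-growth estimate on $G$ is needed once we work on $K$.

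Finally I would check that the two maps are mutually inverse. First, $\Phi\Psi(S)(q(v))=S(q(v))$ is immediate. Second, $\Psi\Phi(T)(v)(g)=T(g.v)(e)=(g.T(v))(e)=T(v)(g)$. Naturality in $\pi$ and $\sigma$ is clear from these formulas.
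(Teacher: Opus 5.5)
Your proof is correct, and it is the standard evaluation-at-the-identity argument for Frobenius reciprocity; the paper omits the proof as standard and cites Casselman for it. You leave two routine points implicit. First, $\mathfrak n^k$ kills all of $\sigma$, not only $\sigma_{K_P}$, because $K_P$-finite vectors are dense and $\mathfrak n$ acts continuously. Second, your seminorm estimate uses that $U(\mathfrak k)$ acts continuously on $\pi$.
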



\subsection{Jacquet functor for Harish-Chandra modules}

\begin{definition} \label{def partial cj hc}
For $k \in \mathbb Z_{\geq 1}$, define the $k$-th Casselman-Jacquet functor:
\[ \mathbf{CJ}^{HC}_{P,k}:\mathcal{HC}(G) \rightarrow \mathcal{HC}_k(P), \quad \mathbf{CJ}^{HC}_{P,k}(\pi)=\pi/(\mathfrak n^k.\pi) . \]
\end{definition}

We again have a Frobenius reciprocity, and the proof is also quite standard, see \cite[Theorem 4.9]{HS83} or \cite[Chapter III, Proposition 2.5]{BW00}.

\begin{proposition} \label{prop frobenius reciprocity hc} \cite{Ca78}  Let $k\in \mathbb Z_{\geq 1}$. For any $\pi$ in $\mathcal{HC}(G)$ and $\sigma$ in $\mathcal{HC}_{k}(P)$, there is a natural isomorphism:
\[   \mathrm{Hom}_{\mathfrak g,K}(\pi, \mathrm{Ind}_{\mathfrak p, K_P}^{\mathfrak g, K}\sigma) \cong \mathrm{Hom}_{\mathfrak p,K_P}(\mathbf{CJ}^{HC}_{P,k}(\pi), \sigma) .\]
\end{proposition}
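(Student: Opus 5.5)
We follow the standard pattern for such adjunctions, adapted from \cite[Theorem 4.9]{HS83} and \cite[Chapter III, Proposition 2.5]{BW00}, using the algebraic model $\mathrm{Ind}_{\mathfrak p, K_P}^{\mathfrak g,K}\sigma \subset \mathrm{Hom}_{U(\mathfrak p)}(U(\mathfrak g),\sigma)$. The plan is to write down two explicit maps and check that they are mutually inverse.

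For the first map, given a $(\mathfrak g,K)$-morphism $\Phi:\pi\to \mathrm{Ind}_{\mathfrak p,K_P}^{\mathfrak g,K}\sigma$, we define $\phi(v):=\Phi(v)(1)$, evaluation at $1\in U(\mathfrak g)$. For $X\in\mathfrak p$ we have $\phi(Xv)=\Phi(v)(X)=X.\Phi(v)(1)=X.\phi(v)$, since $\Phi(v)$ is $U(\mathfrak p)$-linear on the left. Hence $\phi$ is $\mathfrak p$-equivariant. It is $K_P$-equivariant by the defining identity $(k.f)(1)=k.f(1)$. Since $\mathfrak n^k$ annihilates $\sigma$, $\phi$ kills $\mathfrak n^k.\pi$ and so factors through $\mathbf{CJ}^{HC}_{P,k}(\pi)=\pi/\mathfrak n^k.\pi$.

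For the inverse map, given $\phi:\pi/\mathfrak n^k\pi\to\sigma$, we set $\Phi(v)(Y):=\phi(Yv)$ for $Y\in U(\mathfrak g)$. Then $\Phi(v)(pY)=p.\Phi(v)(Y)$ for $p\in U(\mathfrak p)$, and $(X.\Phi(v))(Y)=\Phi(v)(YX)=\Phi(Xv)(Y)$, so $\Phi$ is $\mathfrak g$-equivariant into $I_0^{HC}(\sigma)$. The check $\Phi(k v)(Y)=k.\Phi(v)(\mathrm{Ad}(k^{-1})Y)$ (up to the paper's convention) follows from the $K_P$-equivariance of $\phi$ together with $k Y k^{-1}=\mathrm{Ad}(k)Y$ on $\pi$. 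The two constructions are inverse to each other: evaluating at $1$ recovers $\phi$, and $\Phi(v)(Y)=\Phi(Yv)(1)$ recovers $\Phi$. Naturality in $\pi$ and $\sigma$ is clear from the formulas.

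The main obstacle is showing that $\Phi(v)$ actually lies in the subspace $\mathrm{Ind}_{\mathfrak p,K_P}^{\mathfrak g,K}\sigma$, not merely in $I_0^{HC}(\sigma)$. We need three things.
\begin{enumerate}
\item $\Phi(v)$ must be $\mathfrak k$-finite with $U(\mathfrak k).\Phi(v)$ semisimple. Because $v$ is $K$-finite, $U(\mathfrak k)v$ is a finite-dimensional semisimple $\mathfrak k$-module, and $\Phi$ is $\mathfrak k$-equivariant. So $U(\mathfrak k)\Phi(v)$ is a quotient of a semisimple finite-dimensional module, hence itself semisimple and finite-dimensional.
\item The lifted $\widetilde K_P$-action (really the $K$-action) must agree with the one coming from $\pi$. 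We handle this by showing that $k\mapsto \Phi(kv)$ satisfies the twisted $K_P$-compatibility. This follows from $K_P$-equivariance of $\phi$ and the compatibility of the $K$- and $\mathfrak g$-actions on $\pi$.
\item $Z_P$ must act trivially. This is automatic, since the action factors through $K$ acting on $\pi$.
\end{enumerate}
For the equality $\Phi(kv)=k.\Phi(v)$ under the $K$-action, we use that in the $K$-finite, semisimple setting the $K$-action is determined by the $\mathfrak k$-action together with $K_P$ (where $K=K_P K^0$ need not hold in general). We handle the component group via the $K_P$-compatibility condition, as in \cite[Chapter III, Proposition 2.4]{BW00}.
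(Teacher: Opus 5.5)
Your proposal is correct and is the standard argument that the paper defers to without proof (Hecht--Schmid, Theorem 4.9; Borel--Wallach, Chapter III, Proposition 2.5): $\Phi\mapsto \Phi(\cdot)(1)$ in one direction and $\phi\mapsto\bigl(v\mapsto (Y\mapsto \phi(Yv))\bigr)$ in the other, the only real work being that $\Phi(v)$ is $\mathfrak k$-finite, $\mathfrak k$-semisimple and satisfies the $K_P$-compatibility. One correction to your last paragraph: for a parabolic $P$ one does have $K=K_PK^0$, because $K_P$ contains $Z_K(\mathfrak a_{\min})$ for a minimal parabolic inside $P$, and this group meets every component of $K$; that fact is exactly what justifies your claim that the $K$-action on the induced module is determined by the $\mathfrak k$-action together with $K_P$, so the caveat ``need not hold in general'' should be dropped.
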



\subsection{A topological consequence}

\begin{lemma} \label{lem cj right exact}
For any $k \in \mathbb Z_{k \geq 1}$, the functor $\mathbf{CJ}_{P,k}^{\infty}$ is right exact.
\end{lemma}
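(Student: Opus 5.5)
Given a short exact sequence $0\to \pi_1 \xrightarrow{f} \pi_2 \xrightarrow{g} \pi_3 \to 0$ in $\mathcal{CW}(G)$, I will show that
\[ \mathbf{CJ}^{\infty}_{P,k}(\pi_1)\to \mathbf{CJ}^{\infty}_{P,k}(\pi_2)\to \mathbf{CJ}^{\infty}_{P,k}(\pi_3)\to 0 \]
is exact in $\mathcal{CW}_k(P)$. I will argue directly at the level of vector spaces and closures, and then invoke Corollary \ref{cor cw abelian category} for the topological statements. By Lemma \ref{lem jacquet casselman wallach rep}, all three terms lie in $\mathcal{CW}_k(P)$, so the sequence lives in an abelian category in which every morphism is strict. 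It therefore suffices to check surjectivity of the last map and set-theoretic exactness in the middle.

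\emph{Surjectivity.} The map $\pi_2\to\pi_3\to \pi_3/\overline{\mathfrak n^k.\pi_3}$ is surjective because $g$ is surjective. It factors through $\pi_2/\overline{\mathfrak n^k.\pi_2}$, since $g(\overline{\mathfrak n^k.\pi_2})\subset \overline{\mathfrak n^k.\pi_3}$ by continuity and equivariance. Hence the induced map is surjective.

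\emph{Exactness in the middle.} The composite is clearly zero. I need to show that the kernel of $\pi_2/\overline{\mathfrak n^k.\pi_2}\to \pi_3/\overline{\mathfrak n^k.\pi_3}$, namely $g^{-1}(\overline{\mathfrak n^k.\pi_3})/\overline{\mathfrak n^k.\pi_2}$, equals the image of $\pi_1$, namely $(f(\pi_1)+\overline{\mathfrak n^k.\pi_2})/\overline{\mathfrak n^k.\pi_2}$. The key point, and the main obstacle, is to show
\[ g^{-1}(\overline{\mathfrak n^k.\pi_3}) = f(\pi_1)+\overline{\mathfrak n^k.\pi_2}. \]

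Since $g$ is a surjective continuous map of Fréchet spaces, it is open by the open mapping theorem, so $g(\overline{\mathfrak n^k.\pi_2})$ is dense in $\overline{\mathfrak n^k.\pi_3}$; indeed it contains $\mathfrak n^k.\pi_3=g(\mathfrak n^k.\pi_2)$. To upgrade density to equality I will use the Casselman–Wallach structure. The subspace $f(\pi_1)+\overline{\mathfrak n^k.\pi_2}$ is the preimage in $\pi_2$ of the image of the morphism $\pi_1\to \pi_2/\overline{\mathfrak n^k.\pi_2}$ in $\mathcal{CW}_k(P)$. By Corollary \ref{cor cw abelian category}(3) that image is closed, so $f(\pi_1)+\overline{\mathfrak n^k.\pi_2}$ is closed in $\pi_2$. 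Its image under the open surjection $g$ is then $g(\overline{\mathfrak n^k.\pi_2})$, which is closed in $\pi_3$ because $\pi_3$ is the quotient of $\pi_2$ by $f(\pi_1)$ and images of saturated closed sets under quotient maps are closed. Being closed and dense in $\overline{\mathfrak n^k.\pi_3}$, it equals $\overline{\mathfrak n^k.\pi_3}$. Taking preimages under $g$ gives the displayed identity.

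Finally, strictness of all the maps, and hence exactness as topological sequences, follows from Corollary \ref{cor cw abelian category}(3).
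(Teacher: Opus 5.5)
Your argument is correct, but it takes a different route from the paper. The paper's proof is formal. Proposition \ref{prop frobenius smooth} makes $\mathbf{CJ}^{\infty}_{P,k}$ left adjoint to $\mathrm{Ind}_P^{G,\infty}\colon \mathcal{CW}_k(P)\to\mathcal{CW}(G)$. A left adjoint between abelian categories (Corollary \ref{cor cw abelian category}(1)) preserves cokernels, so it is right exact. Only afterwards, in Corollary \ref{cor surjective closure}, does the paper use strictness (Corollary \ref{cor cw abelian category}(3)) to extract the concrete identity $g(\overline{\mathfrak n^k.\pi_2})=\overline{\mathfrak n^k.\pi_3}$.

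You run this in reverse. Since $g^{-1}(g(S))=S+\ker g$, your key identity $g^{-1}(\overline{\mathfrak n^k.\pi_3})=f(\pi_1)+\overline{\mathfrak n^k.\pi_2}$ is equivalent to that same identity. You prove it directly from strictness and the open mapping theorem, and then deduce right exactness from it.

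One small correction: $\pi_1|_P$ is not an object of $\mathcal{CW}_k(P)$, so Corollary \ref{cor cw abelian category}(3) cannot be applied to $\pi_1\to\pi_2/\overline{\mathfrak n^k.\pi_2}$ itself. Apply it instead to $\mathbf{CJ}^{\infty}_{P,k}(f)\colon \pi_1/\overline{\mathfrak n^k.\pi_1}\to \pi_2/\overline{\mathfrak n^k.\pi_2}$. Both spaces lie in $\mathcal{CW}_k(P)$ by Lemma \ref{lem jacquet casselman wallach rep}, and this map has the same image. With that reading every step goes through, including the closedness of $g(\overline{\mathfrak n^k.\pi_2})$ as the image of a saturated closed set under the open quotient map $g$.

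Your route does not need Frobenius reciprocity, whose proof the paper omits, nor the fact that $\mathrm{Ind}_P^{G,\infty}$ lands in $\mathcal{CW}(G)$. It also gives Corollary \ref{cor surjective closure} directly. The paper's route is shorter and purely categorical once the adjunction is in place, but it relies on Proposition \ref{prop frobenius smooth}, and the set-theoretic content still has to be extracted separately through strictness.
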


\begin{proof}

This follows from Proposition \ref{prop frobenius smooth}, which asserts that  $\mathbf{CJ}_{P,k}^{\infty}$ is left adjoint to $\mathrm{Ind}_P^{G,\infty}$, and the standard fact in homological algebra (for an abelian category) that any left adjoint functor is right exact.
\end{proof}

\begin{corollary} \label{cor surjective closure}

Let $\pi, \pi'$ be in $\mathcal{CW}(G)$ with a surjective morphism $f$ from $\pi$ to $\pi'$.  Then, for any $k \in \mathbb Z_{\geq 1}$,
 \[ f(\overline{\mathfrak n^k.\pi})=\overline{\mathfrak n^k.\pi'}.  \]
\end{corollary}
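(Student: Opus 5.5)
The plan is to use right exactness of $\mathbf{CJ}^{\infty}_{P,k}$ (Lemma \ref{lem cj right exact}) together with the abelian structure of $\mathcal{CW}(G)$ and $\mathcal{CW}_k(P)$ (Corollary \ref{cor cw abelian category}).

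The inclusion $f(\overline{\mathfrak n^k.\pi})\subset\overline{\mathfrak n^k.\pi'}$ is immediate. Since $f$ is $\mathfrak g$-equivariant, $f(\mathfrak n^k.\pi)\subset\mathfrak n^k.\pi'$, and continuity of $f$ then gives $f(\overline{\mathfrak n^k.\pi})\subset\overline{\mathfrak n^k.\pi'}$.

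For the reverse inclusion, let $\kappa=\ker f$, which is closed and lies in $\mathcal{CW}(G)$, so that $0\to\kappa\to\pi\to\pi'\to0$ is exact in $\mathcal{CW}(G)$. Right exactness of $\mathbf{CJ}^{\infty}_{P,k}$ then gives an exact sequence in the abelian category $\mathcal{CW}_k(P)$:
\[ \kappa/\overline{\mathfrak n^k.\kappa}\rightarrow \pi/\overline{\mathfrak n^k.\pi}\rightarrow \pi'/\overline{\mathfrak n^k.\pi'}\rightarrow 0 . \]
Here the second map is the one induced by $f$. By Corollary \ref{cor cw abelian category}(2),(3), exactness in $\mathcal{CW}_k(P)$ is genuine set-theoretic exactness, with kernels and images computed as closed subspaces. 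Hence the kernel of $\pi/\overline{\mathfrak n^k.\pi}\to\pi'/\overline{\mathfrak n^k.\pi'}$ equals the image of $\kappa$, namely $(\kappa+\overline{\mathfrak n^k.\pi})/\overline{\mathfrak n^k.\pi}$.

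Now take $y\in\overline{\mathfrak n^k.\pi'}$ and pick any $x\in\pi$ with $f(x)=y$, which exists because $f$ is surjective. The class of $x$ in $\pi/\overline{\mathfrak n^k.\pi}$ maps to zero in $\pi'/\overline{\mathfrak n^k.\pi'}$. So $x\in\kappa+\overline{\mathfrak n^k.\pi}$, say $x=z+w$ with $z\in\kappa$ and $w\in\overline{\mathfrak n^k.\pi}$. Then $y=f(w)\in f(\overline{\mathfrak n^k.\pi})$, which proves the reverse inclusion.

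The main point needing care is that the exactness supplied by Lemma \ref{lem cj right exact} holds in the categorical sense in $\mathcal{CW}_k(P)$, and we need it to mean set-theoretic exactness at the middle term. Two checks are required. First, the categorical kernel of the induced map must be its set-theoretic kernel; this holds because in $\mathcal{CW}_k(P)$ the kernel is the closed subspace given by the set-theoretic kernel. Second, the categorical image of $\kappa/\overline{\mathfrak n^k.\kappa}$ must be its set-theoretic image; strictness of morphisms (Corollary \ref{cor cw abelian category}(3)) guarantees that image is already closed.

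Alternatively, one can avoid categorical language altogether by using the Frobenius reciprocity of Proposition \ref{prop frobenius smooth} directly. This tests the cokernel of $\kappa/\overline{\mathfrak n^k.\kappa}\to\pi/\overline{\mathfrak n^k.\pi}$ against $\mathrm{Hom}_P(-,\sigma)$ for all $\sigma\in\mathcal{CW}_k(P)$, which is exactly how the right-exactness argument runs anyway.
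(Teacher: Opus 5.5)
Your proposal is correct and follows the paper's proof. Both use right exactness of $\mathbf{CJ}^{\infty}_{P,k}$, which comes from Frobenius reciprocity, and strictness of morphisms in $\mathcal{CW}_k(P)$ (Corollary \ref{cor cw abelian category}(3)) to identify the kernel of $\pi/\overline{\mathfrak n^k.\pi}\to\pi'/\overline{\mathfrak n^k.\pi'}$ with the image of $\kappa/\overline{\mathfrak n^k.\kappa}$. You additionally spell out the final lifting argument, which the paper leaves implicit.
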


\begin{proof}
It is clear that $f(\overline{\mathfrak n^k.\pi})\subset \overline{\mathfrak n^k.\pi'}$. We consider the short exact sequence in $\mathcal{CW}(G)$:
\[  0 \rightarrow \omega \stackrel{\iota}{\rightarrow} \pi \stackrel{f}{\rightarrow} \pi' \rightarrow 0,
\]
where $\omega$ is the kernel of $f$. Then, by Lemma \ref{lem cj right exact}, this induces a right exact sequence in $\mathcal{CW}_k(P)$:
\[  \omega/\overline{\mathfrak n^k.\omega} \stackrel{\widetilde{\iota}}{\rightarrow} \pi/\overline{\mathfrak n^k.\pi} \stackrel{\widetilde{f}}{\rightarrow} \pi'/\overline{\mathfrak n^k.\pi'} \rightarrow 0 .
\]
Then, Corollary \ref{cor cw abelian category}(3) implies $\widetilde{\iota}(\omega/\overline{\mathfrak n^k.\omega})=\widetilde{f}^{-1}(0)$. This forces that the inclusion $f(\overline{\mathfrak n^k.\pi})\subset \overline{\mathfrak n^k.\pi'}$ is an equality.
\end{proof}

Corollary \ref{cor surjective closure} strengthens the surjectivity onto the Casselman-Jacquet quotient in Proposition \ref{prop surj cj quotient} (also see Corollary \ref{cor exact cw cj sub} below). This demonstrates a nicer topological behaviour for the Casselman-Wallach category. On the other hand, it is believed that $\mathfrak n^k.\pi$ is closed in $\pi$ while a general proof seems not to be known. Corollary \ref{cor surjective closure} also reduces the folklore conjecture to the principal series case through the (dual of the) Casselman embedding.

\section{Globalization of Casselman-Jacquet modules} \label{s globalization result}

In this section, we supply the globalization of the partial Casselman-Jacquet modules. Our strategy to show the globalization is to use the globalization of parabolic inductions and then transfer to Casselman-Jacquet functors from uniqueness of adjoint functors.

\begin{theorem} \label{prop cw jacquet functor}
Let $\pi$ be in $\mathcal{CW}(G)$. For any $k \in \mathbb Z_{\geq 1}$, $\mathbf{CJ}_{P,k}^{\infty}(\pi)$ is naturally isomorphic to the Casselman-Wallach globalization of $\mathbf{CJ}_{P,k}^{HC}(\pi_K)$.
\end{theorem}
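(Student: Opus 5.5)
We compare the two functors through their right adjoints, as the section introduction indicates. Fix $k$ and write $\Omega:\mathcal{HC}_k(P)\to\mathcal{CW}_k(P)$ for the globalization equivalence of Theorem \ref{thm equivalence of categories}, with inverse $\Lambda$ (taking $K_P$-finite vectors). We use the equivalence $\mathcal{CW}(G)\simeq\mathcal{HC}(G)$ of Theorem \ref{thm CW globalization}. We aim to show that the two functors
\[ \pi \mapsto \mathbf{CJ}^{\infty}_{P,k}(\pi), \qquad \pi\mapsto \Omega\big(\mathbf{CJ}^{HC}_{P,k}(\pi_K)\big) \]
from $\mathcal{CW}(G)$ to $\mathcal{CW}_k(P)$ are both left adjoint to $\mathrm{Ind}^{G,\infty}_P$. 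Uniqueness of adjoints then gives a natural isomorphism.

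\textbf{Step 1.} The first functor is left adjoint to $\mathrm{Ind}^{G,\infty}_P$ by Proposition \ref{prop frobenius smooth}.

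\textbf{Step 2.} For the second functor, take $\sigma\in\mathcal{CW}_k(P)$. We chain the following natural isomorphisms:
\[ \mathrm{Hom}_P(\Omega(\mathbf{CJ}^{HC}_{P,k}(\pi_K)),\sigma)\cong \mathrm{Hom}_{\mathfrak p,K_P}(\mathbf{CJ}^{HC}_{P,k}(\pi_K),\sigma_{K_P})\cong \mathrm{Hom}_{\mathfrak g,K}(\pi_K,\mathrm{Ind}^{\mathfrak g,K}_{\mathfrak p,K_P}\sigma_{K_P}). \]
The first isomorphism is Theorem \ref{thm equivalence of categories}, and the second is Proposition \ref{prop frobenius reciprocity hc}. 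Next, Proposition \ref{prop gK induction} identifies $\mathrm{Ind}^{\mathfrak g,K}_{\mathfrak p,K_P}\sigma_{K_P}$ with $(\mathrm{Ind}^{G,\infty}_P\sigma)_K$, using $(\sigma_{K_P})^\infty\cong\sigma$. Finally, Theorem \ref{thm CW globalization} gives $\mathrm{Hom}_{\mathfrak g,K}(\pi_K,(\mathrm{Ind}^{G,\infty}_P\sigma)_K)\cong\mathrm{Hom}_G(\pi,\mathrm{Ind}^{G,\infty}_P\sigma)$. This step needs $\mathrm{Ind}^{G,\infty}_P\sigma\in\mathcal{CW}(G)$, which holds by Section \ref{ss parabolic ind cw}.

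\textbf{Step 3.} Every isomorphism in the chain must be natural in both $\pi$ and $\sigma$. Once that is checked, Yoneda (uniqueness of left adjoints) yields $\mathbf{CJ}^{\infty}_{P,k}(\pi)\cong\Omega(\mathbf{CJ}^{HC}_{P,k}(\pi_K))$ naturally in $\pi$.

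\textbf{Main obstacle.} The delicate point is naturality in $\sigma$ of the isomorphism in Proposition \ref{prop gK induction}, and its compatibility with the two Frobenius reciprocity maps. Naturality in $\sigma$ is exactly what Yoneda requires. The plan is to make the maps explicit. On the smooth side, $\Phi\mapsto(v\mapsto \Phi(v)(e))$. On the $(\mathfrak g,K)$ side, $\phi\mapsto(v\mapsto\phi(v)(1))$. On the comparison of inductions, a smooth $f$ is sent to $Y\mapsto (Yf)(e)$. Under these formulas the diagram commutes on $K$-finite vectors, and density of $\pi_K$ in $\pi$ plus continuity then gives commutativity everywhere.

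As a sanity check, one can also verify directly that the natural map $\pi_K/\mathfrak n^k\pi_K\to(\pi/\overline{\mathfrak n^k\pi})_{K_P}$ has dense image. Injectivity of this map is not needed, because the adjoint argument already produces the isomorphism.
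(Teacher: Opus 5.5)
Your proposal is correct and is essentially the paper's own proof. Both arguments show that $\mathbf{CJ}^{\infty}_{P,k}$ and $\pi\mapsto(\mathbf{CJ}^{HC}_{P,k}(\pi_K))^{\infty}$ are left adjoints of $\mathrm{Ind}^{G,\infty}_P$, using Proposition~\ref{prop frobenius smooth} for the first functor and the chain Theorem~\ref{thm equivalence of categories}, Proposition~\ref{prop gK induction}, Proposition~\ref{prop frobenius reciprocity hc} for the second, and then conclude by uniqueness of adjoints. Your explicit check of naturality in $\sigma$ spells out a point the paper leaves implicit, but it is not a different route.
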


\begin{proof}
By Proposition \ref{prop frobenius smooth}, 
\begin{align} \label{eqn isomorphism jacquet induction}
\mathrm{Hom}_G(\pi, \mathrm{Ind}_P^{G,\infty}\sigma)  \cong  \mathrm{Hom}_{P}(\mathbf{CJ}^{\infty}_{P,k}(\pi), \sigma) .
\end{align}
 On the other hand, we have: 
\begin{align*} \label{eqn isomorphisms jacquet hc}
\mathrm{Hom}_G(\pi, \mathrm{Ind}_P^{G, \infty}\sigma) & \cong \mathrm{Hom}_{(\mathfrak g, K)}(\pi_K, (\mathrm{Ind}_P^{G, \infty}\sigma)_K) \\                             & \cong \mathrm{Hom}_{(\mathfrak g, K)}(\pi_K, \mathrm{Ind}_{(\mathfrak p, K_P)}^{(\mathfrak g, K)}(\sigma_{K_P})) \\                                        & \cong \mathrm{Hom}_{(\mathfrak p, K_{P})}(\mathbf{CJ}^{HC}_{P,k}(\pi_K), \sigma_{K_P} ) \\
    & \cong \mathrm{Hom}_P((\mathbf{CJ}^{HC}_{P,k}(\pi_K))^{\infty}, \sigma)
\end{align*}
where the first and last isomorphisms follow from Theorem \ref{thm equivalence of categories}, the second isomorphism follows from Proposition \ref{prop gK induction}, and the third isomorphism follows from Proposition \ref{prop frobenius reciprocity hc}. 

The uniqueness of the adjoint functors in abelian categories (see Corollary \ref{cor cw abelian category}(1)) then implies that $\mathbf{CJ}^{\infty}_{P,k}(\pi)$ is naturally isomorphic to $(\mathbf{CJ}_{P,k}^{HC}(\pi_K))^{\infty}$. This shows the theorem.
\end{proof}

\begin{remark} \label{rmk explicit globalization}
We shall describe the natural isomorphism in Theorem \ref{prop cw jacquet functor}. To do so, we shall recall some generality for abelian categories. Let $\mathcal A$ and $\mathcal B$ be abelian categories. Let $F: \mathcal A \rightarrow \mathcal B$ be an additive functor. Suppose $G$ and $G'$ are functors left adjoint to $F$. Then we have the following sequence of natural isomorphisms:
\[ \mathrm{Hom}_{\mathcal B}(G(X), G(X)) \cong \mathrm{Hom}_{\mathcal A}(X, F\circ G(X)) \cong \mathrm{Hom}_{\mathcal B}(G'(X), G(X)) .
\]
One starts with the identity morphism $\mathrm{Id}_{G(X)}$ on the leftmost spot, and then one results with a natural morphism $\Phi_X: G'(X) \rightarrow G(X)$ in $\mathrm{Hom}_{\mathcal B}(G'(X), G(X))$. 

With above, one deduces that the natural isomorphism from $(\mathbf{CJ}^{HC}_{P,k}(\pi_K))^{\infty}$ to $\mathbf{CJ}^{\infty}_{P,k}(\pi)$ arises from the projection $\pi_K/(\mathfrak n^k.\pi_K) \rightarrow \pi/\overline{\mathfrak n^k.\pi}$.
\end{remark}

\section{Artin-Rees lemma for Casselman-Wallach representations} \label{s artin rees lemma}

In this section, we prove the exactness of the Casselman-Jacquet functors in the Casselman-Wallach category. We have to establish a version of the Artin-Rees lemma, and a technical difficulty, compared with the Harish-Chandra case, is lack of some finite-generation. Our strategy is to use Theorem \ref{prop cw jacquet functor} to pass from Harish-Chandra modules to Casselman-Wallach representations.





\subsection{Artin-Rees lemmas}

We first recall the Artin-Rees lemma for $(\mathfrak g, K)$-modules:

\begin{lemma} \label{lem artin rees lemma} (see  \cite{McC67, Ca78, CO78, SW82}) \label{lem ARL hc}
Let $\pi$ be in $\mathcal{HC}(G)$ and let $\sigma$ be a $(\mathfrak g, K)$-submodule of $\pi$. Then there exists a sufficiently large positive integer $k$ such that, for any $n >k$,
\[  \mathfrak n^{n-k}.(\mathfrak n^k.\pi \cap \sigma) = (\mathfrak n^n. \pi )\cap \sigma .
\]
As a consequence, $(\mathfrak n^n. \pi) \cap \sigma \subset \mathfrak n^{n-k}.\sigma$.
\end{lemma}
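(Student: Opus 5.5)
The plan is to reduce the statement to the classical Artin--Rees lemma for finitely generated modules over a Noetherian ring. The ring will be $U(\mathfrak g)$ with a suitable filtration, or more conveniently the graded algebra built from the powers $\mathfrak n^k$. Write $R = U(\mathfrak g)$ and $I = U(\mathfrak g)\mathfrak n$. Rather than the left ideal $I$, which is awkward, we use the subalgebra $U(\mathfrak n)$ together with the ``Rees algebra''
\[ \mathcal R=\bigoplus_{k\ge 0} t^k\,\mathfrak n^k U(\mathfrak n)\subset U(\mathfrak n)[t] . \]
Since $U(\mathfrak n)$ is Noetherian and $\mathfrak n^kU(\mathfrak n)=(\mathfrak nU(\mathfrak n))^k$ is the $k$-th power of a two-sided ideal generated by the finite-dimensional space $\mathfrak n$, the algebra $\mathcal R$ is a quotient of $U(\mathfrak n\oplus\mathfrak n t)$-type algebras. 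More concretely, $\mathcal R$ is generated by $U(\mathfrak n)$ and $t\mathfrak n$, and it is Noetherian: its associated graded is a quotient of a polynomial ring, by the PBW filtration argument.

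The key input is that a Harish-Chandra module $\pi$ is finitely generated over $U(\mathfrak n)$. This follows from Casselman--Osborne / the standard fact that $\pi$ is finitely generated over $U(\mathfrak n)$ whenever $P$ is minimal; for general $P$ one uses $\mathfrak n_{\min}\supset$ conjugates and the Iwasawa-type decomposition $\mathfrak g=\mathfrak n\oplus\mathfrak m\oplus\mathfrak k$-argument. This is the main obstacle. In the cited sources it is obtained from admissibility and $Z(\mathfrak g)$-finiteness via Casselman--Osborne. I would quote it as in \cite{CO78, Ca78}, in the form $\pi = U(\mathfrak n)F$ for a finite-dimensional $F$. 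It is safest to phrase this with the enlarged algebra $U(\mathfrak n)\otimes$ a finite piece of $U(\mathfrak m\cap\mathfrak k)$ so that it holds for every parabolic. In the weaker form that $\pi$ is finitely generated over a Noetherian algebra $A\supset U(\mathfrak n)$ normalizing $\mathfrak n U(\mathfrak n)$ (for instance $A=U(\mathfrak p)$, which is Noetherian), the argument below still works. We therefore take $A=U(\mathfrak p)$: the ideal $J=U(\mathfrak p)\mathfrak n=\mathfrak nU(\mathfrak p)$ is two-sided because $\mathfrak n$ is an ideal in $\mathfrak p$, and $J^k\pi=\mathfrak n^k.\pi$. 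Finite generation of $\pi$ over $U(\mathfrak p)$ follows from $\mathfrak g=\mathfrak p+\mathfrak k$ together with $K$-finiteness and finite generation over $U(\mathfrak g)$.

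With this in place, form the Rees algebra $\mathcal R_J=\bigoplus_k J^k t^k\subset U(\mathfrak p)[t]$. It is Noetherian, being generated over $U(\mathfrak p)$ by $t\mathfrak n$ with a polynomial associated graded. Then $\widetilde\pi=\bigoplus_k J^k\pi\, t^k$ is a finitely generated $\mathcal R_J$-module, generated by $\pi t^0$, where $\pi$ is finitely generated over $U(\mathfrak p)$. The graded submodule $\bigoplus_k (J^k\pi\cap\sigma)t^k$ is therefore finitely generated. Here $\sigma$ is a $U(\mathfrak p)$-submodule, so this is indeed an $\mathcal R_J$-submodule. Choose generators of degrees $\le k$. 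Then for $n>k$ every element of degree $n$ lies in $J^{n-k}(J^k\pi\cap\sigma)$, giving $\mathfrak n^{n-k}.(\mathfrak n^k.\pi\cap\sigma)\supset \mathfrak n^n.\pi\cap\sigma$. The reverse inclusion is immediate, since $\mathfrak n^{n-k}$ maps $\mathfrak n^k\pi$ into $\mathfrak n^n\pi$ and preserves $\sigma$. The consequence follows because $\mathfrak n^{n-k}.(\mathfrak n^k\pi\cap\sigma)\subset\mathfrak n^{n-k}.\sigma$.
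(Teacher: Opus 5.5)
Your route differs from the paper's. The paper takes a minimal parabolic $B\subset P$, uses Casselman--Osborne to get that $\pi$ is finitely generated over $U(\mathfrak n_B)$, and then quotes McConnell's theorem that $I=\mathfrak n\,U(\mathfrak n_B)$ has the Artin--Rees property in the enveloping algebra of the \emph{nilpotent} Lie algebra $\mathfrak n_B$. Since $\mathfrak n$ is an ideal of $\mathfrak n_B$, one has $I^k\pi=\mathfrak n^k.\pi$. You instead use the elementary fact that $\pi=U(\mathfrak g)F=U(\mathfrak p)U(\mathfrak k)F=U(\mathfrak p)F$ for a finite-dimensional $K$-stable $F$; this is correct, via $\mathfrak g=\mathfrak p+\mathfrak k$. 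You then have to prove Artin--Rees for $J=\mathfrak n\,U(\mathfrak p)$ in the non-nilpotent algebra $U(\mathfrak p)$ yourself. Everything after ``$\mathcal R_J$ is Noetherian'' is fine: $J^m\pi=\mathfrak n^m.\pi$, $\mathfrak n^k.\pi\cap\sigma$ is $U(\mathfrak p)$-stable, and the degree bookkeeping gives $\mathfrak n^n.\pi\cap\sigma\subset\mathfrak n^{n-k}.(\mathfrak n^k.\pi\cap\sigma)$. What this buys you is independence from Casselman--Osborne: the argument works for any finitely generated $(\mathfrak g,K)$-module. Your opening detour about finite generation over $U(\mathfrak n)$ is false for non-minimal $P$ (e.g.\ $P=G$), but you never use it.

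The gap is the Noetherianity of $\mathcal R_J$, which is the entire content of Artin--Rees. Being generated over $U(\mathfrak p)$ by $t\mathfrak n$ does not imply it. Nor is the PBW associated graded generated by the symbols of $\mathfrak p$ and $t\mathfrak n$: for instance $t^2[Y,Y']=[tY,tY']$ has PBW degree $1$. More importantly, your justification never uses that $\mathfrak n$ is nilpotent, and it must. Take $\mathfrak p=\mathfrak n=\mathfrak b=\mathbb C x\oplus\mathbb C y$ with $[x,y]=y$. The ideal $J=\mathfrak b\,U(\mathfrak b)$ satisfies $\bigcap_kJ^k=yU(\mathfrak b)=:N$. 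Artin--Rees would then force $N\subset JN=y\ker\chi$, where $\chi(x)=-1$, $\chi(y)=0$. This is false, since $U(\mathfrak b)$ is a domain and $1\notin\ker\chi$.

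The repair uses the lower central series $\gamma_1=\mathfrak n$, $\gamma_{j+1}=[\mathfrak n,\gamma_j]$. These are ideals of $\mathfrak p$ with $[\gamma_i,\gamma_j]\subset\gamma_{i+j}$ and $\gamma_j\subset J^j$ inside $U(\mathfrak p)$, and $\gamma_{r+1}=0$ by nilpotency. Hence $\widehat{\mathfrak l}=\mathfrak p\otimes 1\oplus\bigoplus_{j=1}^{r}\gamma_j\otimes t^j$ is a finite-dimensional Lie subalgebra of $\mathfrak p\otimes\mathbb C[t]$. Since $t$ is central, $X\otimes t^j\mapsto Xt^j$ induces an algebra map $U(\widehat{\mathfrak l})\to U(\mathfrak p)[t]$. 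Its image lies in $\mathcal R_J$ and contains $U(\mathfrak p)$ and $t\mathfrak n$, hence is all of $\mathcal R_J$, because $J^kt^k=(t\mathfrak n)^kU(\mathfrak p)$. So $\mathcal R_J$ is a quotient of the Noetherian ring $U(\widehat{\mathfrak l})$, and with this step inserted your proof is complete.
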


Since Lemma \ref{lem ARL hc} is quite important, we provide the proof for the convenience of the reader.

\noindent
\begin{proof} Let $B$ be a minimal parabolic subgroup contained in $P$. Let $\mathfrak n_B=\mathrm{Lie}(N_B)$. It follows from \cite[Corollary 1.2]{CO78} that $\pi$ is finitely generated as a $U(\mathfrak n_B)$-module.

Let $I=\mathfrak nU(\mathfrak n_B)$. It follows from \cite[Theorem 4.2]{McC67} (also see \cite[Theorem 2.1]{SW82}) that the Artin-Rees property of the ideal $I$ in $U(\mathfrak n_B)$ is satisfied, i.e. there exists a sufficiently large positive integer $k$ such that for any $n>k$,
\[  I^{n-k}.(I^k.\pi \cap \sigma) = (I^n.\pi)\cap \sigma .
\]
This can then be rewritten as $\mathfrak n^{n-k}.(\mathfrak n^k.\pi \cap \sigma) = (\mathfrak n^n.\pi)\cap \sigma$.
 \end{proof}







Recall that $\overline{\mathfrak n^n.\pi\cap \sigma} \subset \overline{\mathfrak n^n.\pi}\cap \overline{\sigma}$, and so the proof of Lemma \ref{lem AR lemma} does not simply follow from taking the closure in  Lemma \ref{lem artin rees lemma}.

\begin{lemma} \label{lem AR lemma} (Artin-Rees lemma for Casselman-Wallach category)
Let $\pi$ be in $\mathcal{CW}(G)$ and let $\sigma$ be a Casselman-Wallach subrepresentation of $\pi$. Then, there exists a sufficiently large positive integer $k$ such that for any $n>k$,
\[ \overline{\mathfrak n^n.\pi} \cap \sigma \subset \overline{\mathfrak n^{n-k}.\sigma}   .
\]
\end{lemma}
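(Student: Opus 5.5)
Our plan is to push the Harish-Chandra Artin-Rees lemma (Lemma \ref{lem ARL hc}) to the Casselman-Wallach setting. The tools are the abelian structure of $\mathcal{CW}_n(P)$ (Corollary \ref{cor cw abelian category}) and the globalization of the partial Casselman-Jacquet functors (Theorem \ref{prop cw jacquet functor}, with the explicit isomorphism of Remark \ref{rmk explicit globalization}).

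Fix $k$ as in Lemma \ref{lem ARL hc}, applied to $\pi_K$ and its submodule $\sigma_K$. For $n>k$, consider the natural morphism in $\mathcal{CW}_n(P)$
\[ \alpha_n : \sigma/\overline{\mathfrak n^{n-k}.\sigma} \longrightarrow \pi/\overline{\mathfrak n^{n}.\pi}, \]
which is well defined because $\mathfrak n^{n-k}.\sigma\subset \mathfrak n^{n-k}.\pi$. Indeed, to get a morphism landing in $\mathcal{CW}_n(P)$, we first use the composite $\sigma\to\pi\to \pi/\overline{\mathfrak n^n.\pi}$; its kernel is $\sigma\cap\overline{\mathfrak n^n.\pi}$. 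The claim to prove is precisely that this kernel lies in $\overline{\mathfrak n^{n-k}.\sigma}$.

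Equivalently, let $\beta_n:\sigma/\overline{\mathfrak n^{n}.\sigma}\to \pi/\overline{\mathfrak n^n.\pi}$, a morphism in $\mathcal{CW}_n(P)$. Let $\kappa_n=\ker\beta_n=(\sigma\cap\overline{\mathfrak n^n.\pi})/\overline{\mathfrak n^n.\sigma}$. It suffices to show that $\kappa_n$ maps to zero in $\sigma/\overline{\mathfrak n^{n-k}.\sigma}$.

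By Corollary \ref{cor cw abelian category} and Theorem \ref{thm equivalence of categories}, kernels in $\mathcal{CW}_n(P)$ are computed on $K_P$-finite vectors. Hence $\kappa_n$ is the globalization of $\ker\big((\beta_n)_{K_P}\big)$, and in particular $(\kappa_n)_{K_P}$ is dense in $\kappa_n$. By Theorem \ref{prop cw jacquet functor} and Remark \ref{rmk explicit globalization}, $(\beta_n)_{K_P}$ identifies with the algebraic map $\sigma_K/\mathfrak n^n\sigma_K\to\pi_K/\mathfrak n^n\pi_K$. Its kernel is $(\sigma_K\cap\mathfrak n^n\pi_K)/\mathfrak n^n\sigma_K$. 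By Lemma \ref{lem ARL hc} this kernel is contained in the image of $\mathfrak n^{n-k}\sigma_K$.

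Therefore the composite $\kappa_n\to\sigma/\overline{\mathfrak n^n.\sigma}\to\sigma/\overline{\mathfrak n^{n-k}.\sigma}$ is a continuous map that vanishes on the dense subspace $(\kappa_n)_{K_P}$, so it is zero. This gives $\overline{\mathfrak n^n.\pi}\cap\sigma\subset\overline{\mathfrak n^{n-k}.\sigma}$.

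The main obstacle is the identification of $\ker\beta_n$ with the globalization of the algebraic kernel. Two points need checking.

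\begin{itemize}
\item The $K_P$-finite vectors of $\sigma/\overline{\mathfrak n^n.\sigma}$ must be exactly $\sigma_K/\mathfrak n^n\sigma_K$, and likewise for $\pi$. This is the content of Theorem \ref{prop cw jacquet functor} together with the explicit isomorphism in Remark \ref{rmk explicit globalization}.
\item The functor of $K_P$-finite vectors must be exact on $\mathcal{CW}_n(P)$, so that it commutes with kernels. This follows from the categorical equivalence of Theorem \ref{thm equivalence of categories}, combined with the fact that kernels in $\mathcal{CW}_n(P)$ are the set-theoretic kernels (Corollary \ref{cor cw abelian category}(2),(3)).
\end{itemize}

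Once these are in place, the density argument above is routine.
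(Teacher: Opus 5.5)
Your proposal is correct and follows essentially the same route as the paper. The paper likewise compares the algebraic kernel $(\sigma_K\cap\mathfrak n^n.\pi_K)/\mathfrak n^n.\sigma_K$ with the smooth kernel $(\sigma\cap\overline{\mathfrak n^n.\pi})/\overline{\mathfrak n^n.\sigma}$ via Theorem \ref{prop cw jacquet functor} and Remark \ref{rmk explicit globalization}, uses exactness of the globalization for density, and concludes by continuity of the map to $\sigma/\overline{\mathfrak n^{n-k}.\sigma}$ after applying Lemma \ref{lem ARL hc}. One small slip: the map $\alpha_n$ you first write down is not well defined, since it would require $\overline{\mathfrak n^{n-k}.\sigma}\subset\overline{\mathfrak n^{n}.\pi}$; you never actually use it, and the argument via $\beta_n$ is fine.
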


\begin{proof}
 The Casselman-Wallach equivalence of categories then provides an embedding for $(\mathfrak g, K)$-modules 
\[ \sigma_K \hookrightarrow \pi_K. \]
This then induces a $(\mathfrak p, K_P)$-morphism:
\[   \sigma_K/(\mathfrak n^n. \sigma_K) \rightarrow  \pi_K/(\mathfrak n^n. \pi_K) .
\]

 We consider the following commutative diagram such that the two horizontal lines are left-exact sequences:
\[  \xymatrix{ 0 \ar[r] & (\sigma_K\cap (\mathfrak n^n.\pi_K)) /(\mathfrak n^n. \sigma_K) \ar[r] \ar[d] & \sigma_K/(\mathfrak n^n. \sigma_K) \ar[r] \ar[d] &  \pi_K/(\mathfrak n^n. \pi_K) \ar[d] \\ 0 \ar[r] &
(\sigma \cap \overline{\mathfrak n^n.\pi})/(\overline{\mathfrak n^n. \sigma}) \ar[r]  & \sigma/\overline{\mathfrak n^n.\sigma} \ar[r] & \pi/\overline{\mathfrak n^n.\pi} }  .
\]
The last two vertical maps are induced from the embedding $\sigma_K$ to $\sigma$, and $\pi_K$ to $\pi$. It follows from Proposition \ref{prop cw jacquet functor} (see Remark \ref{rmk explicit globalization}) that the two maps are injective maps determining the Casselman-Wallach globalization. Since the Casselman-Wallach globalization functor is exact, the image of the first map is dense. In other words, the projection of $(\sigma_K \cap (\mathfrak n^n.\pi_K))/(\overline{\mathfrak n^n.\sigma})$ into $(\sigma \cap \overline{\mathfrak n^n.\pi})/(\overline{\mathfrak n^n.\sigma})$ is dense.

We choose the index $k$ as the one in the Artin-Rees lemma for Harish-Chandra modules in Lemma \ref{lem artin rees lemma} and consider the morphism induced from the embedding $\sigma \cap \overline{\mathfrak n^n.\pi} \hookrightarrow \sigma$:
\[ F: (\sigma \cap \overline{\mathfrak n^n.\pi})/\overline{\mathfrak n^n.\sigma} \rightarrow \sigma/(\overline{\mathfrak n^{n-k}.\sigma}) .
\]
By the Artin-Rees lemma for Harish-Chandra modules (see Lemma \ref{lem artin rees lemma}), we then have a dense set in $(\sigma \cap \overline{\mathfrak n^n.\pi})/\overline{\mathfrak n^n.\sigma} $ mapping to zero under the above map $F$. Hence, by continuity, the projection is zero. This implies that
\[  \sigma \cap \overline{\mathfrak n^n.\pi} \subset \overline{\mathfrak n^{n-k}.\sigma} 
\]
as desired.
\end{proof}

\subsection{Exactness of the Casselman-Jacquet functors}

Recall that $\mathbf{CJ}^{\infty}_P$ is defined in Section \ref{ss two functors}. The following proof of Theorem \ref{thm exact cj functors cw} is inspired by the exactness of the localization functor in commutative algebra, and a main ingredient is a version of Artin-Rees lemma (Lemma \ref{lem AR lemma}).

A short exact sequence $0 \rightarrow \pi_1 \stackrel{f_1}{\rightarrow} \pi_2 \stackrel{f_2}{\rightarrow} \pi_3\rightarrow 0$ in $\mathrm{Rep}^{\infty,F}(P)$ is said to be strict if all morphisms in the sequence are strict. Hence, $f_1$ is injective and $f_2$ is surjective, and so $\pi_2/\pi_1 \cong \pi_3$.

\begin{theorem} \label{thm exact cj functors cw}
Let $\iota: \mathcal{CW}(G)\rightarrow \mathrm{Rep}^{\infty,F}(G)$ be the natural embedding. Then the Casselman-Jacquet quotient functor $\mathbf{CJ}^{\infty}_P\circ \iota$ sends a short exact sequence to a strict short exact sequence i.e. exact in the sense of \cite[Definition 1.1.18]{Sc99}.
\end{theorem}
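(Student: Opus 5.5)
Take a short exact sequence $0\to\sigma\xrightarrow{f_1}\pi\xrightarrow{f_2}\pi'\to 0$ in $\mathcal{CW}(G)$. By Corollary \ref{cor cw abelian category} it is strict, so we identify $\sigma$ with a closed subrepresentation of $\pi$ and $\pi'$ with $\pi/\sigma$. Recall $\mathbf{CJ}^{\infty}_P(\pi)=\pi/\mathbf{CJ}^{\infty}_{s}(\pi)$ (Proposition \ref{prop surj cj quotient}). The plan is to check exactness at the three places of $0\to\mathbf{CJ}^\infty_P(\sigma)\to\mathbf{CJ}^\infty_P(\pi)\to\mathbf{CJ}^\infty_P(\pi')\to 0$, and then strictness.

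\emph{Surjectivity on the right} is immediate from Proposition \ref{prop surj cj quotient functor}.

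\emph{Exactness in the middle.} By Corollary \ref{cor surjective closure}, $f_2(\overline{\mathfrak n^k.\pi})=\overline{\mathfrak n^k.\pi'}$ for every $k$. Hence $f_2^{-1}(\overline{\mathfrak n^k.\pi'})=\overline{\mathfrak n^k.\pi}+\sigma$. Now take $x\in\pi$ whose image in $\mathbf{CJ}^\infty_P(\pi')$ vanishes. Then for each $k$ we can write $x=a_k+s_k$ with $a_k\in\overline{\mathfrak n^k.\pi}$ and $s_k\in\sigma$. The images $[s_k]\in\mathbf{CJ}^\infty_P(\pi)$ agree with $[x]$ modulo $\overline{\mathfrak n^k.\pi}/\mathbf{CJ}_s^\infty(\pi)$, so $[s_k]\to[x]$ by Corollary \ref{cor converging seq 0}. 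We need more, namely that $[x]$ lies in the image of $\mathbf{CJ}^\infty_P(\sigma)$. For this, compare $s_k$ and $s_{k+1}$: we have $s_k-s_{k+1}\in\overline{\mathfrak n^k.\pi}\cap\sigma\subset\overline{\mathfrak n^{k-k_0}.\sigma}$ by Lemma \ref{lem AR lemma}. Thus $(s_k)$ defines a compatible element of $\varprojlim_k\sigma/\overline{\mathfrak n^{k-k_0}.\sigma}=\mathbf{CJ}^\infty_P(\sigma)$. Its image in $\mathbf{CJ}^\infty_P(\pi)$ agrees with $[x]$ in every $\pi/\overline{\mathfrak n^{k-k_0}.\pi}$, so it equals $[x]$. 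The composite is zero trivially.

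\emph{Injectivity on the left.} This is the main obstacle, since it is exactly what fails in general (Example \ref{ex small seminorm family}). We must show $\sigma\cap\mathbf{CJ}^\infty_s(\pi)=\mathbf{CJ}^\infty_s(\sigma)$. The inclusion $\supseteq$ is clear. For $\subseteq$, Lemma \ref{lem AR lemma} gives, for all $n>k_0$,
\[\sigma\cap\bigcap_n\overline{\mathfrak n^n.\pi}\subset\bigcap_n\overline{\mathfrak n^{n-k_0}.\sigma}=\mathbf{CJ}^\infty_s(\sigma).\]
So the Artin--Rees lemma does the essential work. Equivalently, $\mathbf{CJ}^\infty_P(\sigma)\to\mathbf{CJ}^\infty_P(\pi)$ is injective, because its kernel is $(\sigma\cap\mathbf{CJ}_s^\infty(\pi))/\mathbf{CJ}_s^\infty(\sigma)$.

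\emph{Strictness.} The right map is a continuous surjection of Fr\'echet spaces, hence open by the open mapping theorem. The left map is injective with image equal to the kernel of the right map by the middle exactness, so its image is closed. By the open mapping theorem again it is a topological isomorphism onto its image. Together these give a strict short exact sequence in the sense of \cite[Definition 1.1.18]{Sc99}.
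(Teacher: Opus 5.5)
Your proof is correct and follows essentially the same route as the paper: surjectivity comes from Proposition \ref{prop surj cj quotient functor}, and the Artin--Rees Lemma \ref{lem AR lemma} shows that the filtrations $\sigma\cap\overline{\mathfrak n^k.\pi}$ and $\overline{\mathfrak n^k.\sigma}$ on $\sigma$ give the same inverse limit. Your re-indexing of $(s_k)$ and your identity $\sigma\cap\mathbf{CJ}^{\infty}_{s}(\pi)=\mathbf{CJ}^{\infty}_{s}(\sigma)$ are exactly the paper's inverse of the projection $\mathrm{pr}$. Your version is also more complete than the paper's sketch, which leaves two points implicit that you make explicit: the levelwise middle exactness via Corollary \ref{cor surjective closure}, and strictness via the open mapping theorem.
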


\begin{proof}

 Let $0 \rightarrow \pi_1 \rightarrow \pi_2 \rightarrow \pi_3 \rightarrow 0$ be an exact sequence of Casselman-Wallach representations of $G$. The surjection part follows from a more general result in Proposition \ref{prop surj cj quotient functor}.


We now sketch the proof for the middle exactness and the injectivity for the convenience of the reader. We have the following short exact sequence:
\[   0 \rightarrow \varprojlim~ \pi_1/(\pi_1\cap \overline{\mathfrak n^k.\pi_2}) \rightarrow \varprojlim~ \pi_2/\overline{\mathfrak n^k. \pi_2} \rightarrow \varprojlim~\pi_3/\overline{\mathfrak n^k. \pi_3} \rightarrow 0,
\]
where the above surjectivity follows from Proposition  \ref{prop surj cj quotient functor}, and the injectivity and the middle exactness follows from the definition of the induced maps in the inverse limit. Note that the natural map from $\mathbf{CJ}^{\infty}_P(\pi_1)$ to $\mathbf{CJ}^{\infty}_P(\pi_2)$ factors through the natural projection $\mathrm{pr}$ from $\varprojlim~ \pi_1/\overline{\mathfrak n^k.\pi_1}$ to $\varprojlim~ \pi_1/(\pi_1\cap \overline{\mathfrak n^k.\pi_2})$. Thus, it suffices to show that $\mathrm{pr}$ is an isomorphism. This follows from Lemma \ref{lem AR lemma} that one can find an increasing sequence $k_1<k_2<k_3< \ldots$ to define a natural map 
\[  \varprojlim_i \pi_1/(\pi_1 \cap \overline{\mathfrak n^{k_i}.\pi_2}) \rightarrow \varprojlim_k \pi_1/\overline{\mathfrak n^k.\pi_1}  .
\]
With the fact that $\varprojlim_i \pi_1/(\pi_1\cap \overline{\mathfrak n^{k_i}.\pi_2}) \cong \varprojlim_k \pi_1/(\pi_1\cap \overline{\mathfrak n^k.\pi_2})$, one defines the inverse of $\mathrm{pr}$ and shows that $\mathrm{pr}$ is an isomorphism.
\end{proof}

\begin{corollary} \label{cor exact cw cj sub}
The Casselman-Jacquet submodule functor $\mathbf{CJ}_{s,P}^{\infty}$ restricted to the Casselman-Wallach category $\mathcal{CW}(G)$ sends a short exact sequence to a strict short exact sequence.
\end{corollary}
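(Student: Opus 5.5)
Given a short exact sequence $0\to\pi_1\xrightarrow{f_1}\pi_2\xrightarrow{f_2}\pi_3\to 0$ in $\mathcal{CW}(G)$, I will deduce exactness of $\mathbf{CJ}^{\infty}_{s,P}$ from Theorem \ref{thm exact cj functors cw} together with the strict short exact sequences $0\to\mathbf{CJ}^{\infty}_{s,P}(\pi_i)\to\pi_i\to\mathbf{CJ}^{\infty}_P(\pi_i)\to 0$ of Proposition \ref{prop surj cj quotient}. The maps $f_i$ are strict by Corollary \ref{cor cw abelian category}(3), applied with $P=G$, $k=1$. The plan is a snake-lemma-type diagram chase carried out set-theoretically, followed by a topological check of strictness.

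Consider the $3\times 3$ diagram whose rows are the $\mathbf{CJ}_s$-row, the $\pi$-row and the $\mathbf{CJ}$-row, and whose columns are the three short exact sequences above. The middle row is exact, and by Theorem \ref{thm exact cj functors cw} the bottom row is exact. The top row is checked point by point.

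\textbf{Injectivity.} This is Proposition \ref{prop monic epi property}(1).

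\textbf{Exactness in the middle.} Let $x\in\mathbf{CJ}^{\infty}_{s}(\pi_2)$ with $f_2(x)=0$. Then $x=f_1(y)$ for some $y\in\pi_1$. The image of $y$ in $\mathbf{CJ}^{\infty}(\pi_1)$ maps to the image of $x$ in $\mathbf{CJ}^{\infty}(\pi_2)$, which is $0$. Injectivity of $\mathbf{CJ}^{\infty}(f_1)$ then forces $y\in\mathbf{CJ}^{\infty}_s(\pi_1)$.

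\textbf{Surjectivity.} Let $z\in\mathbf{CJ}^{\infty}_s(\pi_3)$ and lift it to $x\in\pi_2$. The image $\bar x$ of $x$ in $\mathbf{CJ}^{\infty}(\pi_2)$ maps to $0$ in $\mathbf{CJ}^{\infty}(\pi_3)$. By middle exactness of the bottom row, $\bar x$ is the image of some $\bar y\in\mathbf{CJ}^{\infty}(\pi_1)$. Lift $\bar y$ to $y\in\pi_1$, which is possible by the surjectivity in Proposition \ref{prop surj cj quotient}. Then $x-f_1(y)\in\mathbf{CJ}^{\infty}_s(\pi_2)$, and it maps to $z$. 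This is precisely where the surjectivity of $\pi\to\mathbf{CJ}^{\infty}(\pi)$ is essential.

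\textbf{Strictness.} This is the step I expect to be the main obstacle. Closedness of the images is automatic. The image of $\mathbf{CJ}_s(\pi_1)$ is $f_1(\pi_1)\cap\mathbf{CJ}_s(\pi_2)$ by the middle-exactness argument, hence closed. The map to $\mathbf{CJ}_s(\pi_3)$ is surjective. Strictness in the sense of topological isomorphism onto the image then follows from the open mapping theorem, since all spaces are Fréchet: closed subspaces of Fréchet spaces are Fréchet. So the sequence is strict short exact.

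The only delicate point is making sure the bottom row really is injective at $\mathbf{CJ}^{\infty}(\pi_1)$, so that the middle step works. That is supplied by Theorem \ref{thm exact cj functors cw}, which ultimately rests on Lemma \ref{lem AR lemma}.
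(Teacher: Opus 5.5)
Your proposal is correct and follows essentially the same route as the paper, which deduces the statement from the strict sequences $0\to\mathbf{CJ}^{\infty}_{s,P}(\pi_i)\to\pi_i\to\mathbf{CJ}^{\infty}_P(\pi_i)\to 0$ of Proposition \ref{prop surj cj quotient} and the exactness of $\mathbf{CJ}^{\infty}_P$ in Theorem \ref{thm exact cj functors cw} by a ``standard'' diagram chase. You have carried out that chase explicitly, correctly using the surjectivity of $\pi_1\to\mathbf{CJ}^{\infty}_P(\pi_1)$ for the surjectivity step, and you have also checked strictness via the closedness of $f_1(\pi_1)\cap\mathbf{CJ}^{\infty}_{s,P}(\pi_2)$ and the open mapping theorem.
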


\begin{proof}
By Proposition \ref{prop surj cj quotient}, we have a short exact sequence: 
\[   0 \rightarrow \mathbf{CJ}^{\infty}_{s,P}(\pi) \rightarrow \pi \rightarrow \mathbf{CJ}_P^{\infty}(\pi) \rightarrow 0 .\]
The exactness of $\mathbf{CJ}^{\infty}_{s,P}(\pi)$ then follows from the exactness of $\mathbf{CJ}_P^{\infty}(\pi)$ and a standard argument of tracing commutative diagrams.
\end{proof}

\part{Bernstein-Zelevinsky filtrations} \label{part bz filtration}

We begin with notation that will be used throughout the remaining sections for real general linear groups. Let $\mathbb K=\mathbb R$ or $\mathbb C$. Let $G_n=\mathrm{GL}_n(\mathbb K)$. Let $B_n$ be the subgroup of upper triangular matrices in $\mathrm{GL}_n(\mathbb K)$.  For $t \in \mathbb K$, let $|t|$ be the absolute value of $t$.

\section{Schwartz and mirabolic inductions} \label{s mirabolic subgp}

In this section, we shall provide an alternate self-contained definition of Schwartz inductions for mirabolic subgroups, which is more algebraic in nature. The reader who is familiar with \cite{dCl91} can adapt the definition of Schwartz induction in \cite[2.1.2 D\'EFINITION]{dCl91} and skip some details in the definition. The advantage of our presentation is to realize the mirabolic inductions as a concrete subspace of some functions.

\subsection{Mirabolic subgroups} \label{ss mirabolic subgp}

The mirabolic subgroup $M_{n}$ of $\mathrm{GL}_{n}(\mathbb K)$ is defined as:
\[ M_n= \left\{   \begin{pmatrix}  g & v \\ 0 & 1 \end{pmatrix} : g \in \mathrm{GL}_{n-1}(\mathbb K) \mbox{ and } v \in \mathbb K^{n-1} \right\} .
\]
Let 
\[  V_{n-1}= \left\{ \begin{pmatrix} I_{n-1} & v \\  0 & 1 \end{pmatrix} : v \in \mathbb K^{n-1} \right\} , \quad \mathfrak v=\mathfrak v_{n-1}=\mathrm{Lie}(V_{n-1}) .
\]
We shall regard $M_n$ and $G_n$ as subgroups of $M_{n+1}$ via the embedding $m \mapsto \begin{pmatrix}  m & \\ & 1 \end{pmatrix}$. 

\subsection{Compact realizations} \label{ss compact realize}

Define
\[   K_n = \left\{ \begin{array}{ll} O(n) &  \mbox{ if $\mathbb K=\mathbb R$} \\
      U(n) &   \mbox{ if $\mathbb K=\mathbb C$ } \end{array} \right.  ,
\]
the orthogonal and unitary subgroups in $G_n$ respectively. Define $\mathrm{Ind}_{K_{n-1}}^{K_n}\sigma$ to be the space of smooth functions from $K_n$ to $\sigma$ such that for all $k \in K_{n-1}$, $f(kg)=k.f(g)$ for all $g \in K_n$. For $\sigma$ in $\mathrm{Rep}^{\infty,F}(K_{n-1})$, define $\mathrm{SInd}_{K_{n-1}}^{K_n \times \mathbb R_{>0}}\sigma:=\mathrm{SInd}_1^{\mathbb R_{>0}}(\mathrm{Ind}_{K_{n-1}}^{K_n}\sigma)$ to be the space of smooth functions from $K_n \times \mathbb R_{>0}$ to $\sigma$ satisfying: for any $k \in K_{n-1}$ and $(k',r) \in K_n \times \mathbb R_{>0}$,
\[  f((k,1)(k',r))=k.f(k',r)
\]
and furthermore $r^s\frac{d^{l}}{dr^l}f(k,r)$, $s\in \mathbb{Z}_{\geq 0}, l\in \mathbb{Z}_{\geq 0}$, vanish for $r$ tending to $0$ or infinity.




\subsection{Mirabolic inductions} \label{ss mir induction}

Note that we can embed $K_{n-1}$ to $M_n$ via $k \mapsto \mathrm{diag}(k,1)$, and $K_n\cap M_n=K_{n-1}$. With this, one has that, as Nash manifolds,
\[  (M_{n}V_n) \backslash M_{n+1} \cong M_n\setminus G_n \cong K_{n-1}\backslash K_n \times \mathbb R_{>0} 
\]
and so the space $\mathrm{SInd}_{K_{n-1}}^{K_n \times \mathbb R_{>0}}\sigma$ defined in Section \ref{ss compact realize} will suitably define the mirabolic induction.
We now explicitly define the functor:
\[   \mathrm{SInd}_{M_nV_n}^{M_{n+1}}: \mathrm{Rep}^{\infty, F}(M_nV_n) \rightarrow \mathrm{Rep}^{\infty, F}(M_{n+1}) ,
\]
For $\pi \in \mathrm{Rep}^{\infty,F}(M_nV_n)$,  the underlying space of $\mathrm{SInd}_{M_nV_n}^{M_{n+1}}\pi$ is $\mathrm{SInd}_{K_{n-1} }^{K_n \times \mathbb{R}_{>0}} (\pi|_{K_{n-1}})$. We have to describe the $M_{n+1}$-action on $\mathrm{SInd}_{M_nV_n}^{M_{n+1}}\pi$.

We have the following continuous surjection:
\begin{align} \label{eqn gl decompose}
  V_n\times G_n  \simeq  V_n \times (B_n\cap M_n) \times  K_n \times \mathbb R_{>0} \twoheadrightarrow  M_{n+1} ,
\end{align}
via the map 
\[ V_n \times (B_n\cap M_n)  \times  K_n \times \mathbb R_{>0}\ni (w, b, k, r) \mapsto w\cdot \mathrm{diag}(b,1) \cdot \mathrm{diag}(k,1) \cdot \mathrm{diag}(r,\ldots, r, 1) .
\]

For each $f \in \mathrm{SInd}_{K_{n-1} }^{K_n \times \mathbb R_{>0}}\pi$, we define $\widetilde{f}: M_{n+1} \rightarrow \pi$ as follows. For an element $g \in M_{n+1}$, we express according to (\ref{eqn gl decompose})
\[  g=w\begin{pmatrix} b & \\ & 1\end{pmatrix} \begin{pmatrix} k & \\ & 1\end{pmatrix} \mathrm{diag}(r,\ldots, r,1)
\]
with $w\in V_n$, $k \in K_n$, $r \in \mathbb R_{>0}$, $b \in B_n \cap M_n$. We define:
\[   \widetilde{f}(g) =w \begin{pmatrix} b &  \\ & 1\end{pmatrix}.(f(k,r)) .
\]
Such the assignment is injective, and we denote the image by $\mathcal M$. The $M_{n+1}$-action on $\widetilde{f} \in \mathcal M$ is via the right translation, i.e. for $m, g \in M_{n+1}$, $(m.\widetilde{f})(g)=\widetilde{f}(gm)$. Note that, for $f \in \mathcal M$ and $m \in M_{n+1}$, $m.\widetilde{f}$ is still in $\mathcal M$, and this defines a $M_{n+1}$-action on $\mathrm{SInd}_{M_nV_n}^{M_{n+1}}\pi$.

The space $\mathrm{SInd}_{M_nV_n}^{M_{n+1}} \pi$ coincides with the {\it Schwartz induction} in the sense of \cite{dCl91}.

\begin{example} \label{ex mir M2}
We consider $\mathbb{K}=\mathbb{R}$ and the mirabolic subgroup $M_2$. Let $\pi$ be a one dimensional $V_1$-representation such that $\begin{pmatrix} 1 & a \\ & 1 \end{pmatrix} \in V_1$ acts on $\pi$ via multiplying the scalar $e^{-2\pi\sqrt{-1}a}$. Then $\mathrm{SInd}_{M_1V_1}^{M_2}\pi=\mathrm{SInd}_{V_1}^{M_2}\pi$ is isomorphic to $\mathcal S(\mathbb R^{\times})$ in Example \ref{ex schwartz function M2}.
\end{example}

\section{Links the Casselman-Jacquet submodule functor to mirabolic inductions} \label{s descrip bz layer}

We shall usually identify $\mathfrak v_n$ with $\mathbb K^n$ by the map $\begin{pmatrix} 0_n & v \\ & 0 \end{pmatrix} \mapsto v$. Let $e_n$ denote the vector $(0,\dots,0,1)^T\in \mathbb K^n$. Let $\phi:  V_n \rightarrow \mathbb C$ be the character given by: for $v \in \mathfrak v_n$,
\[   \phi(\mathrm{exp}(v)) =e^{-2\sqrt{-1}\pi\mathrm{Re}( \langle e_n^T, v\rangle)}.
\]
Let $\pi \in \mathrm{Rep}^{\infty,F}(M_{n+1})$. We can then form the $M_{n+1}$-representation $\mathcal S(\mathfrak n)\widehat{\otimes}\pi$ as in Section \ref{ss first intersect prop}. We shall consider the $M_{n+1}$-subspace 
\[ ( \bigcap_{k =0}^{\infty} \mathcal C_k(\mathfrak v_n) )\widehat{\otimes} \pi  \subset \mathcal S(\mathfrak v_n)\widehat{\otimes}\pi .
\]
Our first goal is to transform the above subspace into mirabolic inductions via the following three identifications. Example \ref{ex mir M2} illustrates the identifications for $n=1$.

\subsection{First identification} \label{ss induction realize 1}

The Fourier transform in Section \ref{ss fourier transform} provides an isomorphism of $M_{n+1}$-representations:
\[ \Theta_1: (\bigcap_{k =0}^{\infty} \mathcal C_k(\mathfrak v_n)) \widehat{\otimes} \pi \stackrel{\sim}{\rightarrow} \mathcal S(\mathfrak{v}_n^*\setminus \left\{ 0 \right\}) \widehat{\otimes} \pi . \]

We shall identify $\mathfrak v_n^*$ with the space of row vectors over $\mathbb K$ with $n$-entries, and fix the pairing $\mathfrak v_n^*\times \mathfrak v_n$ given by: $\langle y,x\rangle =yx$, the matrix multiplication between a row vector and a column vector. The $M_{n+1}$-structure on $\mathcal S(\mathfrak v_n^*)\widehat{\otimes} \pi$ can be described explicitly. The subgroup $V_n$ acts on $f\in \mathcal S(\mathfrak{v}_n^*\setminus \left\{ 0 \right\}) $ as follows: for $y \in \mathfrak v_n^*\setminus \left\{ 0 \right\}$, and $v\in \mathfrak v_n$,
\[ \left(\mathrm{exp}(v).f\right)(y)= \left(\begin{pmatrix} I_n & v \\ & 1\end{pmatrix}.f\right)(y) =  e^{-2\pi\sqrt{-1}\mathrm{Re}\langle y,v \rangle}f(y) ,
\]
and the action of $V_n$ on $\pi$ is trivial. The subgroup $G_n$ acts on  $\mathcal S(\mathfrak{v}_n^*\setminus \left\{ 0 \right\}) $ by: $(g.f)(y)=\mathrm{det}(g) \cdot f(yg)$, and acts on $\pi$ from the $M_{n+1}$-action on $\pi$.

Recall that the action map $a:\mathcal S(\mathfrak v_n)\widehat{\otimes}\pi \rightarrow \pi$ in (\ref{eqn n action}) is given by:
\[     f\otimes v \in  (\bigcap_{k=0}^{\infty}\mathcal C_k(\mathfrak v_n))\widehat{\otimes} \pi \stackrel{a}{\mapsto} \int_{\mathfrak v_n} f(y) \mathrm{exp}(y).v ~dy  .
\]
For $f \in \mathcal S(\mathfrak v_n^*\setminus \left\{ 0 \right\})$, we now have:
\begin{align} \label{eqn first identify}  f \otimes v \stackrel{\Theta_1^{-1}}{\mapsto} \widehat{f}\otimes v \stackrel{a}{\mapsto}  \int_{\mathfrak v_n }  \int_{\mathfrak{v}_n^*\setminus \left\{0\right\}} e^{2{\sqrt{-1} \pi \mathrm{Re}\langle x, y \rangle}} f(x) ~dx ~\mathrm{exp}(y).v ~dy  .
\end{align}

\subsection{Second identification} \label{ss induction realize 2}

We have isomorphisms of manifolds:
\[ (M_nV_n)\backslash M_{n+1} \cong M_n\setminus G_n \simeq \mathfrak{v}_n^*\setminus \left\{ 0 \right\}\simeq (\mathbb K^n)^*\setminus \left\{ 0 \right\} \]
and it is explicitly determined by:
for $m \in G_n \subset M_{n+1}$,
    \[ m\mapsto  e_n^T\cdot m.
    \]
This induces the following identification, which can be directly checked from the explicit description in Section \ref{ss mir induction}:
\[ \Theta_2: \mathcal S(\mathfrak{v}_n^*\setminus \left\{ 0 \right\}) \widehat{\otimes} \pi \stackrel{\sim}{\rightarrow} \mathrm{SInd}_{M_nV_n}^{M_{n+1}}(\mathbb C_{\mathrm{triv}}\boxtimes \phi)\widehat{\otimes} \pi . \]
For $h \in \mathrm{SInd}_{M_nV_n}^{M_{n+1}}(\mathbb C_{\mathrm{triv}})$ and $v\in \pi$, we can rewrite the map in the first identification as:
\[ h \otimes v  \stackrel{a \circ \Theta_1^{-1}\circ  \Theta_2^{-1}}{\longmapsto}   \int_{\mathfrak v_n}  \left(\int_{(M_nV_n)\setminus M_{n+1}} e^{2\sqrt{-1}\pi \mathrm{Re}\langle e_n^T\cdot m, y \rangle} h(m) ~dm \right)~ \mathrm{exp}(y).v~dy  ,
\]
where $dm$ is the measure on $(M_nV_n) \backslash M_{n+1}$ from the change of variables from $dx$ in (\ref{eqn first identify}).

\subsection{Third identification} \label{ss induction realize 3}
The last isomorphism (cf. \cite[Theorem 44.1]{Tr06}) 
\[ \Theta_3: \mathrm{SInd}_{M_nV_n}^{M_{n+1}}(\mathbb C_{\mathrm{triv}}\boxtimes \phi)\widehat{\otimes} \pi \stackrel{\sim}{\rightarrow}  \mathrm{SInd}_{M_nV_n}^{M_{n+1}}(\pi|_{M_n}\boxtimes \phi) \]
is determined by:
    \[    f\otimes v \mapsto (m \in G_n \mapsto f(m) m.v) .    \]
Let $\Theta=\Theta_3\circ \Theta_2\circ \Theta_1$. For $h \in \mathrm{SInd}_{M_nV_n}^{M_{n+1}}\pi$, we finally have:
\begin{align*}
     h &\stackrel{a\circ \Theta^{-1}}{\longmapsto} \int_{\mathfrak v_n} \int_{M_nV_n \backslash M_{n+1}} e^{2\sqrt{-1}\pi \mathrm{Re}\langle e_n^T\cdot m, y \rangle}(\mathrm{exp}(y)m^{-1}). h(m)~ dm~  dy \\
      &= \int_{\mathfrak v_n} \int_{M_nV_n \backslash M_{n+1}} e^{2\sqrt{-1}\pi \mathrm{Re} \langle e_n^T\cdot m, y \rangle}(m^{-1}\cdot \mathrm{exp}(my)). h(m)~ dm~  dy,
\end{align*}
where the equation follows from $m\cdot\mathrm{exp}(y)\cdot m^{-1}=\mathrm{exp}(my)$.

\subsection{The functor $\Phi^+$}\label{def_phi+}

The action of $M_n$ on $\phi$ is given by $(m.\phi)(v)=\phi(m^{-1}v)$. The stabilizer subgroup of $\phi$ in $G_n$ is $M_n$ (see Section \ref{ss mirabolic subgp}).
 Define 
 \[ \Phi^+: \mathrm{Rep}^{\infty, F}(M_n) \rightarrow \mathrm{Rep}^{\infty, F}(M_{n+1}) \]
 given by:
\begin{equation} \label{eqn phi+ schwartz induction}
\Phi^+(\pi):= \mathrm{SInd}_{M_nV_n}^{M_{n+1}} \delta^{1/2}(\pi\boxtimes \phi) ,
 \end{equation}
where $\delta$ is the character of $M_{n}V_n$ given by $g \mapsto |\mathrm{det}(g)|$.

\begin{lemma} \label{lem action map kernel}
Let $\pi$ be in $\mathrm{Rep}^{\infty, F}(M_{n+1})$. Let
\[ \kappa = \overline{\mathfrak v_n.(\pi\otimes \phi^{-1})}  \subset \pi. \]
Then $\Phi^+(\kappa|_{M_n})$ lies in the kernel of the map 
\[a \circ \Theta^{-1}:  \Phi^+(\pi|_{M_n})=\mathrm{SInd}_{M_nV_n}^{M_{n+1}}(\pi|_{M_n}\boxtimes \phi) \rightarrow \pi  .\]
\end{lemma}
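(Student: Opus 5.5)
The plan is to compute the map $a\circ\Theta^{-1}$ fibrewise over $(M_nV_n)\backslash M_{n+1}\cong \mathfrak v_n^*\setminus\{0\}$. At a point $m$, the integral over $y\in\mathfrak v_n$ against the phase $e^{2\sqrt{-1}\pi\mathrm{Re}\langle e_n^T m, y\rangle}$ is a twisted Schwartz-type action, and it annihilates the twisted coinvariants $\mathfrak v_n.(\pi\otimes\phi^{-1})$. By density and continuity this then extends to the closure $\kappa$.

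\emph{Step 1: reduction to elementary tensors.} The subspace $\Phi^+(\kappa|_{M_n})$ is, via $\Theta_3$, the image of $\mathrm{SInd}(\mathbb C_{\mathrm{triv}}\boxtimes\phi)\widehat{\otimes}\kappa$. This image is a closed subspace of $\Phi^+(\pi|_{M_n})$, by the exactness of $\widehat{\otimes}$ with the nuclear space of Schwartz functions. Algebraic tensors $h\otimes v$ with $v\in\mathfrak v_n.(\pi\otimes\phi^{-1})$ are dense in it. Since $a\circ\Theta^{-1}$ is continuous, it suffices to show that
\[
a\circ\Theta^{-1}\bigl(\Theta_3(h\otimes (X.v - d\phi^{-1}(X)v))\bigr)=0 \quad \text{for } X\in\mathfrak v_n,\ v\in\pi .
\]
Here I interpret $\mathfrak v_n.(\pi\otimes\phi^{-1})$ as spanned by vectors $X.v+2\pi\sqrt{-1}\mathrm{Re}\langle e_n^T,X\rangle v$, with the sign fixed by the normalisation of $\phi$.

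\emph{Step 2: the fibre computation.} Using the last displayed formula for $a\circ\Theta^{-1}$, the contribution of $h(m)=f(m)\,m.w$ is
\[
\int_{M_nV_n\backslash M_{n+1}} f(m)\int_{\mathfrak v_n} e^{2\sqrt{-1}\pi\mathrm{Re}\langle e_n^T m,y\rangle}\,\exp(y).w\,dy\,dm .
\]
For fixed $m$, the inner $y$-integral is not absolutely convergent. I therefore handle it through the Schwartz/Fourier framework: the double integral is really $a(\widehat{F}\otimes w)$ with $F\in\mathcal S(\mathfrak v_n^*\setminus\{0\})$. Now replace $w$ by $m^{-1}.(X.v+c(X)v)$, writing $\mathrm{Ad}(m^{-1})X=X'$ so that $m^{-1}.X.v = X'.(m^{-1}.v)$. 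Then integrate by parts in $y$: $\exp(y)X'.u=\frac{d}{dt}\exp(y+tX')u$, which converts $X'$ into $-\partial_{X'}$ acting on the phase. This produces the factor $-2\pi\sqrt{-1}\mathrm{Re}\langle e_n^Tm, X'\rangle=-2\pi\sqrt{-1}\mathrm{Re}\langle e_n^T, X\rangle$, by the equivariance $\langle e_n^T m,\mathrm{Ad}(m^{-1})X\rangle=\langle e_n^T,X\rangle$. This factor cancels exactly with the twist term $c(X)$. The integrand therefore vanishes pointwise in $m$, and the integral is zero.

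\emph{Main obstacle.} The hard step is making the integration by parts rigorous, since the $y$-integral alone diverges. My plan is to do the computation on the level of $\mathcal S(\mathfrak v_n)\widehat{\otimes}\pi$ before applying $a$. The element $\Theta^{-1}(\Theta_3(h\otimes X.v))$ equals $\Theta^{-1}$ of an element where $X$ acts on the $\pi$-factor, and $a$ intertwines $X$ acting on $\pi$ with $-\partial_X$ acting on the function factor. Under the Fourier transform $\Theta_1$ this derivative becomes multiplication by the linear function $2\pi\sqrt{-1}\langle\cdot,X\rangle$ on $\mathfrak v_n^*\setminus\{0\}$, and under $\Theta_2,\Theta_3$ this is multiplication by the scalar $d\phi(X)$ after transporting by $m$. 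Consequently
\[
\Theta^{-1}\Theta_3\bigl(h\otimes(X.v-d\phi^{-1}(X)v)\bigr)\in \ker a ,
\]
and this holds at the Schwartz level, where all manipulations are justified by Section~\ref{ss fourier transform}. Keeping the $\delta^{1/2}$ normalisation and the $m$-conjugation bookkeeping consistent is routine but must be checked.
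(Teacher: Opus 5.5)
\textbf{There is a genuine gap.} It sits in Step~1, and the rigorous fix in your last paragraph inherits it.

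The isomorphism $\Theta_3$ is the tensor identity $f\otimes v\mapsto (m\mapsto f(m)\,m.v)$, so it untwists using the $G_n$-action on $\pi$. But $\kappa=\overline{\mathfrak v_n.(\pi\otimes\phi^{-1})}$ is only $M_n$-stable: for $g\in G_n$, $g.\kappa$ is the analogous space for the character $g\cdot\phi$.

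Hence $\Theta_3(\mathrm{SInd}(\mathbb C_{\mathrm{triv}}\boxtimes\phi)\widehat{\otimes}\kappa)$ consists of sections whose value at $m$ lies in $m.\kappa$. By contrast, $\Phi^+(\kappa|_{M_n})$ consists of $\kappa$-valued sections. Equivalently, $\Theta_3^{-1}\Phi^+(\kappa|_{M_n})$ is the space of $\pi$-valued $F$ with $F(m)\in m^{-1}.\kappa$.

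The identity you propose, $\Theta^{-1}\Theta_3(h\otimes u)\in\ker a$ for $u\in\mathfrak v_n.(\pi\otimes\phi^{-1})$, is false. The map $a$ trades $X$ acting on $\pi$ for $-\partial_X$. After Fourier transform this is multiplication by $x\mapsto 2\pi\sqrt{-1}\,\mathrm{Re}\langle x,X\rangle$, that is, by $m\mapsto 2\pi\sqrt{-1}\,\mathrm{Re}\langle e_n^T m,X\rangle$. This is not a constant, so it does not cancel the constant twist $d\phi(X)$; the phrase ``after transporting by $m$'' hides exactly this.

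A concrete counterexample: take $n=1$ and $\pi=\mathcal S(\mathbb R^\times)$ with the $M_2$-action from the guiding examples. Then $\kappa=\{u: u(1)=0\}$. Viewing $h$ as a function on $\mathbb R^\times\cong\mathfrak v_1^*\setminus\{0\}$, the element $a\circ\Theta^{-1}(\Theta_3(h\otimes u))$ is, up to normalisations, the function $x\mapsto h(\pm x)\,u(x)$. This is nonzero for suitable $h$ and $u\in\kappa$.

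\textbf{How to fix it.} Your Step~2 substitution $w=m^{-1}.(X.v+c(X)v)$ is actually the correct one. It is what produces $\mathrm{Ad}(m^{-1})X$ and the constant $\mathrm{Re}\langle e_n^Tm,\mathrm{Ad}(m^{-1})X\rangle=\mathrm{Re}\langle e_n^T,X\rangle$. However, it describes elements that are not $\Theta_3$-elementary tensors, so it contradicts Step~1.

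The missing ingredient is the reduction the paper uses:
\begin{itemize}
\item Cover $(M_nV_n)\backslash M_{n+1}$ by open sets carrying local sections $\zeta_\alpha$, and take a partition of unity.
\item By linearity and completion, reduce to sections with $H(\zeta(m))=\widetilde h(m)\,u$ for a fixed generator $u$ of $\kappa$.
\item These are the elementary tensors for the trivialisation given by $\zeta$, not by $\Theta_3$. They span a dense subspace of $\Phi^+(\kappa|_{M_n})$.
\item Their $\Theta_3$-preimages are the $\pi$-valued functions $m\mapsto \widetilde h(m)\,\zeta(m)^{-1}.u$.
\end{itemize}

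For these elements your computation goes through rigorously in $\mathcal S(\mathfrak v_n,\pi)$. Write $\mathrm{Ad}(\zeta(m)^{-1})X=\sum_j \ell_j(m)X_j$, trade each $X_j$ for $-\partial_{X_j}$, and Fourier transform. The resulting factor $\sum_j\ell_j(m)\,\mathrm{Re}\langle e_n^T\zeta(m),X_j\rangle=\mathrm{Re}\langle e_n^T,X\rangle$ is constant and cancels the twist.

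The paper does the same computation at group level. It uses generators $\exp(v_0).q_0-\phi(\exp(v_0))q_0$ and the translation $y\mapsto y+\zeta(m)^{-1}v_0$, then differentiates in $v_0$. Once the reduction is corrected, your infinitesimal version is a legitimate alternative.
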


\begin{proof}
Let $h$ be in $\Phi^+(\kappa)$. We have to show $h \in \mathrm{ker}(a\circ \Theta^{-1})$.

We fix a compactly-supported open cover $\left\{ U_{\alpha}\right\}_{\alpha}$ on $(M_nV_n)\backslash M_{n+1}$ such that a smooth local section from $U_{\alpha}$ to $M_{n+1}$ exists on each $U_{\alpha}$. 
We fix a continuous section $\zeta=\zeta_{\alpha}$ from $U_{\alpha}$ to $M_{n+1}$ for each index $\alpha$. Using partition of unity on the cover $\left\{ U_{\alpha}\right\}$, we can then reduce to consider $h$ with $\mathrm{supp}(h)\subset U_{\alpha^*}$ for some $\alpha^*$. Further taking linearity and completion, it suffices  to consider $h$ satisfying that for any $m \in U_{\alpha^*}$,
\[    h(\zeta(m))= \widetilde{h}(\zeta(m))\cdot ( \mathrm{exp}(v_0).q_0 -\phi(\mathrm{exp}(v_0))q_0) 
\]
for a fixed $q_0 \in \pi$ and fixed $v_0 \in \mathfrak v_n$ and $\widetilde{h} $ is a complex-valued smooth functions on $M_{n+1}$ with $\mathrm{supp}(\widetilde{h})\subset U_{\alpha^*}$.

To this end, we first compute 
\[ \int_{\mathfrak v_n} \int_{M_nV_n \backslash M_{n+1}} e^{2\sqrt{-1}\pi\mathrm{Re}\langle e_n^T\cdot \zeta(m), y\rangle} \zeta(m)^{-1}\cdot( \mathrm{exp}(\zeta(m)y)\mathrm{exp}(v_0)). \widetilde{h}(m)q_0 ~dm ~dy 
\]
Then by combining $\zeta(m)y$ with $v_0$ (in $\mathfrak v_n$) and then changing variables, we have the expression
\[ e^{-2\sqrt{-1}\pi \mathrm{Re}\langle e_n^T, v_0 \rangle} \int_{\mathfrak v_n} \int_{M_nV_n \backslash M_{n+1}} e^{2\sqrt{-1}\pi \mathrm{Re}\langle e_n^T\cdot \zeta(m), y\rangle} \zeta(m)^{-1}(\mathrm{exp}(\zeta(m)y)).\widetilde{h}(\zeta(m))q_0 ~dm ~dy 
\]
and so is equal to 
\[ \phi(\mathrm{exp}(v_0))  \int_{\mathfrak v_n} \int_{M_nV_n\backslash M_{n+1}} e^{2\sqrt{-1}\pi \mathrm{Re}\langle e_n^T\cdot \zeta(m), y\rangle}  \zeta(m)^{-1}\cdot (\mathrm{exp}(\zeta(m)y)).\widetilde{h}(\zeta(m))q_0 ~dm ~dy  .
\]
Now, differentiation is applied to obtain $\Phi^+(\kappa)$ is in $\mathrm{ker}(a\circ \Theta^{-1})$.
\end{proof}



\section{Imprimitivity Theorem} \label{s imprimitivity thm}

\subsection{Imprimitivity theorem}

In order to further describe the Schwartz induction in Section \ref{s descrip bz layer}, we need an imprimitivity theorem. We first explain the setup.

Let $\sigma$ be in $\mathrm{Rep}^{\infty, F}(M_{n})$. Note that $\mathrm{SInd}_{M_n}^{G_n}$ can be defined in the same way as $\mathrm{SInd}_{M_nV_n}^{M_{n+1}}$. For $f_1 \in \mathrm{SInd}_{M_n}^{G_n}\mathbb C_{\mathrm{triv}}$ and  $f_2 \in \mathrm{SInd}_{M_n}^{G_n}\sigma$, we can regard the action of $f_1$ on $f_2$ via the multiplication $f_1f_2$ to be an element in $\mathrm{SInd}_{M_n}^{G_n}\sigma$. The space $\mathrm{SInd}_{M_n}^{G_n}\sigma$ also possess the $G_n$-action by right translation.

\begin{theorem} \cite[2.5.8. TH\'EOR\`ME(iii)]{dCl91} \label{thm imprimitivity}
\begin{enumerate}
\item Let $\sigma$ be in $\mathrm{Rep}^{\infty, F}(M_n)$. Let $\lambda$ be any closed subspace of $\mathrm{SInd}_{M_n}^{G_n}\sigma$ invariant under the action of $\mathrm{SInd}^{G_n}_{M_n}\mathbb C_{\mathrm{triv}}$ and the right translation $G_n$. Then $\lambda$ is isomorphic to $\mathrm{SInd}_{M_n}^{G_n}\sigma'$ for some smooth closed $M_n$-subrepresentation $\sigma'$ of $\sigma$.
\item Let $\sigma$ be an irreducible representation in $\mathrm{Rep}^{\infty, F}(M_{n})$. Then the only closed subspace of $\mathrm{SInd}_{M_n}^{G_n}\sigma$ invariant under the action of $\mathrm{SInd}_{M_n}^{G_n}\mathbb C_{\mathrm{triv}}$ and the right translation of $G_n$ is either $0$ or the whole space.
\end{enumerate}
\end{theorem}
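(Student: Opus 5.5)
The plan is to view $\mathrm{SInd}_{M_n}^{G_n}\sigma$ as a module over the commutative algebra $\mathcal A:=\mathrm{SInd}_{M_n}^{G_n}\mathbb C_{\mathrm{triv}}\cong\mathcal S(M_n\backslash G_n)$, the Schwartz functions on the Nash manifold $M_n\backslash G_n\cong\mathbb K^n\setminus\{0\}$. A closed, $\mathcal A$-stable, $G_n$-stable subspace $\lambda$ should then be determined by its ``fibre at the base point'' $M_n\cdot 1$. Concretely, set
\[ \sigma':=\overline{\{ f(1): f\in \lambda\}}\subset\sigma. \]
First I check that $\sigma'$ is an $M_n$-subrepresentation. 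For $m\in M_n$ and $f\in\lambda$ we have $(m.f)(1)=f(m)=m.f(1)$, and $m.f\in\lambda$ by $G_n$-invariance, so $\sigma'$ is $M_n$-stable. Being closed, it lies in $\mathrm{Rep}^{\infty,F}(M_n)$, and $\mathrm{SInd}_{M_n}^{G_n}\sigma'$ is naturally a closed subspace of $\mathrm{SInd}_{M_n}^{G_n}\sigma$. Right translation also gives the inclusion $\lambda\subset\mathrm{SInd}_{M_n}^{G_n}\sigma'$: for $f\in\lambda$ and $g\in G_n$, $f(g)=(g.f)(1)\in\sigma'$.

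The reverse inclusion $\mathrm{SInd}_{M_n}^{G_n}\sigma'\subset\lambda$ carries the content. I proceed in three steps.

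\begin{enumerate}
\item \emph{Local approximation.} Using the compact realization of Section~\ref{ss compact realize}, I cover $M_n\backslash G_n$ by small open sets $U_\alpha$ with smooth local sections $\zeta_\alpha$, as in the proof of Lemma~\ref{lem action map kernel}. By a Schwartz partition of unity, which lies in $\mathcal A$, it suffices to approximate elements $h\in\mathrm{SInd}\,\sigma'$ supported in a single $U_\alpha$. Through the trivialization $\zeta_\alpha$, such $h$ becomes a $\sigma'$-valued Schwartz function on $U_\alpha$.
\item \emph{Reduction to tensors.} Via \cite[Theorem 44.1]{Tr06}, $\mathcal S(U_\alpha,\sigma')\cong\mathcal S(U_\alpha)\widehat\otimes\sigma'$, so it suffices to produce elements of the form $\varphi\otimes w$ with $w\in\sigma'$.
\item \emph{Transport.} Take $f\in\lambda$ with $f(1)$ close to $w$. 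Then $f(\zeta_\alpha(x))$ equals $\zeta_\alpha(x).\bigl(f\text{ translated}\bigr)(1)$ up to the induced twist. Multiplying by $\varphi\in\mathcal A$ supported near a point, and averaging the translates $g.f$ against a smooth compactly supported bump on $G_n$, produces elements of $\lambda$ close to $\varphi\otimes w$ in the trivialization. Closedness of $\lambda$ then gives $\varphi\otimes w\in\lambda$.
\end{enumerate}

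The main obstacle is the convergence in step~3 in the Schwartz topology. Since $M_n\backslash G_n$ is noncompact, uniform control of $\varphi f$ at infinity and near $0\in\mathbb K^n$ requires the moderate growth estimates on $\sigma$ together with the rapid decay of $\varphi$. I would first treat compactly supported $\varphi$, where the estimate is local and reduces to continuity of the $G_n$-action. Density of compactly supported functions in $\mathcal S(M_n\backslash G_n)\widehat\otimes\sigma'$ then finishes, since these are dense in the Schwartz induction of \cite{dCl91}.

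Part (2) follows from (1). Given such a $\lambda$, the associated $\sigma'$ is a closed $M_n$-subrepresentation of the irreducible $\sigma$, hence $0$ or $\sigma$. In the first case $\lambda\subset\mathrm{SInd}\,0=0$. In the second case $\lambda=\mathrm{SInd}_{M_n}^{G_n}\sigma$.
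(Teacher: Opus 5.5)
The paper does not prove this statement. It quotes du Cloux's Th\'eor\`eme 2.5.8(iii) and only sketches (2) for $\sigma=\mathbb C_{\mathrm{triv}}$: there, a proper closed invariant ideal of $\mathcal S((\mathbb K^n)^*\setminus\{0\})$ lies in a point-evaluation maximal ideal, and transitivity of $G_n$ forces it to vanish. Your approach through the fibre at the base point is a different and more general strategy, and most of it is sound.
\begin{itemize}
\item $\sigma'$ is a closed $M_n$-subrepresentation.
\item $\lambda\subset\mathrm{SInd}_{M_n}^{G_n}\sigma'$ follows from $f(g)=(g.f)(1)$.
\item Steps 1--2 correctly reduce the reverse inclusion to one claim. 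Take a chart $U_\alpha$, $\psi\in C_c^\infty(U_\alpha)$, and $w=f(1)$ with $f\in\lambda$. You must show that the section $\psi\otimes w$, whose trivialized value is $\psi(x)w$, lies in $\lambda$. Closedness of $\lambda$ then extends this to all $w\in\sigma'$.
\item (2) follows from (1), since you get equality of subspaces.
\end{itemize}

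The gap is step 3, which carries all the content; as described, it does not work.
\begin{itemize}
\item Multiplying $f\in\lambda$ by some $\varphi$ supported near a point and invoking continuity of the $G_n$-action only controls $\varphi(x)\bigl(f(\zeta_\alpha(x))-w\bigr)$ in sup norm. Its first derivative contains $\varphi(x)\,\partial_x\bigl(f\circ\zeta_\alpha\bigr)(x)$, which does not shrink with the support. The other term, $(\partial\varphi)\,(f\circ\zeta_\alpha-w)$, stays of size $O(\sup|\varphi|)$.
\item Shrinking the support of $\varphi$ also changes the target $\varphi\otimes w$, so closedness of $\lambda$ cannot be invoked.
\item Averaging $g.f$ against a bump $\beta$ on $G_n$ that is not tied to the multiplier does not help: $\varphi\cdot\int\beta(g)\,g.f\,dg$ still has non-constant trivialized values.
\end{itemize}
So the real obstacle is local: it is the transversal jet of $f$ at the base point. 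It is not at infinity, since density of compactly supported sections already handles that, and moderate growth plays no role.

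The missing idea is to couple translation and localization. Let $\rho_\epsilon$ be an approximate identity in the chart, and put
\[
s_\epsilon=\int_{U_\alpha}\psi(y)\,\rho_\epsilon(\cdot-y)\cdot\bigl(\zeta_\alpha(y)^{-1}.f\bigr)\,dy .
\]
This is a Riemann integral of elements of $\lambda$, hence lies in $\lambda$. Then
\[
s_\epsilon(\zeta_\alpha(x))=\int\rho_\epsilon(z)\,F(x,z)\,dz,\qquad F(x,z)=\psi(x-z)\,f\bigl(\zeta_\alpha(x)\zeta_\alpha(x-z)^{-1}\bigr),
\]
where $F$ is smooth and $F(x,0)=\psi(x)w$. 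Hence every $x$-derivative of $s_\epsilon\circ\zeta_\alpha$ converges uniformly to the corresponding derivative of $\psi(x)w$, with supports in a fixed compact set. So $s_\epsilon\to\psi\otimes w$ in $\mathrm{SInd}_{M_n}^{G_n}\sigma$, and therefore $\psi\otimes w\in\lambda$.

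Alternatively, you can follow du Cloux. Work with the quotient fibre of $\lambda$ at the base point and use his equivalence identifying $\lambda$ with the Schwartz induction of that fibre. Then show that the map from this fibre to $\sigma$ induced by evaluation at $1$ is injective.
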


We provide a heuristic proof of the imprimitivity theorem on a special case, which provides a basic idea on the above theorem. One considers $\sigma$ to be the trivial representation. In such case, we consider 
\[   M_n\backslash G_n\cong  (\mathbb{K}^n)^* \setminus \left\{ 0 \right\} ,
\]
where the isomorphism is given by  $g \mapsto e_n^T\cdot g$.

 Hence, we can view $\mathrm{SInd}_{M_n}^{G_n}\mathbb C_{\mathrm{triv}}$ as the space $\mathcal S((\mathbb{K}^n)^*\setminus \left\{ 0 \right\})$ of Schwartz functions on $(\mathbb{K}^n)^*\setminus \left\{ 0 \right\}$. One now proves that for any maximal ideal $I$ of $\mathcal S((\mathbb{K}^n)^*\setminus \left\{ 0 \right\})$,  
\[   I=\left\{ f \in \mathcal S((\mathbb{K}^n)^*\setminus \left\{ 0 \right\}): f(x_0)=0 \right\}
\]
for some $x_0$ in $(\mathbb{K}^n)^*\setminus \left\{ 0 \right\}$. Then one applies the transitive action $G_n$ on $(\mathbb{K}^n)^*\setminus \left\{ 0 \right\}$, and then one has that any proper subspace satisfying the required invariance properties must lie in 
\[ \bigcap_{x_0\in (\mathbb{K}^n)^*\setminus \left\{ 0 \right\}} \left\{ f \in \mathcal S((\mathbb{K}^n)^*\setminus \left\{0\right\}): f(x_0)=0 \right\} 
\]
and hence must be zero, as desired.

\subsection{Alternate form}

\begin{corollary} \label{cor alt form imprimitivity}
Let $\pi \in \mathrm{Rep}^{\infty, F}(M_n)$. Then any closed $M_{n+1}$-subrepresentation of $\Phi^+(\pi)$ takes the form $\Phi^+(\sigma)$ for some closed $M_n$-subrepresentation $\sigma$ of $\pi$.
\end{corollary}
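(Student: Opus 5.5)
The plan is to reduce Corollary \ref{cor alt form imprimitivity} to Theorem \ref{thm imprimitivity}(1). Recall that $\Phi^+(\pi)=\mathrm{SInd}_{M_nV_n}^{M_{n+1}}\delta^{1/2}(\pi\boxtimes\phi)$. By the compact realization in Section \ref{ss mir induction}, its underlying space is the same as that of $\mathrm{SInd}_{M_n}^{G_n}(\delta^{1/2}\pi)$, since $(M_nV_n)\backslash M_{n+1}\cong M_n\backslash G_n$. Restricting the $M_{n+1}$-action to $G_n\subset M_{n+1}$ then identifies $\Phi^+(\pi)|_{G_n}$ with $\mathrm{SInd}_{M_n}^{G_n}(\delta^{1/2}\pi)$, where $G_n$ acts by right translation. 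The twist $\delta^{1/2}$ is harmless: it is a character, so closed $M_n$-subrepresentations of $\delta^{1/2}\pi$ correspond bijectively to those of $\pi$, and the correspondence is compatible with $\Phi^+$.

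Next I account for the action of $V_n$. For $\widetilde f\in\Phi^+(\pi)$ and $w=\exp(v)\in V_n$, write $g\in G_n$ and compute
\[(w.\widetilde f)(g)=\widetilde f(gw)=\widetilde f((gwg^{-1})g)=\phi(gwg^{-1})\widetilde f(g).\]
Here I use that $V_n$ is normal in $M_{n+1}$ and acts on the inducing datum through $\phi$. Thus $V_n$ acts by multiplication by the functions $g\mapsto e^{-2\pi\sqrt{-1}\mathrm{Re}\langle e_n^Tg,v\rangle}$ on $M_n\backslash G_n\cong(\mathbb K^n)^*\setminus\{0\}$. Any closed $M_{n+1}$-subrepresentation $\lambda$ is also a smooth representation of moderate growth of $V_n$. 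By integrating against $\mathcal S(\mathfrak v_n)$ as in (\ref{eqn n action}) and taking the Fourier transform, $\lambda$ is stable under multiplication by the Fourier transforms $\widehat f$, $f\in\mathcal S(\mathfrak v_n)$, restricted to $(\mathbb K^n)^*\setminus\{0\}$. These restrictions include all of $\mathcal S((\mathbb K^n)^*\setminus\{0\})$, because $\bigcap_k\mathcal C_k(\mathfrak v_n)$ maps onto $\mathcal S(\mathfrak v_n^*\setminus\{0\})$ as recalled in Section \ref{ss fourier transform}. Under the second identification, this space is $\mathrm{SInd}_{M_n}^{G_n}\mathbb C_{\mathrm{triv}}$. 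Therefore $\lambda$ is a closed subspace of $\mathrm{SInd}_{M_n}^{G_n}(\delta^{1/2}\pi)$ that is invariant under $\mathrm{SInd}_{M_n}^{G_n}\mathbb C_{\mathrm{triv}}$ and under right translation by $G_n$.

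Theorem \ref{thm imprimitivity}(1) now gives $\lambda=\mathrm{SInd}_{M_n}^{G_n}\sigma'$ for some closed $M_n$-subrepresentation $\sigma'\subset\delta^{1/2}\pi$. Setting $\sigma=\delta^{-1/2}\sigma'$, it remains to check that this subspace equals $\Phi^+(\sigma)$ as an $M_{n+1}$-representation. This holds because the $V_n$-action on both spaces is given by the same multiplication formula, and the $G_n$-actions agree. The step I expect to need the most care is the middle one: showing that the closure of the $\mathcal S(\mathfrak v_n)$-action really yields multiplication by arbitrary elements of $\mathcal S((\mathbb K^n)^*\setminus\{0\})$, with the normalizing factors $|\det|$ and $\delta^{1/2}$ tracked correctly. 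I would verify this first on elementary tensors $h\otimes v$, using the formulas for $\Theta$ in Section \ref{ss induction realize 3}, and then pass to the general case by continuity and closedness of $\lambda$.
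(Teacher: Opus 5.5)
Your proposal is correct and follows essentially the paper's route. Like the paper, you rewrite the $V_n$-action, via the $\mathcal S(\mathfrak v_n)$-action and the Fourier transform, as multiplication by functions in $\mathcal S(\mathfrak v_n^*\setminus\{0\})\cong\mathrm{SInd}_{M_n}^{G_n}\mathbb C_{\mathrm{triv}}$, and then apply du Cloux's imprimitivity theorem (Theorem \ref{thm imprimitivity}(1)); you additionally supply details the paper omits, namely the explicit formula $(w.\widetilde f)(g)=\phi(gwg^{-1})\widetilde f(g)$, the restriction to $G_n$, and the handling of the $\delta^{1/2}$ twist.
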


\begin{proof}
From the $V_n$-action on $\Phi^+(\sigma)$, the composition of the Fourier transform and the corresponding action map of $V_n$ for $\mathcal S(\mathfrak v_n^*\setminus \left\{ 0 \right\})$ is given by: for $f \in \mathcal S(\mathfrak v_n^*\setminus \left\{ 0 \right\})$ and $h \in \Phi^+(\sigma)$, 
\[  f\otimes v \mapsto \widehat{f} v \quad \mbox{(multiplication of two functions)} .
\]
Now, the corollary follows from Theorem \ref{thm imprimitivity}. The details boil down to the identifications in Section \ref{s descrip bz layer} (one takes $\pi$ to be trivial), and so we omit that.
\end{proof}

\section{Hausdorffness of the functor $\Phi^-$} \label{s closed bz layer}

\subsection{The functor $\Phi^-$} \label{label character twist}

Define the functor 
\[  \Phi^-: \mathrm{Rep}^{\infty, F}(M_{n+1}) \rightarrow \mathrm{Rep}^{\infty, F}(M_{n})
\]
given by:
\[  \Phi^-(\pi) := \delta^{-1/2}\cdot \pi/(\mathfrak v. (\pi\otimes \phi^{-1})),
\]
where $\delta$ is the character of $M_{n}$ given by $m\mapsto |\mathrm{det}(m)|$.

We shall prove the Hausdorffness of $\Phi^-(\pi)$. A main idea is to apply for the du Cloux imprimitivity theorem so that we have rather explicit description on how the part contributing $\Phi^-$ looks like. The computation for the corresponding $\Phi^-$ is done in Lemma \ref{lem phi minus map}. Going back to compute $\Phi^-(\pi)$ is slightly technical, see Lemma \ref{lem kernel schwartz}. We also remark that \cite{AGS15a, AGS15b,WZ25+} establish Hausdorffness in Proposition \ref{prop bz description} for Casselman-Wallach representations by different methods.

\subsection{Evaluation map}

\begin{lemma} \label{lem phi minus map}
Let $\omega$ be in $\mathrm{Rep}^{\infty, F}(M_n)$. Then $\Phi^-\circ \Phi^+(\omega) \cong \omega$ as representations in $\mathrm{Rep}^{\infty, F}(M_{n})$. Moreover, using the realization in (\ref{eqn phi+ schwartz induction}), the isomorphism is given by evaluating at the identity in the space $\mathrm{SInd}_{M_nV_n}^{M_{n+1}}(\omega \boxtimes \phi)$.
\end{lemma}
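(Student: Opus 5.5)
Write $\Pi:=\Phi^+(\omega)=\mathrm{SInd}_{M_nV_n}^{M_{n+1}}\delta^{1/2}(\omega\boxtimes\phi)$. The plan is to show three things: that evaluation at the identity factors through the twisted coinvariants, that it is surjective, and that its kernel is exactly the closure-free space $\mathfrak v_n.(\Pi\otimes\phi^{-1})$. Then open mapping finishes.

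The evaluation map is $\mathrm{ev}:\Pi\to\omega$, $h\mapsto \widetilde h(1)$, in the realization of Section \ref{ss mir induction}. It is continuous, because it is evaluation of a smooth function at the point $(1,1)\in K_n\times\mathbb R_{>0}$. For $v\in V_n$, and up to the modular twist, we have $\widetilde{v.h}(1)=\widetilde h(v)=\phi(v)\widetilde h(1)$. Differentiating gives $\mathrm{ev}(X.h-d\phi(X)h)=0$ for $X\in\mathfrak v_n$, so $\mathrm{ev}$ kills $\mathfrak v_n.(\Pi\otimes\phi^{-1})$. For $m\in M_n$, $\mathrm{ev}(m.h)=\widetilde h(m)=\delta^{1/2}(m)\,m.\widetilde h(1)$. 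After the $\delta^{-1/2}$ in the definition of $\Phi^-$, the induced map $\Phi^-(\Pi)\to\omega$ is therefore $M_n$-equivariant.

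For surjectivity, take $w\in\omega$ and a bump function $\beta$ on $K_{n-1}\backslash K_n\times\mathbb R_{>0}$ supported near the base point. Define $h$ on a local section by $\beta$ times the translate of $w$; then $\mathrm{ev}(h)=w$.

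The main step is to show that every $h\in\Pi$ with $\widetilde h(1)=0$ lies in $\mathfrak v_n.(\Pi\otimes\phi^{-1})$. To do this I pass through the identifications $\Theta_2,\Theta_3$ of Section \ref{s descrip bz layer}. They give $\Pi\cong \mathcal S(\mathfrak v_n^*\setminus\{0\})\widehat\otimes\omega$, where $\mathfrak v_n^*\setminus\{0\}\cong M_n\backslash G_n$ via $g\mapsto e_n^Tg$. On the $\mathcal S$-factor, $V_n$ acts by the characters $e^{-2\pi\sqrt{-1}\mathrm{Re}\langle y,v\rangle}$ with the point $y$ varying. The twist by $\phi^{-1}$ then makes $X\in\mathfrak v_n$ act as multiplication by the linear function $y\mapsto c\,\mathrm{Re}\langle y-e_n^T,X\rangle$. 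Hence
\[\mathfrak v_n.(\Pi\otimes\phi^{-1})=\sum_i \ell_i\cdot\Pi ,\]
where $\ell_1,\dots,\ell_d$ are real coordinates on $\mathfrak v_n^*$ vanishing at $e_n^T$. Evaluation at the identity is evaluation at $y=e_n^T$, followed by the twist in $\omega$, which is an isomorphism. So I need a vector-valued Hadamard lemma: a $\omega$-valued Schwartz function on $\mathfrak v_n^*\setminus\{0\}$ that vanishes at $e_n^T$ lies in $\sum_i\ell_i\cdot\mathcal S(\mathfrak v_n^*\setminus\{0\},\omega)$.

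To prove the Hadamard lemma I split $h=\chi h+(1-\chi)h$, where $\chi$ is a cutoff equal to $1$ near $e_n^T$ and supported in a small ball away from $0$.
\begin{itemize}
\item For the far part, put $r=\sum\ell_i^2$, which is nonvanishing on the support and grows polynomially at infinity. Then $(1-\chi)h=\sum\ell_i\cdot(\ell_i(1-\chi)h/r)$, and each quotient is still Schwartz and flat at $0$. This is the same trick as (\ref{eqn schwartz ideal 0}).
\item For the near part, use Taylor's formula with integral remainder: $\chi h=\sum \ell_i g_i$ with $g_i=\chi'\int_0^1\partial_i h(e_n^T+t(y-e_n^T))\,dt$, where $\chi'$ is a slightly larger cutoff. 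This works because $\chi h$ is compactly supported in a convex neighbourhood avoiding $0$, and the integral is a smooth $\omega$-valued function. One must also check that $h-\sum\ell_ig_i$ vanishes.
\end{itemize}

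Thus $\ker(\mathrm{ev})=\mathfrak v_n.(\Pi\otimes\phi^{-1})$, which is in particular closed. The induced map $\Phi^-(\Pi)\to\omega$ is then a continuous bijection of Fréchet spaces, hence a topological isomorphism by the open mapping theorem.

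I expect the main obstacle to be the bookkeeping of the identifications, namely checking that the $\mathfrak v_n$-action in the $\mathrm{SInd}$-model really is multiplication by $\ell_i$ with no derivative terms. This holds because $V_n$ is normal and acts on induced functions through $\widetilde h(gv)=\phi(gvg^{-1})\widetilde h(g)$. Beyond that, the delicate point is the continuity of the $\omega$-valued Taylor remainder, which follows from Section \ref{ss fourier transform} style seminorm estimates.
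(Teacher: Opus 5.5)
Your proposal is correct and follows essentially the paper's route. You identify $\Phi^+(\omega)$ as a $V_n$-module with $\omega$-valued Schwartz functions on $\mathfrak v_n^*\setminus\{0\}$, observe that the $\phi^{-1}$-twisted $\mathfrak v_n$-action is multiplication by linear functions vanishing at $e_n^T$, and conclude that $\mathfrak v_n.(\Phi^+(\omega)\otimes\phi^{-1})$ is exactly the kernel of evaluation at the identity. The paper calls that last step ``direct''; your cutoff/Hadamard argument, together with surjectivity, the $\delta$-bookkeeping and the open mapping theorem, supplies it.

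Two small repairs are needed.

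First, as written $\sum_i\ell_i g_i=\chi' h\neq\chi h$. Take $g_i=\chi\int_0^1\partial_i h(e_n^T+t(y-e_n^T))\,dt$ instead, or apply Taylor to $\chi h$ and then multiply by a $\chi'$ equal to $1$ on $\mathrm{supp}\,\chi$.

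Second, $\omega$ has no $G_n$-action, so $\Theta_3$ does not literally give a global product structure. Your argument does not need one: it uses only the intrinsic formula $\widetilde h(gv)=\phi(gvg^{-1})\widetilde h(g)$, plus a local section near the base point for the Taylor step.
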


\begin{proof}
We have to compute the action $\Phi^-$ on $\Phi^+(\omega) \cong \mathrm{SInd}_{M_nV_n}^{M_{n+1}}(\omega \boxtimes \phi)$. We shall extend $\omega$ to a trivial $V_n$-representation. Note that as $V_n$-representations, we have:
\[  \mathrm{SInd}_{M_nV_n}^{M_{n+1}} (\omega \boxtimes \phi) \cong \mathrm{SInd}_{M_nV_n}^{M_{n+1}}(\mathbb C_{\mathrm{triv}}\boxtimes \phi) \widehat{\otimes} \omega \cong \mathcal S(\mathfrak v^*\setminus \left\{ 0 \right\})\widehat{\otimes} \omega \cong \mathcal S(\mathfrak v^*\setminus \left\{ 0 \right\}, \omega) ,
\]
where the first isomorphism is similar to the one $\Theta_3$ in Section \ref{ss induction realize 3}; the second isomorphism is similar to the one $\Theta_2$ in Section \ref{ss induction realize 2}; and the last isomorphism is determined by $f\otimes v\mapsto (m \mapsto f(m)v$), where $f \in \mathcal S(\mathfrak v^* \setminus \left\{ 0 \right\})$ and $v \in \omega$.

Then it is direct to show that $\mathfrak v.\left(\mathcal S(\mathfrak v^*\setminus \left\{ 0 \right\}, \omega)\otimes \phi^{-1}\right)$ is the space of functions vanishing at $e_n^T$. Translating under above isomorphisms, we obtain $\mathfrak v.  \mathrm{SInd}_{M_nV_n}^{M_{n+1}} (\omega \boxtimes \phi)$ to be the space of all functions that vanish at the identity. This implies the lemma. 
\end{proof}

\subsection{Hausdorffness}

\begin{lemma} \label{lem kernel schwartz} (cf. \cite[2.5.7 LEMME]{dCl91})
 Let $\pi$ be in $\mathrm{Rep}^{\infty, F}(M_{n+1})$. We use the notation in Section \ref{s descrip bz layer}. Let $\kappa'$ be the maximal closed $M_n$-subspace of $\pi|_{M_n}$ such that $a\circ \Theta^{-1}(\mathrm{SInd}_{M_nV_n}^{M_{n+1}}(\kappa'\boxtimes \phi)=0$.  Then 
\begin{enumerate}
\item $\kappa' = \mathfrak v.(\pi \otimes \phi^{-1})=\overline{\mathfrak v.(\pi\otimes \phi^{-1})}$;
\item $\Phi^-(\pi)$ is Hausdorff;
\item $\Phi^-(\mathrm{SInd}^{M_{n+1}}_{M_nV_n}(\pi/\kappa')) \cong \Phi^-(\pi)$, 
where the isomorphism is induced from map $a\circ \Theta^{-1}$ in Section \ref{s descrip bz layer}.
\end{enumerate}
\end{lemma}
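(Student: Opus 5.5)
The plan is to exhibit the subspace $\kappa_0:=\mathfrak v.(\pi\otimes\phi^{-1})$ as the kernel of a continuous map defined through $a\circ\Theta^{-1}$. This gives closedness and Hausdorffness at once, and (3) then falls out of Lemma \ref{lem phi minus map}.

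\textbf{Step 1.} By Lemma \ref{lem action map kernel}, $\Phi^+(\overline{\kappa_0})$ is killed by $a\circ\Theta^{-1}$. Since $\kappa'$ is maximal with this property, $\overline{\kappa_0}\subset\kappa'$.

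\textbf{Step 2 (reverse inclusion).} For $q\in\kappa'$, I construct an element of $\mathrm{SInd}_{M_nV_n}^{M_{n+1}}(\kappa'\boxtimes\phi)$ that witnesses $q\in\kappa_0$. Choose $h_0\in\mathcal S(\mathfrak v^*\setminus\{0\})$ with $h_0(e_n^T)=1$, and set $h=\Theta_3\Theta_2(h_0\otimes q)$; twisting by $\delta^{1/2}$ is harmless. Then $a\circ\Theta^{-1}(h)=0$ by the defining property of $\kappa'$. Now compute $a\circ\Theta^{-1}$ fibrewise through the explicit formula of Section \ref{ss induction realize 3}. Since $\exp(y)$ acts on $q\otimes\phi^{-1}$, the element $\exp(y).q-\phi(\exp y)q$ lies in $\kappa_0$; I would verify this by integrating $\frac{d}{dt}$ along $t\mapsto\exp(ty)$. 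Expanding, $a\circ\Theta^{-1}(h)$ equals $c\cdot q$ plus an integral of elements of $\kappa_0$, where $c$ is the Fourier-inverted value $h_0(e_n^T)=1$. In effect this uses Lemma \ref{lem phi minus map}: modulo $\mathfrak v$-coinvariants twisted by $\phi$, the map $a\circ\Theta^{-1}$ reduces to evaluation at the identity. The conclusion is $q\in\overline{\kappa_0}$, so $\kappa'=\overline{\kappa_0}$.

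To drop the closure, I use surjectivity: the map $\mathcal S(\mathfrak v)\widehat\otimes\pi\to\pi$ is onto (Proposition \ref{prop nondeg schwartz}). The plan is to express the correction term as $X.w$ for an explicit $w$, namely the integral $\int_0^1\exp(ty)\ldots\,dt$ paired against a Schwartz function. This gives $q=\sum_i(X_i-d\phi(X_i))w_i$ exactly, so $\kappa'=\kappa_0$, which is (1).

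\textbf{Step 3.} Given (1), $\kappa_0$ is closed, so $\Phi^-(\pi)=\delta^{-1/2}\pi/\kappa_0$ is a Hausdorff quotient. That is (2).

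\textbf{Step 4.} For (3), apply Lemma \ref{lem phi minus map} with $\omega=\pi/\kappa'$, which gives $\Phi^-\Phi^+(\pi/\kappa')\cong\pi/\kappa'$ via evaluation at the identity. The map $a\circ\Theta^{-1}$ factors through $\Phi^+(\pi/\kappa')$ by Lemma \ref{lem action map kernel}. By Step 2, this map composed with projection to $\pi/\kappa_0$ agrees with the evaluation at the identity, so it induces exactly this isomorphism $\Phi^-\Phi^+(\pi/\kappa')\cong\pi/\kappa'=\Phi^-(\pi)$, up to the $\delta^{\pm1/2}$ twists.

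The main obstacle is Step 2: identifying $a\circ\Theta^{-1}$ modulo $\kappa_0$ with evaluation at $e_n^T$, and getting exact rather than merely approximate membership in $\kappa_0$. The first thing I would try is a partition of unity on $(M_nV_n)\backslash M_{n+1}$, as in Lemma \ref{lem action map kernel}, together with the Dixmier–Malliavin factorization to write the remainder explicitly as an element of $\kappa_0$.
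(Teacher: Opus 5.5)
Your architecture matches the paper's. You get $\overline{\kappa_0}\subset\kappa'$ from Lemma \ref{lem action map kernel} and maximality, and $\kappa'\subset\kappa_0$ from the fact that, modulo $\kappa_0=\mathfrak v.(\pi\otimes\phi^{-1})$, the map $a\circ\Theta^{-1}$ is evaluation at the identity. Your $\int_0^1$ device for \emph{exact} rather than approximate membership in $\kappa_0$ is a useful addition that the paper leaves implicit.

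The gap is the test element in Step 2. By the formula for $\Theta_3$, $h=\Theta_3\Theta_2(h_0\otimes q)$ is the function $m\mapsto h_0(e_n^T m)\, m.q$ on $G_n$. Since $\kappa'$ is only $M_n$-stable, the value $m.q$ need not lie in $\kappa'$ for $m\notin M_n$. Hence $h\notin\mathrm{SInd}_{M_nV_n}^{M_{n+1}}(\kappa'\boxtimes\phi)$, and the defining property of $\kappa'$ says nothing about $a\circ\Theta^{-1}(h)=\pi(\widehat{h_0})q$.

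This genuinely fails. Take $\pi=\Phi^+(\omega)$. Then $\kappa'$ contains the space of functions vanishing at the identity (Lemma \ref{lem phi minus map}), whose $G_n$-translates already span $\pi$. Since $\kappa'$ is proper, it is not $G_n$-stable. Moreover $\pi(\widehat{h_0})$ acts there as a multiplication operator, so $\pi(\widehat{h_0})q\neq 0$ for suitable $q\in\kappa_0$. Thus your identity $0=q+(\text{element of }\kappa_0)$ is unfounded, and Step 4, which quotes Step 2, inherits the problem.

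The fix is to prove the congruence for arbitrary elements rather than pure tensors.
\begin{itemize}
\item For $F\in\mathrm{SInd}_{M_nV_n}^{M_{n+1}}(\pi|_{M_n}\boxtimes\phi)$, the preimage under $\Theta_3\Theta_2$ is the $\pi$-valued Schwartz function $G(e_n^Tm)=m^{-1}.F(m)$ on $\mathfrak v_n^*\setminus\{0\}$. This is well defined because $F(pm)=p.F(m)$ for $p\in M_n$.
\item Then $a\circ\Theta^{-1}(F)=\int_{\mathfrak v_n}\exp(y).\widehat G(y)\,dy$. Run your $\int_0^1$ computation inside this integral; this is legitimate by moderate growth and the continuity of each $X_i$ on $\pi$, and needs no Dixmier--Malliavin input.
\item The result, up to Fourier normalisations and the harmless $\delta^{1/2}$, is $a\circ\Theta^{-1}(F)-F(1)=\sum_i (X_i-d\phi(X_i))W_i(F)\in\kappa_0$, with explicit continuous $W_i$.
\item Now choose a local section $\zeta$ of $m\mapsto e_n^Tm$ near $e_n^T$ with $\zeta(e_n^T)=1$, and a bump $\chi$ with $\chi(e_n^T)=1$. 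Define $F$ by $F(p\zeta(u))=\chi(u)\,p.q$ for $p\in M_n$, extended by equivariance.
\item This $F$ is $\kappa'$-valued, so $a\circ\Theta^{-1}(F)=0$, while $F(1)=q$. Hence $q\in\kappa_0$.
\end{itemize}
The same congruence yields Step 4.

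The paper's formula $f(m)=\widetilde f(m)\cdot(m.x)$ glosses over the same point, since $m.x$ is not defined on $\pi/\kappa'$ for $m\in G_n$. The section-based element is what makes the argument go through.
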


\begin{proof}
The induced map $a \circ \Theta^{-1}$ from $\mathrm{SInd}_{M_nV_n}^{M_{n+1}}(\pi|_{M_n}\boxtimes \phi)$ to $\pi$ descends to a map from $\mathrm{SInd}_{M_nV_n}^{M_{n+1}}(\pi/\kappa')$ to $\pi$ by Lemma \ref{lem action map kernel}. Let $\pi'$ be the image of the map in $\pi$. Then the induced map from $a\circ \Theta^{-1}$
\[  \Phi^-(\mathrm{SInd}_{M_nV_n}^{M_{n+1}}(\pi/\kappa')) \rightarrow \Phi^-(\pi') 
\]
is still surjective.

Moreover, for $x \in \pi/\kappa'$, we construct a map $f$ in $\mathrm{SInd}_{M_nV_n}^{M_{n+1}}(\pi/\kappa')$ such that:
\[   f(m)=\widetilde{f}(m)\cdot (m.x) 
\]
for some $\widetilde{f} \in \mathrm{SInd}_{M_nV_n}^{M_{n+1}}(\mathbb C_{\mathrm{triv}}\boxtimes \phi)$. By Lemma \ref{lem phi minus map}, any element in $\Phi^-(\mathrm{SInd}_{M_nV_n}^{M_{n+1}}(\pi/\kappa'))$ can be represented by such $f$. Then the action map $a$ gives that $a \circ \Theta^{-1}(f)$ is a scalar multiple of $v$. In particular, the induced maps
\begin{align} \label{eqn composition maps}
\Phi^-(\mathrm{SInd}_{M_nV_n}^{M_{n+1}}(\pi/\kappa')) \cong \pi/\kappa' \rightarrow \Phi^-(\pi') \rightarrow \Phi^-(\pi)
\end{align}
forces that 
\[  \kappa' \subset  \mathfrak v.( \pi'\otimes \phi^{-1}) \subset \mathfrak v.(\pi \otimes \phi^{-1}).
\]

Combining with Lemma \ref{lem action map kernel}, the above inclusions are equalities, i.e.
\[  \kappa' =\mathfrak v.(\pi \otimes \phi^{-1}).
\]
This proves (1) as $\kappa'$ itself is closed. Note that (2) then follows from (1).

For (3), again the above composition (\ref{eqn composition maps}) of maps has to be a bijection from (1). This then implies (3).
\end{proof}

\subsection{Canonical embeddings}

\begin{lemma} \label{lem hausdorff quotient}
    Let $\pi$ be in $\mathrm{Rep}^{\infty, F}(M_{n+1})$. Then there exists a Hausdorff $M_{n}$-quotient $\omega$ of $\pi|_{M_n}$ such that the map $a\circ \Theta^{-1}$ in Section \ref{s descrip bz layer} induces a continuous $M_{n+1}$-equivariant bijection of from $\Phi^+(\omega)$ to $\mathcal S(\mathfrak v_n^*\setminus \left\{ 0 \right\}). \pi$.
\end{lemma}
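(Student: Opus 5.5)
The plan is to take $\omega := \pi|_{M_n}/\kappa'$, where $\kappa'$ is the maximal closed $M_n$-subspace from Lemma \ref{lem kernel schwartz}, twisted by the appropriate power of $\delta$ so that it matches the normalization in (\ref{eqn phi+ schwartz induction}). By Lemma \ref{lem kernel schwartz}(1), $\kappa'=\overline{\mathfrak v.(\pi\otimes\phi^{-1})}$ is closed, so $\omega$ is a Hausdorff quotient. In fact $\omega$ is just $\Phi^-(\pi)$ up to the $\delta$-twist, hence lies in $\mathrm{Rep}^{\infty,F}(M_n)$.

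\textbf{Step 1: the map descends.} Consider $a\circ\Theta^{-1}: \mathrm{SInd}_{M_nV_n}^{M_{n+1}}(\pi|_{M_n}\boxtimes\phi)\to\pi$. It is continuous and $M_{n+1}$-equivariant, being a composition of the topological isomorphisms $\Theta_1,\Theta_2,\Theta_3$ with the action map $a$. By construction, its image is exactly $a\big((\bigcap_k\mathcal C_k(\mathfrak v_n))\widehat{\otimes}\pi\big)=\mathcal S(\mathfrak v_n^*\setminus\{0\}).\pi$. By Lemma \ref{lem action map kernel} (equivalently, the defining property of $\kappa'$), the subspace $\mathrm{SInd}(\kappa'\boxtimes\phi)$ lies in the kernel. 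The induction functor is exact on a closed subspace and its quotient: in the compact realization of Section \ref{ss compact realize} it is just $\pi$-valued Schwartz functions on $K_{n-1}\backslash K_n\times\mathbb R_{>0}$, and $\mathcal S(X)\widehat{\otimes}-$ preserves strict exact sequences of Fr\'echet spaces because $\mathcal S(X)$ is nuclear. Hence $\mathrm{SInd}(\pi\boxtimes\phi)/\mathrm{SInd}(\kappa'\boxtimes\phi)\cong\mathrm{SInd}(\omega\boxtimes\phi)$. We therefore obtain a continuous surjective $M_{n+1}$-map $F:\Phi^+(\omega)\to\mathcal S(\mathfrak v_n^*\setminus\{0\}).\pi$.

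\textbf{Step 2: injectivity.} This is the main obstacle. Let $\lambda=\ker F$, a closed $M_{n+1}$-subrepresentation of $\Phi^+(\omega)$. By Corollary \ref{cor alt form imprimitivity}, $\lambda=\Phi^+(\sigma)$ for some closed $M_n$-subrepresentation $\sigma\subset\omega$. Let $\tilde\sigma\supset\kappa'$ be its preimage in $\pi|_{M_n}$. Then $a\circ\Theta^{-1}$ kills $\mathrm{SInd}(\tilde\sigma\boxtimes\phi)$. Since $\kappa'$ is maximal with this property, $\tilde\sigma=\kappa'$, so $\sigma=0$ and $F$ is injective.

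The delicate points are the following. First, the maximal such $\kappa'$ must exist. This holds because the closed span of all closed $M_n$-subspaces with the vanishing property still has the property, by continuity and linearity of $a\circ\Theta^{-1}$ together with the density of sums in $\mathrm{SInd}$ of the closed span. Second, the descent in Step 1 must be compatible with the twist by $\delta^{1/2}$ and with the identification of Corollary \ref{cor alt form imprimitivity}. I would verify both by tracking the identifications $\Theta_2,\Theta_3$ exactly as in the proof of Lemma \ref{lem phi minus map}.
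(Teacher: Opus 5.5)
Your proposal is correct and follows the paper's route. The key inputs are the same: Corollary~\ref{cor alt form imprimitivity} applied to a kernel, together with exactness of Schwartz induction. The only difference is the order of steps. The paper applies the imprimitivity corollary directly to $\ker(a\circ\Theta^{-1})\subset\Phi^+(\pi|_{M_n})$ to obtain a closed subrepresentation $\sigma$ and then sets $\omega=\pi/\sigma$. This way it never needs the existence of a maximal $\kappa'$, which you justify separately; the resulting $\sigma$ is automatically that maximal subspace.
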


\begin{proof}
From Section \ref{s descrip bz layer}, we have a continuous surjective map $a\circ \Theta^{-1}: \Phi^+(\pi|_{M_n}) \rightarrow \mathcal S(\mathfrak v_n^*\setminus \left\{ 0 \right\}).\pi$. The lemma now follows from Corollary \ref{cor alt form imprimitivity}. \end{proof}


\begin{corollary} \label{cor hausdorff space} (cf. \cite[Theorem 2.5.8]{dCl91})
Let $\pi \in \mathrm{Rep}^{\infty, F}(M_{n+1})$. Then there exists a continuous $M_{n+1}$-equivariant bijection from $\Phi^+\circ \Phi^-(\pi)$ to $\mathcal S(\mathfrak v_n^*\setminus \left\{ 0 \right\}).\pi$.
\end{corollary}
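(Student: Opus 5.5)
The plan is to combine Lemma \ref{lem hausdorff quotient} with Lemma \ref{lem kernel schwartz}. The goal is to identify the Hausdorff quotient $\omega$ of $\pi|_{M_n}$ produced there with $\Phi^-(\pi)$, up to the normalizing character $\delta^{1/2}$ built into $\Phi^+$.

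First, Lemma \ref{lem hausdorff quotient} gives a Hausdorff $M_n$-quotient $\omega$ of $\pi|_{M_n}$. By construction, via Corollary \ref{cor alt form imprimitivity}, $\omega=\pi|_{M_n}/\sigma$, where $\Phi^+(\sigma)$ is the kernel of the surjection
\[a\circ\Theta^{-1}:\Phi^+(\pi|_{M_n})\to\mathcal S(\mathfrak v_n^*\setminus\{0\}).\pi.\]
This kernel is closed because $a\circ\Theta^{-1}$ is continuous, so the imprimitivity theorem applies and produces a closed $M_n$-subrepresentation $\sigma$. The induced map $\Phi^+(\omega)\to\mathcal S(\mathfrak v_n^*\setminus\{0\}).\pi$ is then a continuous $M_{n+1}$-equivariant bijection. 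Here I use that $\Phi^+$ carries the strict short exact sequence $0\to\sigma\to\pi\to\omega\to0$ to an exact one, which holds because Schwartz induction is exact (du Cloux).

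Second, I identify $\sigma$ with $\kappa'$ from Lemma \ref{lem kernel schwartz}. Since $\Phi^+(\sigma)$ lies in the kernel of $a\circ\Theta^{-1}$, maximality of $\kappa'$ gives $\sigma\subset\kappa'$. Conversely, $\Phi^+(\kappa')$ is a closed invariant subspace contained in the kernel, which equals $\Phi^+(\sigma)$. Evaluating at the identity, as in Lemma \ref{lem phi minus map}, then recovers $\kappa'\subset\sigma$.

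Third, Lemma \ref{lem kernel schwartz}(1) gives
\[\kappa'=\mathfrak v.(\pi\otimes\phi^{-1}),\]
and this space is closed. Hence $\omega=\pi/\kappa'$ is exactly the underlying space of $\Phi^-(\pi)$, up to the twist by $\delta^{-1/2}$. That twist is absorbed by the $\delta^{1/2}$ in the definition (\ref{eqn phi+ schwartz induction}) of $\Phi^+$, so $\Phi^+\circ\Phi^-(\pi)=\Phi^+(\omega)$ with the matching normalization. Composing with the bijection from the first step gives the claimed continuous $M_{n+1}$-equivariant bijection onto $\mathcal S(\mathfrak v_n^*\setminus\{0\}).\pi$.

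The main obstacle is the bookkeeping of the characters $\phi$ and $\delta^{\pm1/2}$. The map $a\circ\Theta^{-1}$ was written on $\mathrm{SInd}(\pi|_{M_n}\boxtimes\phi)$ without the $\delta^{1/2}$, so I must check that the Jacobian $|\det g|$ from the Fourier transform (Section \ref{ss fourier transform}) is exactly the modular factor that matches the normalized induction. I would verify this directly on the $G_n$-action formula $(g.f)(y)=\det(g)f(yg)$ in the first identification.
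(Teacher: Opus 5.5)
Your proposal is correct and is essentially the paper's argument, which simply combines Lemma \ref{lem hausdorff quotient} with Lemma \ref{lem kernel schwartz}(1); you also spell out the identification of the imprimitivity kernel $\sigma$ with $\kappa'$ (via maximality and evaluation at the identity), which the paper leaves implicit. On the normalization you flag: the Jacobians $|\det g|^{\pm 1}$ from the substitutions in $dm$ and $dy$ cancel, so $a\circ\Theta^{-1}$ is $M_{n+1}$-equivariant on the unnormalized $\mathrm{SInd}_{M_nV_n}^{M_{n+1}}(\pi|_{M_n}\boxtimes\phi)=\Phi^+(\delta^{-1/2}\pi|_{M_n})$, and the resulting quotient is exactly $\mathrm{SInd}_{M_nV_n}^{M_{n+1}}((\pi/\kappa')\boxtimes\phi)=\Phi^+\circ\Phi^-(\pi)$; thus the Hausdorff quotient to use is $\delta^{-1/2}(\pi/\kappa')$ rather than $\pi/\kappa'$, and no further check is needed.
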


\begin{proof}
This follows from Lemmas \ref{lem kernel schwartz}(1) and \ref{lem hausdorff quotient}.
\end{proof}

\section{Bernstein-Zelevinsky filtrations} \label{s real bz}

\subsection{Bernstein-Zelevinsky filtrations}

\begin{definition} \label{def bz filtration}
Let $\pi$ be in $\mathrm{Rep}^{\infty, F}(M_{n+1})$. 
\begin{enumerate}
\item The induced map from $\Phi^+\circ \Phi^-(\pi)$ to $\mathcal S(\mathfrak v_n^*\setminus \left\{ 0 \right\}).\pi$ in Corollary \ref{cor hausdorff space} is called the {\it canonical embedding} from $\Phi^+\circ \Phi^-(\pi)$ to $\pi$.
\item By Lemma \ref{lem kernel schwartz} several times, for any $k=,1\ldots, n$, $(\Phi^-)^k(\pi)$ is Hausdorff and so is in $\mathrm{Rep}^{\infty, F}(M_{n+1-k})$. Thus we also have the canonical embedding:
\[  \Phi^+\circ \Phi^- ((\Phi^-)^k(\pi)) \hookrightarrow (\Phi^-)^k(\pi) .
\]
However, inductively, we also have the canonical embedding 
\[ \Phi^+\circ (\Phi^-)^k(\pi)= \Phi^+\circ \Phi^-((\Phi^-)^{k-1}(\pi)) \hookrightarrow (\Phi^-)^{k-1}(\pi).  \] 
Repeating the process, we then obtain a sequence of embeddings in $\mathrm{Rep}^{\infty, F}(M_{n+1})$:
\[   (\Phi^+)^k\circ (\Phi^-)^k(\pi) \hookrightarrow \ldots \hookrightarrow \Phi^+\circ \Phi^-(\pi) \hookrightarrow \pi .
\]
The composition of such the embeddings is called the {\it $k$-th canonical embedding}, or simply the {\it canonical embedding} again.
\item For $k=0$, define $\Lambda_0(\pi)=\pi$ and for $k = 1,\ldots, n$, define $\Lambda_k(\pi)$ as the closure of the image of the $k$-th canonical embedding.
\item We shall call
\[   0=\Lambda_{n+1}(\pi) \subset  \Lambda_n(\pi) \subset \ldots \subset  \Lambda_1(\pi) \subset \Lambda_0(\pi)=\pi 
\]
to be the (real) {\it Bernstein-Zelevinsky filtration} for $\pi$.
\end{enumerate}
\end{definition}

\begin{example}
Examples are those in Section \ref{ss   guiding example}. Let $N$ be the unipotent radical of $B_2$. In such cases, $G=\mathrm{GL}_2(\mathbb R)$ and $n+1=2$, and so 
\[  \Lambda_2(\pi)=0, \quad \Lambda_1(\pi)\cong \mathbf{CJ}^{\infty}_{s,N}(\pi), \quad \Lambda_0(\pi)=\pi, \quad \pi/\Lambda_1(\pi)\cong \mathbf{CJ}^{\infty}_N(\pi) .
\]
\end{example}

\subsection{Adjointness of BZ-functors} \label{s adjointness bz functors}

We first have the following adjointness of $\Phi^+$ and $\Phi^-$ (cf. \cite[Section 5.11]{BZ76}):

\begin{proposition} \label{prop adjoint bz functor}
\begin{enumerate}
\item $\Phi^+$ is left adjoint to $\Phi^-$ i.e. for $\pi_1$ in $\mathrm{Rep}^{\infty, F}(M_n)$ and $\pi_2$ in $\mathrm{Rep}^{\infty, F}(M_{n+1})$, 
\[  \mathrm{Hom}_{M_n}(\pi_1, \Phi^-(\pi_2))\cong \mathrm{Hom}_{M_{n+1}}(\Phi^+(\pi_1), \pi_2) .
\]
\item Let $\pi \in \mathrm{Rep}^{\infty, F}(M_{n+1})$. Under the adjointness $\mathrm{Hom}_{M_n}(\Phi^-(\pi), \Phi^-(\pi)) \cong \mathrm{Hom}_{M_{n+1}}(\Phi^+\circ \Phi^-(\pi), \pi)$, the identity map in $\mathrm{Hom}_{M_n}(\Phi^-(\pi), \Phi^-(\pi))$ corresponds to the canonical embedding from $\Phi^+\circ \Phi^-(\pi)$ to $\pi$.
\item $\Phi^-\circ \Phi^+ \cong \mathrm{Id}$.
\item Let $\pi \in \mathrm{Rep}^{\infty, F}(M_n)$. Then $\Phi^+(\pi)$ is $\mathfrak v_{n}$-stable, and so $\mathbf{CJ}^{\infty}_{V_n}\circ \Phi^+(\pi)=0$.
\item The functors $\Phi^-$ and $\Phi^+$ send a strict short exact sequence to a strict short exact sequence.
\end{enumerate}
\end{proposition}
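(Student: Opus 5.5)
I will prove the five items in the order (3), (1), (2), (4), (5), since the later items lean on the earlier ones.

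\textbf{Item (3).} This is Lemma \ref{lem phi minus map}, up to checking the $\delta^{\pm 1/2}$ normalizations. $\Phi^+$ carries $\delta^{1/2}$ and $\Phi^-$ carries $\delta^{-1/2}$, and evaluation at the identity intertwines them.

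\textbf{Item (1).} I will construct unit and counit and verify the triangle identities.

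The unit $\eta\colon \pi_1\to \Phi^-\Phi^+(\pi_1)$ is the inverse of the evaluation isomorphism in Lemma \ref{lem phi minus map}.

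The counit $\epsilon_{\pi_2}\colon \Phi^+\Phi^-(\pi_2)\to \pi_2$ is built from the map $a\circ\Theta^{-1}\colon \mathrm{SInd}_{M_nV_n}^{M_{n+1}}(\pi_2|_{M_n}\boxtimes\phi)\to \pi_2$ of Section \ref{s descrip bz layer}, after inserting the $\delta^{1/2}$ twist. By Lemma \ref{lem action map kernel} and Lemma \ref{lem kernel schwartz}(1), this map kills $\mathrm{SInd}(\kappa'\boxtimes\phi)$ with $\kappa'=\mathfrak v.(\pi_2\otimes\phi^{-1})$. I then need that $\mathrm{SInd}$ is exact on the strict sequence $0\to\kappa'\to\pi_2\to\pi_2/\kappa'\to0$. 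This should follow from the compact realization $\mathrm{SInd}_{K_{n-1}}^{K_n\times\mathbb R_{>0}}$ being a completed tensor product with a nuclear Fr\'echet space, cf. \cite[Theorem 44.1]{Tr06}. Given this, the map descends to $\Phi^+\Phi^-(\pi_2)\to\pi_2$.

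With $\eta$ and $\epsilon$ in hand, the bijection is explicit:
\begin{itemize}
\item send $T\colon \pi_1\to\Phi^-(\pi_2)$ to $\epsilon\circ\Phi^+(T)$;
\item send $S\colon \Phi^+(\pi_1)\to\pi_2$ to $\Phi^-(S)\circ\eta$.
\end{itemize}
Both triangle identities reduce to evaluation at the identity. The first, $\Phi^-(\epsilon)\circ\eta_{\Phi^-}=\mathrm{id}$, uses Lemma \ref{lem kernel schwartz}(3): the composite induced by $a\circ\Theta^{-1}$ in (\ref{eqn composition maps}) is the identity of $\Phi^-(\pi_2)$. The second, $\epsilon_{\Phi^+}\circ\Phi^+(\eta)=\mathrm{id}$, is a direct computation on $\Phi^+(\pi_1)$ using the explicit formula for $a\circ\Theta^{-1}$ and Fourier inversion.

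Continuity of all maps is automatic from the constructions, and naturality in $\pi_1,\pi_2$ is formal.

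\textbf{Item (2).} This is immediate: the adjunct of $\mathrm{id}_{\Phi^-(\pi)}$ is $\epsilon_\pi$. By construction $\epsilon_\pi$ is the map of Corollary \ref{cor hausdorff space}, i.e. the canonical embedding.

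\textbf{Item (4).} Write $\Phi^+(\pi)\cong \mathcal S(\mathfrak v_n^*\setminus\{0\},\pi)$ as $V_n$-representations, as in the proof of Lemma \ref{lem phi minus map}. There $\mathfrak v_n$ acts by multiplication by linear coordinates of $y\in\mathfrak v_n^*$, up to the factor $2\pi\sqrt{-1}$. The identity (\ref{eqn schwartz ideal 0}) (vector-valued version) then gives $\mathfrak v_n.\Phi^+(\pi)=\Phi^+(\pi)$. So $\Phi^+(\pi)$ is $\mathfrak v_n$-stable, hence $\mathbf{CJ}_s^\infty$ of it is everything by Corollary \ref{cor cj sub max stable}. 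Proposition \ref{prop surj cj quotient}(3) then gives $\mathbf{CJ}^\infty_{V_n}(\Phi^+(\pi))=0$.

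\textbf{Item (5).}

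For $\Phi^+$, I use exactness of $\mathrm{SInd}$ on strict sequences, via the nuclear tensor-product realization above; tensoring by the characters $\delta^{1/2}$ and $\phi$ is harmless.

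For $\Phi^-$, take a strict sequence $0\to\pi_1\to\pi_2\to\pi_3\to0$. Set $\kappa_i=\mathfrak v.(\pi_i\otimes\phi^{-1})$; these are closed by Lemma \ref{lem kernel schwartz}(1).
\begin{itemize}
\item Surjectivity of $\pi_2/\kappa_2\to\pi_3/\kappa_3$ is clear.
\item Exactness in the middle is clear, since $\kappa_2$ maps onto $\kappa_3$.
\item Strictness follows from the open mapping theorem, as all spaces are Fr\'echet.
\end{itemize}
The remaining point is injectivity, i.e. $\pi_1\cap\kappa_2=\kappa_1$, which I expect to be the main obstacle. I plan to deduce it from the canonical embedding. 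Consider $\mathcal S(\mathfrak v^*\setminus\{0\}).\pi_2$ and its relation to $\pi_1$, using Corollary \ref{cor alt form imprimitivity} applied to the preimage of $\pi_1$ in $\Phi^+\Phi^-(\pi_2)$. Alternatively, one could use a Schwartz-function averaging argument with $\phi$: for $x\in\pi_1\cap\kappa_2$, apply $\pi(\widehat f)$ with $f$ supported near $e_n^T$ and $f(e_n^T)=1$ to produce $x-\pi(\widehat f)x\in\kappa_1$, then show that $\pi(\widehat f)x$ lies in $\kappa_1$, since $\kappa_2\cap\pi_1$ is killed by such operators at $e_n^T$. This step is where I expect the technical work to lie.
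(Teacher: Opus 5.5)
Your treatment of (1)--(4), of the $\Phi^+$ half of (5), and of right exactness of $\Phi^-$ follows the paper's proof. The unit is the inverse of evaluation at the identity, and the counit is the canonical embedding induced by $a\circ\Theta^{-1}$. Your two assignments $T\mapsto \epsilon\circ\Phi^+(T)$ and $S\mapsto\Phi^-(S)\circ\eta$ are exactly the paper's $F$ and $G$. Your direct check of surjectivity and middle exactness for $\Phi^-$ is, if anything, cleaner than the paper's one-line remark.

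The genuine gap is the injectivity of $\Phi^-(\iota)$ for a strict injection $\iota\colon\pi_1\hookrightarrow\pi_2$, which you explicitly leave open. Neither of your two sketched routes is carried out. The averaging route rests on the claim that $\pi_1\cap\kappa_2$ is ``killed at $e_n^T$'' by the operators $\pi(\widehat f)$, which is essentially a restatement of $\pi_1\cap\kappa_2=\kappa_1$. The route via the preimage of $\pi_1$ in $\Phi^+\Phi^-(\pi_2)$ is not connected to injectivity as stated.

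The missing step is short and uses only tools you already invoked.
\begin{enumerate}
\item The counit is natural, since $a\circ\Theta^{-1}$ commutes with $M_{n+1}$-morphisms. Hence $\iota\circ\epsilon_{\pi_1}=\epsilon_{\pi_2}\circ\Phi^+\Phi^-(\iota)$.
\item $\epsilon_{\pi_1}$ is injective by Corollary \ref{cor hausdorff space}; this is where the imprimitivity theorem enters, via Lemma \ref{lem hausdorff quotient}. So the left-hand side is injective, and therefore $\Phi^+(\Phi^-(\iota))$ is injective.
\item $\Phi^+$ detects injectivity. Suppose $g=\Phi^-(\iota)$ had a nonzero kernel $\sigma_0$, which is a closed $M_n$-subrepresentation. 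Then $\Phi^+(\sigma_0)$ embeds into $\Phi^+\Phi^-(\pi_1)$ and is killed by $\Phi^+(g)$, because functions with values in $\sigma_0$ are annihilated pointwise. Moreover $\Phi^+(\sigma_0)\neq0$ because $\Phi^-\Phi^+(\sigma_0)\cong\sigma_0$ by (3).
\end{enumerate}
This is precisely the paper's commutative-square argument. Once injectivity is established, your remaining points give strictness: the image of $\Phi^-(\pi_1)$ is closed as the kernel of $\Phi^-(\pi_2)\to\Phi^-(\pi_3)$, and the open mapping theorem does the rest.
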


\begin{proof}
(3) follows from Lemma \ref{lem kernel schwartz}. For (4), one may proceed with a similar argument in the proof of Lemma \ref{lem phi minus map}. One has the identification $\Phi^+(\omega)$ with $\mathcal S(\mathfrak v^*\setminus \left\{ 0\right\})\widehat{\otimes}\omega$. Now it follows from Lemma \ref{lem n-invariant under schwartz} that $\mathfrak v.\mathcal S(\mathfrak v^*\setminus \left\{ 0\right\})\widehat{\otimes}\omega=\mathcal S(\mathfrak v^*\setminus \left\{ 0 \right\})\widehat{\otimes}\omega$. Thus, $\mathfrak v.\Phi^+(\pi)=\Phi^+(\pi)$ and so this gives (4).

The $\Phi^+$-part in (5) follows from the definition of (5) and the exactness of the Schwartz induction. We now prove the exactness of the $\Phi^-$-part in (5). A contravariant functor is right exact and so it remains to show the injectivity of $\Phi^-$. Suppose we have an injection $0 \rightarrow \pi_1 \rightarrow \pi_2$. We have the following commutative diagram:
\[ \xymatrix{  & \Phi^+\circ  \Phi^-(\pi_1) \ar@{^{(}->}[d] \ar[r] &  \Phi^+\circ  \Phi^-(\pi_2)  \ar@{^{(}->}[d] \\
     0 \ar[r] &      \pi_1 \ar[r] & \pi_2
}
\]
As the above diagram commutes, we must have that the upper row is also injective, but then $\Phi^-(\pi_1)$ has to inject to $\Phi^-(\pi_2)$ as well by the exactness of $\Phi^+$.

We now consider (1). Let $f \in \mathrm{Hom}_{M_{n}}(\pi_1, \Phi^-(\pi_2))$. Recall that we have the canonical embedding $\Phi^+\circ \Phi^-(\pi_2) \hookrightarrow \pi_2$. Composing with the map 
\[ \Phi^+(\pi_1) \stackrel{\Phi^+(f)}{\rightarrow} \Phi^+\circ \Phi^-(\pi_2),\] we obtain a map from $\Phi^+(\pi_1)$ to $\pi_2$, defining a map in $\mathrm{Hom}_{M_{n+1}}(\Phi^+(\pi_1), \pi_2)$. We denote such the assignment by $F:\mathrm{Hom}_{M_n}(\pi_1, \Phi^-(\pi_2))\rightarrow \mathrm{Hom}_{M_{n+1}}(\Phi^+(\pi_1), \pi_2)$.

On the other hand, for $h \in \mathrm{Hom}_{M_{n+1}}(\Phi^+(\pi_1), \pi_2)$, we then obtain a map by applying the functor $\Phi^-$:
\[   \Phi^-\circ \Phi^+(\pi_1) \rightarrow \Phi^-(\pi_2) .
\]
From (3), this defines a map from $\pi_1$ to $\Phi^-(\pi_2)$. Then, we also obtain a map, denoted by $G:\mathrm{Hom}_{M_{n+1}}(\Phi^+(\pi_1), \pi_2) \rightarrow \mathrm{Hom}_{M_n}(\pi_1, \Phi^-(\pi_2))$. 

Now, it is straightforward to check that $G\circ F=\mathrm{Id}$ by using (3) and Lemma \ref{lem kernel schwartz}(2).  To show $F\circ G=\mathrm{Id}$, we have the following commutative diagram: for $f \in \mathrm{Hom}_{M_{n+1}}(\Phi^+(\pi_1), \pi_2)$,
\[ \xymatrix{  & \Phi^+\circ  \Phi^-(\Phi^+(\pi_1)) \ar@{^{(}->}[d] \ar[r] &  \Phi^+\circ  \Phi^-(\pi_2)  \ar@{^{(}->}[d] \\
     &      \Phi^+(\pi_1) \ar[r]^f & \pi_2 .
}
\]
But the left vertical map is a natural isomorphism and so one obtains from the diagram that $F\circ G(f)=f$, as desired. 

(2) follows from the description of the adjointness above.
\end{proof}


\subsection{Properties of Bernstein-Zelevinsky filtrations}



\begin{theorem} \label{thm bz filtration full}
Let $\pi$ be in $\mathrm{Rep}^{\infty, F}(M_{n+1})$. For $k=0,\ldots, n$, let $\pi_k=\Lambda_k(\pi)$, and let $\tau_k=\Lambda_1((\Phi^-)^{k}(\pi))$ i.e. the closure of the canonical embedding from $\Phi^+\circ \Phi^-\circ (\Phi^-)^{k}(\pi)$ to $(\Phi^-)^{k}(\pi)$. The Bernstein-Zelevinsky filtration of $\pi$ satisfies the following properties: for $k=0,\ldots, n$,
\begin{enumerate}
    \item $(\Phi^-)^k(\pi)=(\Phi^-)^k(\pi_k)$;
        \item $(\Phi^-)^k(\pi)$ is Hausdorff;
    \item $\tau_{k}$ is the maximal $\mathfrak v_{n-k}$-stable closed submodule in $(\Phi^-)^{k}(\pi_{k})$;
    \item $(\Phi^-)^{k}(\pi)/\tau_{k}$ is isomorphic to $\mathbf{CJ}_{V_{n-k}}^{\infty}((\Phi^-)^{k}(\pi))$ in $\mathrm{Rep}^{\infty,F}(M_{n-k+1})$; 
    \item there exists a continuous epimorphism from $(\Phi^+)^k\circ \mathbf{CJ}_{V_{n-k}}^{\infty}\circ (\Phi^-)^{k}(\pi))$ to $\pi_{k}/\pi_{k+1}$.
\end{enumerate}
\end{theorem}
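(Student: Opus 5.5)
The plan is to argue by induction on $k$, using the one-step case ($k=0$, applied to $(\Phi^-)^k(\pi)$ viewed in $\mathrm{Rep}^{\infty,F}(M_{n+1-k})$) as the engine. For $k=0$, (1) and (2) are trivial. For (3), Corollary \ref{cor hausdorff space} identifies the image of the canonical embedding with $\mathcal S(\mathfrak v_n^*\setminus\{0\}).\pi$. Corollary \ref{cor closedness and stable prop} then gives $\tau_0=\overline{\mathcal S(\mathfrak v_n^*\setminus\{0\}).\pi}=\mathbf{CJ}^{\infty}_{s,V_n}(\pi)$, which Corollary \ref{cor cj sub max stable} says is the maximal $\mathfrak v_n$-stable closed subspace. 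For (4), Proposition \ref{prop surj cj quotient}(3) gives $\pi/\tau_0\cong \mathbf{CJ}^{\infty}_{V_n}(\pi)$. Statement (5) at $k=0$ is then the identity $\pi_0/\pi_1=\pi/\tau_0$.

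For general $k$, I would first establish (1) and (2) together. By Lemma \ref{lem kernel schwartz}(2) applied repeatedly, each $(\Phi^-)^j(\pi)$ is Hausdorff, which gives (2). For (1), the key is the statement $\Phi^-(\pi)=\Phi^-(\pi_1)$, where $\pi_1=\Lambda_1(\pi)$. The inclusion $\pi_1\hookrightarrow\pi$ induces, by Proposition \ref{prop adjoint bz functor}(5), an injection $\Phi^-(\pi_1)\to\Phi^-(\pi)$. For surjectivity, the quotient $\pi/\pi_1\cong\mathbf{CJ}^{\infty}_{V_n}(\pi)$ has $\overline{\mathfrak v_n.(\cdot)}$ of finite codimension-type behaviour, so I expect $\Phi^-(\pi/\pi_1)=0$. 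Concretely, $\mathcal S(\mathfrak v_n^*\setminus\{0\})$ kills $\pi/\pi_1$, and by Lemma \ref{lem kernel schwartz}(3) together with Corollary \ref{cor hausdorff space}, $\Phi^-$ only sees the part $\Phi^+\Phi^-(\pi)\subset\pi_1$. Alternatively, one can apply $\Phi^-$ to the canonical embedding $\Phi^+\Phi^-(\pi)\to\pi_1\to\pi$ and use $\Phi^-\Phi^+\cong\mathrm{Id}$ (Proposition \ref{prop adjoint bz functor}(3)): the composite $\Phi^-(\pi)\to\Phi^-(\pi_1)\to\Phi^-(\pi)$ is the identity by Proposition \ref{prop adjoint bz functor}(2), which forces $\Phi^-(\pi_1)\to\Phi^-(\pi)$ to be bijective. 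This is the route I would take.

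Next I would pass from one step to $k$ steps. This needs $\Lambda_{j+1}(\pi)=\Lambda_j$ of the appropriate subobject: by construction, $\pi_{k}$ is the closure of the image of $(\Phi^+)^{k}(\Phi^-)^{k}(\pi)$. Using exactness of $\Phi^+$ (Proposition \ref{prop adjoint bz functor}(5)), together with Corollary \ref{cor alt form imprimitivity} to identify closures of images under $\Phi^+$ as $\Phi^+$ of closures, I expect to get $\pi_k=\overline{(\Phi^+)^{k}(\ldots)}$. Iterating the identity $\Phi^-(\pi_1)=\Phi^-(\pi)$ then gives $(\Phi^-)^k(\pi_k)=(\Phi^-)^k(\pi)$, which is (1). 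With (1) in hand, (3) and (4) follow from the $k=0$ case applied to $(\Phi^-)^k(\pi)$ in $M_{n+1-k}$.

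For (5), I would first apply the exact functor $(\Phi^+)^k$ to the surjection $(\Phi^-)^k(\pi)\to(\Phi^-)^k(\pi)/\tau_k\cong\mathbf{CJ}^{\infty}_{V_{n-k}}((\Phi^-)^k(\pi))$. Composing with the $k$-th canonical embedding into $\pi_k\to\pi_k/\pi_{k+1}$, I would check that $(\Phi^+)^k(\tau_k)$ lands in $\pi_{k+1}$; this holds because $\Phi^+$ preserves closures of images by Corollary \ref{cor alt form imprimitivity}. The image is dense because the image of the $k$-th embedding is dense in $\pi_k$, so the induced map is the desired epimorphism. The main obstacle is exactly these topological compatibilities: canonical embeddings are only continuous bijections onto non-closed images, so commuting closures with $\Phi^+$ and $\Phi^-$ must be handled carefully via imprimitivity.
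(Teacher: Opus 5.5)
Your proposal is correct and follows the paper's proof. Statements (2)--(4) come from Lemma \ref{lem kernel schwartz}, Corollaries \ref{cor hausdorff space}, \ref{cor closedness and stable prop}, \ref{cor cj sub max stable} and Proposition \ref{prop surj cj quotient}. Statement (1) comes from applying $\Phi^-$ to the canonical embeddings and using $\Phi^-\circ\Phi^+\cong\mathrm{Id}$ together with injectivity.

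The one difference is that the paper applies $(\Phi^-)^k$ directly to the chain $(\Phi^+)^k(\Phi^-)^k(\pi)\hookrightarrow\Lambda_k(\pi)\hookrightarrow\pi$. This means your step-by-step iteration, and the closure-commutation via Corollary \ref{cor alt form imprimitivity}, are not needed for (1). That density fact is only genuinely used in (5).
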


\begin{proof}
 (2) follows from Lemma \ref{lem kernel schwartz}. (3) follows from Lemma \ref{lem hausdorff quotient}, and Corollaries \ref{cor hausdorff space} and \ref{cor cj sub max stable}. (4) follows from (3) and Proposition \ref{prop surj cj quotient}. (5) follows from the continuous epimorphism from $(\Phi^+)^k((\Phi^-)^{k}(\pi)/\tau_{k})$ to $\pi_k/\pi_{k+1}$, and (4).

 For (1), we have the following continuous embeddings:
\[  (\Phi^+)^k \circ (\Phi^-)^k(\pi) \hookrightarrow \Lambda_k(\pi) \hookrightarrow \pi .
\]
It follows from Proposition \ref{prop adjoint bz functor}(3)  (multiple times) that in the composition
\[  (\Phi^-)^k\circ (\Phi^+)^k \circ (\Phi^-)^k(\pi) \hookrightarrow (\Phi^-)^k(\Lambda_k(\pi)) \hookrightarrow   (\Phi^-)^k(\pi) ,
\]
the first term is naturally isomorphic to $(\Phi^-)^k(\pi)$. This forces $(\Phi^-)^k(\Lambda_k(\pi))\cong (\Phi^-)^k(\pi)$.
\end{proof}


\subsection{Nuclearity conjecture} \label{ss continuous map homeomorphism}






In general, the image of the canonical embedding may not be closed.

\begin{example} \label{ex small seminorm family2}
We consider the topology on $\pi_1$ as the one defined by the seminorms $p_{r,s}$  in Example \ref{ex small seminorm family}. In this case, $\mathcal S(\mathfrak v_1^*\setminus \left\{ 0 \right\}).\pi_1=\mathcal S(\mathbb R^{\times}) \subsetneq \mathbf{CJ}_s^{\infty}(\pi_1)$, as we have seen that $\mathcal S(\mathbb R^{\times})$ is not complete in $\pi_1$.
\end{example}

We state the following nuclearity conjecture, that guarantees the closedness of the canonical image:

\begin{conjecture}
Let $\pi$ be in $\mathrm{Rep}^{\infty, F}(M_{n+1})$. Suppose $\pi$ is nuclear. Then, for any $k=1,\ldots, n$,
\[   (\Phi^+)^k\circ (\Phi^-)^k(\pi) \cong \Lambda_k(\pi) .
\]
\end{conjecture}

The conjecture is motivated by that the nuclearity of a Fr\'echet space has stronger control on the topology. One expects that the topological vector space in Example \ref{ex small seminorm family} is not nuclear because there are no "enough" seminorms; for more non-nuclear examples whose image of the canonical embedding is not closed, see~\cite[Section 3.6]{WY26+}. It is known that Casselman-Wallach representations are nuclear \cite{CHM00, BK14}, and we comment on this situation based on a study in \cite{WZ25+}:

\begin{remark} 
Let $\mathrm{Ind}_{B_n}^{\mathrm{GL}_n(\mathbb K)} \chi$ be the normalized parabolically induced representation from a character $\chi$ of $B_n$. We identify $T=\mathbb K^{\times}\times \ldots \times \mathbb K^{\times}$. We write
\[  \chi =\chi_1 \boxtimes \ldots \boxtimes \chi_n .
\]
Let $\chi'=\chi_1\boxtimes \ldots \boxtimes \chi_{n-1}$. Let $\pi'=\mathrm{Ind}_{B_{n-1}}^{\mathrm{GL}_{n-1}(\mathbb K)}\chi'$. Then, by inductive argument, Mackey theory \cite{CSu21} and Fourier transform on nilradicals; roughly speaking, $\pi'$ admits a filtration whose successive subquotients are isomorphic to 
\[     (\Phi^+)^k\circ \Psi^+(\omega) ,
\]
where $\omega$ is a $\mathrm{GL}_{n-k}$-representation and $\Psi^+(\omega)$ inflates $\omega$ to a $M_{n-k+1}$-representation. Here we use "roughly speaking" as one has to consider an infinite filtration with an inverse limit such that each successive subquotients still admit an infinite filtration. The key point is that the successive subquotients provide good topologies and so one deduces that the image of the canonical embedding in Theorem \ref{thm bz filtration full} is closed. The general Casselman-Wallach case follows from the celebrated Casselman embedding trick along with the principal series case and the du Cloux imprimitivity theorem; for precise theorem, see~\cite[Theorem 1.7]{WY26+}.
\end{remark}

 \end{document}